\documentclass[preprint,11pt]{imsart}
\usepackage{xr}
\usepackage{amsmath,amssymb}
\usepackage{parskip}
\usepackage{marvosym}
\usepackage[hang,small,bf]{caption}
\usepackage{bm}
\usepackage[colorlinks]{hyperref}
\usepackage{sansmath}
\hypersetup{
	colorlinks,%
	citecolor=blue,%
	filecolor=black,%
	linkcolor=blue,%
	urlcolor=blue
}
\usepackage{enumerate}
\usepackage{multirow}
\linespread{1}
\RequirePackage[OT1]{fontenc}

\usepackage{amsthm}
\usepackage{amsmath}
\usepackage{amssymb}
\usepackage{thmtools} 
\usepackage{subcaption}
\usepackage{textcomp}

\usepackage{graphicx}
\usepackage{verbatim}
\usepackage{array, float}
\usepackage{fontenc}
\usepackage[toc,page]{appendix}
\usepackage[nottoc]{tocbibind}
\usepackage[english]{babel}
\usepackage{marvosym}

\usepackage[pagewise]{lineno}

\usepackage{mathtools}
\allowdisplaybreaks
\sloppy
\usepackage{bm}
\usepackage[noabbrev,capitalize]{cleveref}

\newtheoremstyle{exampstyle}
{8pt} 
{8pt} 
{\it} 
{} 
{\bfseries} 
{.} 
{.5em} 
{} 

\theoremstyle{exampstyle}
\newtheorem{theorem}{Theorem}
\newtheorem{lemma}{Lemma}

\newtheorem{prop}{Proposition}

\newtheoremstyle{remarkstyle}
{8pt} 
{8pt} 
{} 
{} 
{\bfseries} 
{.} 
{.5em} 
{} 

\theoremstyle{remarkstyle}
\newtheorem{example}{Example}
\newtheorem{remark}{Remark}
\newtheorem{assumption}{Assumption}[section]
\newtheorem{definition}{Definition}

\numberwithin{equation}{section}
\numberwithin{example}{section}
\numberwithin{theorem}{section}
\numberwithin{lemma}{section}
\numberwithin{corollary}{section}
\numberwithin{prop}{section}
\numberwithin{definition}{section}
\numberwithin{remark}{section}

\usepackage{tikz}


\startlocaldefs

\newcommand{\eat}[1]{}


\usepackage{enumitem}
\DeclareMathOperator*{\argmin}{\arg\!\min}

\newcommand{\mZ}{\mathcal{Z}}
\newcommand{\bJ}{\bm{J}}
\newcommand{\bZ}{\bm{Z}}

\renewcommand{\hat}[1]{\widehat{#1}}
\renewcommand{\tilde}[1]{\widetilde{#1}}
\newcommand{\E}{\mathbb{E}}
\renewcommand{\P}{\mathbb{P}}
\newcommand{\mX}{{\mathcal{X}}}
\newcommand{\bKe}{\mathsf{K}}
\newcommand{\cF}{\boldsymbol{\mathcal{F}}}

\newcommand{\bzr}{\boldsymbol{0}}
\addtolength\topmargin{35pt}

\newcommand{\mY}{{\mathcal{Y}}}
\newcommand{\bX}{\bm{X}}
\newcommand{\bY}{\bm{Y}}

\newcommand{\R}{\mathbb{R}} 
\newcommand{\Serd}{\Sigma_{\mathrm{ERD}}}

\newcommand{\be}{\bm{e}}
\newcommand{\bG}{\bm{G}}

\newcommand{\mz}{\bm{Z}}

\newcommand{\mH}{\mathcal{H}}

\newcommand{\Z}{\mathcal{Z}}

\newcommand{\bW}{\bm{W}}

\newcommand{\grsc}{\gamma_{m,n}^{\nu,\bJ}}
\newcommand{\grsr}{\gamma_{m,n}^{\nu,\bJ,\mathrm{or}}}
\newcommand{\trsr}{\gamma_{m,n}^{\mathrm{or}}}

\newcommand{\bD}{\bm \Delta}

\newcommand{\by}{\bm{y}}
\newcommand{\rh}{T^{\nu}_{m,n}}
\newcommand{\rhsc}{T^{\nu,\bJ}_{m,n}}
\newcommand{\prsc}{\phi^{\nu,\bJ}_{m,n}}
\newcommand{\prnk}{\phi^{\nu,\bJ}_{n}}
\newcommand{\ptsc}{\tilde{\phi}^{\nu,\bJ}_{m,n}}
\newcommand{\Rb}{\bm{R}_{\mathrm{H}_{\ell}}^{\nu}} 
\newcommand{\hbG}{\hat{\bm{S}}}
\newcommand{\bGa}{\bm{S}}
\newcommand{\ind}{\bm{1}}

\newcommand{\Pacd}{\mathcal{P}_{\mathrm{ac}}(\mathbb{R}^d)}
\newcommand{\Don}{\Delta^{(1)}}
\newcommand{\Dwo}{\Delta^{(2)}}
\newcommand{\Donor}{\Delta^{(1),\mathrm{\mathrm{or}}}}
\newcommand{\Dwor}{\Delta^{(2),\mathrm{\mathrm{or}}}}
\newcommand{\rhsr}{T^{\mathrm{rank},\mathrm{or}}_{m,n,\mathrm{sc}}} 
\newcommand{\bR}{\bm{R}}
\newcommand{\hbR}{\hat{\bm{R}}}
\newcommand{\Rl}{\bm{R}_{\mathrm{H}_1}^{\nu}}
\newcommand{\Rmu}{\bm{R}_{\mathrm{H}_0}^{\nu}}
\newcommand{\Q}{\bm{Q}}
\newcommand{\mF}{\bm{F}}

\addtolength{\oddsidemargin}{-.64in}%
\addtolength{\evensidemargin}{-.64in}%
\addtolength{\textwidth}{1.28in}%
\addtolength{\textheight}{1in}%
\addtolength{\topmargin}{-0.5in}%

\@addtoreset{proofpart}{theorem}

\makeatletter
\newcommand*{\rom}[1]{\expandafter\@slowromancap\romannumeral #1@}
\makeatother

\def\argmin{\mathop{\rm argmin}}


\renewcommand{\hat}[1]{\widehat{#1}}
\renewcommand{\tilde}[1]{\widetilde{#1}}
\renewcommand{\P}{\mathbb{P}}
\newcommand{\Th}{\Theta}
\newcommand{\bt}{\boldsymbol{\theta}}
\newcommand{\bh}{\bm{h}}


\newcommand{\bhg}{\hat{\bm{S}} }
\newcommand{\pg}{\bm{S}} 

\newcommand{\tP}{\tilde{\Psi}}

\newcommand{\pl}{u_{\mathrm{H}_1}^{\nu}}
\newcommand{\nhg}{\mathsf{K}}

\newcommand{\bz}{\bm{z}}

\newcommand{\fnd}{\mathcal{F}_{\mathrm{ind}}}
\newcommand{\fel}{\mathcal{F}_{\mathrm{ell}}}
\newcommand{\fgen}{\mathcal{F}_{\mathrm{gen}}}

\newcommand{\Ezn}{\E_{\mathcal{Z}_n}}

\newcommand{\Rdcov}{\mathrm{RdCov}}

\newcommand{\hG}{\hat{\mathsf{K}}}

\newcommand{\ron}{\bar{\bJ_{1}}}
\newcommand{\rtw}{\bar{\bJ_2}}


\newcommand{\bo}{\boldsymbol{1}}

\newcommand{\pho}{\mathbb{P}_{\mathrm{H}_{1}}}
\newcommand{\zph}{\mathbb{P}_{\mathrm{H}_0}}

\newcommand{\ezph}{\mathbb{E}_{\mathrm{H}_0}}
\newcommand{\bH}{\bm{H}}

\newcommand{\mx}{\bm{x}}
\newcommand{\atr}{\mathrm{ARE}(T^{\nu,\bJ},T)}
\newcommand{\atrt}{\mathrm{ARE}(T^{\nu},T)}
\newcommand{\ati}{\mathrm{ARE}(R^{\nu,\bJ},R)}
\newcommand{\hds}{\mathrm{HD}^2(\P^{(N)},\mathbb{Q}^{(N)})}



\newcommand{\tv}{\mathrm{TV}(\P^{(N)},\mathbb{Q}^{(N)})}
\newcommand{\hd}{\mathrm{HD}(\P^{(N)},\mathbb{Q}^{(N)})}

\newcommand{\obx}{\bar{\bX}}
\newcommand{\oby}{\bar{\bY}}

\newcommand{\sn}{\Serd^{(1)}}
\newcommand{\st}{\Serd^{(2)}}
\newcommand{\rhnk}{R_{n}^{\nu,\bJ}}
\newcommand{\ornk}{R_{n}^{\nu,\bJ,\mathrm{or}}}
\newcommand{\jro}{\bar{\bJ_1}^{\mathrm{or}}}
\newcommand{\jrt}{\bar{\bJ_2}^{\mathrm{or}}}

\newcommand{\bsg}{\bm{\gamma}}

\newcommand{\my}{\bm{y}}
\newcommand{\myt}{\tilde{\bm{y}}}

\newcommand{\grun}{\gamma_{m,n}^{\mathrm{or,U}}}
\newcommand{\grln}{\gamma_{m,n,L}^{\mathrm{or,U}}}
\newcommand{\grnn}{\gamma_{m,n,L_N}^{\mathrm{or,U}}}

\allowdisplaybreaks



\newcommand{\bd}{\bm{d}}

\definecolor{LightCyan}{rgb}{0.88,1,1}
\definecolor{Gray}{gray}{0.9}
\definecolor{LightCyan}{rgb}{0.88,1,1}
\definecolor{Gray}{gray}{0.9}

\endlocaldefs

\usepackage{abstract}
\usepackage{xr}
\usepackage[numbers]{natbib}
\bibliographystyle{apalike} 
\begin{document}
	
	\renewcommand{\abstractname}{}    
	\renewcommand{\absnamepos}{empty}
	
	\begin{frontmatter}
		\title{Pitman Efficiency Lower Bounds for Multivariate Distribution-Free Tests Based on Optimal Transport}
		
		\runtitle{Efficiency of Distribution-Free Hotelling-Type Tests}		
		
		\begin{aug}
			\author{\fnms{Nabarun} 
				\snm{Deb}, \ead[label=e1]{nd2560@columbia.edu}}\author{\fnms{Bhaswar B.} \snm{Bhattacharya}\thanksref{t1},
				\ead[label=e2]{bhaswar@wharton.upenn.edu}}
			\and
			\author{\fnms{Bodhisattva} \snm{Sen}\thanksref{t2}\ead[label=e3]{bodhi@stat.columbia.edu}}
			\affiliation{
				Columbia University\thanksmark{a1},
				University of Pennsylvania\thanksmark{a2}, and
				Columbia University\thanksmark{a3}
			}
			\thankstext{t1}{Supported by NSF CAREER grant DMS-2046393 and a Sloan Research Fellowship.}
			\thankstext{t2}{Supported by NSF grant DMS-2015376.}
			
			\runauthor{Deb, Bhattacharya, and Sen}
			
			\address{1255 Amsterdam Avenue \\
				New York, NY 10027\\
				\printead{e1} \\
			}
			\address{265 South 37th Street \\
				Philadelphia, PA 19104 \\
				\printead{e2} \\			
			}
			\address{1255 Amsterdam Avenue \\
				New York, NY 10027\\
				\printead{e3} \\
			}
		\end{aug}
		\vspace{0.2in}
		\begin{abstract}
			Distribution-free tests such as the Wilcoxon rank sum test are popular for testing the equality of two univariate 
			distributions. Among the important reasons for their
			popularity are the striking results of Hodges-Lehmann
			(1956) and Chernoff-Savage (1958), where the authors show that the asymptotic (Pitman) relative efficiency of Wilcoxon's test with respect to
			Student's $t$-test, under location-shift alternatives, never falls below
			$0.864$ (with the identity score) and $1$ (with the Gaussian score) respectively, despite the former being exactly distribution-free for all
			sample sizes. Motivated by these results, we propose and study a large family of exactly distribution-free multivariate rank-based two-sample tests by leveraging the theory of optimal transport. First, we propose distribution-free analogs of the Hotelling $T^2$
			test (the natural multidimensional counterpart of Student's $t$-test) and
			show that they satisfy Hodges-Lehmann and Chernoff-Savage-type efficiency lower
			bounds over natural sub-families of multivariate distributions, despite being entirely agnostic to the underlying data
			generating mechanism --- making them the first
			multivariate, nonparametric, exactly distribution-free
			tests that provably achieve such efficiency lower bounds. As these tests are derived from Hotelling $T^2$, naturally they are not universally consistent (same as Wilcoxon's test). To overcome this, we propose exactly distribution-free versions of the celebrated kernel maximum mean discrepancy test and the energy test. These tests are indeed universally consistent under no moment assumptions, exactly distribution-free for all sample sizes, and have non-trivial Pitman efficiency. We believe this trifecta of properties hasn't yet been proven for any existing test in the literature. Through extensive simulations, we demonstrate the favorable finite sample performance of our procedures, including robustness to outliers and contamination, compared to existing tests, in both low and high-dimensional settings. Finally, we demonstrate the broader scope of our methods by constructing
			exactly distribution-free multivariate tests for mutual independence using optimal transport,
			which suffer from no loss in asymptotic efficiency against the
			classical Wilks' likelihood ratio test.
		\end{abstract}

		
		\begin{keyword}
			\kwd{Asymptotic relative efficiency}
			\kwd{convergence of optimal transport maps}
			\kwd{elliptically symmetric distributions} 
			\kwd{Konijn alternatives}
			\kwd{local contiguous alternatives}
			\kwd{score functions}
		\end{keyword}
	\end{frontmatter}

	\section{Introduction}
	Given two probability measures $\mu_1$ and $\mu_2$ on $\mathbb{R}^d$, $d\geq 1$, and $\bX_1,\ldots ,\bX_m\overset{i.i.d.}{\sim}\mu_1$ and $\bY_1,\ldots ,\bY_n\overset{i.i.d.}{\sim}\mu_2$, our goal is to test the hypothesis
	\begin{equation}\label{eq:twosam}
		\mathrm{H}_0:\mu_1=\mu_2  \qquad \mathrm{versus} \qquad \mathrm{H}_1:\mu_1\neq\mu_2.
	\end{equation}
	This is the {\it two-sample equality of distributions} testing problem which has been studied extensively over the last hundred years (see, for example,~\cite{Bickel1965,hollander2013nonparametric,Jureckova2012,Oja2010,student1908probable, Thas2010,weiss1960two} and the references therein). It has numerous applications, such as  pharmaceutical studies~\cite{farris1999between,rudolph2018joint}, causal inference~\cite{folkes1987field,goldman2018comparing}, remote sensing~\cite{conradsen2003test,liu2018arbitrary}, econometrics~\cite{mayer1975selecting,schepsmeier2019goodness}, among others. When $d=1$, some popular testing procedures for problem~\eqref{eq:twosam} include Student's $t$-test~\cite{student1908probable}, Kolmogorov-Smirnov test~\cite{kolmogorov1933,Smirnov1948}, Cram\'{e}r-von Mises test~\cite{AndersonCVM1962}, Wald-Wolfowitz runs test~\cite{wald1940}, and  Wilcoxon rank sum test/Mann-Whitney $U$-test~\cite{mann1947,Wilcoxon1947}. When $d>1$, a plethora of procedures have been proposed for testing~\eqref{eq:twosam} that include the Hotelling $T^2$ test~\cite{hotelling1992generalization}, multivariate Cram\'{e}r-von Mises test~\cite{cotterill1982limiting}, tests based on geometric graphs~\cite{chen2017,friedman1979,bbbm2019,rosenbaum2005,schilling1986}, data depth-based tests~\cite{Liu1990,liu1993}, kernel maximum mean discrepancy (MMD)/energy distance tests~\cite{Baringhaus2004,Gretton2012,gretton2008,Szekely2013}, among others.
	
	In the case when $d=1$, a special feature of some of the tests cited above such as the Wilcoxon rank sum (among others) is that it is exactly {\it distribution-free} under the null (as long as $\mu_1$ is absolutely continuous), that is, under $\mathrm{H}_0$, the distribution of the Wilcoxon rank sum test statistic is free of the underlying (unknown) data generating distributions, for all sample sizes. Thus, the critical value of this test is universal and one does not need to resort to permutation distributions/asymptotic approximations to carry out the test; thereby yielding \emph{uniform} level $\alpha$ tests. This rather attractive property arises from the fact that the Wilcoxon rank sum test is based on the ranks of the pooled sample $\mX_m\cup\mY_n$, where $\mX_m:=\{\bX_1,\ldots ,\bX_m\}$ and $\mY_n:=\{\bY_1,\ldots ,\bY_n\}$, instead of the exact values of the individual observations.

	In the $d$-dimensional Euclidean space, for $d \ge 2$, due to the absence of a canonical ordering, the existing extensions of concepts of ranks, such as component-wise ranks~\cite{Bickel1965,Puri1965}, spatial ranks~\cite{Chaudhuri1996,Marden1999}, depth-based ranks~\cite{liu1993,Zuo2000} and Mahalanobis ranks and interdirections~\cite{hallin2002optimal,Peters1990,Randles1989}, and the corresponding rank-based tests no longer possess exact distribution-freeness. This raises a fundamental question: {\it How do we define multivariate ranks that can lead to distribution-free testing procedures?} A major breakthrough in this regard was made very recently in the pioneering work of Marc Hallin and co-authors (\cite{delbarrio2019}) where they propose a notion of multivariate ranks, using optimal  transport, that 
	possesses many of the desirable properties present in their one-dimensional counterparts. 
	Recently, using this notion of multivariate ranks (defined via optimal transport),~\cite{Deb19} proposed a general framework for multivariate distribution-free rank-based testing for problem~\eqref{eq:twosam}. \par 
	
	However, distribution-freeness by itself is \emph{not enough}. For $d=1$, another important reason for the popularity of rank-based tests is their validity under weak assumptions and their high efficiency. To quote from E.L. Lehmann's book~\cite{lehmann1975nonparametrics} ``... it turned out, rather surprisingly, that the efficiency of the Wilcoxon tests and other nonparametric procedures hold up quite well under the classical assumption of normality and that these procedures may have considerable advantages in efficiency (as well as validity) when the assumption of normality is not satisfied". Two striking results in this direction are: 
	\begin{enumerate} 
		\item[$(1)$] the celebrated ``0.864 result" by Hodges and Lehmann~\cite{Hodges1956} 
		and 
		\item[$(2)$] the efficiency of the Gaussian (van der Waerden) score (see~\eqref{eq:vanscore}) transformed Wilcoxon's test by Chernoff and Savage~\cite[Theorem 3]{Chernoff1958}. 
	\end{enumerate} 
	In~\cite{Hodges1956} the authors showed that the ARE of the Wilcoxon rank sum test relative to Student's $t$-test could \emph{never fall below $108/125 \approx 0.864$} (when working with contiguous location-shift  alternatives); whereas the efficiency can be arbitrarily large (tends to $+\infty$) for heavy tailed distributions. In~\cite{Chernoff1958}, the authors showed that surprisingly the ARE of the Gaussian score transformed Wilcoxon's test, relative to Student's $t$-test, \emph{never falls below $1$}. This result shows in particular that there are univariate nonparametric rank-based tests which, in addition to being more robust and consistent beyond location-shift alternatives, unlike the $t$-test, actually do not suffer from any loss in asymptotic efficiency in the Pitman sense relative to the $t$-test. At this point, it is tempting to ask: 
	
	\begin{changemargin}{0.5cm}{0.5cm} 
		\emph{Can we design multivariate nonparametric distribution-free tests for problem~\eqref{eq:twosam} that enjoy similar AREs when compared to the classical Hotelling $T^2$ test (the natural multivariate counterpart of Student's $t$-test)?}
	\end{changemargin}
	
	In this paper we answer the above question in the affirmative. We spell out a general principle for constructing multivariate distribution-free tests.  We then define a class of distribution-free analogs of the classical Hotelling $T^2$ test~\cite{hotelling1992generalization}, using the aforementioned notion of multivariate ranks based on optimal transport. These tests reduce to the Wilcoxon rank sum test, with appropriate score functions, when $d=1$. We obtain numerous ARE lower bounds of our proposed  tests with respect to the Hotelling $T^2$ for multiple sub-families of multivariate distributions. Among other things, we prove a multivariate analog of the ``$0.864$" ARE lower bound result in Hodges-Lehmann~\cite{Hodges1956}. Our \emph{most interesting} observation here is that it is possible to get exactly distribution-free nonparametric tests for  problem~\eqref{eq:twosam}, when $d\geq 1$, that suffer \emph{no loss in ARE} compared to the Hotelling $T^2$ test across multiple sub-families of multivariate distributions, despite being completely agnostic to the underlying sub-family. This can be viewed as a direct multivariate analog of the ARE lower bound in Chernoff-Savage~\cite{Chernoff1958}. To the best of our knowledge, this is the first time that such lower bounds on the ARE are being established for multivariate rank tests based on optimal transport.
	
	The class of tests described above, being distribution-free analogs of Hotelling $T^2$ are naturally not universally consistent (like Wilcoxon's test in $d=1$) against fixed alternatives. This brings us to our second main contribution of the paper. In particular, we study distribution-free analogs of the popular kernel maximum mean discrepancy (MMD);  see~\cite{Gretton2012}, and the energy statistic; see~\cite{Szekely2013}. For $d=1$, a member of this class of tests has been shown to be equivalent to the Cram\'{e}r-von Mises test~\cite{AndersonCVM1962}; see~\cite[Lemma 2.2]{Deb19}. We prove three key desirable properties of these tests: (a) finite sample distribution-freeness and uniform type I error control, (b) universal consistency against all fixed alternatives, and (c) non-trivial ARE versus non-distribution-free counterparts. To our understanding, no other existing test in the literature, is known to satisfy the above trifecta of properties. Extensive numerical experiments bear out our theoretical results and also show favorable finite sample performance of the proposed tests against existing tests in the literature, for many natural classes of alternatives, in both low and high-dimensional problems.

	\subsection{Summary of our Contributions}\label{sec:Rank-Hotelling}

	In~\cref{sec:multi_ranks} below, we recall the notion of multivariate ranks based on optimal transport. We then use it to state a general principle for constructing distribution-free tests, with examples, and describe our main results related to problem~\eqref{eq:twosam}; see~\cref{sec:hotelling_dist_free}. The broader impact of our results in another testing problem, namely that of testing for mutual independence, is briefly discussed in~\cref{sec:bscope}.
	
	\subsubsection{Multivariate Ranks Defined via Optimal Transport} 
	\label{sec:multi_ranks}
	
	Set $N:=m+n$ and denote by $\mZ_N:=\mX_m\cup\mY_n$ the pooled sample, which we enumerate as $\mZ_N=\{\bZ_1,\ldots ,\bZ_N\}$. Let $\mH_N^d:=\{\bh_1^d,\ldots ,\bh_N^d\}$ denote the set of multivariate ranks --- a set of $N$ fixed vectors in $\R^d$ that can be thought of as a ``natural'' discretization of a prespecified probability distribution $\nu$ on $\R^d$,  that is, we assume that the empirical measure on $\mH_N^d$ converges weakly to $\nu$. For example, when $d=1$, $\mH_N^d$ is usually chosen as $\{1/N,\ldots, N/N\}$ and the empirical measure on $\mH_N^1$ converges weakly to $\mathrm{Unif}[0,1]$, the uniform distribution on $[0,1]$. Hereafter, $\nu$ will be called the \emph{reference distribution}. Next, let $S_N$ be the set of all permutations of $[N]:=\{1,2,\ldots ,N\}$ and consider the following optimization problem:
	\begin{equation}\label{eq:empopt}
		\hat{\sigma}:=\argmin_{\sigma=(\sigma_1,\ldots ,\sigma_N)\in S_N} \sum_{i=1}^N \lVert \bZ_i-\bh_{\sigma_i}^d\rVert^2.
	\end{equation}
	Define the pooled {\it multivariate ranks} as
	\begin{equation}\label{eq:defemprank}
		\hat{\bR}_{m,n}(\bZ_i):=\bh_{\hat{\sigma}_i}^d.
	\end{equation}
	Here $\lVert \cdot\rVert$ denotes the standard Euclidean norm. The optimization problem in~\eqref{eq:empopt} can be viewed as an example of the \textit{assignment problem}, which in turn can be solved using a linear program, for which algorithms of worst case time complexity $O(N^3)$ are available in the literature (see~\cite{bertsekas1988,Edmonds1970,jonker1987,munkres1957}). We also refer the interested reader to~\cite{Gabow1989,Agarwal2014,Sharathkumar2012} and the references therein, for a review of faster approximate algorithms addressing~\eqref{eq:empopt}; also see~\cref{sec:compasgn}. To get a better intuition for~\eqref{eq:empopt}~and~\eqref{eq:defemprank}, when $\mH_N^1=\{1/N, 2/N, \ldots, N/N \}$ for $d=1$, then~\eqref{eq:empopt}~and~\eqref{eq:defemprank} reduce to the standard univariate ranks by an application of the rearrangement inequality (see~\cite[Theorem 368]{Hardy1952}). In fact, even in multidimension, these empirical ranks preserve a notion of direction, in the sense that the extreme data points get mapped to the corresponding extreme points of the fixed grid $\mH_N^d$ (see e.g.,~\cite[Theorem 5.1]{figalli2013holder}~and~\cite[Figure 1]{Deb19}). \medskip

	\subsubsection{Distribution-Free Tests Based on Multivariate Ranks} 
	\label{sec:hotelling_dist_free}
	Based on~\cite[Proposition 2.5]{delbarrio2019}~and~\cite[Proposition 2.2]{Deb19}, it is easy to conclude that if $\mu_1=\mu_2$ is continuous, then $(\hat{\bR}_{m,n}(\bZ_1),\ldots ,\hat{\bR}_{m,n}(\bZ_N))$ are uniformly distributed over all permutations of the set $\mH_N^d$. This observation leads to a general principle for constructing distribution-free tests as follows. Take any test statistic for problem~\eqref{eq:twosam} which is \emph{not distribution-free}, say
	$T_{m,n}\equiv T_{m,n}(\bX_1,\ldots ,\bX_m,\bY_1,\ldots ,\bY_n).$
	We can then construct its multivariate rank-based \emph{distribution-free} version, by simply constructing a new statistic with the $\bX_i$'s and $\bY_j$'s replaced by $\hat{\bR}_{m,n}(\bX_i)$'s and $\hat{\bR}_{m,n}(\bY_j)$'s, i.e.,
	\begin{equation}
		\label{eq:genprin}T_{m,n}^{\nu}\equiv T_{m,n}(\hat{\bR}_{m,n}(\bX_1),\ldots ,\hat{\bR}_{m,n}(\bX_m),\hat{\bR}_{m,n}(\bY_1),\ldots ,\hat{\bR}_{m,n}(\bY_n)).
	\end{equation}
	As is evident, this is a very general strategy to construct a broad variety of distribution-free testing procedures. We focus on two examples of this approach in this paper.\medskip 
	
	\emph{\bf Rank Hotelling $T^2$}: One of the most celebrated and useful multivariate two-sample tests is the Hotelling $T^2$ statistic~\cite{hotelling1992generalization} (the multivariate analog of Student's $t$-test), and is given by:
	\begin{equation}\label{eq:hotelling}
		T_{m,n}:=\frac{mn}{m+n}\left(\bar{\bX}-\bar{\bY}\right)^{\top}S_{m,n}^{-1}\left(\bar{\bX}-\bar{\bY}\right),
	\end{equation}
	where $\bar{\bX}:=\frac{1}{m} \sum_{i=1}^m \bX_i$, $\bar{\bY}:= \frac{1}{n} \sum_{j=1}^n \bY_j$, and $S_{m,n}$ is the usual pooled sample covariance matrix. 
	We reject $\mathrm{H}_0$ in~\eqref{eq:twosam} when $T_{m,n}$ exceeds the $(1-\alpha)$-th quantile of the $\chi^2_d$ distribution. 
	Although the Hotelling $T^2$ statistic is quite classical, it still remains popular due to its simplicity and has found many statistical applications in the recent years (see e.g.,~\cite{brereton2016hotelling,herve2018multivariate,zhao2018generalized}). Using the general principle outlined above, the multivariate rank-version of $T_{m,n}$ in~\eqref{eq:hotelling} can be shown to be equivalent to the following:
	\begin{equation}\label{eq:rankhotelling}
		\rh:=\frac{mn}{m+n}\Bigg\lVert \frac{1}{m}\sum_{i=1}^m \hat{\bR}_{m,n}(\bX_i)-\frac{1}{n}\sum_{j=1}^n \hat{\bR}_{m,n}(\bY_j)\Bigg\rVert^2.
	\end{equation}
	Note that when  $d=1$, the statistic \eqref{eq:rankhotelling} is equivalent to the two-sided Wilcoxon rank sum statistic (since the sum of the pooled sample ranks is a constant). Hence, $\rh$ can also be thought of as a multivariate analog of the celebrated Wilcoxon rank sum test. Also note that, unlike the Hotelling $T^2$ statistic, the construction of $\rh$ above does not require any covariance matrix estimation. In Section \ref{sec:ranktsq} we will consider a class of test statistics which generalizes \eqref{eq:rankhotelling} by incorporating score functions (see~\cite[Chapter 13]{Van1998}). \medskip
	
	\emph{\bf Rank kernel MMD}: As mentioned before, the rank Hotelling test statisticin~\eqref{eq:rankhotelling}, despite its attractive properties, does not guarantee universal consistency. The main reason is that the Hotelling $T^2$ statistic in~\eqref{eq:hotelling} is itself not consistent beyond location-shift type alternatives. This leads us to considering a new class of distribution-free tests. Here we choose $T_{m,n}$ to be the unbiased kernel MMD statistic which has attracted a lot of attention in machine learning over the years (see~\cite{Gretton2012,Sejdinovic2013,gretton2009fast}). By leveraging the general principle outlined above (see~\eqref{eq:genprin}), we construct the multivariate rank version of kernel MMD, which is now both distribution-free in finite samples and has universal consistency (see~\cref{sec:rankmmd} for details).
	
	\subsubsection{Main results}\label{sec:oures}
	With the above classes of distribution-free tests in mind, let us briefly outline our main results.
	
	\begin{itemize}
		\item \textit{Convergence of empirical ranks}: In~\cref{theo:rankmapcon}, we prove a new convergence result for functionals of the empirical ranks $\hat{\bR}_{m,n}(\bX_i)$'s and $\hat{\bR}_{m,n}(\bY_j)$'s to the appropriate population versions, which is of independent interest. This result yields convergence of all the test statistics considered in this paper, both under the null, and the alternative. In fact, it also helps address an open problem in~\cite{shi2020rate}; see~\cref{sec:bscope} for more details.
		\item \textit{General reference distributions and scores}: Thanks to the Chernoff-Savage~\cite{Chernoff1958} paper, it has become a staple in nonparametric rank-based inference to incorporate score functions (see~\cite[Chapter 13]{Van1998}) to enhance power properties. In~\cref{sec:ranktsq} we follow this same principle and further generalize~\eqref{eq:rankhotelling} by replacing $\hbR_{m,n}(\bX_i)$ and $\hbR_{m,n}(\bY_j)$ with $\bJ(\hbR_{m,n}(\bX_i))$ and $\bJ(\hbR_{m,n}(\bY_j))$ for some continuous and invertible score function $\bJ(\cdot)$ taking values in $\R^d$ (see~\eqref{eq:rankschotelling}). This comes in addition to a flexible choice of reference distribution $\nu$. We prove that this generalization leads to tests with better ARE properties; see Theorems~\ref{prop:areind}~and~\ref{prop:areell}, \cref{rem:Gausscore}, and~\cref{sec:sim}. 	
		\item \textit{Distribution-freeness}: We prove that both rank Hotelling $T^2$ and rank kernel MMD are distribution-free (see Propositions \ref{prop:dfree} and \ref{prop:dfreeker} respectively) and consequently yield uniformly level $\alpha$ tests (see \eqref{eq:uniflevelt} and \eqref{eq:uniflevelker} respectively).
		\item \textit{Consistency}: The rank Hotelling $T^2$ test is shown to be consistent against a large class of alternatives (see~\cref{theo:rhconsis}), which contains the class of location-shift alternatives for $d\geq 1$ (see~\cref{prop:conloc}) and contamination alternatives (see~\cref{prop:conlocontam} in the Appendix). In fact, numerical experiments (see~\cref{sec:beyondloc})  show that this class also contains some scale families where the standard Hotelling $T^2$ is clearly inconsistent. On the other hand, we prove the \emph{universal consistency} of the rank kernel MMD (see~\cref{theo:rhconsis}).
		\item \textit{Asymptotic distribution under null and contiguous alternatives}: We obtain the asymptotic null distributions of rank Hotelling $T^2$ and rank kernel MMD (see Theorems \ref{theo:nullrankhotelling} and \ref{theo:nullker}) both of which are naturally free of the data distribution. These limits allow the practitioner to use the corresponding asymptotic level $\alpha$ cutoffs without observing the data, for larger sample sizes. We also obtain asymptotic distributions under contiguous alternatives in Theorems \ref{theo:locpowermain1} and \ref{theo:Piteffrank} which help us draw conclusions about the relevant AREs which we discuss next.
		\item \textit{Asymptotic relative efficiency}: Given two level $\alpha$  tests $T_1$ and $T_2$, the ARE can informally be described as follows (see~\cref{def:asympeff} in the Appendix for a formal definition): 
		\begin{changemargin}{0.5cm}{0.5cm} 
			\textit{The ARE of $T_1$ relative to $T_2$ is the limiting ratio of the number of samples needed to attain a power of $\beta\in (\alpha,1)$ when using the test $T_2$ compared to the same for test $T_1$, where the limit is taken as $\mu_2$ ``converges" to $\mu_1$.}
		\end{changemargin} 
		\noindent For example, if the ARE of $T_1$ with respect to $T_2$ is $0.9$, intuitively it means that $T_2$ takes $10\%$ fewer samples than $T_1$ to attain the power level $\beta$. This yields a simple, interpretable comparison between two tests and is widely used for comparisons in the hypothesis testing literature (see \cite{bbb2019,Chernoff1958,Chikkagoudar2014,Hallin2002,hallin2002optimal,Hodges1956,kim2018robust,hallin1994pitman,hallin2000efficiency} and the references therein). 
		\begin{itemize}
			\item[(a)] Rank Hotelling $T^2$: We obtain bounds/expressions for the ARE of rank Hotelling $T^2$ (in~\eqref{eq:rankhotelling}) versus the usual Hotelling $T^2$ (see \cref{prop:Gaussare}, Theorems \ref{prop:areind}, \ref{prop:areell}, and \ref{prop:areica}; the last one is in the Appendix), for different ``effective" reference distributions/ERDs (see~\cref{def:erd}) which takes into account both the score function $\bJ(\cdot)$ and the reference distribution $\nu$. With a notational abuse, let us denote this efficiency by $\atrt$ temporarily. We give some highlights here. Let $\fnd$ denote the ``smooth" location family of product measures. In~\cref{prop:areind}, we show that, by using $\mathrm{Unif}[0,1]^d$ ERD, $\atrt$ satisfies $\inf_{\fnd}\atrt= 0.864$ (same as the univariate Hodges-Lehmann~\cite{Hodges1956} result) whereas with a standard Gaussian ERD, it satisfies $\inf_{\fnd}\atrt= 1$ (same as the univariate Chernoff-Savage~\cite{Chernoff1958} result). Note that both these lower bounds neither depend on the dimension $d$, nor on the level $\alpha$ or the power $\beta$. The lower bound of $1$ with Gaussian ERD is naturally attractive and we prove in Theorems~\ref{prop:areell} and \ref{prop:areica} (see the Appendix) that the same lower bound of $1$ holds if $\fnd$ is replaced by two other popular families: (a) elliptically symmetric distributions (see~\cite{Chmielewski1981,frahm2004generalized}) and (b) the model for blind source separation (see~\cite{Samarov2004,shlens2014tutorial}); see~\cref{sec:bsep}. It should be noted that none of these efficiency lower bound results or the consistency results mentioned above require any moment assumptions on the data generating distribution. Consequently $\atrt$ can be arbitrarily large by choosing heavy-tailed data distributions (also see~\cite[Page 4]{Hodges1956}). These observations provide strong theoretical evidence in favor of using the Gaussian ERD.
			\item[(b)] Rank kernel MMD: The ARE of rank kernel MMD against usual kernel MMD actually depends on the level $\alpha$ and the power $\beta$, which makes it difficult to obtain such succinct lower bounds as above. Nevertheless~\cref{theo:Piteffrank} shows that rank kernel MMD has a non-trivial ARE under local alternatives (also see~\eqref{eq:powk}). This property is itself unique because no other test in the literature has universal consistency, exact distribution-freeness, and a non-trivial efficiency (see~\cref{sec:compot}).
		\end{itemize}
		\item \textit{Simulation experiments}: We carry out extensive simulations in~\cref{sec:sim}. Two important and recurrent observations should be noted: (a) the use of Gaussian ERD consistently leads to more powerful tests compared to non distribution-free counterparts, thereby providing empirical evidence supporting our theoretical findings outlined above; (b) in heavy-tailed settings, rank-based tests generally are more robust and generally outperform existing tests in the literature. In addition, we found, somewhat surprisingly, that in high-dimensional problems too, the proposed rank-based tests are very competitive with popular two-sample tests (see~\cref{sec:highdpower}). 
	\end{itemize}	
	
	\subsubsection{Broader Scope}\label{sec:bscope}
	
	Our general strategy in~\eqref{eq:genprin} is not just confined to the two-sample problem in~\eqref{eq:twosam}, but is useful in other multivariate nonparametric testing problems for obtaining distribution-free procedures. We illustrate this in~\cref{sec:indtest} where we construct a class of multivariate analogs of the classical Spearman's rank correlation (see~\cite{spearman1904proof}) and use it for testing multivariate independence. In~\cref{prop:indepeff} in the Appendix, we show that, once again, under the Gaussian reference distribution, our proposed test of independence suffers from no loss in efficiency over different classes of multivariate probability distributions compared to the Wilks' likelihood ratio test~\cite{wilks1935independence} (the natural multivariate analog of Pearson's correlation~\cite{pearson1920notes}); see~\cite{shi2021center,shi2020rate} for related results. 
	
	In~\cref{sec:comparelit}, we demonstrate that our techniques are useful towards proving consistencies of other nonparametric testing procedures based on optimal transport. One of the main technical tools in this regard is~\cref{theo:rankmapcon}, which we use to answer an open question regarding the consistency of the Gaussian score transformed rank distance covariance test for independence, as laid out in~\cite{shi2020rate} (see~\cref{prop:rdcovcon} in the Appendix). In fact, the same theorem can be used to prove consistencies of other tests in~\cite{Deb19,hallin2020fully,hallin2020center,shi2020distribution}. Very recently, a statistic closely related to \eqref{eq:rankhotelling} was presented in passing in~\cite[Page 25]{hallin2020fully} for the special case when the reference distribution is spherical uniform and for a specific choice of the set $\{\bh_1^d,\ldots ,\bh_N^d\}$. However, none of its theoretical properties, pertaining to consistency or ARE, were derived. In Section \ref{sec:comparelit} we show that our results readily imply its consistency and ARE lower bounds (see~\cref{prop:hallinlb} in the Appendix for details).
	
	\subsection{Related Work}\label{sec:litrev}
	
	Nonparametric multivariate two-sample tests based on rank and data-depth based methods have mostly been restricted to testing against location-scale alternatives~\cite{choi1997approach,hettmansperger1998,randles1990,motto1995,oja1999affine}. Asymptotically distribution-free depth-based tests which are consistent if restricted to the above class of alternatives are discussed in~\cite{liu2010versatile,rousson2002}. Multivariate generalizations of the Wilcoxon rank sum test based on data depth were studied in~\cite{Liu1990,liu1993,zuo2006limiting}, which are also asymptotically distribution-free. However, these tests are not exactly distribution-free in finite samples and are difficult to compute  when  the dimension is large because computation of depth-functions generally require time that scales exponentially with the dimension. An alternative route for testing is through geometric graphs. This includes the celebrated Friedman-Rafsky test based on the minimum spanning tree (MST) \cite{friedman1979}, the tests based on nearest-neighbor graphs~\cite{henze1988,schilling1986,chen2017}, and Rosenbaum's cross-match test \cite{rosenbaum2005} based on minimum non-bipartite matching. These tests are asymptotically distribution-free (apart from the cross-match test, which is exactly distribution-free in finite samples), computationally feasible, and universally consistent, but have no power against $O(1/\sqrt{N})$  alternatives, that is, they have zero asymptotic Pitman efficiency~\cite{bbb2019}. Another approach is to compare pairwise-distances between and within the samples. This is the celebrated energy distance test~\cite{szekely2003statistics,Szekely2013,szekely2004testing,Baringhaus2004} which is a special case of the kernel MMD~\cite{Gretton2012,gretton2009fast}. These tests are consistent against general fixed alternatives, have non-trivial power against local contiguous alternatives, but are not distribution-free, even asymptotically. We will carry out elaborate numerical comparisons with these tests in~\cref{sec:sim}; also see~\cref{sec:compot}. Finally, a new line of work which uses optimal transport based methods for distribution-free testing has found applications in the two-sample testing problem~\cite{boeckel2018multivariate,Deb19}, independence testing problem~\cite{Deb19,shi2020distribution,shi2020rate}, linear regression~\cite{hallin2020fully,hallin2020center}), etc. We will compare our work with these other papers in~\cref{sec:compot}.

	\subsection{Organization}\label{sec:orgz}
	
	The rest of the paper is organized as follows: In Section \ref{sec:bg} we provide some background on population multivariate ranks defined via optimal transport and present a general convergence result. The family of rank Hotelling $T^2$ test statistics and its asymptotic properties (consistency, null distribution, power against local alternatives, and ARE computations) are described in Section \ref{sec:ranktsq}. Similar set of results for the class of rank kernel MMD tests are provided in~\cref{sec:rankmmd}. Implications of the results obtained in this paper to other nonparametric testing problems (e.g., in testing for mutual independence) are discussed in~\cref{sec:compare}. While most of the main paper focuses on location-shift type alternatives, we provide extensive asymptotic analysis for contamination alternatives too, which are deferred to~\cref{sec:auxdet}. Proofs of our results, numerical experiments depicting the finite sample performance of our proposed tests, and other additional technical details are given in Sections \ref{sec:pfmain} and \ref{sec:sim} in the Appendix. 	
	
	\section{Background}\label{sec:bg} 
	In this section, we introduce the notion of the population multivariate rank map define via optimal transport (see \cref{sec:poprank}) and then  present a general convergence result which gives conditions under which the empirical ranks (see~\eqref{eq:defemprank}) and population ranks are asymptotically ``close" (Section \ref{sec:rank_pqr}). 
	
	
	\subsection{Population Rank Map}\label{sec:poprank}
	
	Let $\mathcal{P}(\R^d)$ and $\mathcal{P}_{ac}(\R^d)$ denote the space of probability measures and the space of Lebesgue absolutely continuous probability measures on $\R^d$, respectively. Given measures $\mu,\nu\in\mathcal{P}(\R^d)$, consider the following optimization problem:
	\begin{align}\label{eq:Mongeproblem}
		\inf_{\mF:\R^d\to\R^d} \int  \lVert \bm{x}-\mF(\bm{x})\rVert^2 \,\mathrm d\mu( \bm{x})\qquad \mbox{subject to}\quad \mF\#\mu=\nu;
	\end{align}
	where $\mF\#\mu=\nu$ means that $\mF(\bm{X}) \sim \nu$, where $\bm{X} \sim \mu$. This optimization problem is often referred to as {\it Monge's problem} (see~\cite{monge1781memoire}) and a minimizer of~\eqref{eq:Mongeproblem}, if it exists, is referred to as an {\it optimal transport map}. 
	One of the most powerful results in this field was proved by Robert McCann in 1995, where he took a geometric approach to~\eqref{eq:Mongeproblem}. We now state \textit{McCann's theorem} in a form which will be useful to us (see~\cite[Theorem 2.12 and Corollary 2.30]{Villani2003}) and then use it to define the population multivariate rank map.
	\begin{prop}[McCann's theorem~\cite{Mccann1995}]\label{prop:Mccan}
		Suppose that $\mu, \nu\in\mathcal{P}_{ac}(\R^d)$. Then there exists functions $\bR(\cdot)$ and $\Q(\cdot)$, both of which are gradients of (extended) real-valued $d$-variate convex functions, such that $\bR\# \mu=\nu$, $\Q\# \nu=\mu$, $\bR$ and $\Q$ are unique ($\mu$ and $\nu$ a.e., respectively), $\bR\circ \Q (\bm{y})=\bm{y}$ ($\nu$ a.e.) and $\Q\circ \bR(\bm{x})=\bm{x}$ ($\mu$ a.e.). Moreover, if $\mu$ and $\nu$ have finite second moments, $\bR(\cdot)$ is also the solution to the problem in~\eqref{eq:Mongeproblem}.
	\end{prop}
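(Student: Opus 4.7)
The plan is to follow the variational approach pioneered by Brenier and refined by McCann. The first step is to relax Monge's problem \eqref{eq:Mongeproblem} to the Kantorovich formulation: minimize $\int \|\bm{x} - \bm{y}\|^2 \, d\pi(\bm{x},\bm{y})$ over couplings $\pi$ of $\mu$ and $\nu$. When $\mu,\nu$ have finite second moments, weak-$*$ compactness of the set of couplings together with lower semi-continuity of the cost yields an optimal coupling $\pi^*$ via Prokhorov's theorem. When second moments fail, I would bypass the variational step by working with the equivalent cost $-\langle \bm{x},\bm{y}\rangle$ on bounded sets and then approximating $\mu,\nu$ by their restrictions to balls, patching the resulting maps using uniqueness on overlaps.

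The second step is to extract a convex potential. Any optimal $\pi^*$ must be concentrated on a cyclically monotone subset $\Gamma \subset \R^d \times \R^d$, since otherwise one could rewire the coupling along a finite cycle and strictly lower the quadratic cost. Rockafellar's theorem then provides a proper lower semi-continuous convex function $\varphi : \R^d \to \R \cup \{+\infty\}$ whose subdifferential $\partial \varphi$ contains $\Gamma$. Since convex functions are differentiable Lebesgue-a.e.\ (Alexandrov's theorem), and $\mu \in \mathcal{P}_{ac}(\R^d)$, the subdifferential $\partial \varphi(\bm{x})$ is a singleton for $\mu$-a.e.\ $\bm{x}$. Define $\bR(\bm{x}) := \nabla \varphi(\bm{x})$; then $\pi^*$ is supported on the graph of $\bR$, which forces $\pi^* = (\mathrm{id} \times \bR)_{\#}\mu$ and hence $\bR\#\mu = \nu$.

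The third step is to produce the inverse map via Legendre duality. Set $\varphi^*(\bm{y}) := \sup_{\bm{x}} [\langle \bm{x},\bm{y}\rangle - \varphi(\bm{x})]$; this is again convex and lower semi-continuous, and $\partial \varphi^*$ is the inverse relation of $\partial \varphi$. By symmetry (running the same argument with $\mu$ and $\nu$ interchanged and invoking absolute continuity of $\nu$), $\Q := \nabla \varphi^*$ is defined $\nu$-a.e., pushes $\nu$ to $\mu$, and satisfies $\bR \circ \Q(\bm{y}) = \bm{y}$ ($\nu$ a.e.) and $\Q \circ \bR(\bm{x}) = \bm{x}$ ($\mu$ a.e.). Uniqueness follows because any two convex potentials arising this way have gradients which, by strict convexity of the quadratic cost and the fact that both couplings are supported on cyclically monotone sets compatible with the same marginals, must agree $\mu$-a.e.

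The main obstacle I anticipate is the handover between the Kantorovich relaxation and the Monge map: one must show not only that an optimal coupling exists but that it is actually concentrated on the graph of a single-valued function, which is precisely where the absolute continuity of $\mu$ combined with Alexandrov's differentiability theorem is indispensable. A subtler technical issue is the case of infinite second moments, where the variational problem \eqref{eq:Mongeproblem} is itself ill-posed yet the conclusion about the transport map still holds; handling this cleanly requires either a truncation argument with uniqueness control on overlaps, or a reformulation using $c$-cyclically monotone sets on regions of finite cost. Once these steps are surmounted, the final sentence on optimality of $\bR$ for the quadratic cost under finite second moments follows by observing that any competitor must induce a coupling, which cannot improve on the cyclically monotone $\pi^*$.
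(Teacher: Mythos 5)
The paper does not prove this proposition: it is cited as McCann's theorem from~\cite{Mccann1995} (see also~\cite[Theorem 2.12 and Corollary 2.30]{Villani2003}), so there is no internal proof to compare against. Your sketch of the finite-second-moment case is the standard Brenier argument and is essentially sound: Kantorovich relaxation, cyclical monotonicity of an optimizer, Rockafellar's representation by a convex potential $\varphi$, Alexandrov differentiability together with $\mu\in\mathcal{P}_{ac}(\R^d)$ to make $\partial\varphi$ single-valued $\mu$-a.e., and Legendre duality for the inverse.

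The genuine gap is in the infinite-moment case, which is precisely the content that distinguishes McCann's theorem from Brenier's. Your proposal is to ``approximate $\mu,\nu$ by their restrictions to balls, patching the resulting maps using uniqueness on overlaps.'' This does not work as stated: the Brenier map from the normalized restriction $\mu|_{B_R}$ to the normalized restriction $\nu|_{B_R}$ is \emph{not} the restriction of any fixed global map, and two such maps for radii $R<R'$ need not agree on $B_R$, so there are no ``overlaps'' on which uniqueness could be invoked. Compatibility would require a monotonicity of the truncated problems that simply fails. McCann's actual route is non-variational and never truncates the domain. One constructs a coupling of $\mu$ and $\nu$ with cyclically monotone support directly — for instance by approximating $\mu$ and $\nu$ weakly (by discrete or compactly supported measures), taking cyclically monotone optimal couplings for the approximants, and extracting a weak limit; cyclical monotonicity is a closed condition on supports and passes to the limit without any integrability hypothesis. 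Rockafellar and Alexandrov then apply exactly as in your second step, and the quadratic cost only re-enters in the last sentence, where finite second moments are assumed to identify $\bR$ as the Monge optimizer. Your uniqueness claim would also need more care in the no-moment regime: one cannot invoke strict convexity of the (possibly infinite) cost, and McCann's argument is instead geometric, using monotonicity of gradients of convex functions together with absolute continuity to show that two such maps pushing $\mu$ to $\nu$ coincide $\mu$-a.e.
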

	
	\begin{definition}[Population multivariate rank map]\label{def:popquanrank}
		Given a pre-specified reference distribution $\nu\in\mathcal{P}_{\mathrm{ac}}(\R^d)$ and a measure $\mu\in\mathcal{P}_{ac}(\mathbb{R}^d)$, the {\it population rank map} for the measure $\mu$ with reference distribution $\nu$ is the function $\bR(\cdot)$ as in~\cref{prop:Mccan}. Note that $\bR(\cdot)$ is unique up to measure zero sets with respect to $\mu$. 
	\end{definition}
	When $d=1$, a natural choice for the reference distribution $\nu$ is $\mathrm{Unif}[0,1]$. In this case, by~\cref{prop:Mccan}, it is easy to check that the population rank map is the cumulative distribution function associated with the probability measure $\mu$.
	Hereafter, we will fix a pre-specified reference distribution $\nu$ on $\R^d$ and assume that $\mu_1,\mu_2,\nu\in\mathcal{P}_{\mathrm{ac}}(\R^d)$. The densities of $\mu_1$ and $\mu_2$ with respect to the Lebesgue measure on $\R^d$ will be denoted by $f_1(\cdot)$ and $f_2(\cdot)$. The hypothesis testing problem~\eqref{eq:twosam} can then be reformulated as:
	\begin{equation}\label{eq:twosamden}
		\mathrm{H}_0:f_{1}=f_{2}  \qquad \mathrm{versus} \qquad \mathrm{H}_1:f_{1}\neq f_{2}.
	\end{equation}
	We will also denote by $\Rl(\cdot)$ the population rank map, associated with the measure $\lambda \mu_1+(1-\lambda)\mu_2$ and reference distribution $\nu$ (as in~\cref{def:popquanrank}). Similarly, $\Rmu(\cdot)$ will denote the rank map associated with $\mu_1$, that is, $\Rmu\#\mu_1=\nu$. 	
	
	\subsection{Convergence of Empirical Rank Maps}
	\label{sec:rank_pqr}
	
	In this section we will address the question as to how the empirical rank map defined in~\eqref{eq:empopt}~and~\eqref{eq:defemprank} estimates its population counterpart (see Definition~\ref{def:popquanrank} above). 
	To this end, define $[N]:=\{1,2,\ldots ,N\}$, for any $N\geq 1$. 
	Also, recall that $\mZ_N=\{\bZ_1,\ldots ,\bZ_N\}$ denotes the pooled sample $\mX_m\cup\mY_n$ and $\boldsymbol{\mathcal{H}}_N^d=\{\bm{h}_1^d,\ldots ,\bm{h}_N^d\}$.

	\begin{theorem}\label{theo:rankmapcon}
		Suppose $\mu_1,\mu_2,\nu\in\Pacd$ and
		\begin{equation}\label{eq:empgrid} 
			\frac{1}{N}\sum_{i=1}^N \delta_{\bh_i^d}\overset{w}{\longrightarrow}\nu,
		\end{equation}
		where $\overset{w}{\longrightarrow}$ denotes weak convergence.
		Fix $p,q\in\mathbb{N}$ and assume that $\cF(\cdot):(\R^d)^p\to\R^q$, $\bJ:\R^d\to\R^d$ are continuous Lebesgue a.e. Suppose that for all $1\le r\le p$,
		\begin{align}\label{eq:rankmapconew}
			\limsup_{N\to\infty}\frac{1}{N^p}&\E\Bigg[\sum_{(i_1,\ldots ,i_p)\in [N]^p} \Bigg(\Big\lVert\cF\Big(\bJ(\hbR_{m,n}(\mz_{i_1})),\ldots ,\bJ(\hbR_{m,n}(\mz_{i_r})),\bJ(\Rl(\mz_{i_{r+1}})),\nonumber \\ & \ldots ,\bJ(\Rl(\mz_{i_p}))\Big)\Big\rVert\Bigg)\Bigg]\leq \int \lVert\cF(\bJ(\bm{z}_1),\ldots ,\bJ(\bm{z}_p))\rVert \,\mathrm d\nu(\bm{z}_1)\ldots \, \mathrm d\nu(\bm{z}_p)<\infty.
		\end{align}
		Then the following conclusion holds:
		\begin{align*}
			\frac{1}{N^p}&\sum_{(i_1,\ldots ,i_p)\in [N]^p} \Big\lVert\cF\Big(\bJ(\hbR_{m,n}(\mz_{i_1})),\ldots ,\bJ(\hbR_{m,n}(\mz_{i_r})),\bJ(\Rl(\mz_{i_{r+1}})),\ldots ,\bJ(\Rl(\mz_{i_p}))\Big)\\ &~~~~~~~~~~~~~~~~~~~~~~~ - \cF\Big(\bJ(\Rl(\mz_{i_1})),\ldots ,\bJ(\Rl(\mz_{i_p}))\Big)\Big\rVert\overset{P}{\longrightarrow} 0, 
		\end{align*}
		conditionally on the set $\boldsymbol{\mathcal{H}}_N^d$. Further, if $\cF(\cdot)$ and $\bJ(\cdot)$ are Lipschitz, $\mbox{supp}(\nu)$ (support of the distribution $\nu$) 
		is compact, and $\{\bh_1^d, \bh_2^d, \ldots, \bh_N^d\} \subseteq  \mbox{supp}(\nu)$, then the above convergence holds a.s. If $\mu_1=\mu_2$, the same conclusions also hold if $\Rl(\cdot)$ is replaced with $\Rmu(\cdot)$.
	\end{theorem}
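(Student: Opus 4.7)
The plan is to reduce the statement to a single ``stability of optimal transport'' convergence: conditionally on $\boldsymbol{\mathcal{H}}_N^d$, the joint empirical coupling
\[
\hG_N \;:=\; \frac{1}{N}\sum_{i=1}^N \delta_{(\bZ_i,\,\hbR_{m,n}(\bZ_i))} \;\Longrightarrow\; \Gamma_\ast \;:=\; (\op{id}\times\Rl)\,\#\,\big[\lambda\mu_1+(1-\lambda)\mu_2\big] \qquad \text{a.s.}
\]
To establish this, I would first note that the first marginal of $\hG_N$ converges weakly a.s.\ to $\lambda\mu_1+(1-\lambda)\mu_2$ by a two-sample Glivenko--Cantelli argument (using \eqref{eq:usual}), while the second marginal equals $\frac{1}{N}\sum_i\delta_{\bh_i^d}$ and converges to $\nu$ by \eqref{eq:empgrid}. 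Since $\hbR_{m,n}$ solves the squared-distance assignment problem \eqref{eq:empopt}, each $\hG_N$ is itself a squared-distance optimal coupling of its marginals; lower semi-continuity of the quadratic cost under weak convergence then forces every subsequential limit of $\hG_N$ to be an optimal coupling between $\lambda\mu_1+(1-\lambda)\mu_2$ and $\nu$, and by \cref{prop:Mccan}, absolute continuity of both marginals renders this coupling unique and equal to $\Gamma_\ast$.

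\textbf{Lifting to $p$-tuples.} Next, I would rewrite the target average as an integral against a product of empirical measures. Let $G(\bm{z}_1,\ldots,\bm{z}_p, \bm{y}_1, \ldots, \bm{y}_r)$ denote the quantity inside the outer $\|\cdot\|$ in the conclusion (with $\bm{y}_k$ substituted for $\hbR_{m,n}(\bZ_{i_k})$ and $\bm{z}_k$ for $\bZ_{i_k}$). Then the average equals $\int G\, d\big(\hG_N^{\otimes r}\otimes \mu_N^{\otimes(p-r)}\big)$, where $\mu_N:=\frac{1}{N}\sum_i\delta_{\bZ_i}$. By Alexandrov's theorem, $\Rl$ (the gradient of a convex function) is continuous Lebesgue-a.e., and this combined with Lebesgue-a.e. continuity of $\bJ$ and $\cF$, together with absolute continuity of $\nu$ (which makes the discontinuity set of $\bJ$ a $\nu$-null set), will imply that $G$ is continuous at almost every point of the limit product measure $\Gamma_\ast^{\otimes r}\otimes [\lambda\mu_1+(1-\lambda)\mu_2]^{\otimes(p-r)}$ and identically zero on its support (since $\bm{y}_k=\Rl(\bm{z}_k)$ there). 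A Portmanteau/continuous-mapping argument for product measures then gives $\int G\, d\big(\hG_N^{\otimes r}\otimes \mu_N^{\otimes(p-r)}\big)\to 0$ in probability, provided the family $\{G\}$ is uniformly integrable; this uniform integrability is precisely what \eqref{eq:rankmapconew} provides, via a standard truncation of $\|\cF\|$ at level $M$ followed by $M\to\infty$.

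\textbf{Lipschitz/compact case and main obstacle.} For the Lipschitz case with compact support, I would instead use the pointwise bound $G\le C\sum_{k=1}^r \|\bm{y}_k-\Rl(\bm{z}_k)\|$ (combining the Lipschitz constants of $\cF$ and $\bJ$), which reduces the target average to $\frac{Cr}{N}\sum_i\|\hbR_{m,n}(\bZ_i)-\Rl(\bZ_i)\|$. Compactness of $\op{supp}(\nu)\supseteq\{\bh_i^d\}$ will promote $\hG_N\Rightarrow\Gamma_\ast$ to $W_2$-convergence (bounded supports yield uniformly bounded second moments), delivering $\frac{1}{N}\sum_i\|\hbR_{m,n}(\bZ_i)-\Rl(\bZ_i)\|^2\to 0$ a.s., and hence the claimed a.s.\ convergence via Cauchy--Schwarz. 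The case $\mu_1=\mu_2$ requires no extra work since $\Rl$ then coincides with $\Rmu$. The hardest part, I expect, will be the uniform-integrability step in the general (non-Lipschitz, non-compact) setting: a vanishing fraction of indices may still have $\hbR_{m,n}(\bZ_i)$ assigned to grid points far from $\Rl(\bZ_i)$, and the correspondingly unbounded values of $\cF$ must be ruled out via a careful truncation that precisely exploits the one-sided integrability bound \eqref{eq:rankmapconew}.
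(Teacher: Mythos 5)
Your blueprint matches the paper's proof almost step for step: pass to the empirical joint coupling $\hG_N=\tfrac1N\sum_i\delta_{(\bZ_i,\hbR_{m,n}(\bZ_i))}$, identify the a.s.\ weak limit as the graph coupling $(\op{id}\times\Rl)\#[\lambda\mu_1+(1-\lambda)\mu_2]$ via an optimal-transport stability argument, lift to $p$-fold products, apply Lebesgue-a.e.\ continuity (Alexandroff for $\Rl$) together with uniform integrability from \eqref{eq:rankmapconew} (Vitali) to close the in-probability claim, and then run the Lipschitz reduction $\tfrac1{N^p}V_{m,n}\lesssim\tfrac1N\sum_i\|\hbR_{m,n}(\bZ_i)-\Rl(\bZ_i)\|$ for the a.s.\ part. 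The paper realizes the product step via $p$ i.i.d.\ draws from the empirical joint measure and invokes~\cite[Theorem~2.1]{Deb19} and McCann's convergence lemmas, but that is a notational variant of your $\int G\,d(\hG_N^{\otimes r}\otimes\mu_N^{\otimes(p-r)})$.

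Two places where your justification is off, even though the conclusions are right. First, the stability step: you attribute it to ``lower semi-continuity of the quadratic cost under weak convergence,'' but the theorem makes no moment assumptions on $\mu_1,\mu_2,\nu$, so the transport cost $\int\|\bm x-\bm y\|^2\,d\Gamma_\ast$ may well be $+\infty$; a lower-semicontinuity bound $\int c\,d\Gamma\le\liminf\int c\,d\hG_N$ is then vacuous and cannot certify optimality of a subsequential limit. The robust mechanism, which the paper uses via McCann~\cite{Mccann1995}, is that the supports of $c$-cyclically monotone couplings are closed under weak convergence of couplings, and that a cyclically monotone coupling between two absolutely continuous laws is unique (\cref{prop:Mccan}) --- a characterization that requires no integrability. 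Second, in the Lipschitz/compact case you claim compactness of $\operatorname{supp}(\nu)$ promotes $\hG_N\Rightarrow\Gamma_\ast$ to $W_2$-convergence because ``bounded supports yield uniformly bounded second moments''; but only the \emph{second} marginal of $\hG_N$ has compact support, while the first marginal is drawn from $\mu_1,\mu_2$, which may be unbounded. Fortunately you do not need $W_2$ at all: after the Lipschitz reduction the integrand $(\bm z,\bm y)\mapsto\|\bm y-\Rl(\bm z)\|$ takes values in $\operatorname{supp}(\nu)-\operatorname{supp}(\nu)$ and is hence bounded, so a.s.\ weak convergence of $\hG_N$ plus the Portmanteau theorem for bounded a.e.-continuous integrands (vanishing on $\operatorname{supp}(\Gamma_\ast)$) closes the a.s.\ claim directly, exactly as the paper does.
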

	\cref{theo:rankmapcon} (see~\cref{sec:pfmain} for its proof) shows how a function $\cF(\cdot):(\R^d)^p\to\R^q$, where $1 \leq r \leq p$ of its arguments are evaluated at the empirical rank map and the remaining $p-r$ coordinates are evaluated at the population rank map, can be approximated by its population counterpart in the asymptotic limit. In particular, by choosing $r=p=1$, $q=d$, $\cF(\mx)=\mx$ and $\bJ(\mx)=\mx$ in Theorem \ref{theo:rankmapcon} gives, 
	$$\frac{1}{N} \sum_{i=1}^N \lVert \hbR_{m,n}(\mz_i)-\Rl(\mz_i)\rVert\overset{P}{\longrightarrow}0,$$
	whenever~\eqref{eq:empgrid} and \eqref{eq:rankmapconew} hold. Note that the above conclusion does not require $\nu$ to be compactly supported. On the other hand, it yields convergence in probability instead of almost sure convergence, which in turn requires more stringent assumptions on $\cF(\cdot)$, $\bJ(\cdot)$ and $\nu$. We expect Theorem \ref{theo:rankmapcon} to be of independent interest, because it can be more widely applicable to other nonparametric two-sample and independence tests based on multivariate ranks (see~\cref{sec:compare}). Under stronger assumptions, it is also possible to obtain rates of convergence in~\cref{theo:rankmapcon} using techniques similar to~\cite{deb2021rates}.

	
	\begin{remark}[Verifying~\eqref{eq:rankmapconew}]
		A natural way to verify~\eqref{eq:rankmapconew} is by showing that 
		\begin{align*}
			\limsup_{N\to\infty}\frac{1}{N^p}&\E\Bigg[\sum_{(i_1,\ldots ,i_p)\in [N]^p} \Bigg(\Big\lVert\cF\Big(\bJ(\hbR_{m,n}(\mz_{i_1})),\ldots ,\bJ(\hbR_{m,n}(\mz_{i_r})),\bJ(\Rl(\mz_{i_{r+1}})),\nonumber \\ & \ldots ,\bJ(\Rl(\mz_{i_p}))\Big)\Big\rVert^{1+\delta}\Bigg)\Bigg]< \infty
		\end{align*}
		for some $\delta>0$. The above condition can be easily verified for many  natural choices of $\cF(\cdot)$, $\bJ(\cdot)$, and $\nu$ (see~\cref{sec:verbd} for some  examples). In fact, the above condition will imply that~\eqref{eq:rankmapconew} is satisfied with $=$ instead of $\leq$.
	\end{remark}
	
	\begin{remark}[Choice of $\boldsymbol{\mathcal{H}}_N^d$]\label{rem:chooserank}
		Note that~\cref{theo:rankmapcon} is flexible on the choice of the elements of $\boldsymbol{\mathcal{H}}_N^d$. One choice includes drawing a random sample of size $N$ from $\nu$ and then fixing the obtained sequence. Otherwise one can also choose deterministic sequences that approximate $\nu$. When $\nu=\mathrm{Unif}[0,1]^d$, some popular examples of such sequences can be found in the quasi-Monte Carlo literature (see~\cite{Rabinowitz1987} and the references therein). The use of deterministic sequences to define multivariate ranks has been advocated in~\cite{Deb19,shi2020rate}. If $\bh_1^d, \bh_2^d, \ldots, \bh_N^d$ are chosen using a random sample from $\nu$, then assumption~\eqref{eq:empgrid} is to be interpreted as weak convergence a.s. (which follows using the Varadarajan theorem, see~\cite{Varadarajan1958}). In this case, crucially, the convergence results hold \emph{conditionally} on $\{\bh_1^d,\ldots ,\bh_N^d\}$.		
	\end{remark}
	
	
	\section{Score Transformed Hotelling-Type Tests Based on Optimal Transport}\label{sec:ranktsq}
	
	In this section we generalize the basic version the multivariate distribution-free Hotelling $T^2$ test statistic already presented in~\eqref{eq:rankhotelling} by incorporating \textit{score functions}, which are injective functions $\bJ(\cdot):\R^d\to\R^d$ that are \emph{continuous Lebesgue a.e. in $\R^d$}. A natural example of a score function is $\bJ(\cdot):=(F_1^{-1}(\cdot),\ldots ,F_d^{-1}(\cdot))$, where $F_1(\cdot), F_2(\cdot), \ldots, F_d(\cdot)$ are univariate distribution functions. When $d=1$ and $F(\cdot)=\Phi(\cdot)$ is the standard Gaussian distribution function, then the corresponding score function $\bJ(\cdot)$ is called the \emph{van der Waerden score function} (see~\cite{vander1952}; also see~\eqref{eq:vanscore}). A very useful notion for the sequel is that of an effective reference distribution which is obtained by combining a score function with a reference measure as follows: 
	
	\begin{definition}\label{def:erd}
		Given a reference distribution $\nu$ and a score function $\bJ(\cdot):\R^d\to\R^d$, the \textit{effective reference distribution}, hereby abbreviated as ERD, is the push-forward measure $\bJ\#\nu$. For example, in the classical univariate Chernoff-Savage framework (see~\cite{Chernoff1958,Gastwirth1968}), the ERD is the $\mathcal{N}(0,1)$ distribution, which can be obtained by choosing the reference distribution $\nu=\mathrm{Unif}[0,1]$ and the score function $\bJ(\cdot)=\Phi^{-1}(\cdot)$, the standard Gaussian quantile function. 
	\end{definition} 
	
	\begin{assumption}\label{assumption3} 
		The effective reference distribution (ERD) is non-degenerate in the sense that it has a well-defined finite and positive definite covariance matrix $\Serd$.
	\end{assumption}
	
	Under the above assumption, we define the {\it rank Hotelling $T^2$ statistic with reference distribution $\nu$ and score function $\bJ$} as:
	\begin{align}\label{eq:rankschotelling}
		\rhsc&:=\tfrac{mn}{m+n} (\bm{\Delta}_{m, n}^{\nu,\bJ})^{\top} \Serd^{-1} \bm{\Delta}_{m, n}^{\nu,\bJ}, 
	\end{align}
	where 
	\begin{align}\label{eq:delta}
		\bm{\Delta}_{m, n}^{\nu,\bJ} := \frac{1}{m}\sum_{i=1}^m \bJ\left(\hat{\bR}_{m,n}(\bX_i)\right)-\frac{1}{n}\sum_{j=1}^n \bJ\left(\hat{\bR}_{m,n}(\bY_j) \right) . 
	\end{align}
	is the difference of the means of the score transformed pooled ranks of the two samples. 
	
	Note that $\rhsc$ is equivalent to statistic $\rh$ introduced~\eqref{eq:rankhotelling} when the score function is $\bJ(\mx)=\mx$ and the reference distribution $\nu$ has uncorrelated components with the same marginal distributions, each of which have finite second moments. This is because, in this case, $\Serd=\sigma^2 \bm I_d$, where $\sigma>0$. 
	The class of statistics in \eqref{eq:rankschotelling} leads to a new family of two-sample tests that are distribution-free and consistent for a general collection of alternatives, which we discuss in the sequel.

	\begin{prop}[Distribution-freeness]\label{prop:dfree}
		Assume that $\mathrm{H}_0$ is true and $\mu_1=\mu_2\in \mathcal{P}_{\mathrm{ac}}(\R^d)$. Then the distribution of $\rhsc$ is universal, that is, it is free of $\mu_1=\mu_2$, for all $m,n\geq 1$.
	\end{prop}
	
	Using the above result we can readily obtain a finite sample distribution-free two-sample test that uniformly controls the Type I error. To this end, fix a level $\alpha\in (0,1)$ and let $c_{m,n}$ denote the upper $\alpha$ quantile of the universal distribution in~\cref{prop:dfree}. Consider the test function:
	\begin{equation}\label{eq:testtrank}
		\prsc:=\ind\left(\rhsc\geq c_{m,n}\right).
	\end{equation}
	This test is exactly distribution-free for all $m,n\geq 1$ and uniformly level\footnote{Strictly speaking, to guarantee exact level $\alpha$, we have to randomize $\prsc$, as the exact distribution of $\rhsc$ is discrete. But unless $m,n$ are extremely small, this makes no practical difference.} $\alpha$ under $\mathrm{H}_0$, that is, 
	\begin{equation}\label{eq:uniflevelt}
		\sup_{\mu_1=\mu_2\in \mathcal{P}_{\mathrm{ac}}(\R^d)} \E\left[\prsc\right]=\alpha.
	\end{equation}

	\begin{remark} In addition to being distribution-free, the statistic~\eqref{eq:rankschotelling} has the advantage that the matrix $\Serd$ is deterministic. In fact, it can be computed once the reference distribution $\nu$ and score function $\bJ$ are specified, and does not depend on the data. This makes the implementation of the test \eqref{eq:testtrank} particularly convenient, because no covariance matrix estimation is required for computing $\rhsc$, unlike other nonparametric tests of location such as the interdirections-based tests in~\cite{hallin2002optimal,Peters1990,Randles1989} and the tests based on data-depth in~\cite{liu1993}.
	\end{remark} 
	
	\subsection{Asymptotic Null Distribution and Consistency}\label{sec:condist}
	
	In this subsection we discuss the consistency of the test $\prsc$ and the asymptotic null distribution of the test statistic $\rhsc$. Throughout we will assume that $\bX \sim \mu_1$ and $\bY \sim  \mu_2$. We first describe the asymptotic null distribution of the statistic $\rhsc$. This is formalized in the following theorem, which is proved in\cref{sec:pfnulldistribution}.
	
	\begin{theorem}[Limiting null distribution]\label{theo:nullrankhotelling}
		Suppose the condition in \eqref{eq:empgrid} and assumption {\em\ref{assumption3}} hold. Further, assume that
		\begin{equation}\label{eq:nullrhas}
			\limsup\limits_{N\to\infty} \frac{1}{N}\sum_{i=1}^N \lVert \bJ(\bh_i^d)\rVert^2\leq \int \lVert \bJ(\bz)\rVert^2\,\mathrm d\nu(\bz).
		\end{equation}
		We also consider the asymptotic regime where $N = m+n \rightarrow \infty$ such that 
		\begin{equation}\label{eq:usual}
			m/N\to\lambda\in (0,1).
		\end{equation} 
		Then, under $\mathrm H_0$ as in~\eqref{eq:twosamden}, we have: 
		\begin{align}\label{eq:rhscnull}
			\rhsc\overset{w}{\longrightarrow}\chi^2_d.
		\end{align}
	\end{theorem}
	
	On account of having a simple limiting null distribution, the result above can be used to calibrate the statistic $\rhsc$ to obtain an asymptotic level $\alpha$ test. To this end, denote by $\chi^2_{d, 1-\alpha}$ the $(1-\alpha)$-th quantile of the $\chi^2_d$ distribution. Then the test which rejects when $\{\rhsc > \chi^2_{d, 1-\alpha}\}$ satisfies~\eqref{eq:uniflevelt} asymptotically. 
	
	The proof of Theorem \ref{theo:nullrankhotelling} is given in~\cref{sec:pfnulldistribution}. It proceeds in two steps: First, we prove a H\'ajek representation type result which shows that  the error incurred in replacing the empirical rank maps in \eqref{eq:rankschotelling} with their population counterparts (that is, $\Rmu(\cdot)$)  is asymptotically negligible under $\mathrm H_0$. The asymptotic distribution in \eqref{eq:rhscnull} then follows from the asymptotic normality of 
	\begin{align}\label{eq:delta_oracle}
		\bm{\Delta}_{m, n}^{\nu,\bJ,\mathrm{or}} := \frac{1}{m}\sum_{i=1}^m \bJ\left(\Rmu(\bX_i)\right)-\frac{1}{n}\sum_{j=1}^n \bJ\left(\Rmu(\bY_j) \right) 
	\end{align}
	and the continuous mapping theorem. Note that $\bm{\Delta}_{m, n}^{\nu,\bJ,\mathrm{or}}$ is the oracle counterpart of $\bm{\Delta}_{m, n}^{\nu,\bJ}$ obtained by replacing the empirical ranks maps in \eqref{eq:delta} with their population analogs. 
	
	\begin{remark}[On assumption~\eqref{eq:nullrhas}]\label{rem:justbound}
		A simple case where assumption~\eqref{eq:nullrhas} can be verified is when $\bh_1^d, \bh_2^d, \ldots, \bh_N^d$ are i.i.d. samples from the reference distribution $\nu$. In that case, equality holds in~\eqref{eq:nullrhas} almost surely by the strong law of large numbers and~\cref{theo:nullrankhotelling} holds conditionally on $\{\bh_1^d,\ldots ,\bh_N^d\}$. There has been some interest in using deterministic choices for $\{\bh_1^d,\ldots ,\bh_N^d\}$ in recent works such as~\cite{Deb19,shi2020distribution} (see also~\cite[Table 12]{Deb19} for some potential benefits of deterministic choices). We will discuss how to verify~\eqref{eq:nullrhas} in some such cases in~\cref{sec:verbd}.
	\end{remark}

	We now proceed to show that the test $\prsc$ is consistent against a large class of alternatives in~\eqref{eq:twosamden}. This is formalized in the following theorem. 
	
	\begin{theorem}[Consistency]\label{theo:rhconsis} 
		Suppose the conditions in \eqref{eq:empgrid},~\eqref{eq:nullrhas} and assumption  {\em\ref{assumption3}} hold. Then, for problem \eqref{eq:twosamden} in the usual asymptotic regime~\eqref{eq:usual}, 
		$\lim_{m,n\to\infty} \E_{\mathrm{H}_1} \left[\prsc\right]=1$
		provided $\E\bJ(\Rl(\bX))$ and $\E\bJ(\Rl(\bY))$ are finite and $\E\bJ(\Rl(\bX))\neq\E\bJ(\Rl(\bY))$.
	\end{theorem}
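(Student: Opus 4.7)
The plan is to show that under $\mathrm{H}_1$, the statistic $\rhsc$ diverges to infinity in probability, which combined with the (asymptotically) bounded critical value yields consistency. The strategy mirrors the standard ``oracle replacement'' argument: I will first argue that the empirical-rank version $\bm{\Delta}_{m,n}^{\nu,\bJ}$ is asymptotically close to its population-rank analogue
\[
\bm{\Delta}_{m,n}^{\nu,\bJ,\mathrm{or},\lambda} := \frac{1}{m}\sum_{i=1}^m \bJ(\Rl(\bX_i)) - \frac{1}{n}\sum_{j=1}^n \bJ(\Rl(\bY_j)),
\]
and then invoke the law of large numbers on the latter.

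For the replacement step, I will apply \cref{theo:rankmapcon} with $p=r=1$, $q=d$, and $\cF(\bm{x})=\bm{x}$. Note that $\int \|\bJ(\bm{z})\|\,\mathrm d\nu(\bm{z}) < \infty$ by Assumption \ref{assumption3} (finite ERD covariance implies finite first moment) and \eqref{eq:nullrhas} then provides a uniform $L^1$ bound of the type \eqref{eq:rankmapconew}; this yields
\[
\frac{1}{N}\sum_{i=1}^N \bigl\|\bJ(\hbR_{m,n}(\bZ_i)) - \bJ(\Rl(\bZ_i))\bigr\|\overset{P}{\longrightarrow}0.
\]
Since the $\bX_i$'s form a subset of the pooled sample, the crude bound
\[
\frac{1}{m}\sum_{i=1}^m \bigl\|\bJ(\hbR_{m,n}(\bX_i))-\bJ(\Rl(\bX_i))\bigr\| \leq \frac{N}{m}\cdot\frac{1}{N}\sum_{i=1}^N \bigl\|\bJ(\hbR_{m,n}(\bZ_i))-\bJ(\Rl(\bZ_i))\bigr\|
\]
together with $N/m\to 1/\lambda$ (similarly $N/n\to 1/(1-\lambda)$) shows $\bm{\Delta}_{m,n}^{\nu,\bJ} - \bm{\Delta}_{m,n}^{\nu,\bJ,\mathrm{or},\lambda}\overset{P}{\longrightarrow} \bm{0}$.

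For the second step, since $\bX_1,\ldots,\bX_m$ are i.i.d.\ $\mu_1$ and $\bJ\circ\Rl$ is integrable under $\mu_1$ by assumption, the weak LLN gives $\tfrac{1}{m}\sum_{i=1}^m \bJ(\Rl(\bX_i))\overset{P}{\longrightarrow}\E\bJ(\Rl(\bX))$, and similarly for the $\bY_j$'s. Combined with the first step, $\bm{\Delta}_{m,n}^{\nu,\bJ}\overset{P}{\longrightarrow}\bm{\delta} := \E\bJ(\Rl(\bX))-\E\bJ(\Rl(\bY))\neq \bm{0}$. Because $\Serd^{-1}$ is positive definite, $\bm{\delta}^\top\Serd^{-1}\bm{\delta}>0$, and the scaling factor $mn/(m+n) = \lambda(1-\lambda)N(1+o(1))\to\infty$ forces $\rhsc\overset{P}{\longrightarrow}\infty$.

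Finally, the critical value $c_{m,n}$ in \eqref{eq:testtrank} converges to $\chi^2_{d,1-\alpha}$ (by \cref{theo:nullrankhotelling} and the distribution-freeness in \cref{prop:dfree}), hence stays bounded. Therefore $\P_{\mathrm{H}_1}(\rhsc\geq c_{m,n})\to 1$, completing the proof. The only delicate point I foresee is verifying the $L^1$ hypothesis \eqref{eq:rankmapconew} needed to invoke \cref{theo:rankmapcon}, but \eqref{eq:nullrhas} and Assumption \ref{assumption3} together with Cauchy--Schwarz should handle this cleanly; the rest is essentially a standard diverging-test-statistic argument.
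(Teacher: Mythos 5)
Your proposal is correct and follows essentially the same route as the paper's own proof: invoke~\cref{theo:rankmapcon} with $p=r=1$ to replace the empirical rank maps in $\bm{\Delta}_{m,n}^{\nu,\bJ}$ by their population analogues $\Rl$, apply the weak law of large numbers to get $\bm{\Delta}_{m,n}^{\nu,\bJ}\overset{P}{\to}\E\bJ(\Rl(\bX))-\E\bJ(\Rl(\bY))\neq\bm 0$, deduce $\rhsc\overset{P}{\to}\infty$ from the positive definiteness of $\Serd$ and the $N$-scaling, and conclude via the boundedness of $c_{m,n}$ from~\cref{theo:nullrankhotelling}. You actually spell out two small steps the paper leaves implicit (the nonnegativity bound passing from the pooled sum over $[N]$ to the separate sums over $\bX_i$'s and $\bY_j$'s, and the verification of~\eqref{eq:rankmapconew} via~\eqref{eq:nullrhas} and~\cref{assumption3}), so nothing is missing.
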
	
	
	The proof of this theorem is given in~\cref{sec:pfconsistency}. It follows from~\cref{theo:nullrankhotelling} which we use to show that $\rhsc$ converges to zero under $H_0$ and a positive number under the alternative, thus, implying consistency.
	\begin{remark}[Connection to Wilcoxon's rank-sum test]\label{rem:wilcox}
		For $d=1$, if $\bJ(\cdot)$ is nondecreasing (as is the case with quantile functions) then it is easy to see that whenever $\bY$ is stochastically larger (respectively, smaller) than $\bX$, we have $\E\bJ(\Rl(\bY))>\E\bJ(\Rl(\bX))$ (respectively, $\E\bJ(\Rl(\bY))<\E\bJ(\Rl(\bX))$). This is the exact condition for the consistency of Wilcoxon's rank sum test (see~\cite{Zimmerman2012} for details).
	\end{remark}
	
	While Theorem \ref{theo:rhconsis} gives the general condition under which the test $\prsc$ is consistent, it is not directly apparent how it applies to the location-shift alternatives where $f_2(\cdot)= f_1(\cdot - \bD)$, for some $\bD \in \R^d\backslash\{\bm 0\}$. In this case, the two-sample testing problem \eqref{eq:twosamden} becomes: 
	\begin{equation}\label{eq:twosamloc_theta}
		\mathrm{H}_0:\bD=\bzr  \qquad \mathrm{versus} \qquad \mathrm{H}_1:\bD\neq \bzr .
	\end{equation} 
	The following proposition shows the consistency $\prsc$ for the above problem.  
	
	\begin{prop}[Consistency under location-shift alternatives]\label{prop:conloc}
		Suppose the conditions in \eqref{eq:empgrid},~\eqref{eq:nullrhas} and assumption {\em\ref{assumption3}} hold with $\bJ(\mx)=\mx$. Recall from~\cref{prop:Mccan} that the population rank map $\Rl(\cdot)$ is the gradient of a convex function, say $\pl(\cdot)$. Assume that $\pl(\cdot)$ is strictly convex on an open set of positive measure with respect to $\mu_1$. Then, for problem \eqref{eq:twosamloc_theta} in the usual asymptotic regime~\eqref{eq:usual}, for any $\bD\in \R^d\setminus \{\bzr\}$,
		$$\lim_{m,n\to\infty} \E_{\bD}\left[\prsc\right]=1.$$
		
	\end{prop}

	The proof of Proposition \ref{prop:conloc} is given in~\cref{sec:pfconsistency}. The main ingredient of the proof is the \textit{cyclical monotonicity property} (see~\cite[Chapter V, page 238]{Rockafellar1970}) of the population multivariate rank map (recall~\cref{def:popquanrank}) which is used to show that the consistency condition $\E\bJ(\Rl(\bX))\neq\E\bJ(\Rl(\bY))$ in Theorem \ref{theo:rhconsis} holds whenever $\bm \Delta \ne \bm 0$. Outside location-shift alternatives, one can also prove consistency under contamination alternatives (see~\cite{Dhar2012,Hodges1956,stepanova2008asymptotic}) which are popular in literature ; see~\cref{prop:conlocontam} in the Appendix for details.
	
	\subsection{Local Asymptotic Power}\label{sec:locpow}
	
	In this subsection we derive the asymptotic power of the test  $\prsc$ against local contiguous alternatives. To quantify the notion of local alternatives, we will adopt the standard smooth parametric model assumptions from the theory of local asymptotic normality (LAN) (see, for example,~\cite[Chapter 7]{Van1998}). To this end, let $\Th\subseteq\mathbb{R}^p$ ($p$ fixed, may or may not be equal to $d$) and $\{\mathcal{P}_{\bt}\}_{\bt\in\Th}$ be a parametric family of distributions in $\mathbb{R}^d$ with density $f(\cdot|\bt)$, with respect to Lebesgue measure, indexed by a $p$-dimensional parameter $\bt\in\Th$. We will assume the following standard regularity conditions on this parametric family:
	\begin{itemize}
		\item The family $\{\mathcal{P}_{\bt}\}_{\bt\in\Th}$ is \textit{quadratic mean differentiable} (QMD) at $\bt=\bt_0\in\Theta$ (see~\cite[Definition 12.2.1]{LR05} for related definitions). It holds for most standard families of
		distributions, including exponential families in natural form.
		
		\item For $\bm X \sim \mathcal{P}_{\bt_0}$, $\mathbb{E}_{\bt_0}\left(\lVert\boldsymbol{\eta}(\bm X, \bt_0)\rVert^2\right) < \infty$, where $\boldsymbol{\eta}(\cdot, \bt) := \frac{\nabla_{\bt} f(\cdot|\bt)}{f(\cdot|\bt)}$ is the \textit{score function}. Also, suppose that the Fisher information exists at $\bt_0$, that is, $I(\bt_0):=\E_{\bt_0}[\boldsymbol{\eta}(\bX, \bt_0)\boldsymbol{\eta}(\bX, \bt_0)^{\top}]$ exists and is invertible.
	\end{itemize}
	Under these regularity assumptions, we will consider the following sequence of hypotheses:
	\begin{equation}\label{eq:twosamsmooth}
		\mathrm{H}_0:f_1(\cdot)=f_2(\cdot)=f(\cdot|\bt_0)  \quad \mathrm{versus} \quad \mathrm{H}_1:f_1=f(\cdot|\bt_0),\ f_2=f\left(\cdot|\bt_0+N^{-\frac{1}{2}}\bh\right)
	\end{equation}
	for some $\bh\neq \bzr$. It is worth noting that under these contiguous alternatives, no test can be consistent, that is, no test can have power converging to $1$ (see~\cref{sec:lowerbound}; cf.~\cite[Theorem 5.4]{shi2020rate}). Therefore it is interesting to look at the power curves of tests along such contiguous alternatives as a way to draw direct non-trivial comparisons between them. 	
	In the following theorem we derive the asymptotic distribution of $\rhsc$ under the local alternatives \eqref{eq:twosamsmooth}. In addition to providing precise expressions for the asymptotic local power, this result will be key in deriving bounds on the ARE of $\rhsc$ with respect to Hotelling $T^2$ test in Section \ref{sec:arehotelling} below.
	\begin{theorem}[Asymptotics under local alternatives]\label{theo:locpowermain1}
		Suppose the condition in \eqref{eq:empgrid},~\eqref{eq:nullrhas} and assumption {\em\ref{assumption3}} hold. Recall that $\boldsymbol{\eta}(\cdot, \bt) = \frac{\nabla_{\bt} f(\cdot|\bt)}{f(\cdot|\bt)}$. Then in the usual asymptotic regime~\eqref{eq:usual}, under $\mathrm{H}_1$ from~\eqref{eq:twosamsmooth} and with $\boldsymbol{G}\sim\mathcal{N}(\bzr,\bm I_d)$, we have: 
		$$\rhsc\overset{w}{\longrightarrow} \Big\lVert \sqrt{\lambda(1-\lambda)}\Serd^{-\frac{1}{2}}\E_{\bt_0}\left[\bJ(\Rmu(\bX))\bh^{\top}\boldsymbol{\eta}(\bX, \bt_0)\right]+\boldsymbol{G}\Big\rVert^2.$$
	\end{theorem}
	The proof of the above result is given in~\cref{sec:pflocalpower}. For this we first derive the joint limiting distribution of $\bm{\Delta}_{m, n}^{\nu,\bJ}$ (recall \eqref{eq:delta}) and the likelihood ratio (for the testing problems~\eqref{eq:twosamsmooth}) under $\mathrm H_0$. The limiting distribution of $\bm{\Delta}_{m, n}^{\nu,\bJ}$ under $\mathrm H_1$ can then be obtained by invoking Le Cam's third lemma~\cite[Theorem 12.3.2]{LR05}, from which the result follows by an application of the continuous mapping theorem.

	\subsection{ARE against Hotelling $T^2$}\label{sec:arehotelling}
	
	We now compute the ARE of $\rhsc$ against the Hotelling $T^2$ test for a variety of settings in location-shift alternatives. As is evident from Theorem \ref{theo:locpowermain1}, the asymptotic efficiency of $\rhsc$ will depend on the choices of $\bJ(\cdot)$ and $\nu$ (collectively on the ERD; recall~\cref{def:erd}) and also on the data generating distribution. This makes it important and interesting to study how the relative efficiencies depend on the aforementioned variables. We begin by describing three popular choices of ERDs below. The specific combinations of $\bJ(\cdot)$ and $\nu$ yielding these ERDs will be provided in the Theorem statements.
	
	\begin{enumerate}
		\item[(1)] \textit{Uniform on the hypercube}: In this case, the ERD is 
		$\mathrm{Unif}[0,1]^d$, the uniform distribution on the hypercube $[0,1]^d$. This choice has appeared in~\cite{Chernozhukov2017,Deb19,ghosal2019multivariate}, and is a natural choice as in the univariate case, the $\mathrm{Unif}[0,1]$ distribution is the most popular reference distribution for rank-based tests in the literature. Note that this ERD has \textit{independent components}.
		
		\item[(2)] \textit{Spherical uniform}: Let $U_1\sim \mathrm{Unif}[0,1]$ and $\boldsymbol{U}_2$ be drawn independently and uniformly from the unit sphere $\{\mx\in\R^d: \lVert \mx\rVert=1\}$. Then the spherical uniform ERD is the distribution of the random variable $U_1\boldsymbol{U}_2$. It is easy to see that the spherical uniform distribution is \textit{spherically symmetric}. The spherical uniform reference distribution has been used in~\cite{delbarrio2019,shi2020distribution,shi2020rate} while defining multivariate ranks.

		\item[(3)] \textit{Standard multivariate normal}: Here, the ERD is $\mathcal N(\bzr, \bm I_d)$, the standard $d$-variate Gaussian distribution with mean vector $\bzr$ and covariance matrix $\bm I_d$. This is the only multivariate distribution which is both spherically symmetric and has independent components~\cite{Dawid1977}. 
		
	\end{enumerate} 
	
	\begin{remark}\label{rem:covmat} We observe that for all $3$ ERDs described above $\Serd=\sigma^2 \bm I_d$, for some $\sigma>0$. In particular, for the uniform distribution on the hypercube $\sigma^2=1/12$, for the spherical uniform distribution 
		$\sigma^2=(3d)^{-1}$, and for the standard $d$-variate normal distribution $\sigma^2=1$.
	\end{remark}


	Following the seminal paper of Hodges and Lehmann~\cite{Hodges1956}, we will compute the ARE of $\rhsc$ against the Hotelling $T^2$ under local perturbations in location-shift models. Recall that this corresponds to $f_2(\cdot)=f_1(\cdot-\bD)$, where $f_1(\cdot)$ satisfies the regularity assumptions described above. In the same vein as~\eqref{eq:twosamloc_theta}, we consider the following testing problem: 
	\begin{equation}\label{eq:twosamloc}
		\mathrm{H}_0:\bD=\bzr  \qquad \mathrm{versus} \qquad \mathrm{H}_1:\bD=N^{-\frac{1}{2}}\bh
	\end{equation}
	for some $\bh\neq \bzr$. Hereafter, we will abbreviate the ARE of $\rhsc$ against the Hotelling $T^2$ test as $\atr$. Recall that the ARE is essentially the inverse ratio of the number of samples required by the respective tests (individually calibrated as asymptotically level $\alpha$ tests) to achieve power $\beta$ when the null shrinks to the alternative at rate $O(1/\sqrt N)$ (refer to \cref{def:asympeff} in the Appendix for the formal definition of the ARE between two tests). As a consequence, the ARE between tests in general depends on the level parameter $\alpha$, the power parameter $\beta$, and also on the sequence $\bD$ converging to $\bzr$. However, interestingly, for tests which have asymptotically non-central $\chi^2$ 
	distributions with the same degrees of freedom under contiguous alternatives (such as in~\eqref{eq:twosamloc}), the ARE is simply the ratio of the corresponding non-centrality parameters (see~\cite[Proposition 5]{Hallin2002}). This applies to $\atr$ which simplifies to
	\begin{equation}\label{eq:atr}
		\atr=\frac{\Big\lVert\Serd^{-\frac{1}{2}}\E_{\mathrm{H}_0}\left[\bJ(\Rmu(\bX))\bh^{\top}\boldsymbol{\eta}(\bm X, \bt_0)\right]\Big\rVert^2}{\Big\lVert(\E_{\mathrm{H}_0}[\bX-\E\bX][\bX-\E\bX]^{\top})^{-\frac{1}{2}} \E_{\mathrm{H}_0}\left[\bX\bh^{\top}\boldsymbol{\eta}(\bm X, \bt_0)\right]\Big\rVert^2}.
	\end{equation}
	
	We provide a detailed discussion on the derivation of \eqref{eq:atr}  around \eqref{eq:AREinterest} in the Appendix. Also, to avoid notational clutter, we have hidden the dependence of $\atr$ on $\bm h$ in the  notation. In fact, all the subsequent results on $\atr$ in this section will be valid for all $\bm h\neq \bzr$. 
	With the above expression in mind, let us compute $\atr$ in the simple case of the Gaussian location family for the different ERDs described above. The proof of the following result is given in~\cref{sec:pfarehotelling}. 
	
	\begin{prop}[Gaussian location problem]\label{prop:Gaussare}  Suppose $\mathcal{P}_{\bt}$ is the $d$-variate normal distribution $\mathcal{N}(\bt,\Sigma)$, for some unknown positive definite covariance matrix $\Sigma$. Then, under condition \eqref{eq:empgrid}, assumption \eqref{eq:nullrhas}, the following conclusions hold for the testing problem~\eqref{eq:twosamloc} in the usual asymptotic regime~\eqref{eq:usual}:
		\begin{enumerate}
			\item[$(1)$] If the ERD is $\mathrm{Unif}[0,1]^d$, obtained by choosing $\nu=\mathcal{N}(\bzr,\bm I_d)$ and $\bJ(\mx)=(\Phi(x_1),\ldots ,\Phi(x_d))$, then  
			$\atr=3/\pi\approx 0.95.$
			
			\item[$(2)$] When the ERD is spherical uniform with $\bJ(\mx)=\mx$ and $\nu\ =$ spherical uniform,  
			\begin{align}\label{eq:atr_spherical}
				\atr= \kappa_d:=\frac{3}{d}\left\{\frac{1}{2^{d-1}}\cdot\frac{\Gamma{(d-0.5)}}{(\Gamma(d/2))^2}+ \omega_d\right\}^2, 
			\end{align} 
			with
			\begin{align}\label{eq:expectation_G}
				\omega_d:= \frac{\sqrt{2\pi} (d-1) }{2^{d/2}\Gamma(d/2)}\E[|Z|^{d-2}]-\frac{\sqrt{2\pi}(2d-2)!!}{2^{d-1}(\Gamma(d/2))^2} {}_{2}F_{1}(d-\tfrac{1}{2}, \tfrac{d}{2}- \tfrac{1}{2}; \tfrac{d}{2}+\tfrac{1}{2}; -1), 
			\end{align}
			where $Z\sim \mathcal{N}(0,1)$, $(2d-2)!!:=1\times 3\times \ldots \times (2d-3)$ and ${}_{2}F_{1}(\cdot,\cdot;\cdot;\cdot)$ is the hypergeometric function (see~{\em\cite{Olde2010}}).  
			\item [$(3)$] When the ERD is $\mathcal{N}(\bzr,\bm I_d)$ with $\bJ(\mx)=\mx$ and $\nu=\mathcal{N}(\bzr,\bm I_d)$, $\atr=1$.
		\end{enumerate}
	\end{prop}

	The result above shows that even in this simple case of the Gaussian location family, the ARE depends crucially on the choice of $\nu$ and $\bJ(\cdot)$. In particular, when $\nu=\mathcal{N}(\bzr,\bm I_d)$, $\bJ(\mx)=(\Phi(x_1),\ldots ,\Phi(x_d))$, or $\bJ(\mx)=\mx$,  $\nu=\mathcal{N}(\bzr,\bm I_d)$, the ARE stays constant over the dimension $d$. However, when $\bJ(\mx)=\mx$ and $\nu\ =$ spherical uniform, the ARE varies with $d$.
	
	\cref{fig:PowerGausscomp1} shows the plots of the efficiencies in~\cref{prop:Gaussare} as a function of the dimension. One of the surprising things that emerge from this plot is that $\kappa_d$ (recall \eqref{eq:atr_spherical}) falls below 0.95 when $d \geq 5$,  that is, {\it in the normal
		family for $d\geq 5$, the rank Hotelling statistic with cubic uniform
		ERD (with $\bJ(\cdot)$ and $\nu$ chosen as in~\cref{prop:Gaussare}, part (1)) is more efficient compared to
		the spherical uniform (in terms of ARE with respect to the Hotelling's $T^2$).} It is also worth noting that although the ARE decreases with increasing dimension when the ERD is the spherical uniform distribution, it stabilizes to a non-zero limit.

	%
		%
		%
	
	\begin{figure}[h]\label{fig:PowerGausscomp1}
		\hspace{0.6in}
		\includegraphics[height=5cm,width=5.5cm]{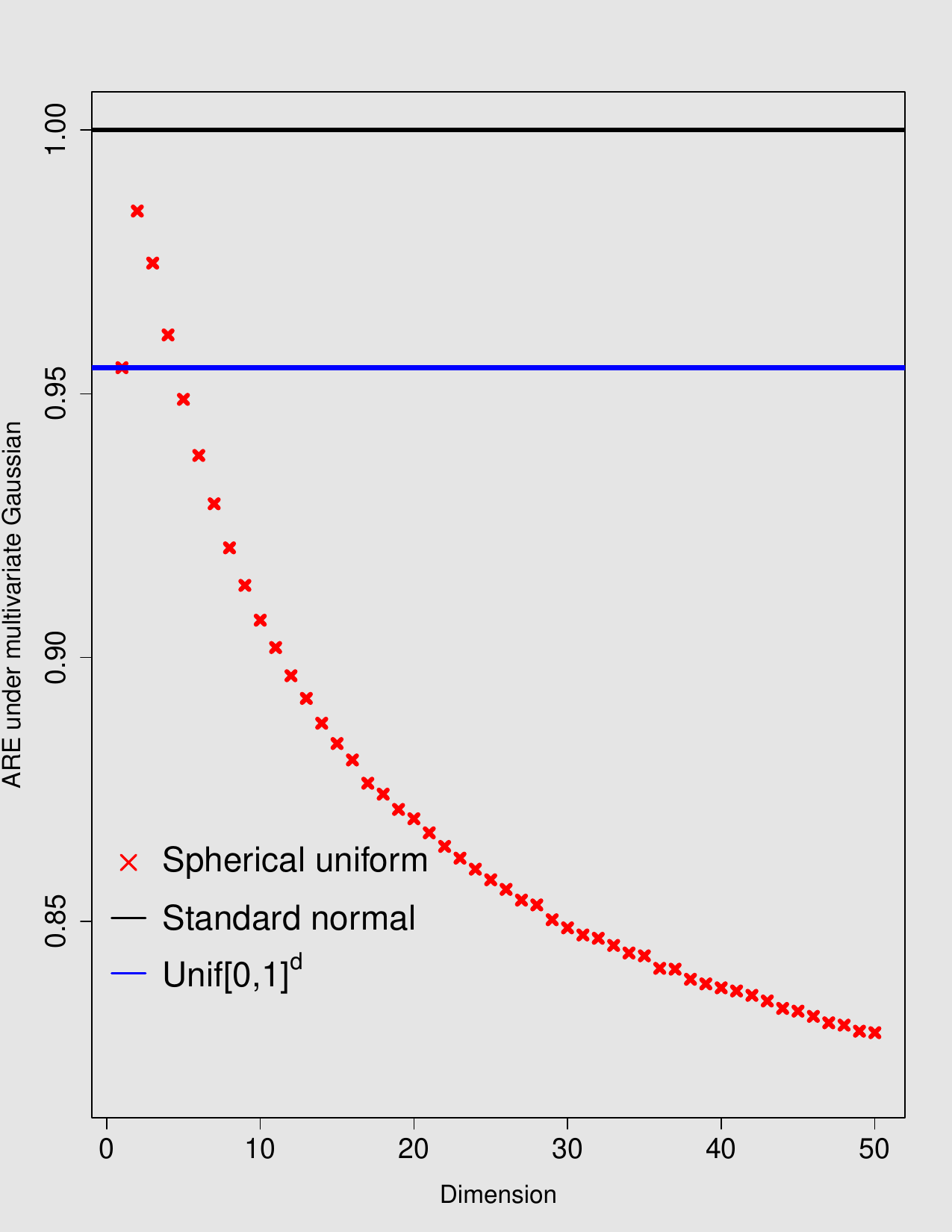}	      
		\includegraphics[height=5cm,width=5.5cm]{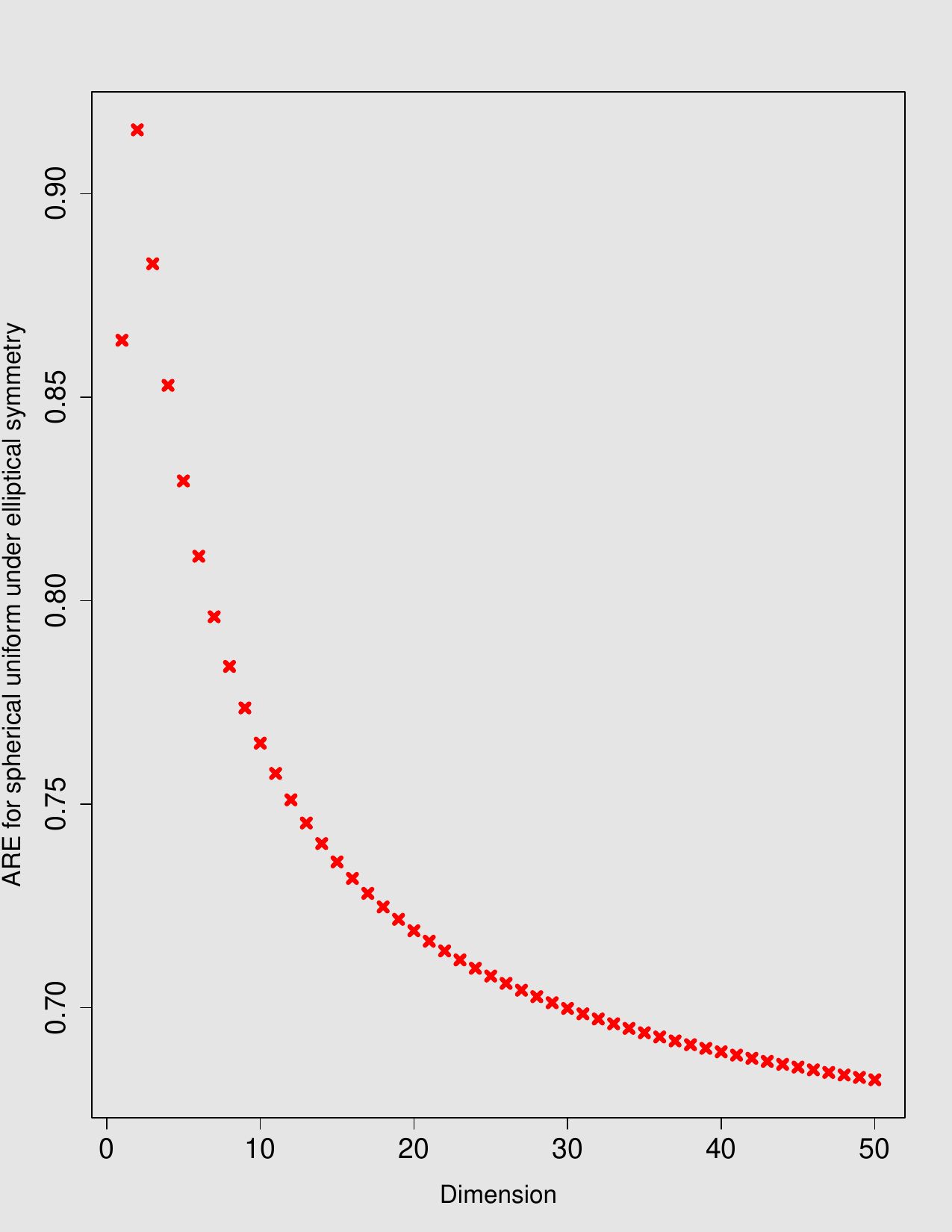}
		\caption{The left panel shows the ARE when $\bX$ follows a multivariate Gaussian distribution for our three candidate ERDs (see~\cref{prop:Gaussare}). The black and blue lines correspond to the AREs for the standard Gaussian and the $\mathrm{Unif}[0,1]^d$ ERD. Note that these lines are constant over dimension at $1$ and $3/\pi$, respectively. For the spherical uniform ERD, the ARE decreases for $d\geq 2$ and is lower than that of the $\mathrm{Unif}[0,1]^d$ ERD for $d\geq 5$. In the right panel, we plot the lower bound for the ARE of $\rhsc$ with respect to the Hotelling $T^2$ test under elliptical symmetry when the spherical uniform ERD is used (see~\cref{prop:areell}). Note that it  decreases for $d\geq 2$.}
	\end{figure}
	
	
	\begin{remark}[Advantages of incorporating score functions]\label{rem:scorefn} 
		In \cref{prop:Gaussare}~(1) we see the theoretical benefits of incorporating score functions. For instance, if we had simply set $\bJ(\mx)=\mx$ and $\nu=\mathrm{Unif}[0,1]^d$, then by~\cref{theo:locpowermain1}, we would have to work with the population rank map from $\mathcal{N}(\bt_0,\Sigma)$ to $\mathrm{Unif}[0,1]^d$, which is harder to obtain in a closed form. On the other hand, when we set $\nu=\mathcal{N}(\bzr,\bm I_d)$, then the population rank map from $\mathcal{N}(\bt_0,\Sigma)$ to $\nu=\mathcal{N}(\bzr,\bm I_d)$ has the simple closed form expression $\Rmu(\bX)=\Sigma^{-\frac{1}{2}}(\bX-\bt_0)$. This implies, $\bJ(\Rmu(\bX))=\Phi(\Sigma^{-\frac{1}{2}}(\bX-\bt_0))$, where the standard normal distribution function $\Phi$  function is applied coordinate-wise, which is again a simple function to analyze. It is interesting to note that using this approach, we are able to recover the efficiency of $3/\pi$ which is the same as the ARE of the Wilcoxon's test against the $t$-test (see~\cite{Hodges1956}).
	\end{remark}
	We now move on to the independent components case. This is a natural choice as product distributions are most natural ways to construct multivariate distributions from univariate ones. To this end, denote by $\fnd$ the class of $d$-dimensional product distributions where the distribution of each component belongs to a location family which is absolutely continuous with respect to the Lebesgue measure. More precisely,  $\{\mathcal{P}_{\bt}\}_{\bt \in \Theta} = \{p_{\bt}(\bz)=\prod_{i=1}^d \tilde{f}_i( z_i - \theta_i)\}_{\bt \in \Theta}$, where $\tilde{f}_1, \tilde{f}_2, \ldots, \tilde{f}_d$ are univariate densities which are absolutely continuous with respect to the Lebesgue measure on $\R$, $\bz=(  z_1,  z_2, \ldots,  z_d)$, and $\bt=(\theta_1,\ldots ,\theta_d) \in \Theta \subseteq \R^d$.

	\begin{theorem}[Independent components case]\label{prop:areind} 
		Suppose condition \eqref{eq:empgrid} and assumption \eqref{eq:nullrhas} hold. Then the following conclusions hold for testing the hypothesis \eqref{eq:twosamloc} in the usual asymptotic regime~\eqref{eq:usual}:
		\begin{enumerate}
			\item[$(1)$] If the ERD is $\mathrm{Unif}[0,1]^d$ obtained by choosing $\nu=\mathcal{N}(\bzr,\bm I_d)$ and $\bJ(\mx)=(\Phi(x_1),\ldots ,\Phi(x_d))$ for $\mx=(x_1,\ldots ,x_d)$, we have $\inf_{\fnd}\atr\geq \frac{108}{125}= 0.864.$
			Further, equality holds if and only if the $i$-th component of $\bX$ has density of the form:
			\begin{equation}\label{eq:areind_density}
				f_i(x)=\frac{3}{20\sqrt{5}\sigma_i^3}\left(5\sigma_i^2-(x-\theta_i)^2\right)\ind\{|x-\theta_i|\leq \sqrt{5}\sigma_i\},
			\end{equation}
			for all $i\in [d]$, where $\sigma_1,\ldots ,\sigma_d>0$. The same conclusion also holds if we use $\nu=\mathrm{Unif}[0,1]^d$ and $\bJ(\mx)=\mx$.
			
			\item[$(2)$] When the ERD is $\mathcal{N}(\bzr,\bm I_d)$ with $\nu=\mathcal{N}(\bzr,\bm I_d)$ and $\bJ(\mx)=\mx$, then 
			$\inf_{\fnd}\atr\geq 1.$
			Further, equality holds in the above display if and only if $\bX\sim\mathcal{N}(\bt, \bm D)$, where $\bm D$ is a diagonal matrix with strictly positive entries.
		\end{enumerate}
	\end{theorem}

	The proof of this result is given in~\cref{prop:areind} (in the Appendix) provides worst case lower bounds for the efficiency of $\prsc$ against Hotelling $T^2$ over the class of multivariate distributions with independent components for the two different choices of ERD with independent components. These can be interpreted as multivariate analogs of the celebrated results of Hodges and Lehmann~\cite{Hodges1956} and Chernoff and Savage~\cite{Chernoff1958}: 
	
	\begin{itemize}
		
		\item {\it Multivariate Hodges-Lehmann phenomenon}: The result in~\cref{prop:areind}-(1) shows that, in the worst case, $\prsc$ needs only around $15\%$ more samples than Hotelling $T^2$ to yield the same power, if $\mathrm{Unif}[0,1]^d$ is used as the ERD, with $\nu=\mathcal{N}(\bzr,\bm I_d)$ and $\bJ(\mx)=(\Phi(x_1),\ldots ,\Phi(x_d))$. Note that the bound in this case does not depend on the dimension $d$ and matches the univariate Hodges-Lehmann bound~\cite{Hodges1956}. 
		
		\item {\it Multivariate Chernoff-Savage phenomenon}:~\cref{prop:areind}-(2) shows that if $\mathcal{N}(\bzr,\bm I_d)$ is used as the ERD, with $\nu=\mathcal{N}(\bzr,\bm I_d)$ and $\bJ(\mx)=\mx$, then $\prsc$ attains at least as large an asymptotic power as Hotelling $T^2$ with as many samples, thereby extending the Chernoff-Savage bound \cite{Chernoff1958} for the univariate Wilcoxon rank sum test. This, in particular, implies the following remarkable fact: {\it If the distribution $\mu_1$ has independent components but is not Gaussian, then $\prsc$ will attain the same power as Hotelling $T^2$ with fewer samples.}
		
	\end{itemize}
	
	Next, we consider the class of multivariate elliptically symmetric distributions. To be precise, $\bX$ has an elliptically symmetric distribution if there exists $\bt\in\R^d$, a positive definite $d\times d$ matrix $\Sigma$, and a function $\underline{f}(\cdot):\R^+\to\R^+$ such that the density $f_1$ of $\bm X$ satisfies:
	\begin{equation}
		\label{eq:citelatter}f_1(\mx)\propto (\mathrm{det}(\Sigma))^{-\frac{1}{2}}\underline{f}\left((\mx-\bt)^{\top}\Sigma^{-1}(\mx-\bt)\right),\quad \mbox{for}\ \mbox{all}\ \mx\in\R^d.
	\end{equation}
	In the following, we will denote by $\fel$ the class of $d$-dimensional elliptically symmetric distributions satisfying some regularity conditions (see~\cref{sec:pfareell} for details) with location parameter $\bt$ and a positive definite matrix $\Sigma$, both unknown. Elliptically symmetric distributions form a rich class of multivariate probability measures which includes the spherical uniform, multivariate Gaussian, $t$, and logistic distributions among others. This class of distributions has attracted a lot of attention in statistical theory (see~\cite{Chmielewski1981,frahm2004generalized} and the references therein for a review) and applications such as in graphical modeling~\cite{Vogel2011}, mathematical finance~\cite{hodgson2002testing}, etc.

	\begin{theorem}[Elliptically symmetric case]\label{prop:areell}
		Suppose condition \eqref{eq:empgrid} and assumption \eqref{eq:nullrhas} hold. Then the following conclusions hold for the hypothesis testing problem \eqref{eq:twosamloc} in the usual asymptotic regime~\eqref{eq:usual}: 
		\begin{enumerate}
			\item[$(1)$] When the ERD is the spherical uniform with $\bJ(\mx)=\mx$ and $\nu\ =$ spherical uniform, 
			$$\inf_{\fel}\atr\geq \frac{81}{500}\cdot \frac{(\sqrt{2d-1}+1)^5}{d^2(\sqrt{2d-1}+5)}\geq 0.648.$$
			The explicit form of the radial density for which the inequality is attained is given in Section \ref{sec:pfareell}, see  equations \eqref{eq:ellip1} and \eqref{eq:ellip2} in the Appendix.
			\item[$(2)$] When the ERD is $\mathcal{N}(\bzr,\bm I_d)$ with $\bJ(\mx)=\mx$ and $\nu=\mathcal{N}(\bzr,\bm I_d)$,  
			$\inf_{\fel}\atr\geq 1.$
			Also, equality holds if and only if $\bX\sim \mathcal{N}(\bt,\Sigma)$ for some positive definite $\Sigma$ and $\bt\in\R^d$.
		\end{enumerate}
	\end{theorem}
	
	The proof of this result is given in~\cref{sec:pfarehotelling}. It should be noted that the above lower bounds match those for tests proposed in~\cite{Hallin2002,Oja2005,paindaveine2004}. However, the construction of the test statistics in~\cite{Hallin2002,Oja2005,paindaveine2004} specifically assumes the knowledge of elliptical symmetry of the underlying distribution. On the other hand, $\rhsc$ assumes no such knowledge on the underlying distribution, and yet successfully attains the same ARE lower bounds. Again we have the following multivariate counterparts of the classical univariate results~\cite{Chernoff1958,Hodges1956} for the class $\fel$.   
	
	\begin{itemize}
		
		\item {\it Multivariate Hodges-Lehmann phenomenon}: For the spherical uniform ERD, with $\bJ(\mx)=\mx$ and $\nu\ =$ spherical uniform, the lower bound on $\atr$ depends on the dimension $d$, which is always bounded below by 0.648. Note that when $d=1$ this lower bound equals $0.864$ as one would expect in light of~\cref{prop:areind}, because for $d=1$ the spherical uniform is simply $\mathrm{Unif}[-1,1]$, a location-scale shift of $\mathrm{Unif}[0,1]$. For $d\geq 2$, the lower bound is decreasing in $d$ and converges to $81/125=0.648$, as $d\to\infty$. The plot of the lower bound as a function of $d$ is shown in~\cref{fig:PowerGausscomp1}.

		\item {\it Multivariate Chernoff-Savage phenomenon}:  For the Gaussian ERD,  with $\bJ(\mx)=\mx$ and $\nu=\mathcal{N}(\bzr,\bm I_d)$, as in the independent components case, $\atr$ is lower bounded by $1$ in the worst case, irrespective of the dimension. Once again this showcases the strength of the proposed statistic in detecting location shifts and the benefits of the Gaussian ERD. 
		
	\end{itemize}
	
	\begin{remark}[Benefits of Gaussian ERD]\label{rem:Gausscore}
		~\cref{prop:Gaussare}, Theorems~\ref{prop:areind},~and~\ref{prop:areell} reveal that $\prsc$, when the ERD is Gaussian (with $\bJ(\mx)=\mx$ and $\nu=\mathcal{N}(\bzr,\bm I_d)$), automatically adapts to the underlying family $\{\mathcal{P}_{\bt}\}_{\theta\in\Theta}$ provided the family is Gaussian with unknown covariance, has independent components, or is elliptically symmetric, respectively. The same conclusion holds under the blind source separation model too (see~\cref{sec:bsep} for details). Therefore,  $\prsc$ with the Gaussian ERD yields a test which is always as efficient as the parametric Hotelling $T^2$ test, for the aforementioned families. The fact that the standard Gaussian is both spherically symmetric (has a density of the form~\eqref{eq:citelatter} with $\bt=\bzr$ and $\Sigma=\bm I_d$) and has independent components, plays a crucial role in the proofs. This shows the benefits (in terms of the ARE) of using the Gaussian ERD when the performance is compared with respect to the Hotelling's $T^2$ test. Interestingly, the advantage of using a  Gaussian reference distribution was also observed in simulations in the related problem of mutual independence testing in the recent paper \cite{shi2020rate}. Our paper corroborates this theme for the two-sample problem using the ARE framework, which provides a theoretical foundation for making an informed choice about the underlying reference distribution. 
		Analogous results for mutual independence testing are presented in~\cref{sec:indtest}.
	\end{remark}
	
	We conclude this section by noting that it is also possible to study AREs beyond location-shift alternatives. Another popular choice is the sequence of contamination alternatives which we discuss in detail in~\cref{sec:contamodel} (see in particular~\cref{prop:arecontam}). 
	
	\section{Two Sample Tests Based on Rank Kernel MMD}\label{sec:rankmmd}

	
	While the rank Hotelling $T^2$ (see~\cref{sec:ranktsq}) has appealing properties in terms of its Pitman efficiency (see~\cref{sec:locpow}), it isn't however consistent against all fixed nonparametric alternatives (see~\cref{theo:rhconsis} and  Propositions~\ref{prop:conloc}). The goal of this section, therefore, is to develop two-sample testing procedures that are exactly distribution-free, conistent against fixed alternatives, and still possess non-trivial Pitman efficiency. We will study a class of distribution-free two-sample tests based on a rank and score transformed version of the celebrated kernel maximum mean discrepancy (MMD); see~\cite{gretton2009fast,Gretton2012}. The main motivation behind choosing kernel MMD here as opposed to Hotelling $T^2$ earlier in the paper is that the former is known to be consistent against fixed alternatives unlike the latter. We will soon see that a rank and score transformation of kernel MMD retains consistency in spite of gaining distribution-freeness. 
	
	More formally, given a reference distribution $\nu$, a symmetric, non-negative definite kernel function $\bKe(\cdot,\cdot) : \R^d \times \R^d \rightarrow \R$ which is continuous Lebesgue a.e. in $\R^d\times\R^d$,  and a score function $\bJ: \R^d \rightarrow \R^d$, the {\it rank-based kernel two-sample statistic} is defined as:     
	\begin{align}\label{eq:kerankscmmd}
		&\;\;\grsc:=\frac{mn}{m+n}\left[ w_{m, n}^{(1)}  + w_{m, n}^{(2)} - b_{m, n}  \right], 
	\end{align}  
	where
	\begin{align}
		w_{m, n}^{(1)} & := \frac{1}{m(m-1)}\sum_{1\leq i\neq j \leq m} \mathsf{K}(\bJ(\hbR_{m,n}(\bX_i)),\bJ(\hbR_{m,n}(\bX_j))), \nonumber \\ 
		w_{m, n}^{(2)}  & := \frac{1}{n(n-1)}\sum_{1\leq i\neq j \leq n} \bKe(\bJ(\hbR_{m,n}(\bY_i)),\bJ(\hbR_{m,n}(\bY_j))), \nonumber \\ 
		b_{m, n} & := \frac{2}{mn}\sum_{1 \leq i \leq m}\sum_{1 \leq j \leq n} \bKe(\bJ(\hbR_{m,n}(\bX_i)),\bJ(\hbR_{m,n}(\bY_j))),
	\end{align}
	and, as before, $\hbR_{m,n}$ is empirical rank map based on the pooled sample $\mX_m \cup \mY_n$.

	\begin{remark}\label{rem:rankker}
		Note that when $\bJ(\mx)=\mx$, $\nu=\textrm{Unif}[0,1]^d$ and
		\begin{equation}\label{eq:ranker}
			\bKe(\mx,\my)=\lVert \mx\rVert + \lVert \my\rVert - \lVert \mx-\my\rVert,
		\end{equation}
		the statistic $\grsc$ is equivalent to the rank energy statistic presented in~{\em \cite{Deb19}}. In fact for $d=1$, it is equivalent to the two-sample Cram\'{e}r-von Mises test~\cite{AndersonCVM1962}, by~\cite[Lemma 4.4]{Deb19}.
	\end{remark}
	

	\begin{prop}[Distribution-freeness]\label{prop:dfreeker}
		Assume that $\mathrm{H}_0$ is true and $\mu_{1}=\mu_{2}\in \mathcal{P}_{\textrm{ac}}(\R^d)$. Then the distribution of $\grsc$ is universal, that is, it is free of $\mu_{1}=\mu_{2}$, for all $m,n\geq 1$.
	\end{prop}
	
	Using the above result we can readily obtain a finite sample distribution-free two-sample test which uniformly controls the Type I error. To this end, fix a level $\alpha\in (0,1)$ and let $c_{m,n}$ denote the upper $\alpha$ quantile of the universal distribution in~\cref{prop:dfreeker}. Consider the test:
	\begin{equation}\label{eq:testrankker}
		\ptsc:=\ind\left(\grsc\geq c_{m,n}\right).
	\end{equation}
	This test is exactly distribution-free for all $m,n\geq 1$ and uniformly level $\alpha$ under $\mathrm{H}_0$, that is, 
	\begin{equation}\label{eq:uniflevelker}
		\sup_{\mu_{1}=\mu_{2}\in \mathcal{P}_{\textrm{ac}}(\R^d)} \E\left[\ptsc\right]\leq\alpha.
	\end{equation}
	
	\subsection{Consistency and Asymptotic Null Distribution}\label{sec:consis}
	
	We now discuss the consistency of the test $\ptsc$. For this, we need the notion of a characteristic kernel, which is defined below.
	\begin{definition}[Characteristic kernel]\label{def:charker}
		A symmetric, non-negative definite kernel $\bKe(\cdot,\cdot)$ will be called characteristic if
		$\E_{\bZ \sim \mu_{1}} \bKe(\bZ,\cdot)=\E_{\bZ \sim \mu_{2}} \bKe(\bZ,\cdot) \quad \text{ if and only if } \quad \mu_{1}=\mu_{2},$
		for all $\mu_{1},\mu_{2}\in\mathcal{P}(\R^d)$ such that $\E_{\bZ \sim \mu_{1}} \bKe(\bZ,\bZ)<\infty$ and $\E_{\bZ \sim \mu_{2}} \bKe(\bZ,\bZ)<\infty$. Characteristic kernels play a central role in nonparametric tests based on reproducing kernel Hilbert spaces (see~\cite{gretton2009fast,Sejdinovic2013,Gretton2012}). Some examples include the kernel in \eqref{eq:ranker}, the {\it Gaussian kernel} where  $\bKe(\mx, \my) :=\exp(-\lVert \mx - \my \rVert^2)$, and the {\it Laplace kernel} where $\bKe(\mx, \my) :=\exp(-\lVert \mx -  \my \rVert_1)$ (here $\lVert \cdot \rVert_1$ denotes the usual $\ell_1$-norm on $\R^d$); see~\cite{sriperumbudur2008injective,sriperumbudur2010} for related results. A general strategy for constructing characteristic kernels from general semimetrics is also given in~\cite{Lyons13}.
	\end{definition}

	The following Theorem shows that $\ptsc$ yields a universally consistent test for problem~\eqref{eq:twosamden} whenever the $\bKe(\cdot,\cdot)$ used in~\eqref{eq:kerankscmmd} satisfies the characteristic property in~\cref{def:charker}. 
	
	\begin{theorem}\label{theo:consis} 
		Suppose assumption~\eqref{eq:empgrid} holds and the kernel $\bKe(\cdot,\cdot)$ is characteristic. Recall that $\bX\sim\mu_1$ and $\bY\sim\mu_2$. Moreover, assume that 
		\begin{equation}\label{eq:kerassn1}
			\E[\bKe(\bJ(\Rl(\bX)),\bJ(\Rl(\bX)))]<\infty, \qquad  \E[\bKe(\bJ(\Rl(\bY)),\bJ(\Rl(\bY)))]<\infty,
		\end{equation} 
		\begin{equation}\label{eq:kerassn2}
			\limsup\limits_{N\to\infty} \frac{1}{N}\sum_{i=1}^N \bKe(\bJ(\bh_i^d),\bJ(\bh_i^d))\leq \int \bKe(\bJ(\bz),\bJ(\bz))\,d\nu(\bz).
		\end{equation}	
		Then for problem \eqref{eq:twosamden} in the usual asymptotic regime~\eqref{eq:usual},  $\lim_{m,n\to\infty} \E\left[\ptsc\right]=1$
		provided $\mu_{1}\neq\mu_{2}$. 
	\end{theorem}
	
	The proof of~\cref{theo:consis} can be found in~\cref{sec:pfrankmmd}. The main idea is to use~\cref{theo:rankmapcon} to show that $\grsc$ converges in probability to $0$ if $\mu_1=\mu_2$ and to a strictly positive number if $\mu_1\neq\mu_2$.
	
	It is useful to compare~\cref{theo:consis} with~\cref{theo:rhconsis}. Note that under the standard assumptions,~\cref{theo:consis} yields consistency of $\ptsc$ for all fixed alternatives, which is a considerably larger class than the alternatives in~\cref{theo:rhconsis} for the test $\prsc$.
	
	Condition~\eqref{eq:kerassn2} can be verified in the same way as condition~\eqref{eq:nullrhas} (see~\cref{sec:verbd}) and so we skip the details for brevity. Note that condition~\eqref{eq:kerassn1} always holds if $\bKe$ is bounded, which in particular, includes the Gaussian and the Laplace kernels discussed above. It also holds for the rank energy test (see~\cref{rem:rankker}) 
	because the rank maps lie in $[0, 1]^d$. Another important case is $\ptsc$ with the \emph{van der Waerden} score:
	\begin{equation}\label{eq:vanscore}
		\bJ(\mx):= F^{-1}_{\chi_d}(\lVert\mx\rVert)\frac{\mx}{\lVert \mx\rVert} \bm{1}(\mx\neq\bzr).
	\end{equation}
	and the spherical uniform reference distribution (for $\nu$). Here $F_{\chi_d}(\cdot)$ is the distribution function of a $\sqrt{\chi_d^2}$ random variable. For ease of exposition, suppose that $\bKe(\cdot,\cdot)$ is the kernel in~\eqref{eq:ranker}. Then by the triangle inequality,
	$$\E[\bKe(\bJ(\Rl(\bX)),\bJ(\Rl(\bX)))]\lesssim \E \lVert \bJ(\Rl(\mathbf{Z}))\rVert = \E \lVert \mathcal{N}(0,I_d) \rVert <\infty,$$ 
	where $\mathbf{Z}\sim \lambda \mu_{1}+(1-\lambda)\mu_{2}$. This establishes the consistency of rank-based kernel two-sample tests which use the van der Waerden score,  provided~\eqref{eq:kerassn2} holds. This was presented as a conjecture (in the context of independence testing) in~\cite{shi2020rate}. We are now able to verify their conjecture using similar techniques as in the proof of~\cref{theo:consis}. We discuss their conjecture in greater detail in~\cref{sec:comparelit}.
	
	\begin{remark}[Moment assumptions and comparisons with kernel MMD]\label{rem:momasn}
		For usual kernel MMD (see~\cite{Gretton2012}), the condition for consistency is $\E \bKe(\bX,\bX)<\infty$ and $\E \bKe(\bY,\bY)<\infty$; c.f.~\eqref{eq:kerassn1} above. For unbounded kernels, such as  the one in~\eqref{eq:ranker}, this imposes additional moment conditions on $\mu_1$ and $\mu_2$. In contrast, if we choose $\bJ\#\nu$ to be a compactly supported absolutely continuous distribution, then~\eqref{eq:kerassn1} will be satisfied for all continuous, albeit unbounded kernels $\bKe(\cdot,\cdot)$, without imposing further moment conditions on $\mu_1$ and $\mu_2$.
	\end{remark}
	
	Next, we move on to the asymptotic null distribution of $\grsc$ (see~\eqref{eq:kerankscmmd}). We have already shown that $\grsc$ is  distribution-free under the null hypothesis $\mathrm{H}_0$ (see~\cref{prop:dfreeker}). In the subsequent theorem, we will further show that under $\mathrm{H}_0$, $\grsc$ is $O_p(1)$ and obtain the limiting distribution explicitly. Towards this end, we first set up some notation. Define  
	$$\tilde{\mathsf{K}}(\mathbf{u},\mathbf{v}):= \bKe(\mathbf{u},\mathbf{v})-\mathbb{E}_{\mathbf{V}}\bKe(\mathbf{u},\mathbf{V})-\mathbb{E}_{\mathbf{U}}\bKe(\mathbf{U},\mathbf{v})+\mathbb{E}_{(\mathbf{U},\mathbf{V})}\bKe(\mathbf{U},\mathbf{V}),$$ 
	where $\mathbf{U},\mathbf{V}$ are drawn independently from $\bJ\#\nu$ (the ERD, see~\cref{def:erd}). Assume that \begin{equation}\label{eq:seckerasn}    
		\limsup\limits_{N\to\infty} \frac{1}{N}\sum_{i=1}^N \bKe^2(\bJ(\bh_i^d),\bJ(\bh_i^d))\leq \int \bKe^2(\bJ(\bz),\bJ(\bz))\,d\nu(\bz)<\infty.
	\end{equation} 
	Then by~\cite[Theorem VI.23]{Reed1980}, there exists a countable collection of eigenvalues $\varpi_1, \varpi_2, \ldots, $ and corresponding orthonormal eigenfunctions $\Psi_1(\cdot), \Psi_2(\cdot), \ldots, $ from $\mathbb{R}^d$ to $\mathbb{R}$ with respect to the $\bJ\#\nu$ measure such that 
	\begin{align}\label{eq:conteigexp}
		\tilde{\mathsf{K}}(\mathbf{u},\mathbf{v})=\sum_{i=1}^{\infty}\varpi_i\Psi_i(\mathbf{u})\Psi_i(\mathbf{v}), \quad \mbox{in}\ L^2\left((\bJ\#\nu)\otimes(\bJ\#\nu)\right).
	\end{align}  
	
	\begin{theorem}[Null distribution]\label{theo:nullker}
		Assume that~\eqref{eq:usual},~\eqref{eq:empgrid}, and~\eqref{eq:seckerasn} hold. Let $\{G_i\}_{i\geq 1}$ be an i.i.d. sequence of $\mathcal{N}(0,1)$ random variables.  
		Then, under $\mathrm H_0$ as in~\eqref{eq:twosamden}: 
		\begin{equation}\label{eq:kernull1}
			\grsc\overset{w}{\longrightarrow}\sum_{i=1}^{\infty}\varpi_i (G_i^2-1).
		\end{equation}
	\end{theorem}
	
	The proof of~\cref{theo:nullker} can be found in~\cref{sec:pfrankmmd}. The proof proceeds in two main steps. Firstly, we show that \begin{equation}\label{eq:basestep1}
		\grsc-\grsr\overset{P}{\longrightarrow} 0,
	\end{equation} 
	where
	\begin{align}\label{eq:kerankscmmdor}
		&\;\;\grsr:=\frac{mn}{m+n}\left[ w_{m, n}^{(1),\mathrm{or}}  + w_{m, n}^{(2),\mathrm{or}} - b_{m, n}^{\mathrm{or}}  \right], 
	\end{align}  
	\begin{align}
		w_{m, n}^{(1),\mathrm{or}} & := \frac{1}{m(m-1)}\sum_{1\leq i\neq j \leq m} \mathsf{K}(\bJ(\Rmu(\bX_i)),\bJ(\Rmu(\bX_j))), \nonumber \\ 
		w_{m, n}^{(2),\mathrm{or}}  & := \frac{1}{n(n-1)}\sum_{1\leq i\neq j \leq n} \bKe(\bJ(\Rmu(\bY_i)),\bJ(\Rmu(\bY_j))), \nonumber \\ 
		b_{m, n}^{\mathrm{or}} & := \frac{2}{mn}\sum_{1 \leq i \leq m}\sum_{1 \leq j \leq n} \bKe(\bJ(\Rmu(\bX_i)),\bJ(\Rmu(\bY_j))).
	\end{align}
	This step crucially uses~\cref{theo:rankmapcon}. The second step is to show that the limiting distriburion of $\grsr$ matches the right hand side of~\eqref{eq:kernull1}. This proceeds using standard theory of \emph{degenerate} $U$-statistics; see e.g.,~\cite{Serfling1980,Reed1980}. On account of degeneracy in $\grsr$, the proof of the first step above requires considerably more work than the proof of a similar step in~\cref{theo:nullrankhotelling} (see the discussion around~\eqref{eq:delta_oracle}). 
	
	\begin{remark}[Comparison with $\rhsc$ and~\cref{theo:nullrankhotelling}]
		The limiting null distribution of $\grsc$ in~\cref{theo:nullker} depends on the ERD (see~\cref{def:erd}) through the $\varpi_i$'s, whereas, in the analogous result~\cref{theo:nullrankhotelling} for $\rhsc$, the limiting null is $\chi_d^2$, irrespective of the ERD.
	\end{remark}
	
	\begin{remark}[Comparison with null distribution of kernel MMD]
		The asymptotic null distribution in~\cref{theo:nullker} is exactly same as that of the usual kernel MMD (without using ranks) as presented in~\cite[Theorem 32]{Sejdinovic2013}, if $\mu_1=\mu_2=\bJ\#\nu$.
	\end{remark}
	
	By virtue of~\cref{theo:nullker}, it is possible to choose a universal cutoff for $\grsc$ under the null, for all large $m$, $n$, based on the limiting distribution in~\eqref{eq:kernull1}, thereby eliminating the need to obtain cutoffs for everey $m,n$.  Admittedly the limiting distribution does not have a simple form. There is extensive literature on approximating limiting distributions of the form~\eqref{eq:kernull1}; see e.g.,~\cite[Theorem 1]{gretton2009fast},~\cite{Gretton2012,bodenham2016comparison}. By virtue of distribution-freeness, this limiting distribution can be approximated using the same techniques verbatim, even before the data is observed.
	
	\subsection{Local Asymptotic Power}\label{sec:nullpow} 
	
	Having established consistency and distribution-freeness of $\ptsc$ in the previous subsections, we will now shift our attention to its local power against contiguous alternatives as in~\eqref{eq:twosamloc}. We will soon see that $\ptsc$ has non-trivial power against such alternatives, in addition to being consistent against all fixed alternatives and exactly distribution-free for all sample sizes. To the best of our knowledge, this combination of properties is not known to be satisfied for any of the other existing two-sample tests in the literature (also see~\cref{sec:compot}).
	
	\begin{theorem}[Asymptotics under contiguous alternatives]~\label{theo:Piteffrank} 
		Suppose that assumptions ~\eqref{eq:empgrid} and \eqref{eq:seckerasn} hold.  
		Then, under $\mathrm{H}_1$ from model~\eqref{eq:twosamsmooth}, the following holds, as $N\to\infty$ in the usual asymptotic regime~\eqref{eq:usual}:  
		$$\grsc\overset{w}{\longrightarrow}\sum_{i=1}^{\infty}\varpi_i\left[\left(G_i+\sqrt{\lambda(1-\lambda)}\mathbb{E}\left[\Psi_i(\bJ(\Rmu(\mathbf{X})))\bh^{\top}\boldsymbol{\eta}(\bX,\bt_0)\right]\right)^2-1\right],$$ 
		where $G_1, G_2, \ldots, $ are i.i.d. standard Gaussian random variables and the eigenvalues $\varpi_1, \varpi_2, \ldots, $ and the eigenfunctions $\Psi_1(\cdot), \Psi_2(\cdot), \ldots$ are as defined in \eqref{eq:conteigexp}. 
	\end{theorem}
	
	The proof of the above result can be found in~\cref{sec:pfrankmmd}. The result above shows that $\ptsc$ has non-trivial asymptotic power against $O(1/\sqrt N)$ alternatives and, as a consequence, has non-trivial Pitman efficiency (see \cref{def:asympeff} in the Appendix for further details), in addition to being distribution-free and computationally feasible (see~\cref{sec:compasgn}). For better understanding, fix $\bh^*:=\frac{\bh}{\lVert \bh\rVert}$ and let $\lVert \bh\rVert\to\infty$. Then, if $\varpi_i>0$ for all $i$, as a consequence of~\cref{theo:Piteffrank}, we have
	\begin{equation}\label{eq:powk}
		\lim\limits_{\lVert \bh \rVert\to\infty}\lim\limits_{N\to\infty}\E_{\mathrm{H}_1}[\ptsc]=1.
	\end{equation}
	While distribution-free testing of two multivariate distributions has a long history which has fostered renewed interest in light of modern applications, to the  best of our knowledge, none of the previously proposed distribution-free tests satisfy the three aforementioned properties simultaneously (see~\cref{sec:compot}).  
	
	\section*{Acknowledgments}
	The authors would like to thank Marc Hallin for numerous insightful comments that greatly improved the quality and the presentation of the paper. We are also grateful to Johan Segers for pointing out an error in an earlier version of the paper.

		\bibliography{biblio}

		\appendix

\begin{center}
\large{\textbf{Appendix}}
\end{center}

	\medskip
	
	In this Appendix section we will present the following: 
	\begin{itemize}
		\item \cref{sec:compare} contains useful implications of our results  in the context of the multivariate independence testing problem. This, in particular, addresses an open problem raised in~\cite{shi2020rate}.
		\item \cref{sec:auxdet} contains auxiliary technical details that we skipped for brevity. This includes (a) analyzing asymptotic properties of our proposed tests under contamination alternatives to augment our results on location alternatives (see~\cref{sec:contamodel}), (b) obtaining a Chernoff-Savage~\cite{Chernoff1958} ARE lower bound for our rank Hotelling $T^2$ test (see~\eqref{eq:testtrank}) under a blind source separation model (see~\cref{sec:bsep}), (c) verifying assumption~\eqref{eq:nullrhas} for deterministic sequences such as quasi-Monte Carlo points (see~\cref{sec:verbd}),  (d) summarizing the computational complexity of our testing procedures from Sections \ref{sec:ranktsq} and \ref{sec:rankmmd} (see~\cref{sec:compasgn}), and (e) comparing our multivariate rank-based tests to existing asymptotically distribution-free tests in the literature (see~\cref{sec:compot}).
		\item \cref{sec:pfmain} contains all the proofs of our results, along with the proofs of additional results from Sections \ref{sec:compare}, \ref{sec:contamodel}, and \ref{sec:bsep}. Further, in~\cref{sec:lowerbound}, we present some asymptotic minimax lower bound results in the context of the testing problem \eqref{eq:twosamsmooth} which shows the rate optimality of our proposed procedures.
		\item \cref{sec:sim} contains detailed simulation studies that support our theoretical results. In particular, we numerically demonstrate (a) the multivariate Hodges-Lehmann \cite{Hodges1956} and Chernoff-Savage \cite{Chernoff1958} phenomena (see~\cref{sec:numill}), (b) the finite sample power comparisons between the rank Hotelling $T^2$ test (see~\eqref{eq:testtrank}) with different ERDs (see~\cref{def:erd}) and the usual Hotelling $T^2$ test (see~\cref{sec:finper}), (c) the consistency of rank Hotelling $T^2$ test (see~\eqref{eq:testtrank}) beyond location alternatives where usual Hotelling $T^2$ fails (see~\cref{sec:beyondloc}), (d) the finite sample power comparisons between the rank MMD test (see~\eqref{eq:testrankker}) with different ERDs (see~\cref{def:erd}) and the usual MMD and energy tests (see~\cref{sec:emmdrank}), and (e) the power comparisons between the same tests as above but now in the high-dimensional regime (see~\cref{sec:highdpower}). 
	\end{itemize}
	
	\section{Broader scope}\label{sec:compare} 
	
	In this section, we will illustrate the broader scope of our techniques to other nonparametric testing problems, by establishing analogous results in the context of mutual independence testing and discussing connections and refinements to related methods in recent literature. In particular, in Section \ref{sec:indtest} we  construct a class of distribution-free nonparametric tests of independence which are natural multivariate analogs of Spearman's rank correlation~\cite{spearman1904proof} and enjoy favorable ARE properties similar to $\rhsc$. In Section \ref{sec:comparelit} we discuss how our techniques provide direct improvements of the results in some related papers such as~\cite{shi2020rate,hallin2020fully,Deb19,hallin2020center,hallin2020rank}, including the resolution of an open question from~\cite{shi2020rate}.
	
	\subsection{Applications to Independence Testing}\label{sec:indtest}
	Suppose $(\bX_1,\bY_1),\ldots ,(\bX_n,\bY_n)$ are i.i.d. observations from  $\mu\in\mathcal{P}(\R^{d_1+d_2})$ with absolutely continuous marginals $\mu_1$ and $\mu_2$ (note that $\mu$ need not be absolutely continuous). We are interested in the following test of independence problem:
	\begin{equation}\label{eq:indep}
		\mathrm{H}_0:\bX_1\!\perp\!\!\!\perp\bY_1 
		\qquad \mathrm{versus} \qquad \mathrm{H}_1:\bX_1 \not\!\perp\!\!\!\perp \bY_1 .
	\end{equation}
	In other words, we want to test the hypothesis $\mathrm{H}_0: \mu=\mu_1\otimes\mu_2$ versus $\mathrm{H}_1: \mu \neq \mu_1\otimes\mu_2$. This is the classical {\it multivariate mutual independence testing problem} which has received enormous attention in the past hundred years (see~\cite[Chapters 1 and 8]{hollander2013nonparametric} and the references therein) with applications in finance~\cite{lu2009financial}, statistical genetics~\cite{liu2010versatile}, survival analysis~\cite{martin2005testing}, etc. 
	
	When $d_1=d_2=1$, the earliest attempt at problem~\eqref{eq:indep} is the Pearson's correlation~\cite{pearson1920notes} which can only detect linear association between two variables. This was soon extended through the classical Spearman's rank correlation coefficient~\cite{spearman1904proof} which can detect any monotonic association between the variables and has the additional benefit of being exactly distribution-free under $\mathrm{H}_0$ for all sample sizes. Since then rank-based distribution-free correlation measures in the univariate case have received a lot of attention (see, for example,~\cite{kendall1938new,hoeffding1948non,blum1961distribution,Bregsma2014} and the references therein).  
	
	In the multivariate setting, the earliest test for problem~\eqref{eq:indep} is probably due to Wilks~\cite{wilks1935independence} which is constructed using the Gaussian likelihood ratio test statistic, and reduces to Pearson's correlation for $d_1=d_2=1$. In this sense, Wilks' test is the natural multivariate analog of Pearson's correlation. Since the advent of Wilks' test, a number of other multivariate tests have been proposed including  the Friedman-Rafsky test based on geometric graphs~\cite{friedman1983} and the celebrated distance covariance test~\cite{Gabor2007} (see~\cite{Josse2016} for a comprehensive survey of other procedures). However, none of these proposals are exactly distribution-free under $\mathrm{H}_0$, the chief hurdle once again, being the lack of a canonical ordering in $\R^d$, for $d\geq 2$. This gap in the literature was recently bridged in a series of works~\cite{Deb19,shi2020distribution,shi2020rate,deb2020measuring} where multivariate ranks based on optimal transport were used (as we did in~\cref{sec:Rank-Hotelling}) to construct multivariate, nonparametric, exactly distribution-free tests for~\eqref{eq:indep}. However, none of these papers provide any explicit expressions for the ARE of their tests against natural counterparts, and consequently do not guarantee any ARE lower bounds. 
	
	The main goal of this section is to overcome the aforementioned gap in the optimal transport based independence testing literature by constructing a multivariate version of Spearman's rank correlation coefficient that is exactly distribution-free and has high ARE compared to the Wilk's test, the natural multivariate analog of Pearson's correlation coefficient.
	
	\subsubsection{Multivariate Spearman's Correlation}\label{sec:msc}
	Before defining our statistic, let us fix some notation. Let $\nu_1\in\mathcal{P}_{\textrm{ac}}(\R^{d_1})$ and $\nu_2\in\mathcal{P}_{\textrm{ac}}(\R^{d_2})$ be two reference distributions. Let $\hbR_1(\bX_1),\ldots ,\hbR_1(\bX_n)$ denote the multivariate ranks of $\bX_1,\ldots ,\bX_n$ constructed as in~\eqref{eq:empopt}~and~\eqref{eq:defemprank} with a fixed grid $\{\bh_{1,i}^{d_1}\}_{i\in [n]}$ satisfying~\eqref{eq:empgrid} with $\nu=\nu_1$. Construct $\hbR_2(\bY_1),\ldots ,\hbR_2(\bY_n)$ using  $\{\bh_{2,i}^{d_2}\}_{i\in [n]}$ analogously, where $\{\bh_{2,i}^{d_2}\}_{i\in [n]}$ is a discretization of $\nu_2$.  Also, given any $d_1\times d_2$ matrix $\bm H$, let $\mbox{\textrm{vec}}(\bm H)$ be the vector obtained by unlisting the entries of $\bm H$ row-wise. For example, 
	$$\bm H=\begin{pmatrix} 1 & 3 & 5 \\ 2 & 4 & 6 \end{pmatrix} \;\;\implies \;\; \mbox{\textrm{vec}}(\bm H)=\begin{pmatrix} 1 & 3 & 5 & 2 & 4 & 6\end{pmatrix}.$$ 
	Next, for any two matrices $\bm H_1$ and $\bm H_2$, we will use $\bm H_1\otimes \bm H_2$ to  denote the standard Kronecker product. Finally, let $\bJ_1:\R^{d_1}\to\R^{d_1}$, $\bJ_2:\R^{d_2}\to\R^{d_2}$ be two injective, continuous score functions and assume that the ERDs, $\bJ_1\#\nu_1$ and $\bJ_2\#\nu_2$ both satisfy~\cref{assumption3} with positive definite covariance matrices $\Serd^{(1)}$ and $\Serd^{(2)}$.
	
	Based on the above notation, setting $\nu:=(\nu_1,\nu_2)$ and $\bJ:=(\bJ_1,\bJ_2)$, our version of multivariate Spearman's correlation is given as:
	\begin{equation}\label{eq:rank_spearman}
		\rhnk:=\left\lVert \left(\sn\otimes \st\right)^{-\frac{1}{2}}\mbox{\textrm{vec}}\left(\frac{1}{\sqrt{n}}\sum_{i=1}^n (\bJ_1(\hbR_1(\bX_i))-\ron)(\bJ_2(\hbR_2(\bY_i))-\rtw)^{\top}\right)\right\rVert^2,
	\end{equation}
	where $\ron:=\frac{1}{n} \sum_{i=1}^n \hbR_1(\bX_i)$ and $\rtw:= \frac{1}{n}\sum_{i=1}^n \hbR_2(\bY_i)$.

	\begin{remark}[Extension of Spearman's rank correlation]\label{rem:conspear}
		When $d_1=d_2=1$ and the fixed grids $\{\bh_{1,i}^{d_1}\}_{i\in [n]}$, $\{\bh_{2,i}^{d_2}\}_{i\in [n]}$ are both chosen as $\{i/n\}_{i\in [n]}$, then $\rhnk$ is the same as the squared Spearman's rank correlation coefficient. Moreover, $\rhnk$ is exactly distribution-free under $\mathrm{H}_0$ as shown in~\cref{prop:dfreeind} below. In this sense, $\rhnk$ is a multivariate extension of the classical Spearman's rank correlation coefficient.
	\end{remark}
	
	Our first result shows that $\rhnk$ is distribution-free under $\mathrm{H}_0$. This follows directly from~\cite[Proposition 2.2]{Deb19} and is formalized in the following proposition:
	\begin{prop}\label{prop:dfreeind}
		Under $\mathrm{H}_0$ as specified in~\eqref{eq:indep}, $\rhnk$ is distribution-free for all $n\geq 1$, that is, its distribution is free of $\mu_1$ and $\mu_2$.
	\end{prop}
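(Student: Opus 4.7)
My strategy is to reduce the claim to the distribution-freeness of the joint collection of rank pairs $\{(\hbR_1(\bX_i), \hbR_2(\bY_i))\}_{i \in [n]}$, and then deploy the standard exchangeability/invariance argument underlying \cite[Proposition 2.2]{Deb19}. The first observation is that $\rhnk$ as defined in \eqref{eq:rank_spearman} is a deterministic, measurable function of these pairs once the fixed grids $\{\bh_{1,i}^{d_1}\}$, $\{\bh_{2,i}^{d_2}\}$, the score functions $\bJ_1,\bJ_2$, and the (deterministic, data-independent) matrices $\sn,\st$ are specified. So it suffices to show that the joint law of this collection under $\mathrm{H}_0$ does not depend on $(\mu_1,\mu_2)$.

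Next I would exploit the assignment/permutation structure of the rank maps from \eqref{eq:empopt}--\eqref{eq:defemprank}. Since $\mu_1 \in \Pacd$, the quadratic assignment problem for $(\bX_1,\ldots,\bX_n)$ against the grid $\{\bh_{1,i}^{d_1}\}$ has an almost surely unique optimizer $\hat{\sigma}$, and hence $(\hbR_1(\bX_1),\ldots,\hbR_1(\bX_n))$ is a.s.\ a permutation of the fixed grid. By the i.i.d.\ (hence exchangeable) nature of $(\bX_1,\ldots,\bX_n)$ and the permutation-equivariance of the cost, this random permutation is uniformly distributed over $S_n$ irrespective of $\mu_1$. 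The identical argument applied to $(\bY_1,\ldots,\bY_n)\sim \mu_2$ shows that $(\hbR_2(\bY_1),\ldots,\hbR_2(\bY_n))$ is a uniform random permutation of $\{\bh_{2,i}^{d_2}\}$ independent of $\mu_2$.

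Finally, under $\mathrm{H}_0$ the blocks $(\bX_1,\ldots,\bX_n)$ and $(\bY_1,\ldots,\bY_n)$ are independent, so the two random permutations producing the two rank sequences are independent as well. Consequently the joint distribution of $\{(\hbR_1(\bX_i),\hbR_2(\bY_i))\}_{i \in [n]}$ is the product of two independent uniform distributions on $S_n$ pushed forward through the fixed grids, which is entirely free of $(\mu_1,\mu_2)$; composing with the measurable map from the first paragraph yields the distribution-freeness of $\rhnk$.

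The only non-trivial ingredient is the almost-sure uniqueness of the optimal assignment under absolute continuity of $\mu_1$ (and analogously for $\mu_2$), which is what forces the rank vectors to be bona fide permutations of the fixed grids; this is exactly the content borrowed from \cite[Proposition 2.2]{Deb19}. Everything else is a clean application of exchangeability together with the product-form independence supplied by $\mathrm{H}_0$, so I do not anticipate any further obstacle.
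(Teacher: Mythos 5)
Your proof is correct and fleshes out precisely the argument the paper delegates to \cite[Proposition 2.2]{Deb19}: a.s.\ uniqueness of the optimal assignment (from absolute continuity) makes the rank vector a genuine permutation of the fixed grid, exchangeability of the i.i.d.\ sample together with permutation-equivariance of the cost forces that permutation to be uniform on $S_n$ regardless of the underlying law, and $\mathrm{H}_0$ supplies independence of the two permutations. One small terminological slip: the optimization in \eqref{eq:empopt} is a \emph{linear} assignment problem (the cost is linear in the matching variables), not a quadratic assignment problem.
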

	Using the above result we can readily obtain a finite sample distribution-free independence test. Fix a level $\alpha\in (0,1)$ and let $c_n$ denote the upper $\alpha$ quantile of the universal distribution in~\cref{prop:dfreeind}. Consider the test function:
	\begin{equation}\label{eq:testindrank}
		\prnk:=\ind\left(\rhnk\geq c_{n}\right).
	\end{equation}
	This test is exactly distribution-free for all $n\geq 1$ and uniformly level $\alpha$ under $\mathrm{H}_0$, in the sense of~\eqref{eq:uniflevelt}. 
	From~\cref{prop:dfreeind}, it is clear that the asymptotic null distribution of $\rhnk$ should be free of $\mu_1$ and $\mu_2$. In the following theorem, we make this explicit.
	
	\begin{theorem}\label{theo:nullrhnk}
		Suppose the fixed grids $\{\bh_{1,i}^{d_1}\}_{i\in [n]}$, $\{\bh_{2,i}^{d_2}\}_{i\in [n]}$ satisfy~\eqref{eq:nullrhas} with score functions $\bJ_1(\cdot)$, $\bJ_2(\cdot)$ and reference distributions $\nu_1$, $\nu_2$, respectively. Then under $\mathrm{H}_0$ as in~\eqref{eq:indep}, 
		$$\rhnk\overset{w}{\longrightarrow}\chi^2_{d_1 d_2}.$$
	\end{theorem}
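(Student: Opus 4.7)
The strategy is to reduce the statistic $\rhnk$ to an oracle counterpart in which the empirical rank maps $\hbR_1, \hbR_2$ are replaced by the population rank maps $\bm R_1, \bm R_2$ (taking $\mu_k$ to $\nu_k$ via McCann's theorem), and then to invoke the multivariate central limit theorem together with the continuous mapping theorem on the resulting i.i.d.\ sum. Throughout, the key structural fact we exploit is that under $\mathrm{H}_0$, the samples $\{\bX_i\}$ and $\{\bY_i\}$ are independent, so the random vectors $\bm U_i := \bJ_1(\bm R_1(\bX_i))$ and $\bm V_i := \bJ_2(\bm R_2(\bY_i))$ are independent (within and across pairs), with $\bm U_i \sim \bJ_1\#\nu_1$ and $\bm V_i \sim \bJ_2\#\nu_2$, and hence by Assumption~\ref{assumption3} have finite means $\mu_U, \mu_V$ and finite covariance matrices $\sn, \st$.

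\textbf{Step 1 (Oracle reduction).} Define the oracle statistic
\begin{equation*}
T_n^{\mathrm{or}} := \left\lVert (\sn \otimes \st)^{-1/2}\, \mathrm{vec}\!\left(\tfrac{1}{\sqrt n}\sum_{i=1}^n (\bm U_i - \mu_U)(\bm V_i - \mu_V)^{\top}\right)\right\rVert^2.
\end{equation*}
The first step is to show $|\rhnk - T_n^{\mathrm{or}}| \overset{P}{\to} 0$ under $\mathrm H_0$. Expanding the bilinear form and comparing term by term, this reduces to controlling (i) $\tfrac{1}{\sqrt n}\sum_i [\bJ_1(\hbR_1(\bX_i)) - \bm U_i]\,\bm V_i^{\top}$ and its symmetric counterpart, (ii) the analogous second order correction with both rank maps perturbed, and (iii) the replacement of the sample-based centerings $\ron, \rtw$ by $\mu_U, \mu_V$. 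Each piece is handled by Theorem~\ref{theo:rankmapcon} applied with suitable $\cF$ and $\bJ$: in particular, $p=1$ with $\cF$ the identity or $\cF(\bm x) = \lVert \bm x\rVert^2$ gives the approximation $\tfrac{1}{n}\sum_i \lVert \bJ_k(\hbR_k) - \bJ_k(\bm R_k)\rVert \to 0$ together with bounds on $\tfrac{1}{n}\sum_i \lVert\bJ_k(\hbR_k)\rVert^2$, and $p=2$ with $\cF(\bm x, \bm y) = \bm x\bm y^{\top}$ controls the bilinear cross terms. Crucially, conditioning on the $\bY$-sample (respectively, the $\bX$-sample) turns the cross term in (i) into a sum of conditionally centered random vectors whose conditional variance is precisely of the form just bounded, so that an application of Chebyshev's inequality completes the oracle reduction.

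\textbf{Step 2 (CLT for the oracle, and continuous mapping).} Writing $\bm W_i := \bm U_i - \mu_U$ and $\bm Z_i := \bm V_i - \mu_V$, the row-wise vectorization yields $\mathrm{vec}(\bm W_i \bm Z_i^{\top}) = \bm W_i \otimes \bm Z_i$. Under $\mathrm H_0$, $\bm W_i$ and $\bm Z_i$ are independent and mean zero, so
\begin{equation*}
\E[\bm W_i \otimes \bm Z_i] = 0, \qquad \mathrm{Cov}(\bm W_i \otimes \bm Z_i) = \E[\bm W_i \bm W_i^{\top}] \otimes \E[\bm Z_i \bm Z_i^{\top}] = \sn \otimes \st,
\end{equation*}
using the mixed-product property of the Kronecker product. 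The multivariate CLT then gives $\tfrac{1}{\sqrt n}\sum_i \bm W_i \otimes \bm Z_i \overset{d}{\to} \mathcal N(\mathbf 0, \sn \otimes \st)$. Since $\sn$ and $\st$ are positive definite, so is $\sn \otimes \st$, and left-multiplication by $(\sn \otimes \st)^{-1/2}$ followed by squaring the norm yields the $\chi^2_{d_1 d_2}$ limit by the continuous mapping theorem. Combined with Step~1 and Slutsky's lemma, this gives the claimed weak convergence of $\rhnk$.

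\textbf{Main obstacle.} The bottleneck is Step 1: Theorem~\ref{theo:rankmapcon} delivers $L^1$-type averages with $1/n^p$ normalization, whereas the H\'ajek representation requires $o_P(1)$ control at the $\sqrt n$ scale. The way around this is to leverage the $\mathrm H_0$ independence of $\bX$ and $\bY$: conditioning on one sample reduces each cross term to a sum of conditionally i.i.d.\ centered variables whose conditional variance is exactly the kind of quantity ($\tfrac{1}{n}\sum_i \lVert\bJ_k(\hbR_k) - \bJ_k(\bm R_k)\rVert^2$, which is $o_P(1)$ via Theorem~\ref{theo:rankmapcon} applied with $\cF(\bm x) = \lVert \bm x\rVert^2$ together with the permutation identity $\tfrac{1}{n}\sum_i \lVert\bJ_k(\hbR_k(\cdot))\rVert^2 = \tfrac{1}{n}\sum_i \lVert \bJ_k(\bh_{k,i}^{d_k})\rVert^2$ and condition~\eqref{eq:nullrhas}) that Theorem~\ref{theo:rankmapcon} can deliver. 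This is analogous to the H\'ajek step used in the proof of Theorem~\ref{theo:nullrankhotelling}, except that here the structure is bilinear rather than linear.
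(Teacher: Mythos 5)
Your two-step plan (oracle reduction, then multivariate CLT plus continuous mapping) is the same skeleton the paper uses, and your Step~2, including the Kronecker-product identities $\mathrm{vec}(\bm W_i \bm Z_i^{\top}) = \bm W_i \otimes \bm Z_i$ and $\mathrm{Cov}(\bm W_i \otimes \bm Z_i)=\Serd^{(1)}\otimes\Serd^{(2)}$, is identical. The genuine difference is in how Step~1 is executed. The paper defines its oracle $\ornk$ with \emph{sample-mean} centering $\jro,\jrt$, so that both the empirical and oracle score arrays are exactly centered ($\sum_i \hbG_{\bm X_i}=\bm 0$ and $\sum_i \bGa_{\bm X_i}=\bm 0$), and then conditions on the \emph{unordered sets} $(\mathcal X_n,\mathcal Y_n)$ while averaging over the random pairing $\sigma$ that $\mathrm H_0$ makes exchangeable; the resulting conditional second moment factors exactly into a product of two diagonal averages (one per marginal), each handled by Theorem~\ref{theo:rankmapcon}. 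You instead use population-mean centering in the oracle and condition on one sample at a time, bounding the conditional variance and applying Chebyshev. This is a viable alternative route, but there are two places where your write-up glosses over structure that the paper's permutation argument takes care of automatically. First, given the $\bY$-sample, the summands in a term like $\frac{1}{\sqrt n}\sum_i(\hat{\bm A}_i-\bm W_i)\bm Z_i^{\top}$ are \emph{not} conditionally independent: every $\hat{\bm A}_i$ depends on the whole $\bX$-sample through the joint assignment $\hat\sigma$, so the conditional variance involves the cross-covariances $\E[(\hat{\bm A}_i-\bm W_i)^{\top}(\hat{\bm A}_j-\bm W_j)]$ for $i\neq j$, not just the diagonal quantity $\frac1n\sum_i\|\bJ_k(\hbR_k)-\bJ_k(\bR_k)\|^2$. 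These cross terms are $O(1/n)$ by exchangeability together with the identity $\sum_i\hat{\bm A}_i=\bm 0$ (which holds since $\ron$ is the deterministic grid average), so the argument survives, but this needs to be said. Second, your item (i) pairs $[\bJ_1(\hbR_1(\bX_i))-\bm U_i]$ with the \emph{uncentered} $\bm V_i$; as written, its conditional mean given the $\bY$-sample is $\delta_n\cdot\frac{1}{\sqrt n}\sum_i \bm V_i^{\top}$ with $\delta_n:=\frac1n\sum_j\bJ_1(\bh_{1,j}^{d_1})-\mu_U$, and~\eqref{eq:empgrid} carries no rate for $\delta_n$, so this is not controllable. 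You should instead pair it with the sample-centered factor $\bJ_2(\hbR_2(\bY_i))-\rtw$, whose sum vanishes exactly, which kills the $\delta_n$ bias rather than merely bounding it. Finally, Theorem~\ref{theo:rankmapcon} with $\cF(\bm x)=\|\bm x\|^2$, $p=r=1$ controls $\frac1n\sum_i\big|\,\|\bJ_k(\hbR_k(\bZ_i))\|^2-\|\bJ_k(\bR_k(\bZ_i))\|^2\big|$ but not the diagonal cross term $\frac1n\sum_i\bJ_k(\hbR_k(\bZ_i))^{\top}\bJ_k(\bR_k(\bZ_i))$ directly, so one additional step (a uniform-integrability/Vitali argument as sketched in the paper's proof of Theorem~\ref{theo:nullrankhotelling}) is needed to conclude the $L^2$ consistency you quote; the paper's own proof of the present theorem elides this step too, so it is a shared rather than proposal-specific issue.
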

	
	The proof of~\cref{theo:nullrhnk} is given in~\cref{sec:pfindtest}. The simple limiting null distribution of $\rhnk$ as presented in~\cref{theo:nullrhnk} can be used to calibrate the statistic $\rhnk$ to obtain an asymptotically level $\alpha$ test.

	\subsubsection{ARE against Wilks' Test~{\em \cite{wilks1935independence}}}\label{sec:arewilks}
	
	In this section, we will derive lower bounds on the ARE of $\rhnk$ against Wilks' statistic over various classes of multivariate distributions. To begin with, we recall the Wilks' test statistic: 
	\begin{small}
		\begin{equation}\label{eq:wilkstest}
			R_n:=n\log\left({\frac{\mbox{det}(\bm{Q}_{1,1})\cdot \mbox{det}(\bm{Q}_{2,2})}{\mbox{det}(\bm Q)}}\right),\ \bm{Q}:=\begin{pmatrix} \bm{Q}_{1,1} & \bm{Q}_{1,2} \\ \bm{Q}_{2,1} & \bm{Q}_{2,2}\end{pmatrix}:=\frac{1}{n}\sum_{i=1}^n \begin{pmatrix} \bX_i-\obx \\ \bY_i-\oby\end{pmatrix}\begin{pmatrix} \bX_i-\obx \\ \bY_i-\oby\end{pmatrix}^{\top},
		\end{equation}
	\end{small}
	$\obx:=\frac{1}{n}\sum_{i=1}^n \bX_i$ and $\oby:= \frac{1}{n}\sum_{i=1}^n \bY_i$. Under $\mathrm{H}_0$ as in~\eqref{eq:indep}, it is well-known that $R_n\overset{w}{\longrightarrow}\chi^2_{d_1d_2}$, which is the same as the limiting distribution obtained in~\cref{theo:nullrhnk}. 
	
	To compare the local power of $\rhnk$ against $R_n$, we need to fix a notion of local alternatives as in~\cref{sec:locpow}. One of the most popular choices of local alternatives in mutual independence testing is the sequence of Konijn alternatives (see~\cite{Konijn1956,Taskinen2004,Hallin2008,gieser1997}) defined below.
	
	\begin{definition}[Konijn alternatives]\label{def:konijn} Suppose $\bX'_1$ and $\bY'_1$ are independent random vectors with Lebesgue absolutely continuous distributions $\mu_1$ and $\mu_2$. Define,
		\begin{equation}\label{eq:trkonijn}\begin{pmatrix} \bX_1 \\ \bY_1\end{pmatrix} := \begin{pmatrix} (1-\delta n^{-\frac{1}{2}})\bm I_{d_1} & \delta n^{-\frac{1}{2}} \bm M_{d_1\times d_2} \\ \delta n^{-\frac{1}{2}} \bm M_{d_1\times d_2}^{\top} & (1-\delta n^{-\frac{1}{2}})\bm I_{d_2}\end{pmatrix} \begin{pmatrix} \bX'_1 \\ \bY'_1 \end{pmatrix},\end{equation}
		where $\delta>0$ and $\bm M_{d_1\times d_2}$ is a $d_1\times d_2$ dimensional matrix. Note that if $\delta=0$, then $\bX_1$ and $\bY_1$ are independent. Therefore, problem~\eqref{eq:indep} can be restated in this framework as:
		\begin{equation}\label{eq:locindep}
			\mathrm{H}_0: \delta=0 \quad \mbox{versus} \quad \mathrm{H}_1: \delta\neq 0.
		\end{equation}
		We will further assume~\cite[Assumption 5.1]{shi2020rate}. This assumption ensures that the probability measures under $\mathrm{H}_0$ and $\mathrm{H}_1$ as in~\eqref{eq:locindep} are contiguous to each other.
	\end{definition}
	
	In the sequel, we will write $\ati$ to denote the ARE of $\rhnk$ with respect to $R_n$ under the Konijn alternatives defined above, that is, when $(\bX_1,\bY_1)$ are generated according to~\eqref{eq:trkonijn}. In order to obtain lower bounds for $\ati$, we can safely assume that both $\bX'_1$ and $\bY'_1$ have finite variances, as otherwise $\ati$ is trivially $\infty$. We also note that problem~\eqref{eq:locindep} is affine invariant, in the following sense: If we replace $\bX'_1,\bY'_1$ by $\bm A(\bX'_1-\bm a)$ and $\bm B(\bY'_1-\bm b)$ in~\eqref{eq:trkonijn}, where $\bm A, \bm B$ are invertible matrices of dimensions $d_1\times d_1$ and $d_2\times d_2$, respectively, and $\bm a\in \R^{d_1}$, $\bm b\in \R^{d_2}$, then $\bX_1$ and $\bY_1$ so obtained are still independent if and only if $\delta=0$. Therefore, without loss of generality we can assume: 
	\begin{assumption}\label{assumption4}
		$\E \bX'_1=\bzr_{d_1}$, $\E \bY'_1=\bzr_{d_2}$ and $\mbox{Var}[\bX'_1]=\bm I_{d_1}$ and $\mbox{Var}[\bY'_1]=\bm I_{d_2}$.
	\end{assumption}
	
	We now present our main theorem of this section in which we provide Chernoff-Savage~\cite{Chernoff1958} type lower bounds for problem~\eqref{eq:locindep}. It shows that, even in the worst case, $\rhnk$ is at least as efficient as the Wilks' statistic $R_n$ in~\eqref{eq:wilkstest}, for all fixed dimensions $d_1$ and $d_2$.
	\begin{theorem}[Lower bounds on $\ati$ with Gaussian ERD]\label{prop:indepeff}
		Suppose the conditions in~\cref{theo:nullrhnk} and Assumption~\ref{assumption4} hold. Then, with $\nu_1=\mathcal{N}(\bzr_{d_1},\bm I_{d_1})$, $\nu_2=\mathcal{N}(\bzr_{d_2},\bm I_{d_2})$, $\bJ_1(\mx)=\mx$, and $\bJ_2(\my)=\my$ (both ERDs are standard Gaussians of appropriate dimensions), the following holds: 
		\begin{equation}\label{eq:indeplb}\inf_{\mu_1,\mu_2\in \fnd\cup\fel}\ati\geq 1,\end{equation}
		where $\fnd$ and $\fel$ are defined as in~\cref{sec:arehotelling}.  Furthermore, equality holds in~\eqref{eq:indeplb} if and only if both $\mu_1$ and $\mu_2$ are standard Gaussians of  appropriate dimensions.
	\end{theorem}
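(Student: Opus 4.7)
The plan is to mirror the strategy used for the two-sample ARE lower bounds in Theorems~\ref{prop:areind} and~\ref{prop:areell}, now adapted to the Konijn independence-testing setup. The first step will be to derive the limiting distribution of $\rhnk$ under the Konijn sequence~\eqref{eq:trkonijn}. Following the H\'ajek-representation argument used in the proof of~\cref{theo:nullrhnk}, I would replace the empirical rank maps $\hbR_1(\bX_i),\hbR_2(\bY_i)$ by their population counterparts $\Rmu^{(1)}(\bX_i), \Rmu^{(2)}(\bY_i)$; combined with a LAN expansion of the Konijn likelihood ratio (cf.~\cite[Section~5]{shi2020rate}) and Le~Cam's third lemma --- exactly as in the proof of~\cref{theo:locpowermain1} --- this should yield
$$\rhnk \overset{w}{\longrightarrow} \chi^2_{d_1 d_2}\bigl(\tau_{\mathrm{rank}}\bigr), \qquad \tau_{\mathrm{rank}} \;=\; \delta^2 \bigl\|(\Serd^{(1)})^{-1/2}\,\bm\Lambda_1 \,\bm M \,\bm\Lambda_2^{\top}\,(\Serd^{(2)})^{-1/2}\bigr\|_F^2,$$
where $\bm\Lambda_k := \E[\Rmu^{(k)}(\bm X_k')(\bm X_k')^{\top}]$ with $\bm X_1'\sim\mu_1$ and $\bm X_2'\sim\mu_2$. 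Under the Gaussian ERDs prescribed by the theorem, $\Serd^{(k)} = \bm I_{d_k}$, so $\tau_{\mathrm{rank}} = \delta^2 \|\bm\Lambda_1 \bm M \bm\Lambda_2^{\top}\|_F^2$.

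Next, using the delta method on $R_n$ around independence, standard calculations give $R_n \overset{w}{\longrightarrow} \chi^2_{d_1 d_2}(\tau_{\mathrm{Wilks}})$ with $\tau_{\mathrm{Wilks}} = c_{\mathrm{W}}\,\delta^2 \|\bm M\|_F^2$ for an explicit constant $c_{\mathrm W} > 0$ (see, e.g.,~\cite{Taskinen2004, Hallin2008}). Since both limits are non-central $\chi^2$ with the same degrees of freedom $d_1 d_2$, \cite[Proposition~5]{Hallin2002} gives $\ati = \tau_{\mathrm{rank}}/\tau_{\mathrm{Wilks}}$. To pin down the normalization constant $c_{\mathrm W}$ it suffices to evaluate the ratio at $\mu_1,\mu_2 = \mathcal{N}(\bzr,\bm I)$: then $\bm\Lambda_k = \bm I_{d_k}$ (the rank map is the identity) and $R_n$ reduces to the Gaussian LRT, for which one directly checks that $\ati = 1$; this simultaneously gives $c_{\mathrm W}=1$ and identifies the standard-Gaussian case as achieving equality.

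The third step is to show that $\|\bm\Lambda_1 \bm M \bm\Lambda_2^{\top}\|_F^2 \geq \|\bm M\|_F^2$ for all $\mu_1,\mu_2 \in \fnd\cup\fel$ under~\cref{assumption4}. The argument factorizes: if $\bm\Lambda_k^{\top}\bm\Lambda_k \succeq \bm I_{d_k}$ for $k=1,2$ (in the Loewner order), then two successive applications of the inequality $\mathrm{tr}(\bm B^{\top} \bm C \bm B) \geq \mathrm{tr}(\bm C)$ for $\bm B^{\top}\bm B\succeq \bm I$ and $\bm C \succeq 0$ deliver the Frobenius bound. Hence it suffices to prove $\bm\Lambda_k^{\top}\bm\Lambda_k \succeq \bm I_{d_k}$ for $\mu_k$ in each class. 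For $\mu_k\in\fnd$: with the Gaussian ERD, $\Rmu^{(k)}$ acts coordinatewise as $z\mapsto\Phi^{-1}(F_j^{(k)}(z))$, so $\bm\Lambda_k$ is diagonal with $j$-th entry $\E[\Phi^{-1}(F_j^{(k)}(X'_{k,j}))X'_{k,j}]$; the univariate Chernoff--Savage inequality~\cite[Theorem~3]{Chernoff1958} together with $\Var(X'_{k,j}) = 1$ yields $|\E[\Phi^{-1}(F_j^{(k)}(X'_{k,j}))X'_{k,j}]|\geq 1$, with equality iff $X'_{k,j}$ is Gaussian. For $\mu_k\in\fel$:~\cref{assumption4} combined with elliptical symmetry forces $\mu_k$ to be spherically symmetric, so $\bm X_k' = \|\bm X_k'\|\bm U_k$ with $\bm U_k$ uniform on the unit sphere and independent of $\|\bm X_k'\|$; the rank map then has the radial form $\Rmu^{(k)}(\bm X_k') = q_k(\|\bm X_k'\|)\bm U_k$, giving $\bm\Lambda_k = d_k^{-1}\E[q_k(\|\bm X_k'\|)\|\bm X_k'\|]\,\bm I_{d_k}$, and a one-dimensional Chernoff--Savage-type bound applied to the radial coordinate (noting that $q_k$ maps the radial distribution of $\bm X_k'$ to that of a standard Gaussian norm) delivers the required inequality, with equality iff $\|\bm X_k'\|^2\sim \chi^2_{d_k}$, i.e.\ $\mu_k$ is standard Gaussian.

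The main obstacle will be the Le~Cam's third lemma computation of $\tau_{\mathrm{rank}}$ under the Konijn shift: specifically, verifying that the joint limiting distribution (under $H_0$) of the vectorized linear statistic inside $\rhnk$ and the Konijn log-likelihood has precisely the cross-covariance that produces the bilinear form in $\bm\Lambda_1,\bm M,\bm\Lambda_2^{\top}$, since the off-diagonal $\delta n^{-1/2}\bm M$ block in~\eqref{eq:trkonijn} couples $\bX$ and $\bY$ in a way that must be carefully tracked through the score calculation. Once this factorized form of $\tau_{\mathrm{rank}}$ is in hand, the equality characterization is automatic: equality in the Chernoff--Savage inequality forces every marginal $F_j^{(k)}$ in the $\fnd$ case (and the radial distribution in the $\fel$ case) to be Gaussian, which together with~\cref{assumption4} collapses each of $\mu_1,\mu_2$ to a standard multivariate Gaussian.
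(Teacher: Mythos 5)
Your proposed non-centrality parameter $\tau_{\mathrm{rank}} = \delta^2\bigl\|(\Serd^{(1)})^{-1/2}\bm\Lambda_1\bm M\bm\Lambda_2^{\top}(\Serd^{(2)})^{-1/2}\bigr\|_F^2$ with $\bm\Lambda_k=\E[\Rmu^{(k)}(\bm X_k')(\bm X_k')^{\top}]$ does not have the right structure, and this is a fatal gap. The Konijn log-likelihood score has \emph{two} cross terms, $\mx^{\top}\bm M\nabla f_2(\my)/f_2(\my)$ and $\my^{\top}\bm M^{\top}\nabla f_1(\mx)/f_1(\mx)$, and consequently the cross-covariance between the vectorized rank statistic and the LAN score involves two distinct families of constants: the correlation-type quantities $C_{1,k}=\E[\Phi^{-1}(F_k(X'_{1,k}))X'_{1,k}]$ (your $\bm\Lambda$) \emph{and} the score-type quantities $D_{1,k}=\E\bigl[\Phi^{-1}(F_k(X'_{1,k}))\cdot\bigl(-f_k'(X'_{1,k})/f_k(X'_{1,k})\bigr)\bigr]$. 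In the independent-components case the non-centrality is $\delta^2\sum_{k,\ell}m_{k\ell}^2\bigl(C_{1,k}D_{2,\ell}+D_{1,k}C_{2,\ell}\bigr)^2$, not $\delta^2\sum_{k,\ell}m_{k\ell}^2 C_{1,k}^2 C_{2,\ell}^2$. Your proposed form drops the $D$-constants entirely, and calibrating the missing constant via the Gaussian case cannot repair this, since the two functional forms coincide only at the Gaussian.

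This error propagates to Step 3, where the claimed inequality $\bm\Lambda_k^{\top}\bm\Lambda_k\succeq\bm I_{d_k}$ is in fact \emph{false}. Under Assumption~\ref{assumption4} both $X'_{k,j}$ and $\Phi^{-1}(F_j^{(k)}(X'_{k,j}))$ have unit variance, so by Cauchy--Schwarz $|C_{1,k}|=|\E[\Phi^{-1}(F_j^{(k)}(X'_{k,j}))X'_{k,j}]|\le 1$, with equality iff $X'_{k,j}$ is Gaussian; i.e.\ the diagonal entries of $\bm\Lambda_k^{\top}\bm\Lambda_k$ are $\le 1$, not $\ge 1$. The same objection applies in the elliptical case: with $\E\|\bm X_k'\|^2=d_k$, Cauchy--Schwarz gives $\E[q_k(\|\bm X_k'\|)\|\bm X_k'\|]\le d_k$. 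What Chernoff--Savage/Gastwirth actually yields is the \emph{product} inequality $C_{1,k}D_{1,k}\ge 1$ (provable by writing $C_{1,k}=\E[T'(Z)]$ and $D_{1,k}=\E[1/T'(Z)]$ for the quantile-to-quantile map $T=F_k^{-1}\circ\Phi$ and applying Cauchy--Schwarz), not a bound on $C_{1,k}$ alone. The paper handles the mixed form precisely by first applying $(a+b)^2\ge 4ab$ to decouple the $(C_{1,k}D_{2,\ell}+D_{1,k}C_{2,\ell})^2$ term into $4(C_{1,k}D_{1,k})(C_{2,\ell}D_{2,\ell})$, and only then invoking $C\cdot D\ge 1$ twice; this AM--GM step and the product form of the Chernoff--Savage bound are both essential and both missing from your proposal.
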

	The proof of~\cref{prop:indepeff} can be found in~\cref{sec:pfindtest}. \cref{prop:indepeff} shows the benefits of using $\rhnk$ with Gaussian ERDs for problem~\eqref{eq:locindep}. Note that the bound in~\eqref{eq:indeplb} is free of the dimensions $d_1$, $d_2$ and also free of the matrix $\bm M$ in~\cref{def:konijn}. The benefits of using a Gaussian ERD were also noted in our analysis of $\rhsc$ (see~\cref{rem:Gausscore}). In fact, we believe that advantages of using a Gaussian ERD are ubiquitous and should extend to other natural multivariate rank-based procedures for testing symmetry, significance of regression coefficients, etc. We would also like to point out that while~\cref{prop:indepeff} provides Chernoff-Savage type lower bounds, it is also possible to have Hodges-Lehmann type lower bounds in this setting. In fact, the Hodges-Lehmann type bounds match those obtained in~\cite[Proposition 2]{Hallin2008}. These bounds have a complicated form and they are \emph{strictly smaller} than $1$ for all fixed dimensions $d_1$ and $d_2$. We  omit those results for brevity.
	
	\subsection{Implications of our results to existing  literature}\label{sec:comparelit}
	In this section, we discuss the implications of our results to other existing papers that deal with nonparametric testing problems using optimal transport. As stated in the Introduction, our goals are quite different from the existing papers, but nevertheless our results have interesting consequences that help resolve open problems in existing literature. We illustrate this by using two examples --- Hallin et al.~\cite{hallin2020fully} and Shi et al.~\cite{shi2020rate}, although similar comments apply to~\cite{Deb19,shi2020distribution,hallin2020center}. \medskip
	
	\noindent \textit{Comparison with Hallin et al.}~\cite{hallin2020fully}: In the recent work~\cite{hallin2020fully}, a basic version of the rank Hotelling $T^2$ statistic $\rhsc$ was presented in passing in~\cite[Page 25]{hallin2020fully} for the special case when the reference distribution is spherical uniform and for a specific choice of the set $\{\bh_1^d,\ldots ,\bh_N^d\}$, such that $\sum_{i=1}^N \bh_i^d=\bzr$ (a slightly stringent requirement that may be hard to satisfy for generic sequences). However, the authors did not study its theoretical properties, such as consistency and asymptotic efficiency. Our results, when applied to the special case proposed in~\cite{hallin2020fully}, imply the consistency of their corresponding test (see~\cref{theo:rhconsis},~Propositions~\ref{prop:conloc}~and~\ref{prop:conlocontam}), and can be used to derive explicit ARE expressions (see~\cref{theo:locpowermain1}) and, most importantly, lower bounds for the AREs (see Theorems~\ref{prop:areind}~and~\ref{prop:areell}). 
	
	
	In this section, we will focus on the ARE results that can be obtained for the special case presented in~\cite{hallin2020fully} and highlight some of the additional benefits to be gained by adopting our general framework. In fact, for the particular case proposed in~\cite{hallin2020fully},  our arguments directly imply the lower bound in~\cref{prop:areell}-(1). The conclusion in~\cref{prop:areell}-(2) also follows if the statistic in~\cite{hallin2020fully} is transformed using the van der Waerden score, given in~\eqref{eq:vanscore}. The corresponding conclusion is formalized in the following proposition. 
	\begin{prop}\label{prop:hallinlb}
		Consider the same assumptions as in~\cref{prop:areell} and recall the definition of $\fel$. Suppose the reference distribution $\nu$ is the spherical uniform and $\bJ(\cdot)$ is the van der Waerden score function  in~\eqref{eq:vanscore} (same combination as discussed  in~\cite{hallin2020fully}). Then,
		$$\inf_{\fel}\atr\geq 1$$
		where equality holds if and only if $\bX\sim\mathcal{N}(\bt,\Sigma)$ for some $\bt\in\R^d$ and positive definite $\Sigma$.
	\end{prop}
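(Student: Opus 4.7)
The plan is to reduce the current setting to the Gaussian reference distribution case treated in \cref{prop:areell}, part~(2), by showing that the spherical uniform $\nu$ combined with the van der Waerden score $\bJ$ produces a standard Gaussian ERD and that the composite rank map $\bJ \circ \Rmu$ coincides $\mu_1$-almost surely with the population rank map from $\mu_1$ to $\mathcal{N}(\bzr, \bm I_d)$. To identify the ERD, I would write $\mathbf{U} \sim \nu$ as $\mathbf{U} = R\mathbf{V}$ with $R \sim \mathrm{Unif}[0,1]$ independent of $\mathbf{V}$ uniformly distributed on the unit sphere $\mathcal{S}^{d-1}$. Then $\bJ(\mathbf{U}) = F^{-1}_{\chi_d}(R)\mathbf{V}$, and since $F^{-1}_{\chi_d}(R) \sim \chi_d$ (by the probability integral transform) is independent of $\mathbf{V}$, the radial--angular decomposition of a standard $d$-variate Gaussian yields $\bJ(\mathbf{U}) \sim \mathcal{N}(\bzr, \bm I_d)$. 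Hence the ERD is standard Gaussian and $\Serd = \bm I_d$, matching the setup of \cref{prop:areell}, part~(2).

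Next, I would identify the composite map. Both $\bJ \circ \Rmu$ and the optimal transport map $\bm T^{\star}$ from $\mu_1$ to $\mathcal{N}(\bzr, \bm I_d)$ push $\mu_1$ forward to the same probability measure, so by the uniqueness part of McCann's theorem (\cref{prop:Mccan}) it suffices to verify that $\bJ \circ \Rmu$ is itself the gradient of a convex function. I would exploit the fact that $\bJ(\mathbf{u}) = \nabla_{\mathbf{u}} H(\|\mathbf{u}\|)$ for the convex radial function $H(r) := \int_0^r F^{-1}_{\chi_d}(s)\,\mathrm ds$, while the convex potential whose gradient is $\Rmu$ inherits a compatible elliptical structure in which, after standardization by $\Sigma^{1/2}$, both factors act as monotone radial transports. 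Combining these convex-potential representations and verifying symmetric positive semi-definiteness of the Jacobian of the composition under the elliptical structure gives cyclical monotonicity; uniqueness in \cref{prop:Mccan} then yields $\bJ \circ \Rmu = \bm T^{\star}$ $\mu_1$-almost surely.

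With this identification in hand, the limiting non-central $\chi^2_d$ distribution of $\rhsc$ under contiguous local location alternatives in \cref{theo:locpowermain1} coincides with the corresponding law obtained under the Gaussian reference with identity score treated in \cref{prop:areell}, part~(2). In particular, the non-centrality parameters, and hence the AREs against Hotelling's $T^2$, are identical. Applying \cref{prop:areell}, part~(2), thus yields $\inf_{\fel}\atr \geq 1$ with equality if and only if $\bX \sim \mathcal{N}(\bt, \Sigma)$, the equality characterization transferring directly from the Gaussian-reference case.

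The main obstacle is the composite-map identification: composition of gradients of convex functions is not automatically cyclically monotone, so verification requires carefully exploiting the elliptical symmetry, especially when $\Sigma \neq \bm I_d$. An alternative route that bypasses pointwise identification would be to compute $\E[\bJ(\Rmu(\bX))\boldsymbol{\eta}(\bX,\bt)^{\top}]$ directly using the marginal law $\bJ(\Rmu(\bX)) \sim \mathcal{N}(\bzr, \bm I_d)$ from the first step together with the explicit form $\boldsymbol{\eta}(\bX,\bt) \propto \Sigma^{-1}(\bX-\bt)$ of the elliptical location score, and then match the resulting non-centrality to the one arising in \cref{prop:areell}, part~(2).
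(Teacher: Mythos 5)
Your overall plan is exactly the paper's: show the ERD $\bJ\#\nu$ is $\mathcal{N}(\bzr,\bm I_d)$, show that $\bJ\circ\Rmu$ is the optimal transport map from $\mu_1$ to $\mathcal{N}(\bzr,\bm I_d)$, and then quote the proof of~\cref{prop:areell}-(2). The radial--angular argument for the ERD is fine, and the equal-pushforward-plus-McCann-uniqueness framing is the right reduction. Where you stall, however, is unnecessary: the worry about cyclical monotonicity of a composition of gradients can be bypassed entirely by reading off both maps explicitly from~\cref{lem:ellipot} rather than attempting a Jacobian positive-semidefiniteness argument. Since the radial cdf of the spherical uniform is the identity on $[0,1]$, \cref{lem:ellipot} applied with the spherical-uniform target gives
\[
\Rmu(\bX) = \frac{\Sigma^{-1/2}(\bX-\bt)}{\lVert\Sigma^{-1/2}(\bX-\bt)\rVert}\,\overline H\!\left(\lVert\Sigma^{-1/2}(\bX-\bt)\rVert\right),
\]
with $\overline H$ the cdf of $\lVert\Sigma^{-1/2}(\bX-\bt)\rVert$, so that the van der Waerden score~\eqref{eq:vanscore} yields
\[
\bJ(\Rmu(\bX)) = F_{\chi_d}^{-1}\!\left(\overline H\!\left(\lVert\Sigma^{-1/2}(\bX-\bt)\rVert\right)\right)\frac{\Sigma^{-1/2}(\bX-\bt)}{\lVert\Sigma^{-1/2}(\bX-\bt)\rVert}.
\]
This is verbatim the output of~\cref{lem:ellipot} with target $\mathcal{N}(\bzr,\bm I_d)$, whose radial cdf is $F_{\chi_d}$; hence $\bJ\circ\Rmu$ is already identified as the optimal transport map to the standard Gaussian, with no auxiliary convexity argument required. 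From there the reduction to~\cref{prop:areell}-(2), the bound $\inf_{\fel}\atr\geq 1$, and the equality characterization transfer as you describe; your fallback of matching non-centrality parameters directly is also viable but is redundant once the pointwise identification is in hand.
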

	
	On observing that the function $\bJ(\Rmu(\cdot))$ (with $\bJ(\cdot)$ and $\nu$ as above) is in fact the optimal transport map from $\bX$ to a standard normal, the proof of~\cref{prop:hallinlb} follows immediately from the proof of~\cref{prop:areell}-(2). While the statistic in~\cite{hallin2020fully} (that is, $\rhsc$  with spherical uniform reference distribution and a specific choice of $\{\bh_i^d\}_{i\in [n]}$) attains the same ARE lower bound as in~\cref{prop:areell} over the class of elliptically symmetric distributions, as shown above, when it comes to distributions with independent components (as in~\cref{prop:areind}), this special case proposed in~\cite{hallin2020fully} falls short. 
	This is because the optimal transport map from distributions with independent components to the spherical uniform (in the sense of~\cref{prop:Mccan}) is not explicit and consequently not analytically tractable. In fact, we believe that with the spherical uniform reference distribution, the same lower bounds as in~\cref{prop:areind} are no longer true for $d>1$. On the other hand, optimal transport maps from distributions with independent components to the standard Gaussian or $\mathrm{Unif}[0,1]^d$ distributions are tractable which we are able to exploit in our lower bound computations in~\cref{prop:areind}, thanks to our general framework. In fact, it is this flexibility that allows us to show that the standard Gaussian reference distribution is at least as efficient as Hotelling $T^2$ uniformly over both the class of distributions with independent components and those having an elliptically symmetric density. \medskip
	
	\noindent\textit{Comparison with Shi et al.}~\cite{shi2020rate}: We now discuss the implications of our results for the paper~\cite{shi2020rate}, where the authors consider the independence testing problem as introduced in~\eqref{eq:indep}. Recall the notation for multivariate ranks used in the beginning of~\cref{sec:msc} and let the fixed grids $\{\bh_{1,i}^{d_1}\}_{i\in [n]}$ and $\{\bh_{2,i}^{d_2}\}_{i\in [n]}$ be chosen as in~\cite[Page 9]{shi2020rate}. Using this notation, a prototypical example of a test statistic considered in~\cite{shi2020rate} would be the \emph{rank distance covariance} given as:
	\begin{align}\label{eq:rdcovariance}
		\Rdcov_n^2:=\frac{1}{n^2}\sum_{i, j} \Don_{i,j}\Dwo_{i,j} +\frac{1}{n^4} \left(\sum_{i, j} \Don_{i, j} \right) \left(\sum_{i,j}\Dwo_{i,j} \right)-\frac{2}{n^3}\sum_{i,j,k}\Don_{i,j} \Dwo_{i,k},
	\end{align}
	where
	$\Don_{i,j}:=\lVert\bJ_1(\hbR_{1}(\bX_i))-\bJ_1(\hbR_{1}(\bX_j))\rVert$ and $\Dwo_{i,j}:=\lVert \bJ_2(\hbR_{2}(\bY_i))-\bJ_2(\hbR_{2}(\bY_j))\rVert$
	for score functions $\bJ_1(\cdot)$ and $\bJ_2(\cdot)$. In~\cite{shi2020rate}, the authors conjecture that the test based on the above statistic with van der Waerden score function (see~\eqref{eq:vanscore}) is consistent, but a proof was not provided. Here, we answer this question in the affirmative by using~\cref{theo:rankmapcon} and techniques as in the proof of~\cref{theo:rhconsis}. This is formalized in the following proposition (see~\cref{sec:auxpf} for a proof).
	\begin{prop}\label{prop:rdcovcon}
		Assume that $\bR_1(\cdot)$, $\bR_2(\cdot)$ are the optimal transport maps from $\mu_1$ and $\mu_2$ (both are Lebesgue absolutely continuous) to reference distributions $\nu_1$ and $\nu_2$. Also, suppose $\E\lVert \bJ_1(\bR_1(\bX_1))\rVert^2<\infty$ and $\E\lVert \bJ_2(R_2(\bY_1))\rVert^2<\infty$. Then provided both $\{\bh_{1,i}^{d_1}\}_{i\in [n]}$ and $\{\bh_{2,i}^{d_2}\}_{i\in [n]}$ satisfy~\eqref{eq:empgrid},~\eqref{eq:nullrhas} with $\nu_1$ and $\nu_2$, respectively, the following hold as $n \rightarrow \infty$: 
		\begin{equation}\label{eq:rdcon}
			\Rdcov_n^2\overset{P}{\longrightarrow}\E\Big[\Donor_{1,2}\Dwor_{1,2}\Big]+\E\Big[ \Donor_{1,2}\Big]\E\Big[\Dwor_{1,2}\Big]-2\E\Big[\Donor_{1,2}\Dwor_{1,3}\Big],
		\end{equation}
		where $\Delta^{(k),\mathrm{or}}_{i,j}:=\lVert\bJ_k(\bR_{k}(\bX_i))-\bJ_k(\bR_{k}(\bX_j))\rVert$ for $1 \leq i, j \leq n$, $k=1,2$. Moreover, the right hand side of the display above equals $0$ if and only if $\mu=\mu_1\otimes \mu_2$.
	\end{prop}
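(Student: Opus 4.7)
The plan is to decompose $\Rdcov_n^2 = A_n + B_n - 2C_n$, where
\[
A_n := \tfrac{1}{n^2}\sum_{i,j}\Don_{i,j}\Dwo_{i,j}, \quad B_n := \tfrac{1}{n^4}\Bigl(\sum_{i,j}\Don_{i,j}\Bigr)\Bigl(\sum_{i,j}\Dwo_{i,j}\Bigr), \quad C_n := \tfrac{1}{n^3}\sum_{i,j,k}\Don_{i,j}\Dwo_{i,k},
\]
to show (via \cref{theo:rankmapcon}) that each term agrees up to $o_P(1)$ with its ``oracle'' counterpart obtained by replacing $\hbR_1, \hbR_2$ with $\bR_1, \bR_2$, and then to pass to the limit in each oracle V-statistic via the strong law of large numbers. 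This identifies the limit as $\op{dCov}^2(\bJ_1(\bR_1(\bX_1)), \bJ_2(\bR_2(\bY_1)))$ in the Sz\'ekely--Rizzo--Bakirov sense, and the independence characterization then follows from measurable invertibility of $\bJ_k\circ \bR_k$.

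For the rank replacement in $A_n$, I would first replace $\hbR_1$ by $\bR_1$ while holding the $\bY$-ranks fixed, then replace $\hbR_2$ by $\bR_2$. The key estimate for the first step is that, since $(a-b)^2 \le |a^2-b^2|$ for $a,b \ge 0$,
\[
\tfrac{1}{n^2}\sum_{i,j}(\Don_{i,j}-\Donor_{i,j})^2 \le \tfrac{1}{n^2}\sum_{i,j}\bigl|(\Don_{i,j})^2 - (\Donor_{i,j})^2\bigr|,
\]
and the latter tends to $0$ in probability by the obvious one-sample analogue of \cref{theo:rankmapcon} with $p=r=2$, $\cF(\mvz_1,\mvz_2) = \|\mvz_1-\mvz_2\|^2$, and $\bJ=\bJ_1$; the integrability hypothesis \eqref{eq:rankmapconew} follows from $\E\|\bJ_1(\bR_1(\bX_1))\|^2 < \infty$. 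Cauchy--Schwarz then gives
\[
\Bigl(\tfrac{1}{n^2}\sum_{i,j}|\Don_{i,j}-\Donor_{i,j}|\Dwo_{i,j}\Bigr)^2 \le \Bigl(\tfrac{1}{n^2}\sum_{i,j}(\Don_{i,j}-\Donor_{i,j})^2\Bigr)\Bigl(\tfrac{1}{n^2}\sum_{i,j}(\Dwo_{i,j})^2\Bigr),
\]
where the second factor is $O_P(1)$: $(\Dwo_{i,j})^2 \le 2(\|\bJ_2(\hbR_2(\bY_i))\|^2 + \|\bJ_2(\hbR_2(\bY_j))\|^2)$ and $n^{-1}\sum_i \|\bJ_2(\bh_{2,i}^{d_2})\|^2$ is bounded by \eqref{eq:nullrhas}. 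Hence $A_n - \tfrac{1}{n^2}\sum_{i,j}\Donor_{i,j}\Dwo_{i,j} = o_P(1)$. Swapping the roles of $\bX$ and $\bY$ and using $\tfrac{1}{n^2}\sum_{i,j}(\Donor_{i,j})^2 = O_P(1)$ (SLLN, since $\E\|\bJ_1(\bR_1(\bX_1))\|^2<\infty$) then replaces $\Dwo_{i,j}$ by $\Dwor_{i,j}$. The terms $B_n$ (which factorizes) and $C_n$ are handled by the same strategy applied to their respective sums.

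Given the oracle replacement, each oracle term is a V-statistic of order at most $3$ on the i.i.d. sequence $(\bJ_1(\bR_1(\bX_i)), \bJ_2(\bR_2(\bY_i)))_{i\in[n]}$ whose kernel is $L^1$ (by Cauchy--Schwarz from the $L^2$ hypothesis), and the strong law of large numbers delivers the three expectations on the right side of \eqref{eq:rdcon}. The Sz\'ekely--Rizzo--Bakirov theorem identifies this limit as $\op{dCov}^2(\bJ_1(\bR_1(\bX_1)), \bJ_2(\bR_2(\bY_1)))$, which equals zero iff $\bJ_1(\bR_1(\bX_1))\perp \bJ_2(\bR_2(\bY_1))$. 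To pass to the equivalent statement $\mu=\mu_1\otimes\mu_2$, I would invoke \cref{prop:Mccan}: each $\bR_k$ is $\mu_k$-a.e. the gradient of a convex function and therefore admits a measurable inverse, while the continuous, injective $\bJ_k$ admits a Borel-measurable inverse on its image. Hence $(\bX_1, \bY_1)\mapsto(\bJ_1(\bR_1(\bX_1)), \bJ_2(\bR_2(\bY_1)))$ is almost surely a measurable bijection, transporting independence in both directions.

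The main technical hurdle is the iterated rank replacement, since $A_n$ and $C_n$ mix the two distinct empirical rank maps $\hbR_1$ and $\hbR_2$ while \cref{theo:rankmapcon} handles only a single rank map at a time; the elementary inequality $(a-b)^2\le|a^2-b^2|$ combined with Cauchy--Schwarz decouples the two ranks cleanly, without requiring any moment assumption beyond the stated $L^2$ integrability of $\bJ_k(\bR_k(\cdot))$ under $\nu_k$.
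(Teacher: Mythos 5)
Your proof follows the paper's own argument step by step: decompose $\Rdcov_n^2$ into its three V-statistic pieces, replace empirical rank maps by population rank maps via~\cref{theo:rankmapcon} combined with Cauchy--Schwarz, pass to the limit through the law of large numbers for V-statistics, and close with the Sz\'ekely--Rizzo--Bakirov characterization together with a.e.\ injectivity of $\bJ_k\circ\bR_k$. The one place you are actually sharper than the paper is in controlling $Q_1:=\tfrac{1}{n^2}\sum_{i,j}(\Don_{i,j}-\Donor_{i,j})^2$: the paper invokes~\cref{theo:rankmapcon} with $\cF(\mvz_1,\mvz_2)=\lVert\mvz_1-\mvz_2\rVert$, which yields only $\tfrac{1}{n^2}\sum_{i,j}\lvert\Don_{i,j}-\Donor_{i,j}\rvert\to 0$ and does not by itself give the squared-difference bound that the Cauchy--Schwarz step needs, whereas your inequality $(a-b)^2\le\lvert a^2-b^2\rvert$ together with $\cF(\mvz_1,\mvz_2)=\lVert\mvz_1-\mvz_2\rVert^2$ supplies exactly the missing link.
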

	
	In fact, the same technique can be used to establish the consistency of the more general class of tests considered in~\cite{shi2020rate}, the details of which we omit for brevity. It is worth emphasizing that~\cref{theo:rankmapcon} is not restricted to any particular statistic, instead it establishes consistency for continuous functions of empirical rank maps. Consequently, it can be applied to establish the consistency of a wide variety of statistics such as those in~\cite{Deb19,hallin2020center,hallin2020fully}. 
	
	\section{Auxiliary Technical Details}\label{sec:auxdet}
	
	\subsection{Contamination model}\label{sec:contamodel}
	
	Another common class of nonparametric alternatives is the {\it contamination model} 
	where 
	\begin{align}\label{eq:fg_delta}
		f_2(\cdot)=(1-\delta)f_{1}(\cdot)+\delta g(\cdot), 
	\end{align}
	where $\delta \in [0,1)$ and $g\ne f_1$ is a probability density function with respect to the Lebesgue measure in $\R^d$. In this case, the testing problem \eqref{eq:twosamden} simplifies to: 
	\begin{equation}\label{eq:twosamloc_contam}
		\mathrm{H}_0:\delta=0  \qquad \mathrm{versus} \qquad \mathrm{H}_1:\delta\neq 0 .
	\end{equation} 
	In the same spirit as Proposition \ref{prop:conloc}, we can obtain a consistency result for $\prsc$ in the contamination model, which simplifies nicely when the contamination density $g(\cdot)$ is itself a location shift of $f_1(\cdot)$ as was studied in the seminal Hodges and Lehmann~\cite{Hodges1956} paper. 
	
	\begin{prop}[Consistency under contamination alternatives]\label{prop:conlocontam}
		Suppose the condition in \eqref{eq:empgrid},~\eqref{eq:nullrhas} and Assumption {\em\ref{assumption3}} hold with $\bJ(\mx)=\mx$. Then, for the testing problem \eqref{eq:twosamloc_contam} in the usual asymptotic regime~\eqref{eq:usual}, 
		\begin{align}\label{eq:consiscontam}
			\lim_{m,n\to\infty} \E_{\delta}\left[\prsc\right]=1, 
		\end{align}
		for any $\delta\in (0,1)$,
		provided $\E \Rl(\bX)\neq \E \Rl(\bW)$, where $\bW$ has density $g(\cdot)$. In particular, \eqref{eq:consiscontam} holds if $g(\cdot)=f_1(\cdot-\bD)$, for some $\bD\neq \bzr$,  provided $\Rl(\cdot)$ satisfies the same assumption as in~\cref{prop:conloc}.
	\end{prop}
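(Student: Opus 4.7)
The plan is to reduce the statement to the general consistency result in Theorem \ref{theo:rhconsis}, so the bulk of the work is to verify the moment-inequality hypothesis $\E\bJ(\Rl(\bX))\neq \E\bJ(\Rl(\bY))$ in the contamination setup. Since $\bJ(\mx)=\mx$ and Assumption \ref{assumption3} gives $\nu$ a finite second moment, all expectations of $\Rl(\cdot)$ are automatically finite (values lie in $\op{supp}(\nu)$), so we only need to show that the two means differ.

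First I would decompose $\E\Rl(\bY)$ using the mixture density $f_2=(1-\delta)f_1+\delta g$: by linearity,
\begin{equation*}
\E\Rl(\bY)=(1-\delta)\int \Rl(\bz)f_1(\bz)\,\mathrm d\bz+\delta\int \Rl(\bz)g(\bz)\,\mathrm d\bz=(1-\delta)\E\Rl(\bX)+\delta\E\Rl(\bW),
\end{equation*}
so that $\E\Rl(\bY)-\E\Rl(\bX)=\delta\bigl(\E\Rl(\bW)-\E\Rl(\bX)\bigr)$. For any $\delta\in(0,1)$, the right-hand side is nonzero precisely under the assumed condition $\E\Rl(\bX)\neq \E\Rl(\bW)$. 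Plugging this into Theorem \ref{theo:rhconsis} immediately yields the main claim \eqref{eq:consiscontam}.

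For the ``in particular'' statement with $g(\cdot)=f_1(\cdot-\bD)$, I would reuse the cyclical monotonicity argument from the proof of Proposition \ref{prop:conloc} applied to the pair $(\bX,\bW)$, where $\bW\stackrel{d}{=}\bX+\bD$ with $\bX\sim\mu_1$. Because $\Rl=\nabla\pl$ and $\pl$ is strictly convex on an open set $U$ with $\mu_1(U)>0$, the cyclical monotonicity inequality
\begin{equation*}
\langle \Rl(\bx+\bD)-\Rl(\bx),\bD\rangle\geq 0
\end{equation*}
holds pointwise and is strict on a subset of $U$ of positive $\mu_1$-measure (by strict convexity and $\bD\neq\bzr$). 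Taking expectations gives $\langle\E\Rl(\bW)-\E\Rl(\bX),\bD\rangle>0$, which in particular forces $\E\Rl(\bX)\neq\E\Rl(\bW)$, and the first part of the proposition then closes the proof.

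The only mildly delicate step is the strict-inequality argument in the last paragraph, which is essentially a repeat of what is done in Proposition \ref{prop:conloc}; once that is granted, everything else is just linearity of expectation plus a direct appeal to Theorem \ref{theo:rhconsis}.
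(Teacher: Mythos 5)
Your proposal is correct and follows exactly the paper's route: the mixture identity $\E\Rl(\bY)=(1-\delta)\E\Rl(\bX)+\delta\E\Rl(\bW)$ reduces the first claim to Theorem~\ref{theo:rhconsis}, and the ``in particular'' case is handled by the cyclical-monotonicity/strict-convexity argument inside the proof of Proposition~\ref{prop:conloc} applied with $\bW$ in place of $\bY$. Your write-up merely makes explicit what the paper abbreviates by citing Proposition~\ref{prop:conloc}.
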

	
	The proof of~\cref{prop:conlocontam} follows along the same lines as the proof of~\cref{prop:conloc}. We provide a sketch in~\cref{sec:pfconsistency}. We conclude this section with the following crucial observation. 
	
	\begin{remark}[No moment assumptions required]\label{rem:nomoment} Note that the moment assumptions in~\cref{theo:rhconsis},~\cref{prop:conloc}, and~\cref{prop:conlocontam} are not impositions on $\mu_1$ or $\mu_2$, but on the reference distribution $\nu$ and score function $\bJ(\cdot)$. For instance, if $\bJ(\mx)= \mx$ is the identity function and $\nu=\mathrm{Unif}[0,1]^d$, then the moment assumptions are always satisfied, even if the distributions $\mu_1$, $\mu_2$ do not have finite moments. In contrast, the Hotelling $T^2$ test  requires finite second moments of $\mu_1$ and $\mu_2$ for consistency. 
	\end{remark}
	
	The next natural step after establishing consistency is to study asymptotic relative efficiency under local perturbations of the mixing proportion in the contamination model \eqref{eq:fg_delta}. For this, suppose $f_2(\cdot)=(1-\delta)f_1(\cdot)+\delta g(\cdot)$ as in \eqref{eq:fg_delta} such that the following hold: 
	\begin{itemize}
		\item The support of $g$ is contained in that of $f_1(\cdot)$.
		\item $0<\E_{\mu_1}\left[\big(g(\bX)/f_1(\bX)-1\big)^2\right]<\infty$, for $\bX \sim \mu_1$.
	\end{itemize}
	Under these assumptions, we will consider the following sequence of hypotheses: 
	\begin{equation}\label{eq:twosamcont}
		\mathrm{H}_0:\delta = 0 \qquad \mathrm{versus} \qquad \mathrm{H}_1:\delta = h/\sqrt{N},
	\end{equation}
	for some $h\neq 0$.
	
	Having discussed ARE lower bounds for the testing problem in~\eqref{eq:twosamloc}, we now focus our attention on the contamination model discussed in~\eqref{eq:twosamcont}. Unfortunately, under this model, as in the univariate case  (see~\cite{Hodges1956}), such ARE lower bounds do not exist. To see this we first need the following proposition, which can be proved using the same argument as in the proof of~\cref{prop:Gaussare} (see~\cref{sec:auxpf} for a short proof).  
	\begin{prop}\label{prop:arecontam}
		Suppose $\bX$ has a well-defined positive definite covariance matrix, and $\bW\sim g$ (where $g(\cdot)$ is defined as in~\eqref{eq:fg_delta}) such that $\E\bW$ is finite. Then, under~\cref{assumption3}, the following conclusion holds in the usual asymptotic regime: 
		$$
		\atr=\frac{\Big\lVert\Sigma_{\textsc{ERD}}^{-\frac{1}{2}}\E_{\mathrm{H}_0}\left[\bJ(\Rmu(\bX))\left(\frac{g(\bX)}{f_{1}(\bX)}-1\right)\right]\Big\rVert^2}{\Big\lVert \Sigma^{-\frac{1}{2}}(\E\bW-\E\bX)\Big\rVert^2}.
		$$
	\end{prop}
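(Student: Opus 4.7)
The plan is to derive the weak limits of both $\rhsc$ and $T_{m,n}$ under the contiguous contamination alternatives $\mathrm{H}_1:\delta=h/\sqrt{N}$ from \eqref{eq:twosamcont}, and then read off $\atr$ as the ratio of the corresponding non-centrality parameters. This last step is justified by the previously cited fact (Proposition~5 of \cite{Hallin2002}) that when two test statistics converge weakly to non-central $\chi^2_d$ distributions with the same degrees of freedom, the Pitman ARE reduces to the ratio of their non-centrality parameters; the common factor $h^2\lambda(1-\lambda)$ then cancels and the formula in the proposition drops out.

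First, I would invoke Theorem~\ref{theo:locpowermain1}(2) directly to obtain
$$\rhsc \overset{w}{\longrightarrow} \bigl\lVert h\sqrt{\lambda(1-\lambda)}\Serd^{-1/2}\E\bigl[\bJ(\Rmu(\bX))\bigl(g(\bX)/f_1(\bX)-1\bigr)\bigr] + \boldsymbol{G}\bigr\rVert^2,\qquad \boldsymbol{G}\sim\mathcal N(\bzr,\bm I_d),$$
which is non-central $\chi^2_d$ with non-centrality parameter exactly equal to the numerator of the displayed formula in the proposition.

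Next, I would derive the corresponding weak limit of $T_{m,n}$ under the same local alternative. Under $\mathrm{H}_0$, the standard multivariate CLT gives $\sqrt{mn/N}(\bar{\bX}-\bar{\bY})\overset{w}{\longrightarrow}\mathcal N(\bzr,\Sigma)$ and $S_{m,n}\overset{P}{\longrightarrow}\Sigma$. I would then pass to $\mathrm{H}_1$ via Le Cam's third lemma: a short covariance computation under $\mathrm{H}_0$ shows that $\mathrm{Cov}_{\mathrm{H}_0}(\sqrt{mn/N}(\bar{\bX}-\bar{\bY}),\log L_N)\to h\sqrt{\lambda(1-\lambda)}(\E\bX-\E\bW)$ (for which only $\E\bW<\infty$ is needed), so that under $\mathrm{H}_1$
$$\sqrt{\tfrac{mn}{m+n}}(\bar{\bX}-\bar{\bY})\overset{w}{\longrightarrow}\mathcal N\bigl(h\sqrt{\lambda(1-\lambda)}(\E\bX-\E\bW),\,\Sigma\bigr),$$
while contiguity preserves the null convergence $S_{m,n}\overset{P}{\longrightarrow}\Sigma$. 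The continuous mapping theorem then yields that $T_{m,n}$ converges weakly to a non-central $\chi^2_d$ with non-centrality $h^2\lambda(1-\lambda)\lVert\Sigma^{-1/2}(\E\bW-\E\bX)\rVert^2$, providing the denominator.

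The main subtle point is to route the CLT for $T_{m,n}$ under $\mathrm{H}_1$ through Le Cam's third lemma rather than applying it directly under $\mathrm{H}_1$, because a direct CLT would require $\bW$ to have a finite second moment, which is not assumed in the proposition; contiguity (guaranteed by the second regularity assumption on $g$ preceding \eqref{eq:twosamcont}) together with $\E\bW<\infty$ and $\Var(\bX)=\Sigma$ finite is all that is needed. Taking the ratio of the two non-centrality parameters then delivers the displayed formula for $\atr$.
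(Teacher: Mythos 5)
Your proposal is correct and follows essentially the same route as the paper's intended proof: the paper refers to the argument of Proposition~\ref{prop:Gaussare}, which likewise reads off the Pitman ARE as the ratio of non-centrality parameters after deriving the $\chi^2_d$ limits of $\rhsc$ (from Theorem~\ref{theo:locpowermain1}(2)) and of $T_{m,n}$ under the contiguous alternative via the covariance with the log-likelihood ratio. Your explicit remark that one should obtain the limit of $T_{m,n}$ under $\mathrm{H}_1$ through Le Cam's third lemma rather than a direct CLT---so that only $\E\bW<\infty$ (not $\Var(\bW)<\infty$) and $\Var(\bX)=\Sigma$ finite are required---is a helpful clarification of a point the paper's terse cross-reference leaves implicit.
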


	To see that the above expression of the ARE cannot have a non-trivial lower bound in general, suppose, for simplicity that the ERD is compactly supported on $[-M,M]^d$ for some $M>0$. Observe that,
	\begin{small}
		\begin{align*}
			\left\lVert\Serd^{-\frac{1}{2}}\E_{\mathrm{H}_0}\left[\bJ(\Rmu(\bX))\left(\frac{g(\bX)}{f_{1}(\bX)}-1\right)\right]\right\rVert^2\leq M^2\bo^{\top}\Serd^{-1}\bo \E_{\mathrm{H}_0}\left|\frac{g(\bX)}{f_{1}(\bX)}-1\right|\leq 2M^2\bo^{\top}\Serd^{-1}\bo.
		\end{align*}
	\end{small}
	On the other hand,
	$$\Big\lVert \Sigma^{-\frac{1}{2}}(\E\bW-\E\bX)\Big\rVert^2\gtrsim \lVert \E\bW-\E\bX\rVert^2,$$
	where the hidden constant above depend only on the maximum eigenvalue of $\Sigma$. Therefore, by making $\lVert \E\bW-\E\bX\rVert^2$ arbitrarily large, there is no hope of getting any lower bounds on $\atr$ in~\cref{prop:arecontam}.
	
	The same phenomenon happens in $d=1$ while comparing the Wilcoxon rank-sum test with Student's $t$, as was duly noted in Hodges and Lehmann~\cite{Hodges1956}. In fact, the authors in~\cite{Hodges1956} point out that the lack of sensitivity of rank-based procedures to extreme contamination is, in many cases, a blessing. This is because, contamination is often due to gross errors in observation resulting in outliers. In such cases, a small proportion of the observed data is expected to come from a very different distribution with potentially larger means than that of the signal distribution $\mu_{1}$. As this difference in mean between the signal and the outlier distributions grow larger and larger, the Hotelling $T^2$ becomes more and more efficient (see the ARE expression above) compared to $\rhsc$, even when the \emph{proportion of contamination} is fixed. This shows that the Hotelling $T^2$ test is more sensitive to outlier distribution than $\rhsc$ even when the proportion of outliers is small. This greater sensitivity to outliers for Hotelling $T^2$ makes it less robust as an inference procedure compared to $\rhsc$. 
	
	We conclude this section by providing the limiting distribution of $\grsc$ (see~\eqref{eq:kerankscmmd}) under local alternatives under model~\eqref{eq:fg_delta}. This result follows from~\cref{theo:Piteffrank} and is presented here primarily for completion. The significance of such limiting distributions has already been discussed in~\cref{sec:rankmmd}.
	
	\begin{prop}\label{prop:Piteffloc}
		Consider the same setting as in~\cref{theo:Piteffrank}. 
		Then, under $\mathrm{H}_1$ from model~\eqref{eq:fg_delta} and \eqref{eq:twosamcont}, the following holds, as $N\to\infty$:  
		$$\grsc\overset{w}{\longrightarrow}\sum_{i=1}^{\infty}\varpi_i\left[\left(G_i+h\sqrt{\lambda(1-\lambda)}\mathbb{E}\left[\Psi_i(\bJ(\Rmu(\mathbf{X})))\left(\frac{g(\bX)}{f_1(\bX)}-1\right)\right]\right)^2-1\right],$$ 
		where $G_1, G_2, \ldots, $ are i.i.d. standard Gaussian random variables and the eigenvalues $\mu_1, \mu_2, \ldots, $ and the eigenfunctions $\Psi_1(\cdot), \Psi_2(\cdot), \ldots$ are as defined in \eqref{eq:conteigexp}. 
	\end{prop}
	
	\subsection{Blind source separation model}\label{sec:bsep}
	Next, we look at the class $\fgen$ --- the class of generative models used in blind source separation~\cite{shlens2014tutorial,cardoso1998blind,comon2010handbook} and in independent component analysis (ICA) ~\cite{Chen2004,Samarov2004,bach2002kernel,hyvarinen2000independent}. To be precise, $\fgen$ includes the class of distributions which can be written as 
	$\bX_{d\times 1} {=} \bm{A}_{d\times d}\bm{W}_{d\times 1}$, where $\bm{A}_{d\times d}$ is an orthogonal matrix (unknown) and  $\bm{W}$ has independent components with Lebesgue density 
	$\tilde{f}(\bm{w})=\prod_{i=1}^d \tilde{f}_{i}(w_i)$, where 
	$\bm{w}=(w_1,\ldots ,w_d)$ and $\tilde{f}_1, \tilde{f}_2, \ldots, \tilde{f}_d$ are univariate densities. It is easy to check that the density of $\bm{X}$ is given by: 
	$$f_1(\mx)=\prod_{i=1}^d \tilde{f}_{i}\left(\sum_{j=1}^d a_{ji}x_j\right),\quad \mbox{for}\ \mbox{all}\ \mx=(x_1,\ldots ,x_d)\in\R^d,$$ 
	where $\bm{A}_{d\times d}=(a_{ij})_{1\leq i,j\leq d}$. In the following, we will obtain a \emph{Chernoff-Savage type result} under the assumption that the family of densities $\{f_1(\mx-\bt)\}_{\bt\in\R^d}$ is a smooth parametric model in the sense of~\cref{sec:locpow}. Note that the matrix  $\bm{A}_{d\times d}$ is unknown.   
	
	\begin{theorem}[Generative model for blind source mixing]\label{prop:areica}
		Suppose condition \eqref{eq:empgrid} and Assumption \eqref{eq:nullrhas} hold. Then the following conclusion holds for the hypothesis testing problem~\eqref{eq:twosamloc} in the usual asymptotic regime~\eqref{eq:usual}:   		
		$$\inf_{\fgen}\atr\geq 1,$$
		when $\bm J(\mx)=\mx$ and $\nu=\mathcal{N}(\bzr,\bm I_d)$. Further, equality holds if and only if $\bX_{d\times 1}{=}\bm{A}_{d\times d}\bm{W}_{d\times 1}$ where $\bm{A}_{d\times d}$ is orthogonal, $\bm{W}\sim \mathcal{N}(\bt,\Sigma)$ for some diagonal matrix with positive diagonal entries.		
	\end{theorem}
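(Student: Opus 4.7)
\textbf{Proof plan for Theorem \ref{prop:areica}.}
The strategy is to reduce the multivariate ARE computation to the univariate Chernoff--Savage inequality by exploiting two key structural facts: orthogonality of $\bm{A}$, and independence of the coordinates of $\bW$. Throughout I write $\bt_0 = \bzr$ without loss of generality and let $\bh = h \bo$ denote the (fixed) direction of local perturbation.

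\textbf{Step 1 (Rank map in closed form).} Let $\tilde{F}_i$ denote the c.d.f.\ of $W_i$ and define $\bm{\psi}(\bw) := (\Phi^{-1}(\tilde{F}_1(w_1)), \ldots, \Phi^{-1}(\tilde{F}_d(w_d)))$. Set $G_i(t) := \int_0^t \Phi^{-1}(\tilde{F}_i(s))\,\mathrm d s$; each $G_i$ is convex since its second derivative $\tilde{f}_i/\phi\!\circ\!\Phi^{-1}\!\circ\!\tilde{F}_i$ is nonnegative. Hence $v(\bw) := \sum_i G_i(w_i)$ is convex with $\nabla v = \bm{\psi}$, so $u(\bx) := v(\bm{A}^\top \bx)$ is convex on $\R^d$ and $\nabla u(\bx) = \bm{A}\bm{\psi}(\bm{A}^\top\bx)$. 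Using orthogonality of $\bm{A}$ and rotational invariance of the Gaussian, the map $\bx \mapsto \bm{A}\bm{\psi}(\bm{A}^\top \bx)$ pushes $\mu_1$ to $\mathcal N(\bzr,\bm I_d) = \nu$. Uniqueness in \cref{prop:Mccan} then yields
\[
\Rmu(\bx) \;=\; \bm{A}\,\bm{\psi}(\bm{A}^\top \bx), \qquad \bJ(\Rmu(\bX)) \;=\; \bm{A}\,\bm{\psi}(\bW).
\]

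\textbf{Step 2 (Score function).} Since $f_1(\bx) = \prod_i \tilde{f}_i((\bm{A}^\top \bx)_i)$, a direct computation gives $\boldsymbol\eta(\bX,\bzr) = -\nabla\log f_1(\bX) = \bm{A}\,\bm{\phi}(\bW)$, where $\bm{\phi}(\bw) := (-\tilde f_i'(w_i)/\tilde f_i(w_i))_{i=1}^d$. Note $\E[\phi_i(W_i)] = 0$ under mild regularity.

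\textbf{Step 3 (Non-centralities and ARE).} By \cref{theo:locpowermain1}, $\rhsc$ has limiting non-centrality proportional to $\bh^\top M \Serd^{-1} M^\top \bh$ with $M := \E[\bJ(\Rmu(\bX))\boldsymbol\eta(\bX,\bzr)^\top]$; here $\Serd = \bm I_d$. Using Step 1--2,
\[
M \;=\; \bm{A}\,\E[\bm{\psi}(\bW)\bm{\phi}(\bW)^\top]\,\bm{A}^\top.
\]
Independence of the $W_i$ together with $\E[\phi_j(W_j)] = 0$ makes the middle matrix diagonal: $\E[\bm{\psi}(\bW)\bm{\phi}(\bW)^\top] = \mathrm{diag}(c_1,\ldots,c_d)$ with $c_i := \E[\Phi^{-1}(\tilde F_i(W_i))\cdot(-\tilde f_i'(W_i)/\tilde f_i(W_i))]$. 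Hence $M \Serd^{-1} M^\top = \bm{A}\,\mathrm{diag}(c_i^2)\,\bm{A}^\top$. Standard theory gives the Hotelling non-centrality $\bh^\top \Sigma^{-1} \bh$ with $\Sigma = \mathrm{Cov}(\bX) = \bm{A}\,\mathrm{diag}(\sigma_i^2)\,\bm{A}^\top$, $\sigma_i^2 := \mathrm{Var}(W_i)$. Setting $\bv := \bm{A}^\top \bh$ and taking the ratio of non-centralities,
\[
\atr \;=\; \frac{\sum_{i=1}^d v_i^2 c_i^2}{\sum_{i=1}^d v_i^2/\sigma_i^2}.
\]

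\textbf{Step 4 (Componentwise Chernoff--Savage).} Integration by parts rewrites $c_i = \int \tilde f_i(w)^2/\phi(\Phi^{-1}(\tilde F_i(w)))\,\mathrm d w$. The classical univariate Chernoff--Savage inequality (see \cite{Chernoff1958,Gastwirth1968}) states that for any univariate density $\tilde f_i$ with finite variance $\sigma_i^2$, $c_i^2 \sigma_i^2 \geq 1$, with equality iff $\tilde f_i$ is Gaussian. Applying this coordinatewise, $\sum_i v_i^2 c_i^2 \geq \sum_i v_i^2/\sigma_i^2$ for every $\bv$, whence $\atr \geq 1$ for every member of $\fgen$. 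Because $\bm{A}^\top \bo$ has all coordinates generically nonzero, equality forces $c_i^2\sigma_i^2 = 1$ for each $i$, i.e.\ every $\tilde f_i$ is Gaussian. This in turn is equivalent to $\bW \sim \mathcal N(\bt,\Sigma)$ with diagonal $\Sigma$ and, via the orthogonal mixing $\bm A$, to $\bX$ being any such location-shifted Gaussian.

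The main obstacle is Step 1: identifying the rank map in closed form. Once the structural identity $\Rmu = \bm{A}\,\bm{\psi}(\bm{A}^\top\cdot)$ is established via Brenier uniqueness, the remaining steps are linear algebra plus an invocation of the one-dimensional Chernoff--Savage bound. I expect Step 3's bookkeeping around the formula in \cref{theo:locpowermain1} to be the only other point requiring care, since one must carefully identify the non-centrality parameter in the direction $\bh = h\bo$ from the statement of that theorem.
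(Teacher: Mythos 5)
Your proof follows essentially the same route as the paper's: compute $\Rmu(\bX) = \bm{A}\,\bm{\psi}(\bm{A}^\top\bX)$ (the paper cites a lemma from the literature where you construct the Brenier potential $u(\bx)=v(\bm A^\top\bx)$ directly, which is a harmless in-place reproof of the same fact), reduce the non-centrality ratio to $\sum_i R_i^2 c_i^2 / \sum_i R_i^2/\sigma_i^2$ via the diagonal structure of $\E[\bm\psi(\bW)\bm\phi(\bW)^\top]$, and apply the univariate Gastwirth/Chernoff--Savage bound $c_i^2\sigma_i^2\ge 1$ coordinatewise. The only soft spot is your ``generically nonzero'' remark in the equality characterization (if some $R_i=(\bm A^\top\bo)_i=0$ the corresponding $\tilde f_i$ is unconstrained), but the paper is equally terse on this point, so the proposal matches.
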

	The proof of the above result can be found in~\cref{sec:pfarehotelling}. \cref{prop:areica} can be viewed as a \emph{Chernoff-Savage type result} (see~\cite{Chernoff1958}) for the class of distributions $\fgen$. Here again, $\rhsc$ with the Gaussian reference distribution has an uniformly higher ARE against the Hotelling $T^2$ test.
	
	\subsection{Assumption~\eqref{eq:nullrhas} for Deterministic Sequences}\label{sec:verbd}
	In this section, we discuss how to verify~\eqref{eq:nullrhas} for some popular deterministic sequences and score function combinations. The case when $\bh_i^d$'s are sampled randomly has already been discussed in~\cref{rem:justbound}.
	
	\begin{example}[When ERD is uniform on the unit cube or spherical uniform]\label{ex:erdbounded}
		When the ERD is the $\mathrm{Unif}[0,1]^d$ distribution or the spherical uniform distribution, the natural choice in the literature is to choose $\bJ(\mx)=\mx$ and $\bh_1^d, \bh_2^d, \ldots, \bh_N^d$ in $[0,1]^d$ such that
		$$\frac{1}{N}\sum_{i=1}^N \delta_{\bh_i^d}\overset{w}{\longrightarrow}\nu$$
		where, depending on the case considered, $\nu$ is either $\mathrm{Unif}[0,1]^d$ or the spherical uniform distribution. For the $\mathrm{Unif}[0,1]^d$ distribution, popular choices of the $\{\bh_i^d\}_{i \in [N]}$ include the regular grid or quasi-Monte Carlo sequences such as the Halton sequence (see~\cite[Section D.3]{Deb19} for a discussion). For the spherical uniform distribution, a suitable choice was constructed explicitly in~\cite{delbarrio2019} which has since been used in~\cite{shi2020distribution,shi2020rate}. For these choices, $\{\bJ(\bh_i^d)\}_{i \in [N]}$ and $\{K(\bJ(\bh_i^d),\bJ(\bh_i^d))\}_{i \in [N]}$ are uniformly bounded (assuming $K(\cdot,\cdot)$ is continuous on $[0,1]^d$). Therefore, the assumption~\eqref{eq:nullrhas} follows directly using the dominated convergence theorem.
	\end{example}
	
	\begin{example}[When ERD is standard Gaussian]\label{ex:erdunbounded}
		A natural way to obtain the Gaussian ERD would be to start with $\nu=\mathrm{Unif}[0,1]^d$ or $\nu$ equals to the spherical uniform distribution and choose $\bJ(\cdot)$ appropriately such that $\bJ\#\nu$ is standard Gaussian.
		
		\begin{enumerate}
			\item \emph{When $\nu=\mathrm{Unif}[0,1]^d$}: Suppose we choose $\{\bh_i^d\}_{i \in [N]}$ as the standard Halton sequence (see~\cite{Niederreiter1992} for details on its construction). For $\mx=(x_1,\ldots ,x_d)$, set $\bJ(\mx):=(\Phi^{-1}(x_1),\ldots ,\Phi^{-1}(x_d))$. Also, write $\bh_i^d=(h_{i,1}^d, \ldots ,h_{i,d}^d)$ for $i\in [N]$. Then by condition \eqref{eq:empgrid} and the dominated convergence theorem,~\eqref{eq:nullrhas} follows if we show that:
			\begin{equation}\label{eq:unb1}
				\limsup\limits_{N\to\infty} \frac{1}{N}\sum_{i=1}^N (\Phi^{-1}(h_{i,j}^d))^4<\infty
			\end{equation}
			for all $j\in [d]$. Towards this direction, let $p_j$ denote the $j$-th prime number. Without loss of generality, suppose $h_{1,j}^d<h_{2,j}^d<\ldots <h_{N,j}^d$. Then by construction of Halton sequences, 
			$$h_{i,j}^d-h_{i-1,j}^d\geq (Np_j)^{-1}.$$
			Also, as $\Phi^{-1}(\cdot)$ is increasing, using the above lower bound yields that,
			$$\frac{1}{N}\sum_{i=1}^N (\Phi^{-1}(h_{i,j}^d))^4\leq p_j^{-1}\int_0^1 (\Phi^{-1}(u))^4\,\mathrm du<\infty,$$
			which establishes~\eqref{eq:unb1} and consequently~\eqref{eq:nullrhas}, both. 
			
			The same idea can be used to establish~\eqref{eq:unb1}~and~\eqref{eq:nullrhas}  for other quasi-Monte Carlo sequences or even the uniform $d$-dimensional grid. Moreover, as the arguments above show, it is easy to replace $\Phi^{-1}(\cdot)$ with other quantile functions under appropriate moment assumptions.
			
			\item \emph{When $\nu$ is the spherical uniform}: In this case, the popular choice is to choose $\{\bh_i\}_{i \in [N]}$ as Hallin's discrete grid (see~\cite{delbarrio2019}) and $\bJ(\cdot)$ as the van der Waerden score given in~\eqref{eq:vanscore}. Then condition~\eqref{eq:nullrhas} can be verified in the same way as in~\cite[Proof of Theorem 5.1]{shi2020rate}.
		\end{enumerate}
		
	\end{example}
	
	\subsection{Computational complexity for rank-based tests}\label{sec:compasgn}
	The overall time complexity of the testing procedures described in Sections~\ref{sec:ranktsq} and \ref{sec:rankmmd} can be split up into two additive components as follows:
	\begin{itemize}
		\item[(C1)] Computation of the empirical rank map $(\hat{\bR}_{m,n}(\bZ_1),\ldots ,\hat{\bR}_{m,n}(\bZ_N))$ (see~\eqref{eq:defemprank}) and,
		\item[(C2)] computation of rank Hotelling $T^2$ with a known covariance matrix (see~\cref{sec:ranktsq}) or the computation of rank kernel MMD (see~\cref{sec:rankmmd}), once the empirical ranks are computed.
	\end{itemize}
	The overall time complexity would be the sum total of the complexities in (C1) and (C2). Let us elaborate on the two components individually.
	
	(C1). In order to obtain $(\hat{\bR}_{m,n}(\bZ_1),\ldots ,\hat{\bR}_{m,n}(\bZ_N))$, we need to solve the optimization problem~\eqref{eq:empopt}. This problem is usually referred to as the \emph{assignment} problem. We reproduce it here for completeness:
	\begin{equation}\label{eq:empoptp2}
		\hat{\sigma}:=\argmin_{\sigma=(\sigma_1,\ldots ,\sigma_N)\in S_N} \sum_{i=1}^N \lVert \bZ_i-\bh_{\sigma_i}^d\rVert^2.
	\end{equation}
	Here all the symbols have their usual meanings. A popular deterministic algorithm, namely the \emph{Hungarian algorithm}, can be used to solve~\eqref{eq:empoptp2} in $O(N^3)$ worst case time complexity (see e.g., \cite{Edmonds1970,jonker1987,edmonds1972theoretical,tomizawa1971some,dinitz1969algorithm}). There has been considerable interest in speeding up this algorithm by leveraging parallel and distributed computing methods (see e.g.~\cite[Chapter 4]{bokhari1987assignment}, \cite{wein1991massively,date2016gpu}). We also refer the reader to fast implementations in Google OR-Tools \url{https://developers.google.com/optimization/assignment/assignment_example} for fast solvers in \texttt{Python,Java,C++}, among others. Another popular approach is the \emph{auction algorithm} due to Bertsekas \cite{bertsekas2009auction,bertsekas1989auction}, which has worst case complexity $O(N^3\log{N})$ but also has the advantage of being easy to parallelize (see~\cite{bertsekas1988,bertsekas1979distributed}). A different potential way to speed up computation is to consider $(1+\epsilon)$-approximate solutions to~\eqref{eq:empoptp2} for some $\epsilon>0$. In other words, the goal is to find $\hat{\sigma}^{\epsilon}\in S_N$ such that 
	$$\sum_{i=1}^N \lVert \bZ_i-\bh_{\hat{\sigma}^{\epsilon}_i}^d\rVert^2\le (1+\epsilon) \sum_{i=1}^N \lVert \bZ_i-\bh_{\hat{\sigma}_i}^d\rVert^2.$$
	In~\cite{Agarwal2014}, the authors obtain a deterministic $O(N^{3/2}\epsilon^{-d}\log{(N)})$ time algorithm to get such a $\hat{\sigma}^{\epsilon}$ (also see~\cite{Sharathkumar2012}). Another  algorithm to obtain $\hat{\sigma}^{\epsilon}$ was proposed in \cite{lahn2021n} with time complexity $O(N^{5/4} \mbox{poly}_d(\log{N},1/\epsilon))$, where $\mbox{poly}_d(\log{N},1/\epsilon)$ is a dimension $d$ dependent polynomial function of $\log{N}$ and $1/\epsilon$. This is an improvement to~\cite{Agarwal2014} in terms of sample size but could potentially be worse in terms of the dimension $d$. In a different direction, in \cite{Gabow1989}, the authors show that if $\{\lVert \bZ_i-\bh_j\rVert^2\}_{i,j\in [N]^2}$ are all integers bounded by $M$ (which can be ensured approximately by scaling and rounding), then it is possible to construct a deterministic algorithm to solve~\eqref{eq:empoptp2} exactly, in $O(N^{5/2}\log{(NM)})$ time.
	\par
	
	(C2). Once we have obtained $(\hat{\bR}_{m,n}(\bZ_1),\ldots ,\hat{\bR}_{m,n}(\bZ_N))$, the next step is to compute Hotelling $T^2$ and kernel MMD using these empirical ranks. To begin with, let us consider $\rhsc$ from~\eqref{eq:rankschotelling} which is given by
	$$\rhsc=\frac{mn}{m+n} \big(\bm{\Delta}_{m, n}^{\nu,\bJ}\big)^{\top}\Serd^{-1} \bm{\Delta}_{m, n}^{\nu,\bJ},$$ 
	where 
	$$\bm{\Delta}_{m, n}^{\nu,\bJ}=\frac{1}{m}\sum_{i=1}^m \bJ\left(\hat{\bR}_{m,n}(\bX_i)\right)-\frac{1}{n}\sum_{j=1}^n \bJ\left(\hat{\bR}_{m,n}(\bY_j)\right).$$
	Now, in all our examples $\Serd$ is a diagonal matrix (see~\cref{rem:covmat}), so there is no matrix-vector multiplication necessary. As the computation of $\bm{\Delta}_{m,n}^{\nu,\bJ}$ requires $O(N d)$ time (as we are averaging $N$ numbers for each of $d$ coordinates), it then follows that $\rhsc$ can be computed in $O(Nd+d^2)$ time, where the additional $O(d^2)$ factor arises while computing the norm of $\bm{\Delta}_{m, n}^{\nu,\bJ}$.
	
	We now move on to the computation of $\grsc$ which has the same computational complexity as the computation of the usual rank MMD after the ranks $(\hat{\bR}_{m,n}(\bZ_1),\ldots ,\hat{\bR}_{m,n}(\bZ_N))$ have been computed. It is well known that this complexity is $O(N^2 d)$ (see~\cite{gretton2008,gretton2009fast}). Several methods for speed up have been explored in the literature. One approach is to replace \emph{all} pairwise interactions by only a \emph{sparse set} of pairwise interactions to get a linear $O(Nd)$ time variant (\cite{gretton2012optimal}). A data splitting and aggregation approach that interpolates the linear time variant and the quadratic time variant, was proposed in \cite{zaremba2013b}. In practice, the authors in~\cite{zaremba2013b} suggest split sizes that result in a $O(N^{3/2}d)$ time algorithm. Another method to speed up computation is the FastMMD (\cite{zhao2015fastmmd}), which is based on the \emph{Fastfood} technique from \cite{le2013fastfood}. This is an approximate method that considers basis expansions of particular kernels and is particularly suitable to shift invariant and spherically invariant kernels. If $L$ basis functions are chosen for the approximation, then \cite{zhao2015fastmmd} contains an algorithm where the time complexity is $O(LNd)$ for shift-invariant kernels and $O(LN\log{d})$ for spherically invariant ones.
	
	\subsection{Statistically and computationally feasible distribution-free tests}\label{sec:compot}
	As mentioned before, prior to the recent breakthrough with multivariate ranks defined via optimal transport, most two-sample tests which are asymptotically distribution-free were either based on geometric graphs or depth functions.

	\begin{itemize}
		
		\item {\it Tests based on geometric graphs functions}: This includes the celebrated  Friedman-Rafsky test based on the {\it minimum spanning tree} (MST)\footnote{Given a finite set $S \subset \R^d$, the {\it minimum spanning tree} (MST) of $S$ is a connected graph  with vertex-set $S$ and no cycles, which has the minimum weight, where the weight of a graph is the sum of the distances of its edges.} \cite{friedman1979}, the tests based on nearest-neighbor graphs \cite{henze1988,schilling1986}, and their recent generalizations  \cite{chen2017}, and the cross-match test of Rosenbaum \cite{rosenbaum2005} based on the minimum non-bipartite matching. These tests are asymptotically distribution-free (apart from the cross-match test, which is exactly distribution-free in finite samples), computationally feasible, and universally consistent, but have no power against $O(1/\sqrt N)$  alternatives, that is, they have zero Pitman efficiency \cite{bbb2019}.  More recently, Biswas et al. \cite{munmun2014} proposed another two-sample test based on Hamiltonian cycles, which is also distribution-free in finite samples. However, computing the minimum weight Hamiltonian path is NP-hard, making this test computationally prohibitive beyond small sample sizes, and it is also expected to have zero asymptotic efficiency.

		
		
		\item {\it Tests based on depth functions}: A class of asymptotically distribution-free tests that have non-trivial asymptotic efficiencies are the Liu-Singh rank sum tests \cite{liu1993}. These tests are based on a notion of data-depth and generalize the Mann-Whitney rank test. This include tests based on halfspace depth \cite{tukey} and simplicial depth \cite{Liu1990}, among others (refer to the survey by Oja \cite{ojaR} and the references therein). However,  these tests are only consistent over a restrictive class of alternatives (recall that the Mann-Whitney test is only powerful against alternatives where $\mathbb P(X < Y)  \ne \frac{1}{2}$) and computationally intractable as the dimension increases. 
	\end{itemize}
	In contrast to the methods described above, the class of rank-based kernel two-sample tests proposed in this paper are distribution-free procedures which, in addition to being universally consistent and computationally feasible (see~\cref{sec:compasgn}), are also statistically efficient. This resolves a gap in the literature and justifies the applicability of rank-based methods in modern data applications. 
	
	It should be mentioned that the class of kernel MMD based tests (see~\cite{Gretton2012,gretton2009fast,Sejdinovic2013}) also exhibits consistency against all fixed alternatives and local power against contiguous alternatives; but those are not exactly (or even asymptotically) distribution-free. We will draw more detailed comparisons between this class of tests and $\ptsc$ in~\cref{sec:sim} below.
	
	In another recent line of work, optimal transport-based methods have been used for distribution-free testing in the two-sample problem~\cite{boeckel2018multivariate,Deb19}, independence testing problem~\cite{Deb19,shi2020distribution,shi2020rate}, linear regression~\cite{hallin2020fully,hallin2020center}), etc. However, none of this existing literature establishes any explicit Pitman ARE bounds for their testing procedures. Moreover, the above papers assume that the reference distribution $\nu$ is compactly supported and on some occasions, also assume that the data distributions are compactly supported, to establish consistency guarantees. We, on the other hand, do not require such stringent restrictions for consistency, which in turn allows us to $\nu=\mathcal{N}(\bzr,\bm{I}_d)$ distribution. We prove a general result for convergence of optimal transport-based functionals, namely~\cref{theo:rankmapcon}, under weak assumptions that allow covering all required consistency results in one go. It also helps us address an open problem raised in~\cite{shi2020rate} (see~\cref{sec:compare} above for more details).
	
	\section{Proofs of the Main Results}\label{sec:pfmain}
	
	In this section we present the proofs of our results and the new results introduced in the Appendix itself. The section is organized as follows: We begin with the proof of~\cref{theo:rankmapcon} in~\cref{sec:pfconvergence}. The proofs of the consistency results from Section~\ref{sec:condist} are given in~\cref{sec:pfconsistency}. In~\cref{sec:pfnulldistribution}, we provide the proof of~\cref{theo:nullrankhotelling}. The proof of Theorem~\ref{theo:locpowermain1} is presented in~\cref{sec:pflocalpower}. Further, in~\cref{sec:pfarehotelling} we prove the results from Section~\ref{sec:arehotelling}. The results on rank kernel MMD from~\cref{sec:rankmmd} are proved in~\cref{sec:pfrankmmd}. The proof of the results from~\cref{sec:compare} have been provided in~\cref{sec:pfindtest} and those from~\cref{sec:auxdet} are added in~\cref{sec:auxpf}.
	
	Hereafter, given two positive sequences $\{a_N\}_{N \geq 1}$ and $\{b_N\}_{N \geq 1}$, we will write $a_N \lesssim b_N$ to denote $a_N \leq C b_N$, for some constant $C >0$ not depending on $N$. 
	
	\subsection{Proof of~\cref{theo:rankmapcon}}\label{sec:pfconvergence} 
	Throughout this proof, we will assume $r=p$ for notational simplicity. As will be evident from the arguments below, the proof for other values of $r$ will follow similarly. 
	
	We begin our proof by recalling the following well-known result from convex analysis. 
	
	\begin{lemma}[Alexandroff's Theorem,~\citet{Alexandroff1939}]\label{lem:Alexandroff}
		Let $f:U\to\mathbb{R}$ be a convex function, where $U$ is an open convex subset of $\mathbb{R}^n$. Then $f$ has a second derivative Lebesgue a.e.~in $U$.	
	\end{lemma}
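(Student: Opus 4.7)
The plan is to follow the classical route via distributional Hessians and the Lebesgue differentiation theorem for Radon measures. First I would establish the easy structural facts: since $f$ is convex on the open convex set $U$, it is locally bounded above and hence locally Lipschitz (a standard consequence of convexity). Therefore Rademacher's theorem yields that the gradient $\nabla f$ exists Lebesgue-a.e.\ in $U$. This takes care of the first-order part of ``second derivative.''

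Next I would set up the Hessian as a matrix-valued Radon measure. Convexity implies that for each unit vector $e \in \mathbb{R}^n$, the second distributional derivative $\partial_{ee}^2 f$ is a nonnegative distribution, hence a nonnegative Radon measure on $U$ by the Riesz representation theorem. By polarization, every mixed distributional derivative $\partial_{ij}^2 f$ is a signed Radon measure, and the matrix $\mu = (\mu_{ij})$ is symmetric and positive-semidefinite in the measure sense. I would then Lebesgue-decompose each $\mu_{ij} = A_{ij}(x)\, dx + \mu_{ij}^s$, producing a symmetric PSD matrix-valued function $A(x)$ (defined a.e.) and a singular part $\mu^s$. Simultaneously, at Lebesgue-a.e.\ point $x_0$ one has: (a) $\nabla f(x_0)$ exists; (b) $x_0$ is a Lebesgue point of $A$, i.e.\ $\frac{1}{|B_r|}\int_{B_r(x_0)} |A(y) - A(x_0)|\,dy \to 0$ as $r\to 0$; and (c) the singular part vanishes at $x_0$ in the sense $|\mu^s|(B_r(x_0))/|B_r| \to 0$.

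The heart of the proof is to show that at any such ``good'' point $x_0$, the second-order Taylor expansion
\[
f(x_0 + h) = f(x_0) + \nabla f(x_0)\cdot h + \tfrac{1}{2} h^\top A(x_0) h + o(|h|^2)
\]
holds. I would do this by mollifying: let $f_\varepsilon = f * \rho_\varepsilon$, which is smooth and convex, with $D^2 f_\varepsilon \to \mu$ as measures. For $h$ small, use the FTC twice to write
\[
f_\varepsilon(x_0+h) - f_\varepsilon(x_0) - \nabla f_\varepsilon(x_0)\cdot h = \int_0^1 (1-t)\, h^\top D^2 f_\varepsilon(x_0+th) h\, dt.
\]
Passing to the limit in $\varepsilon$ and then averaging over a small ball in $h$, the right-hand side is controlled by integrals of $d\mu$ over $B_{|h|}(x_0)$. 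The contribution from the absolutely continuous part gives $\tfrac{1}{2} h^\top A(x_0) h + o(|h|^2)$ using (b), while (c) shows the singular part contributes $o(|h|^2)$. The upper-bound half follows directly; the matching lower bound uses convexity of $f$ (so the error between $f$ and its Taylor polynomial is nonnegative on one side and controlled on the other via the supporting hyperplane inequality applied on nearby spheres).

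The main obstacle I expect is the lower bound in this Taylor expansion. Convexity provides a clean one-sided estimate, but to extract a true $o(|h|^2)$ remainder on both sides one must carefully combine the Lebesgue-point property of $A$ with the annihilation of $\mu^s$ at $x_0$, ruling out the possibility that $f$ has an ``approximate'' quadratic behavior larger than $\tfrac{1}{2} h^\top A(x_0) h$. The standard technique is to average over $h$ on a small sphere and use that, by PSD-ness of $\mu$, each directional second distributional derivative is a positive measure to which Lebesgue differentiation applies directly. Once the two-sided Taylor expansion is verified at a.e.\ $x_0$, this is precisely the statement that $f$ admits a second derivative (in the Alexandrov sense) a.e.\ in $U$.
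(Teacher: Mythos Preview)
The paper does not prove this lemma at all: it simply states Alexandroff's theorem as a ``well-known result from convex analysis'' with a citation to \cite{Alexandroff1939}, and then uses it (only the consequence that $\Rl(\cdot)$, being the gradient of a convex function, is continuous a.e.) as a black box in the proof of Theorem~\ref{theo:rankmapcon}. So there is nothing to compare your argument against --- the paper's ``proof'' is a reference.

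Your sketch follows the standard modern route (local Lipschitzness $\Rightarrow$ Rademacher; distributional Hessian as a positive-semidefinite matrix-valued Radon measure; Lebesgue decomposition; mollification and Lebesgue differentiation to get the second-order Taylor expansion at a.e.\ point). This is essentially the argument one finds in, e.g., Evans--Gariepy or Villani, and is a correct outline. Your identification of the lower-bound in the Taylor expansion as the delicate step is accurate. For the purposes of this paper, however, none of this is needed: only a.e.\ continuity of $\nabla u$ for convex $u$ is used, and that already follows from the first-order part of your argument (convex $\Rightarrow$ locally Lipschitz $\Rightarrow$ differentiable a.e.\ by Rademacher, and the gradient of a convex function, where it exists, is continuous on the set of differentiability points up to a null set).
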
 
	
	Hereafter, we assume that all the random variables are defined on the same probability space. Then in the usual asymptotic regime \eqref{eq:usual}, 
	\begin{align}\label{eq:rankmapt1}
		\frac{1}{N}\sum_{i=1}^N \delta_{\mz_i}\overset{w}{\longrightarrow}\lambda\mu_1+(1-\lambda)\mu_2=:\mu\quad a.s.
	\end{align}
	Let $\{(\bm{C}_{N, 1},\bm{D}_{N, 1}), (\bm{C}_{N, 2},\bm{D}_{N, 2}), \ldots, (\bm{C}_{N, p},\bm{D}_{N, p})\}$ denote $p$ i.i.d. draws from the following distribution $$\frac{1}{N}\sum_{i=1}^N \delta_{(\mz_i,\hbR_{m,n}(\mz_i))},$$
	which is the empirical distribution on the (random) set $\{(\mz_j,\hbR_{m,n}(\mz_j))\}_{j\in [N]}$. Also, let $\nu^*$ be the law induced by the random variable $(\mz,\Rl(\mz))$, $\mz\sim \lambda\mu_1+(1-\lambda)\mu_2$. Note that, by~\eqref{eq:empgrid} ~and~\eqref{eq:rankmapt1}, $(\bm{C}_{N,1},\bm{D}_{N, 1})$ is asymptotically tight almost surely as both the variables converge weakly marginally. Consequently, by using the same sequence of steps\footnote{First, by using Prokhorov's Theorem, one can show that given any subsequence, there exists a further subsequence such that $((\bm{C}_{N,1},\bm{D}_{N, 1}),\ldots ,(\bm{C}_{N,p},\bm{D}_{N, p}))$ converges weakly to some distribution on $(\R^{2d})^p$, almost surely. By using~\cite[Corollary 14 and Lemma 2]{Mccann1995}, one can then show that the limiting distribution above is free of the subsequence thereby concluding the proof.} as in~\cite[Theorem 2.1]{Deb19}, we get 
	\begin{equation}\label{eq:rankmapcon1} 
		\left((\bm{C}_{N,1},\bm{D}_{N, 1}),\ldots ,(\bm{C}_{N,p},\bm{D}_{N, p})\right)\overset{w}{\longrightarrow}\nu^*\otimes \ldots \otimes \nu^*, \quad a.s.
	\end{equation}
	Next, observe that $\Rl(\cdot)$ is continuous a.e. in the interior of the support of $\mu$, by Alexandroff's theorem (see~Lemma \ref{lem:Alexandroff}) and the a.e. continuity of $\cF(\cdot)$ and $\bJ(\cdot)$ (by the assumptions in the theorem). Therefore, the map: 
	\begin{equation*}
		g:\left((\by_{1},\bz_{1}),\ldots ,(\by_{p},\bz_{p}) \right) \mapsto \bigg\lVert\cF\big(\bJ\big(\Rl(\by_{1})\big),\ldots ,\bJ\big(\Rl(\by_{p})\big)\big)-\cF\big(\bJ(\bz_{1}),\ldots ,\bJ(\bz_{p})\big)\bigg\rVert
	\end{equation*}
	is continuous a.e. with respect to the $p$-fold product measure $\nu^*\otimes \ldots \otimes \nu^*$. Suppose $(\bZ_1,\Rl(\bZ_1)),\ldots ,(\bZ_p,\Rl(\bZ_p))\sim \nu^*\otimes \ldots \otimes \nu^*$. Then observe that $$g\bigg((\bZ_1,\Rl(\bZ_1)),\ldots ,(\bZ_p,\Rl(\bZ_p))\bigg)=0.$$Therefore, by a direct application of the continuous mapping theorem, 
	$$g\bigg((\bm{C}_{N,1},\bm{D}_{N,1}),\ldots ,(\bm{C}_{N,p},\bm{D}_{N, p})\bigg)\overset{w}{\longrightarrow}0\quad a.s.$$
	Now, since weak convergence to a degenerate measure implies convergence in probability, given any $\varepsilon>0$, the following holds: 
	$$\P\left[g\bigg((\bm{C}_{N,1},\bm{D}_{N,1}),\ldots ,(\bm{C}_{N,p},\bm{D}_{N,p})\bigg)>\varepsilon \Big| \{ \mz_1,\ldots ,\mz_N \} \right]\longrightarrow 0\quad \mbox{a.s.}$$
	and consequently by the bounded convergence theorem, 
	\begin{equation}\label{eq:rankmapcon2}
		\P\left[g\bigg((\bm{C}_{N,1},\bm{D}_{N,1}),\ldots ,(\bm{C}_{N,p},\bm{D}_{N,p})\bigg)>\varepsilon\right]\to 0.
	\end{equation} 
	To complete the proof, define
	\begin{small} 
		\begin{align*}
			& V_{m, n} :=\sum_{(i_1,\ldots ,i_p)\in [N]^p} \bigg\lVert\cF(\bJ(\hbR_{m,n}(\mz_{i_1})),\ldots ,\bJ(\hbR_{m,n}(\mz_{i_p})))-\cF(\bJ(\Rl(\mz_{i_1})),\ldots ,\bJ(\Rl(\mz_{i_p})))\bigg\rVert. 
		\end{align*} 
	\end{small}
	This implies, by recalling the definition of the function $g$, 
	\begin{align}\label{eq:rankmapconinf}
		\P\left( \frac{1}{N^p}V_{m, n}>\varepsilon  \right) &=\P\left(\E\left[g\bigg((\bm{C}_{N,1},\bm{D}_{N,1}),\ldots ,(\bm{C}_{N,p},\bm{D}_{N,p})\bigg) \Big| \mz_1,\ldots ,\mz_N\right]> \varepsilon\right)\nonumber \\ &\leq \varepsilon^{-1}\E\left[g\bigg((\bm{C}_{N,1},\bm{D}_{N,1}),\ldots ,(\bm{C}_{N,p},\bm{D}_{N,p})\bigg)\right],
	\end{align}
	where the last line uses Markov's inequality. We next show that 
	\begin{align}\label{eq:rankmapconinf1}
		\left\{g\bigg((\bm{C}_{N,1},\bm{D}_{N,1}),\ldots ,(\bm{C}_{N,p},\bm{D}_{N,p})\bigg)\right\}_{N\geq 1}\ \mbox{is uniformly integrable}.
	\end{align}
	Note that if we establish~\eqref{eq:rankmapconinf1}, then by~\eqref{eq:rankmapcon2}, we will have $\E\left[g\bigg((\bm{C}_{N,1},\bm{D}_{N,1}),\ldots ,(\bm{C}_{N,p},\bm{D}_{N,p})\bigg)\right]\to 0$ as $N\to\infty$. Combining this observation with~\eqref{eq:rankmapconinf} would then imply $\frac{1}{N^p} V_{m, n}$ convergences in probability to zero, as required.
	
	\emph{Proving~\eqref{eq:rankmapconinf1}}: Observe that
	\begin{align*}
		&\;\;\;\; g\bigg((\bm{C}_{N,1},\bm{D}_{N,1}),\ldots ,(\bm{C}_{N,p},\bm{D}_{N,p})\bigg)\\ &\leq \bigg\lVert\cF\big(\bJ\big(\Rl(\bm{C}_{N,1})\big)\big),\ldots ,\bJ\big(\Rl(\bm{C}_{N,p})\big)\big)\bigg\rVert+\bigg\lVert\cF\big(\bJ(\bm{D}_{N,1}),\ldots ,\bJ(\bm{D}_{N,p})\big)\bigg\rVert  \\ 
		& =: \mathcal{G}_N+\mathcal{H}_N . 
	\end{align*}
	By using the above display,~\eqref{eq:rankmapconinf1} would follow if $\{\mathcal{G}_N\}_{N\geq 1}$ and $\{\mathcal{H}_N\}_{N\geq 1}$ are uniformly  integrable. Observe that $\mathcal{G}_N\overset{w}{\longrightarrow}\cF(\bJ(\tilde{\bZ}_1),\ldots ,\bJ(\tilde{\bZ}_p))$,  where $\tilde{\bZ}_1,\ldots ,\tilde{\bZ}_p\overset{i.i.d.}{\sim} \nu$. The same conclusion also holds for $\mathcal{H}_N$. Now observe that: 
	\begin{align*}
		&\;\;\;\;\limsup_{N\to\infty} \E\left[\mathcal{G}_N\right]\nonumber \\ &= \limsup_{N\to\infty}\frac{1}{N^p}\E\Bigg[\sum_{(i_1,\ldots ,i_p)\in [N]^p} \Bigg(\Big\lVert\cF\big(\bJ(\hbR_{m,n}(\mz_{i_1})),\ldots ,\bJ(\hbR_{m,n}(\mz_{i_p}))\big)\Big\rVert\Bigg)\Bigg]\nonumber \\ & \leq\quad  \int \lVert\cF(\bJ(\bm{z}_1),\ldots ,\bJ(\bm{z}_p))\rVert \,d\nu(\bm{z}_1)\ldots \,d\nu(\bm{z}_p)<\infty.
	\end{align*}
	The same conclusion also holds with $\mathcal{G}_N$ replaced with $\mathcal{H}_N$. Using the above display,~\eqref{eq:rankmapconinf1} then follows by Vitali's convergence theorem (see~\cite[Theorem 5.5]{Shorack2017}).
	
	To establish the almost sure convergence, recall that in this case $\cF(\cdot)$ and $\bJ(\cdot)$ are both assumed to be Lipschitz. Consequently,
	\begin{align*}
		\frac{1}{N^p}V_{m, n} &\lesssim \frac{1}{N}\sum_{i=1}^N \lVert \hbR_{m,n}(\mz_i)-\Rl(\mz_i)\rVert.
	\end{align*} 
	The conclusion then follows in the same manner as the proof of~\cite[Theorem 2.1]{Deb19}, once again by noting that $(\bm{C}_{N,1},\bm{D}_{N,1})$ is asymptotically tight almost surely by~\eqref{eq:empgrid} ~and~\eqref{eq:rankmapt1}.
	
	\subsection{Proofs of~\cref{theo:rhconsis} and \cref{prop:conloc}}\label{sec:pfconsistency}
	In this section we present the proofs of the consistency results of the test $\prsc$. We begin with the proof of ~\cref{theo:rhconsis}. 
	
	\begin{proof}[Proof of~\cref{theo:rhconsis}] Throughout this proof all expectations are taken under $\mathrm{H}_1$ and we will assume $\bm X \sim \mu_1$ and $\bm Y \sim \mu_2$. With this in mind, we first show the following: 
		\begin{align}\label{eq:rhconsis1}
			& \frac{\rhsc}{N \lambda(1-\lambda)} \nonumber \\ 
			& \overset{P}{\longrightarrow}  \left( \E \bJ(\Rl(\bX)) - \E \bJ(\Rl(\bY)) \right)^{\top}\Serd^{-1} \left( \E \bJ(\Rl(\bX)) - \E \bJ(\Rl(\bY))  \right) .
		\end{align} 
		Towards proving this, using~\cref{theo:rankmapcon} , with $p=1$, $r=1$, $q=d$, and $\cF(\mx)=\mx$, note that 
		$$\frac{1}{m}\sum_{i=1}^m \lVert \bJ(\hbR_{m,n}(\bX_i))-\bJ(\Rl(\bX_i))\rVert+\frac{1}{n}\sum_{i=1}^n \lVert \bJ(\hbR_{m,n}(\bY_i))-\bJ(\Rl(\bY_i))\rVert = o_P(1).$$
		This implies, 
		\begin{align*} 
			& \Bigg\lVert\frac{1}{m}\sum_{i=1}^m \bJ(\hbR_{m,n}(\bX_i))-\frac{1}{n}\sum_{i=1}^n  \bJ(\hbR_{m,n}(\bY_i))-\frac{1}{m}\sum_{i=1}^m \bJ(\Rl(\bX_i))+\frac{1}{n}\sum_{i=1}^n  \bJ(\Rl(\bY_i))\Bigg\rVert \nonumber \\ 
			& = o_P(1).  
		\end{align*}
		Next, by using the weak law of large numbers together with Slutsky's theorem gives, 		$$\frac{1}{m}\sum_{i=1}^m \bJ(\hbR_{m,n}(\bX_i))-\frac{1}{n}\sum_{i=1}^n  \bJ(\hbR_{m,n}(\bY_i))\overset{P}{\longrightarrow} \E \bJ(\Rl(\bX)) - \E \bJ(\Rl(\bY)) .$$
		An application of the continuous mapping theorem then completes the proof of~\eqref{eq:rhconsis1}.
		
		Now, to complete the proof of~\cref{theo:rhconsis}, note that whenever $\E\bJ(\Rl(\bX))-\E\bJ(\Rl(\bY))\neq 0$, \eqref{eq:rhconsis1} implies that $\rhsc\overset{P}{\longrightarrow}\infty$. Also, under $\mathrm{H}_0$, $\rhsc$ is $O_p(1)$ (by~\cref{theo:nullrankhotelling}), and consequently $c_{m,n}$ in~\eqref{eq:testtrank} is $O(1)$. Combining these observations immediately yields consistency. 
	\end{proof}
	
	\begin{proof}[Proof of~\cref{prop:conloc}]
		Let $B\subset \R^d$ be an open set such that $\mu_1(B)>0$ and $u_{\mathrm{H}_1}^{\nu}(\cdot)$ is strictly convex on $B$. Fix $\my\in B$. Let $\myt$ be an element in the support of $\mu_1$. We now claim that 
		\begin{equation}\label{eq:conlocmain}
			\langle  \Rl(\myt)-\Rl(\my),\myt-\my\rangle >0.
		\end{equation}
		Note that the LHS above is always $\geq 0$, as $\Rl(\cdot)$ is the gradient of a convex function. To show strict inequality, firstly note that~\eqref{eq:conlocmain} is immediate if $\myt\in B\setminus\{\my\}$ as $u_{\mathrm{H}_1}^{\nu}(\cdot)$ is strictly convex on $B$. Now suppose $\myt\notin B$. As $B$ is open, there exists $\alpha\in (0,1)$ small enough such that the following holds:
		\begin{equation}\label{eq:conloc1}
			\langle  \Rl(\my)- \Rl(\alpha\my+(1-\alpha)\myt),(1-\alpha)(\my-\myt)\rangle > 0,
		\end{equation}
		where we use the fact that $\Rl(\cdot)$ is the gradient of a convex function (see~\cref{prop:Mccan}) and consequently satisfies cyclical monotonicity.

		Next, observe that 
		\begin{align*}
			&\;\;\;\;\langle \Rl(\myt)- \Rl(\my),\myt-\my\rangle\nonumber \\&=\langle \Rl(\myt)- \Rl(\alpha\my+(1-\alpha)\myt), \myt-\my\rangle+\langle \Rl(\my)- \Rl(\alpha\my+(1-\alpha)\myt),\my-\myt\rangle\\ &\geq \langle \Rl(\my)-\Rl(\alpha\my+(1-\alpha)\myt),\my-\myt\rangle >0,
		\end{align*}
		where the last line follows from the fact that $\Rl(\cdot)$ is the gradient of a convex function and~\eqref{eq:conloc1}. 
		
		Now, it suffices to prove that $\bD\neq\bzr$ implies $\E\Rl(\bX)\neq\E\Rl(\bY)$. We will prove this by contradiction. Towards this direction, suppose that $\bD\neq \bzr$. Observe that the condition $\E\Rl(\bX)=\E\Rl(\bY)$ can be written as: $\int (\Rl(\mx+\bD)-\Rl(\mx))\,\mathrm d\mu_1(\mx)=0$, which implies, 
		\begin{align}\label{eq:conloc2}
			\int \langle \Rl(\mx+\bD)-\Rl(\mx),\bD\rangle\,\mathrm d\mu_1(\mx)  = 0. 
		\end{align}	
		As $\langle \Rl(\mx+\bD)-\Rl(\mx),\bD\rangle \geq 0$,~\eqref{eq:conloc2} implies that $\langle \Rl(\mx+\bD)-\Rl(\mx),\bD\rangle = 0$, $\mu_1$-almost everywhere~$\mx$. However, ~\eqref{eq:conlocmain} implies, $\langle \Rl(\mx+\bD)-\Rl(\mx),\bD\rangle > 0$ for all $\mx\in B$, where $\mu_1(B)>0$. This is a contradiction, thereby completing the proof.
	\end{proof}
	
	\subsection{Proof of~\cref{theo:nullrankhotelling}}\label{sec:pfnulldistribution}

	Recall the definition of $\mathbf{\Delta}_{m, n}^{\mathrm{or}}$ from~\eqref{eq:delta_oracle}. Now, define
	$$\rhsr:=\tfrac{mn}{N}\left(\mathbf{\Delta}_{m, n}^{\nu,\bJ,\mathrm{or}}\right)^{\top}\Serd^{-1}\left(\mathbf{\Delta}_{m, n}^{\nu,\bJ,\mathrm{or}}\right)$$
	Throughout we will take limit as $N \rightarrow \infty$ such that \eqref{eq:usual} holds. The main step in the proof of~\cref{theo:nullrankhotelling} is to show that 
	\begin{align}\label{eq:rh_limit}
		\big|\rhsc-\rhsr\big|\overset{P}{\longrightarrow}0
	\end{align}
	under $\mathrm{H}_0$. The proof of \eqref{eq:rh_limit} is deferred. It can be used to complete the proof of~\cref{theo:nullrankhotelling} as follows: Note that by a direct application of the multivariate central limit theorem, 	
	$$\sqrt{\frac{mn}{N}}\mathbf{\Delta}_{m, n}^{\nu,\bJ,\mathrm{or}} \overset{w}{\longrightarrow}\mathcal{N}(\bzr,\Serd),$$
	since $\{\bJ(\Rmu(\bX_1)), \bJ(\Rmu(\bX_2)), \ldots, \bJ(\Rmu(\bX_m)) \}$,  $\{\bJ(\Rmu(\bY_1)), \bJ(\Rmu(\bY_2)), \ldots, $ $\bJ(\Rmu(\bY_n)) \}$ are independent and identically distributed random variables under $\mathrm{H}_0$. This implies, by the continuous mapping theorem, 
	$$\rhsr\overset{w}{\longrightarrow}\chi^2_d,$$
	under $\mathrm{H}_0$. Combining this with \eqref{eq:rh_limit} and the Slutsky's theorem, gives 
	$$\rhsc\overset{w}{\longrightarrow}\chi^2_d$$
	which completes the proof of~\cref{theo:nullrankhotelling}. \medskip

	\noindent {\it Proof of}~\eqref{eq:rh_limit}: 	Note that it suffices to show that:
	\begin{equation}\label{eq:nrh1}
		\lim_{N \rightarrow \infty} \frac{mn}{N}\E\big\lVert \bm \Delta_{m,n}^{\nu,\bJ}-\mathbf{\Delta}_{m, n}^{\nu,\bJ,\mathrm{or}} \big\rVert^2 = 0,
	\end{equation}
	where $\bm \Delta_{m,n}$ is defined as in~\eqref{eq:delta}. For the proof of claim~\eqref{eq:nrh1}, we need the notion of \emph{permutation distributions} as defined below:

	\begin{definition}[Permutation distribution]\label{def:pdist}
		Recall that $\mathcal{Z}_N=\{\mz_1,\mz_2,\ldots ,\mz_N\}$ denotes the pooled sample $\mathcal{X}_m\cup \mathcal{Y}_n$. For $1\leq i\leq N$, define 
		$$L_i = 
		\left\{
		\begin{array}{ccc}
			1 & \text{ if }   \mz_i\in\mathcal{X}_m,   \\
			2  &  \text{ if }   \mz_i\in\mathcal{Y}_n.    
		\end{array}
		\right.$$ 
		Note that if $\mu_1=\mu_2$, then 
		\begin{align}\label{eq:L12}
			\mathbb{P}(L_i=1|\mZ_N)=\tfrac{m}{N}=1-\mathbb{P}(L_i=2|\mZ_N). 
		\end{align}
		(Observe that $L_1, L_2, \ldots, L_N$ are identically distributed, but they are not independent.) In particular, the distribution of $(\rhsc,\rhsr)$ is completely determined by the joint distribution of $(L_1,\ldots ,L_N)$ conditional on $\{\mz_1,\ldots ,\mz_N\}$. We will refer to this distribution as the \emph{permutation distribution}.
	\end{definition}

	Let $\E_{\mZ_N}$ denote the conditional expectation with respect to $\{\bZ_1,\ldots ,\bZ_N\}$. Define $$\bhg_i:=\bJ(\hbR_{m,n}(\bZ_i)),\quad \pg_i:=\bJ(\Rmu(\bZ_i)).$$ Observe that the left hand side of~\eqref{eq:nrh1} can be written as:
	\begin{align}\label{eq:nrh2}
		\E & \big\lVert \bm \Delta_{m,n}^{\nu,\bJ}-\mathbf{\Delta}_{m, n}^{\nu,\bJ,\mathrm{or}} \big\rVert^2 \nonumber \\ 
		& =  \frac{mn}{N}\E\Bigg\lVert \sum_{i=1}^N \bhg_i\left\{\frac{\ind\{L_i=1\}}{m}-\frac{\ind\{L_i=2\}}{n}\right\}-\sum_{i=1}^N \pg_i\left\{\frac{\ind\{L_i=1\}}{m}-\frac{\ind\{L_i=2\}}{n}\right\} \Bigg\rVert^2\nonumber \\ 
		& = \frac{mn}{N}\E\Bigg\lVert \sum_{i=1}^N \bhg_i \ell_i - \sum_{i=1}^N \pg_i \ell_i \Bigg\rVert^2, 
	\end{align}
	where $\ell_i:=\frac{\ind\{L_i=1\}}{m}-\frac{\ind\{L_i=2\}}{n}$, for $i \in [N]$. Now,  \eqref{eq:nrh2} can be written as 
	\begin{align}\label{eq:nrh_T}
		\E  \big\lVert \bm \Delta_{m,n}^{\nu,\bJ}-\mathbf{\Delta}_{m, n}^{\nu,\bJ,\mathrm{or}} \big\rVert^2 & = T_1 + T_2 -  T_3, 
	\end{align} 
	where 
	\begin{align}\label{eq:nrh2_T}
		T_1 = \frac{mn}{N}\E\Bigg\lVert \sum_{i=1}^N \bhg_i \ell_i  \Bigg\rVert^2, ~ T_2  = \frac{mn}{N}\E\Bigg\lVert \sum_{i=1}^N \pg_i \ell_i \Bigg\rVert^2, ~ T_3 = \frac{2mn}{N} \E\left[\sum_{1 \leq i, j \leq N} \bhg_i^{\top}\pg_j \ell_i \ell_j \right] .  
	\end{align}

	We will now show that each of the three terms in~\eqref{eq:nrh2} converges to the same limit as $N\to\infty$. We begin with $T_1$. Note that $\bhg_{1}, \bhg_{2}, \ldots, \bhg_{N}$ are measurable with respect to the sigma field induced by $\Z_N$. Therefore the conditional expectation $\E_{\Z_N}$ only operates on the indicator variables above to yield the corresponding probabilities. In particular, recall~\eqref{eq:L12} and note that, for $i\neq j$, 
	$$\P(L_i=1,L_j=1|\mZ_N)=\frac{m(m-1)}{N(N-1)},\quad \P(L_i=2,L_j=2|\mZ_N)=\frac{n(n-1)}{N(N-1)},$$
	and, 
	$$\P(L_i=1,L_j=2|\mZ_N)=\frac{mn}{N(N-1)}.$$
	The above identities imply $\E_{\mZ_N}\ell_i=0$, for all $i \in [N]$, and, for $i \ne j$,  
	\begin{equation}\label{eq:nrh3}
		\frac{mn}{N}\E_{\mZ_N}\left[ \ell_i \ell_j \right] = - \frac{1}{N(N-1)} .
	\end{equation}	
	Now, recalling the definition of $T_1$ from~\eqref{eq:nrh2_T} and taking iterated expectation, first with respect to the permutation distribution (conditional on $\mZ_N$) and then with respect to the randomness of $\mZ_N$, gives, 
	\begin{align}\label{eq:nrh9}
		T_1 = & \frac{mn}{N}\E\E_{\mZ_N}\Bigg\lVert \sum_{i=1}^N \bhg_i \ell_i \Bigg\rVert^2 \nonumber \\ 
		&=\frac{mn}{N}\E\left[\sum_{i=1}^N \lVert \bhg_i\rVert^2\left\{\frac{\P(L_i=1|\mZ_N)}{m^2}+\frac{\P(L_i=2|\mZ_N)}{n^2} \right\} + \sum_{1 \leq i\neq j \leq N}\bhg_i^{\top}\bhg_j\E_{\mZ_N}[\ell_i \ell_j]\right] \nonumber \\ 
		& =\frac{1}{N}\E\left[\sum_{i=1}^N \lVert \bhg_i\rVert^2\right]-\frac{1}{N(N-1)}\E\left[\sum_{ 1 \leq i\neq j \leq N }\bhg_i^{\top}\bhg_j\right], 
	\end{align} 
	where the last step uses \eqref{eq:L12} and~\eqref{eq:nrh3}. Now, invoking~\cref{theo:rankmapcon}, with $p=q=r=1$, $\cF(\mx)=\lVert \mx\rVert^2$ and ~\cref{theo:rankmapcon}, gives 
	\begin{equation}\label{eq:nrh5}
		\frac{1}{N}\sum_{i=1}^N \big|\lVert\bhg_i\rVert^2-\lVert \pg_i\rVert^2\big|\overset{P}{\longrightarrow}0.
	\end{equation} 
	By using assumption~\eqref{eq:nullrhas} coupled with Vitali's convergence theorem (see~\cite[Theorem 5.5]{Shorack2017}), the above convergence happens in $L_1$. Similarly, using \cref{theo:rankmapcon}, with $p=2$, $r=2$, $q=1$, $\cF(\mx,\my)=\mx^{\top}\my$, together with Vitali's theorem gives, 
	\begin{equation}\label{eq:nrh6}
		\frac{1}{N(N-1)}\E\left[\sum_{1 \leq i\neq j \leq N}\big|\bhg_i^{\top}\bhg_j-\pg_i^{\top}\pg_j\big|\right] {\longrightarrow} 0.
	\end{equation}
	Finally, using the weak law of large numbers in~\eqref{eq:nrh5}~and~\eqref{eq:nrh6}, it follows that 
	\begin{equation}\label{eq:nrh7}
		\lim_{N \rightarrow \infty}\E\left[\frac{1}{N}\sum_{i=1}^N \lVert \bhg_i\rVert^2\right] = \int \lVert \bJ(\mx)\rVert^2\,\mathrm d\nu(\mx),
	\end{equation}
	and 	
	\begin{equation}\label{eq:nrh8}
		\lim_{N \rightarrow \infty}	\frac{1}{N(N-1)}\E\left[\sum_{1 \leq i\neq j \leq N }\bhg_i^{\top}\bhg_j\right] = \int \bJ(\mx)^{\top}\bJ(\my)\,\mathrm d\nu(\mx)\,\mathrm d\nu(\my).
	\end{equation}
	Combining~\eqref{eq:nrh9},~\eqref{eq:nrh7}~and~\eqref{eq:nrh8}, gives
	\begin{align*}
		\lim_{N \rightarrow \infty} T_1 = \eta_{\bJ}:= \int \lVert \bJ(\mx)\rVert^2\,\mathrm d\nu(\mx)-\int \bJ(\mx)^{\top}\bJ(\my)\,\mathrm d\nu(\mx)\,\mathrm d\nu(\my).
	\end{align*}
	
	Similar arguments can be applied to the other two terms in \eqref{eq:nrh3} to show that $\lim_{N \rightarrow \infty} T_2 = \eta_{\bJ}$ and $\lim_{N \rightarrow \infty} T_3 = 2 \eta_{\bJ}$. This proves~\eqref{eq:nrh1}.
	
	\subsection{Proof of~\cref{theo:locpowermain1}}\label{sec:pflocalpower}
	Recall the setup of~\eqref{eq:twosamsmooth} and its corresponding assumptions from Section~\ref{sec:locpow}, both. Define the likelihood ratio statistic as:
	$$V_N:=\sum_{j=1}^n \log\frac{ f\left(\bY_j|\bt_0+N^{-1/2}\bh\right)}{f(\bY_j|\bt_0)}.$$
	By using the local asymptotic normality under~\eqref{eq:twosamsmooth}, $V_N$ can be written as:
	
	\begin{equation}\label{eq:likscore} V_N=\dot{V}_N-\frac{(1-\lambda)}{2}\bh^{\top}\bm{I}(\bt_0)\bh+o_{P}(1), \text{~where~}\dot{V}_N:=\frac{h}{\sqrt{N}}\sum_{j=1}^n \frac{\bo^{\top}\nabla_{\bt} f(\bY_j|\bt)|_{\bt_0}}{f(\bY_j|\bt_0)}. 
	\end{equation}
	Denote 
	\begin{align}\label{eq:rmu_T12}
		\bm T_1 &:=\frac{1}{m} \sum_{i=1}^m \left\{\bJ(\Rmu(\bX_i))-\E_{\mathrm{H}_0} \bJ(\Rmu(\bX_1)) \right\},  \nonumber \\ 
		\bm T_2 &:=\frac{1}{n} \sum_{j=1}^n \left\{\bJ(\Rmu(\bY_j))-\E_{\mathrm{H}_0} \bJ(\Rmu(\bY_1)) \right\} . 
	\end{align} 
	Consequently, by the multivariate central limit theorem, the following result holds:
	\begin{equation}\label{eq:3dmct}
		\begin{pmatrix} \sqrt{\frac{mn}{N}} \bm T_1 \\ \sqrt{\frac{mn}{N}} \bm T_2 \\ V_N \end{pmatrix} \overset{w}{\to}\mathcal{N}\left(\begin{pmatrix} \bzr_{d \times 1} \\ \bzr_{d \times 1} \\ -\frac{c_1}{2} \end{pmatrix},\begin{pmatrix} (1-\lambda)\Serd & \bzr_{d \times d} & \bzr_{d \times 1} \\ \bzr_{d \times d} & \lambda\Serd & \bsg_1  \\ \bzr_{1 \times d}^{\top} & \bsg_1^{\top} &  c_1\end{pmatrix}\right) 
	\end{equation} 
	under $\mathrm{H}_0$, where $\bzr_{d \times 1}$ denotes the vector of zeros of length $d$, $\bzr_{d \times d}$ the $d\times d$ matrix of zeros, $c_1:=(1-\lambda)\bh^{\top}\bm{I}(\bt_0)\bh$, and 
	$$\bsg_1:=\sqrt{\lambda(1-\lambda)}\E_{\mathrm{H}_0}\left[\bJ(\Rmu(\bY))\frac{\bh^{\top}\nabla_{\bt} f(\bY|\bt)|_{\bt_0}}{f(\bY|\bt_0)}\right].$$ 
	Note that under $\mathrm{H_0}$, $\bm{\Delta}_{m, n}^{\nu,\bJ,\mathrm{or}}=\bm T_1-\bm T_2$ (recall~\eqref{eq:delta_oracle}). Therefore, by~\eqref{eq:nrh1} and~\eqref{eq:3dmct}, under $\mathrm{H}_0$, 
	\begin{equation*}
		\begin{pmatrix} \sqrt{\frac{mn}{N}} \bm{\Delta}_{m, n} \\ V_N \end{pmatrix} \overset{w}{\to}\mathcal{N}\left(\begin{pmatrix} \bzr\\ -\frac{c_1}{2} \end{pmatrix},\begin{pmatrix} \Serd & \bsg_1  \\  \bsg_1^{\top}  & c_1\end{pmatrix}\right).
	\end{equation*}	
	Then by Le Cam's third lemma~\cite[Corollary 12.3.2]{LR05} and the continuous mapping theorem, we have:
	\begin{equation*}
		\rhsr\overset{w}{\longrightarrow}\Big\lVert -\Serd^{-\frac{1}{2}}\bsg_1+\bm{G}\Big\rVert_2^2,
	\end{equation*}
	under $\mathrm{H}_1$, where $\bm{G}\sim\mathcal{N}(\bzr,\bm I_d)$. Next note that in~\eqref{eq:rh_limit}, we showed that $|\rhsc-\rhsr|\overset{P}{\longrightarrow}0$ under $\mathrm{H}_0$. By contiguity, the same conclusion also holds under $\mathrm{H}_1$. Therefore, by an application of Slutsky's theorem, we get:
	\begin{equation}\label{eq:ARErankt}
		\rhsc\overset{w}{\longrightarrow}\Big\lVert -\Serd^{-\frac{1}{2}}\bsg_1+\bm{G}\Big\rVert_2^2.
	\end{equation}
	This completes the proof.
	
	\subsection{Proofs from Section~\ref{sec:arehotelling}}\label{sec:pfarehotelling}
	In this section we will present the proofs of the results from Section~\ref{sec:arehotelling}. We begin with the formal definition of the asymptotic (Pitman) relative efficiency of two tests (see~\cite{pitman1948lecture,nikitin2011asymptotic,Van1998}). 
	
	\begin{definition}[Asymptotic (Pitman) relative efficiency]\label{def:asympeff}
		Consider the sequence of testing problems 
		\begin{align}\label{eq:theta01}
			\mathrm{H}_0:\theta=\theta_0 \quad  \text{versus} \quad \mathrm{H}_1:\theta=\theta_{1, K},
		\end{align} 
		for $\theta_0, \theta_{1, K} \in\R$ and all $K \geq 1$, where $\theta_{1, K} \to\theta_0$, as $K \rightarrow \infty$. Suppose $\{T_{1, K}\}_{K\geq 1}$ and $\{T_{2, K}\}_{K\geq 1}$ are two sequences of level $\alpha\in (0,1)$ tests for the problem~\eqref{eq:theta01}, with associated test functions $\{\phi_{1, K}\}_{K\geq 1}$ and $\{\phi_{2, K}\}_{K \geq 1}$. Fixing a power level $\beta\in [\alpha,1)$, define
		$$N_{1}(\alpha,\beta,\theta_1):=\min\{K \geq 1: \E_{\theta_1}\phi_{1, K'}\geq \beta,\ \text{ for all } K'\geq K\},$$
		and $N_2(\alpha,\beta,\theta_1)$ similarly as above with $\phi_{1,K'}$ replaced by $\phi_{2,K'}$. Then the asymptotic (Pitman) relative efficiency (ARE) of the sequence of tests $\phi_{1,K}$ with respect to $\phi_{2,K}$, along a sequence $\theta_{1, K} \to\theta_0$, as $K \rightarrow \infty$, is given by
		$$\mathrm{ARE}(\{T_{1,K}\}_{K\geq 1},\{T_{2,K}\}_{K\geq 1})=\lim\limits_{\theta_{1, K} \to\theta_0}\frac{N_2(\alpha,\beta,\theta_{1, K})}{N_1(\alpha,\beta,\theta_{1, K})}$$
		provided the limit exists. Compared to other notions of asymptotic efficiency, such as the Bahadur efficiency~\cite{Bahadur67}, the asymptotic (Pitman) relative efficiency is reputed to be a fairly good approximation for moderate sample sizes in many testing problems (see~\cite{GO81}; also see~\cref{sec:sim}).  
	\end{definition}

	As is evident from the above definition, the ARE of two tests will in general depend on $\alpha$ and $\beta$. However, when the tests have asymptotically non-central chi-squared distributions with the same degrees of freedom (as is the case with the Hotelling $T^2$ and the $\rhsc$ statistics), the ARE is simply the ratio of the corresponding non-centrality parameters (see \cite[Proposition 5]{Hallin2002} and \cite[Theorem 14.19]{Van1998}). Equipped with this fact, we begin the proofs of our results from Section~\ref{sec:arehotelling}.

	\subsubsection{Proof of~\cref{prop:Gaussare}}
	
	Recall the definition of the Hotelling $T^2$ statistic $T_{m,n}$ from~\eqref{eq:hotelling}. Note that if $\mbox{Var}_{\mathrm{H}_0} (\bX)$ exists then under the regularity assumptions of~Theorem \ref{theo:locpowermain1}, it is easy to check that:
	\begin{align}\label{eq:AREt}
		T_{m,n}\overset{w}{\longrightarrow} \Big\lVert \sqrt{\lambda(1-\lambda)}\tilde{\Sigma}^{-\frac{1}{2}} \E_{\mathrm{H}_0}\left[\bX\bh^{\top}\boldsymbol{\eta}(\bm X, \bt_0)\right]+\bm{G}\Big\rVert^2,
	\end{align} 
	where $\bm{G}\sim \mathcal{N}(\bzr,\bm I_d)$ and 
	\begin{equation}\label{eq:tsig}
		\tilde{\Sigma}:=\E[\bX-\E\bX][\bX-\E\bX]^{\top}.
	\end{equation}
	Recall from~\cref{sec:arehotelling}   that $\atr$ denotes the ARE of $\rhsc$ with respect to $T_{m, n}$. Then, using the non-centrality parameters of $\rhsc$ and $T_{m, n}$ from ~\eqref{eq:ARErankt} and~\eqref{eq:AREt}, respectively, and invoking~\cite[Theorem 14.19]{Van1998} gives, 
	\begin{equation}\label{eq:AREinterest}
		\atr=\frac{\Big\lVert\Serd^{-\frac{1}{2}}\E_{\mathrm{H}_0}\left[\bJ(\Rmu(\bX))\bh^{\top}\boldsymbol{\eta}(\bm X, \bt_0)\right]\Big\rVert^2}{\Big\lVert\tilde{\Sigma}^{-\frac{1}{2}} \E_{\mathrm{H}_0}\left[\bX\bh^{\top}\boldsymbol{\eta}(\bm X, \bt_0)\right]\Big\rVert^2}.
	\end{equation}
	
	\noindent{\it Proof of~\cref{prop:Gaussare}}~(1): Let us define $\bm A:=\Sigma^{-\frac{1}{2}}:= ((a_{ij}))_{1 \leq i, j  \leq d}$, where $a_{ij}$ denotes the $(i, j)$-th element of the matrix $\bm A$. In this case, $\tilde{\Sigma}=\Sigma$. Also note that $\bm A(\bX-\bt_0)\overset{w}{=}\bm{G}\sim\mathcal{N}(\bzr,\bm I_d)$. Using 
	$$\boldsymbol{\eta}(\bm X, \bt_0)=\frac{\nabla_{\bt} f(\bX|\bt)|_{\bt_0}}{f(\bX|\bt_0)}= \Sigma^{-1} (\bX-\bt_0) = \bm A \bm{G} ,$$ 
	and $\E_{\mathrm{H}_0}\left[\boldsymbol{\eta}(\bm X, \bt_0)\right]=0$, observe that 
	\begin{align}\label{eq:Gaussare1}
		\left\lVert\Sigma^{-\frac{1}{2}} \E_{\mathrm{H}_0}\left[\bX\bh^{\top}\boldsymbol{\eta}(\bm X, \bt_0)\right]\right\rVert^2 =\left\lVert \E_{\mathrm{H}_0}\left[\bm{G} \cdot \bh^{\top}\bm A \bG\right]\right \rVert^2  
		=\sum_{j=1}^d \left(\sum_{i=1}^d h_i a_{ij}\right)^2  =\bh^{\top}\Sigma^{-1}\bh.
	\end{align}
	To see the above equality, observe
	$$\E_{\mathrm{H}_0}\left[(\bm{G} \cdot \bh^{\top}\bm A \bG)_j\right]=\sum_{1 \leq i,k \leq d} h_i a_{ik}\E_{\mathrm{H}_0}[G_kG_j]=\sum_{i=1}^d h_i a_{ij}.$$
	Now, set $\Rmu(\bX):=\bm A(\bX-\bt_0)$. Note that, with this definition, $\Rmu(\bX)\sim\mathcal{N}(\bzr,\bm I_d)$. Moreover, $\Rmu(\bX)$ is the gradient of the convex function $\frac{1}{2}(\bX-\bt_0)^{\top}A(\bX-\bt_0)$. Therefore, $\Rmu(\cdot)$ is the required optimal transport map in this case (see~\cref{prop:Mccan}). Next note that, 
	$$\bJ(\Rmu(\bX))=\left(\Phi(\be_1^{\top} \bm A (\bX-\bt_0)),\ldots ,\Phi(\be_d^{\top} \bm A (\bX-\bt_0))\right),$$
	where $\be_i$ is the $i$-\textrm{th} vector of the canonical basis in $\R^d$. Combining the above display with~\cref{rem:covmat}  and the integration by parts formula gives, 
	\begin{align}\label{eq:Gaussare2}
		& \left\lVert\Serd^{-\frac{1}{2}}\E_{\mathrm{H}_0}\left[\bJ(\Rmu(\bX))\bh^{\top}\boldsymbol{\eta}(\bm X, \bt_0)\right] \right\rVert^2\nonumber \\ 
		&=12\sum_{j=1}^d \left\{ \left(\sum_{i=1}^d h_i\E\left[\frac{\partial}{\partial X_i} \Phi\left(\be_j^{\top} \bm A (\bX-\bt_0) \right)\right]\right)^2 \right \} \nonumber \\ 
		& =12\sum_{j=1}^d \left\{\left(\sum_{i=1}^d h_i a_{ij}\right)^2\left(\E \phi\left(\be_j^{\top} \bm A (\bX-\bt_0))\right)\right)^2 \right\} , 
	\end{align}
	where $\bm X= (X_1, \ldots, X_d)^{\top}$. Now, for $j \in [d]$, define $\bd_j :=A\be_j$ and note that by the standard block determinant formula,
	$$\textrm{det}(\Sigma^{-1}+\bd_j\bd_j^{\top})=\textrm{det}(\Sigma^{-1})(1+\bd_j^{\top}\Sigma\bd_j)=2(\textrm{det}(\Sigma))^{-1}.$$
	Using the above display, we get the following chain of equalities, for $j \in [d]$:
	\begin{align}\label{eq:Gaussare3}
		\E \phi\left(\be_j^{\top} \bm A (\bX-\bt_0) \right) & = \E \phi\left(\bd_j^{\top} (\bX-\bt_0) \right) \nonumber \\ 
		&=\frac{\sqrt{\textrm{det}(\Sigma^{-1})}}{(\sqrt{2\pi})^{d+1}}\int\exp\left(-\frac{1}{2}(\mx-\bt_0)^{\top}\left(\bd_j\bd_j^{\top}+\Sigma^{-1}\right)(\mx-\bt_0)\right)\,\mathrm d\mx\nonumber \\ &=\left(\frac{ \textrm{det}(\Sigma^{-1}) }{ 2\pi \cdot \textrm{det}(\Sigma^{-1}+\bd_j\bd_j^{\top})} \right)^{\frac{1}{2}} \nonumber \\ 
		& =\frac{1}{2\sqrt{\pi}}.
	\end{align}
	Using~\eqref{eq:Gaussare2} in~\eqref{eq:Gaussare3} gives, 
	$$\left\lVert\Serd^{-\frac{1}{2}}\E_{\mathrm{H}_0}\left[\bJ(\Rmu(\bX))\bh^{\top}\boldsymbol{\eta}(\bm X, \bt_0)\right] \right\rVert^2 =  \frac{3}{\pi}  \sum_{j=1}^d \left(\sum_{i=1}^d h_i a_{ij}\right)^2 = \frac{3}{\pi} \cdot \bh^{\top}\Sigma^{-1}\bh.$$ 
	This implies, by~\eqref{eq:AREinterest} and~\eqref{eq:Gaussare1}, $\atr=\frac{3}{\pi}$, which completes the proof of part (1). \medskip

	\noindent{\it Proof of~\cref{prop:Gaussare}}~(2): Let $H_d(\cdot)$ be the cumulative distribution function of a $\sqrt{\chi^2_d}$ distribution and $h_d(\cdot)$ be the associated probability density. It is easy to check that 
	\begin{align}\label{eq:density_r}
		h_d(r)=\frac{1}{2^{d/2-1} \Gamma(d/2) } r^{d-1}e^{-\frac{r^2}{2}},
	\end{align}	
	for $r \geq 0$. Next, define
	\begin{equation}\label{eq:Gaussare7}
		\Rmu(\bX):=\frac{\bm A(\bX-\bt_0)}{\sqrt{(\bX-\bt_0)^{\top}\bm A^2(\bX-\bt_0)}}\cdot H_d\left(\sqrt{(\bX-\bt_0)^{\top}\bm A^2(\bX-\bt_0)}\right).
	\end{equation}
	We will first show that $\Rmu(\bX)$ defined above is the optimal transport map from $\bX$ to the spherical uniform distribution. Towards this direction, consider the following standard lemma which we state and prove for completeness. 
	\begin{lemma}\label{lem:ellipot}
		Suppose $\bX'_1$ has an elliptically symmetric distribution as in~\eqref{eq:elldefn} with parameters $\bt$, $\Sigma$ and $\bX'_2$ has an elliptically symmetric distribution with parameters $\bzr_d$ and $\bm I_d$ (that is, the distribution of $\bX'_2$ is spherically symmetric). Let $H_1(\cdot)$ be the distribution function of $\lVert\Sigma^{-\frac{1}{2}}(\bX'_1-\bt)\rVert$ and $H_2(\cdot)$ be the distribution function of $\lVert\bX'_2\rVert$. Then the optimal transport map from the distribution of $\bX'_1$ to that of $\bX'_2$ is given by:
		\begin{equation}\label{eq:ellipot1}
			\bR(\bX'_1):=\frac{\Sigma^{-\frac{1}{2}}(\bX'_1-\bt)}{\lVert\Sigma^{-\frac{1}{2}}(\bX'_1-\bt)\rVert}H_2^{-1}\left(H_1\left(\lVert\Sigma^{-\frac{1}{2}}(\bX'_1-\bt)\rVert\right)\right).
		\end{equation} 
	\end{lemma}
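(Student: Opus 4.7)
The plan is to apply McCann's theorem (\cref{prop:Mccan}), under which the optimal transport map from $\mu_{\bX'_1}$ to $\mu_{\bX'_2}$ is uniquely characterized (a.e.) as the gradient of a convex function that realizes the prescribed pushforward. Hence, the proof reduces to two verifications on the candidate $\bR$ in \eqref{eq:ellipot1}: (i) the pushforward identity $\bR\#\mu_{\bX'_1}=\mu_{\bX'_2}$, and (ii) that $\bR=\nabla\Psi$ for some convex potential $\Psi$ on $\R^d$.

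For (i), I would sphericalize by setting $\mU:=\Sigma^{-\frac{1}{2}}(\bX'_1-\bt)$. Since the density of $\bX'_1$ depends on $\mx$ only through $(\mx-\bt)^\top\Sigma^{-1}(\mx-\bt)=\lVert\Sigma^{-\frac{1}{2}}(\mx-\bt)\rVert^2$, a Jacobian change of variable shows that $\mU$ has a spherically symmetric density and $\lVert\mU\rVert\sim H_1$. By spherical symmetry, $\mU$ admits the polar decomposition $\mU=\lVert\mU\rVert\cdot\mV$, where $\mV:=\mU/\lVert\mU\rVert$ is uniform on $S^{d-1}$ and independent of $\lVert\mU\rVert$. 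In these coordinates $\bR(\bX'_1)=\mV\cdot H_2^{-1}(H_1(\lVert\mU\rVert))$; the probability integral transform forces $H_2^{-1}(H_1(\lVert\mU\rVert))\sim H_2$, and independence with the uniformly-distributed direction $\mV$ then yields that $\bR(\bX'_1)$ is spherically symmetric with radial CDF $H_2$, which matches the law of $\bX'_2$.

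For (ii), the natural candidate convex potential is
\[
\Psi(\mx):=G\bigl(\lVert\Sigma^{-\frac{1}{2}}(\mx-\bt)\rVert\bigr),\qquad G(r):=\int_0^{r}H_2^{-1}(H_1(s))\,\mathrm d s.
\]
Since $H_2^{-1}\circ H_1$ is non-decreasing and non-negative (as $H_1$ is a non-decreasing CDF and $H_2^{-1}$ takes values in $[0,\infty)$), $G$ is convex and non-decreasing; composing with the convex function $\mx\mapsto\lVert\Sigma^{-\frac{1}{2}}(\mx-\bt)\rVert$ preserves convexity, so $\Psi$ is convex on $\R^d$. The chief difficulty, and where I expect the main technical effort to lie, is verifying $\nabla\Psi(\mx)=\bR(\mx)$ a.e.~by a chain-rule calculation using $\nabla_{\mx}\lVert\Sigma^{-\frac{1}{2}}(\mx-\bt)\rVert=\Sigma^{-1}(\mx-\bt)/\lVert\Sigma^{-\frac{1}{2}}(\mx-\bt)\rVert$. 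Since this direct computation produces a gradient in the direction $\Sigma^{-1}(\mx-\bt)$ rather than $\Sigma^{-\frac{1}{2}}(\mx-\bt)$, reconciling the two will likely require either identifying a different potential adapted to elliptical-to-spherical geometry, or reducing to the spherical case $\Sigma=\sigma^2\bm I_d$ via an affine change of coordinates and then invoking the classical optimality of radial rearrangements between spherical measures; once the gradient identity is established, McCann's uniqueness closes the argument.
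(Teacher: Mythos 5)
You take exactly the paper's route: check the pushforward $\bR\#\mu_{\bX'_1}=\mu_{\bX'_2}$, try to exhibit $\bR$ as the gradient of a convex potential, and then appeal to McCann's uniqueness via \cref{prop:Mccan}. Your pushforward argument is correct and is a fleshed-out version of what the paper states in one line. Your worry about the gradient step, however, is not a technicality you have merely failed to resolve --- it is a genuine gap, and it is present in the paper's own proof as well. Writing $\Gamma:=H_2^{-1}\circ H_1$ and $u(\mx):=\lVert\Sigma^{-1/2}(\mx-\bt)\rVert$, the paper's candidate potential $\Psi(\mx)=\int_0^{u(\mx)}\Gamma(s)\,\mathrm ds$ satisfies $\nabla\Psi(\mx)=\Gamma(u(\mx))\,\Sigma^{-1}(\mx-\bt)/u(\mx)$, which is directed along $\Sigma^{-1}(\mx-\bt)$, whereas $\bR(\mx)$ is directed along $\Sigma^{-1/2}(\mx-\bt)$; these agree only when $\Sigma\propto\bm I_d$. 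The paper's assertion that $\bR=\nabla\Psi$ is therefore incorrect as written.

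Neither of your two proposed repairs can close the gap, and the obstruction is structural. No alternative convex potential exists: with $\Sigma=\mathrm{diag}(\sigma_1^2,\ldots,\sigma_d^2)$, $u_k:=(x_k-\theta_k)/\sigma_k$, and $r:=u(\mx)$, the $i$-th component $R_i$ of $\bR$ satisfies, for $i\neq j$,
\begin{equation*}
\frac{\partial R_i}{\partial x_j}\,=\,\frac{1}{\sigma_j}\cdot\frac{u_i u_j}{r^2}\left(\Gamma'(r)-\frac{\Gamma(r)}{r}\right),
\end{equation*}
so that $\partial R_i/\partial x_j\neq\partial R_j/\partial x_i$ on an open set unless all $\sigma_k$ coincide or $\Gamma(r)=cr$. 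A smooth map whose Jacobian is asymmetric on a set of positive measure is the gradient of no function at all (and Alexandroff's theorem extends this obstruction to non-smooth convex potentials), so $\bR$ cannot be the McCann map unless $\Sigma\propto\bm I_d$ or $\Gamma$ is linear --- the latter occurring, e.g., when $H_1$ and $H_2$ are scaled versions of the same distribution, as in the Gaussian-to-Gaussian case. Your second suggestion fares no better: sphericalizing via $\mU=\Sigma^{-1/2}(\bX'_1-\bt)$ and then applying the radial OT map from the law of $\mU$ to that of $\bX'_2$ reproduces $\bR$ exactly, but pre-composing an OT map with a non-orthogonal linear map does not preserve optimality, so the composition is a transport map that is generally not the optimal one. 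In short, your diagnosis is sharper than the paper's one-line gradient claim; what neither the proposal nor the paper supplies is that, as stated, \cref{lem:ellipot} does not hold for general $\Sigma$, and requires the additional hypothesis $\Sigma\propto\bm I_d$ or $H_2^{-1}\circ H_1$ linear.
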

	\begin{proof}
		$\bR(\cdot)$ as defined in~\eqref{eq:ellipot1} clearly satisfies $\bR(\bX'_1)\overset{d}{=}\bX'_2$. Further note that $\bR(\cdot)$ is the gradient of the function:
		$$\int_0^{\lVert \Sigma^{-\frac{1}{2}} (\bX'_1-\bt)\rVert} H_2^{-1}(H_1(r))\,\mathrm dr,$$
		which is a convex function due to the monotonicity of $H_2^{-1}\circ H_1(\cdot)$. Therefore, by~\cref{prop:Mccan},  $\bR(\cdot)$ as defined in~\eqref{eq:ellipot1} is the optimal transport map in this case.
	\end{proof}
	
	It follows from~\cref{lem:ellipot} with $\bX'_1\overset{D}{=}\bX_1$ and $\bX'_2$ as the spherical uniform distribution, that $\Rmu(\cdot)$ as defined in~\eqref{eq:Gaussare7} above is the optimal transport map in this case. 
	
	Next, let us write: 
	$$\Rmu(\bX)=( r_1(\bX), \ldots  r_d(\bX)),$$
	where $\bm r_j(\bX)$ denotes the $j$-th coordinate of the rank vector $\Rmu(\bX)$, for $j \in [d]$. Now, write $\bm{G}=\bm A(\bX-\bt_0)$ and note $\bm{G}\overset{w}{=}\mathcal{N}(\bzr,\bm I_d)$. For any $i,j \in [d]$, observe that:
	\begin{equation}\label{eq:Gaussare4}
		\frac{\partial}{\partial X_i}  r_j(\bX)=\frac{a_{ij} H_d(\lVert \bm{G}\rVert)}{\lVert \bm{G}\rVert}+\frac{(\be_j^{\top}\bG) h_d(\lVert \bG\rVert)}{\lVert \bG\rVert^2}\cdot \be_i^{\top}A\bG-\frac{(\be_j^{\top}\bG)H_d(\lVert \bG\rVert)}{\lVert \bG\rVert^{3}}\cdot \be_i^{\top}A\bG.
	\end{equation}
	By using the spherical symmetry of $\bG$, we further get:
	\begin{align} 
		\E\left[\frac{(\be_j^{\top}\bG) h(\lVert \bG\rVert)}{\lVert \bG\rVert^2}\cdot \be_i^{\top}A\bG\right]&=\E\left[\frac{(a_{ij} G_j^2)h_d(\lVert \bG\rVert)}{\lVert \bG\rVert^2}\right]\nonumber \\
		& = \frac{a_{ij}}{d} \E\left[ h_d(\lVert \bG\rVert)  \right] \label{eq:Gaussare51}  \\ 
		& = \frac{a_{ij}}{d}\cdot\frac{1}{2^{d-2}(\Gamma(d/2))^2}\int_0^{\infty} e^{-r^2}r^{2d-2}\,\mathrm dr \label{eq:Gaussare52}
		\\ & = \frac{a_{ij}}{d}\cdot\frac{1}{2^{d-1}}\cdot \frac{\Gamma(d-0.5)}{(\Gamma(d/2))^2}. \label{eq:Gaussare53}
	\end{align}
	Here, \eqref{eq:Gaussare51} follows by using that conditional on $\lVert \bG\rVert$, $G_1,\ldots ,G_d$ have the same marginal distribution,~\eqref{eq:Gaussare52} uses \eqref{eq:density_r}, and \eqref{eq:Gaussare53} is a simple integration exercise using the properties of the Gamma integral.
	
	Similarly, 
	\begin{align}
		&\;\;\;\E\left[\frac{(\be_j^{\top}\bG) H_d(\lVert \bG\rVert)}{\lVert \bG\rVert^{3}}\cdot \be_i^{\top}A\bG\right]\nonumber \\&=\E\left[\frac{(a_{ij} G_j^2)H_d(\lVert \bG\rVert)}{\lVert \bG\rVert^{3}}\right]\nonumber \\ &=\frac{a_{ij}}{d}\E\left[\frac{H_d(\lVert \bG\rVert)}{\lVert \bG\rVert}\right]\label{eq:Gaussare63}\\ &=\frac{a_{ij}}{d}\cdot \frac{1}{2^{d-2}(\Gamma(d/2)^2)}\int_0^{\infty}\int_0^t e^{-\frac{x^2+t^2}{2}}x^{d-1} t^{d-2} \,\mathrm dt\,\mathrm dx\nonumber \\ &=\frac{a_{ij}}{d\cdot 2^{d/2}\Gamma(d/2)}\E|G_1|^{d-2}-\frac{a_{ij}}{d\cdot 2^{d/2-1}(\Gamma(d/2))^2}\int_0^{\infty} \Gamma(d/2,t^2/2)e^{-\frac{t^2}{2}}t^{d-2}\,\mathrm dt\label{eq:Gaussare6}
	\end{align}
	where $$\Gamma(a,b):=\int_b^{\infty}  \exp(-x)x^{a-1}\,\mathrm dx$$
	for $a,b>0$ is popularly called the upper incomplete Gamma function. By~\cite{Kummer1837}, the incomplete Gamma function in~\eqref{eq:Gaussare6} can alternatively be written as:
	$$\Gamma(d/2,t^2/2)=e^{-\frac{t^2}{2}}\frac{t^d}{2^{d/2}}\int_0^{\infty} e^{-\frac{ut^2}{2}}(1+u)^{d/2-1}\,\mathrm du.$$
	Using the above identity gives, 
	\begin{align}\label{eq:Gaussare62}
		&\int_0^{\infty} \Gamma(d/2,t^2/2)e^{-\frac{t^2}{2}}t^{d-2}\,\mathrm dt \nonumber \\ 
		&=\frac{\sqrt{2\pi}(2d-2)!!}{2^{d/2+1}}\int_1^{\infty} \frac{u^{d/2-1}}{(1+u)^{d-0.5}}\,\mathrm du \nonumber \\ 
		& =\frac{\sqrt{2\pi}(2d-2)!!}{2^{d/2}}{}_{2}F_{1}(d-0.5,d/2-0.5;d/2+0.5;-1) := \mathcal{C}_d,
	\end{align}
	where the last line follows from Kummer's identity (see~\cite[Section 2.3]{bailey1935generalized}).
	
	Next, by using the integration by parts formula, we have:
	\begin{align}
		&\;\;\;\left\lVert\Serd^{-\frac{1}{2}}\E_{\mathrm{H}_0}\left[\bJ(\Rmu(\bX))\bh^{\top}\boldsymbol{\eta}(\bm X, \bt_0)\right]\right\rVert^2\nonumber \\
		&=3d\sum_{j=1}^d \left(\sum_{i=1}^d h_i \E\left[\frac{\partial}{\partial X_i} r_j(\bX)\right]\right)^2\label{eq:Gaussare64} \\ &=3\bh^{\top}\Sigma^{-1}\bh\cdot \frac{1}{d}\bigg[\frac{1}{2^{d-1}}\cdot\frac{\Gamma{(d-0.5)}}{(\Gamma(d/2))^2}+\frac{\sqrt{2\pi}(d-1)}{2^{d/2}\Gamma(d/2)}\E[|G_1|^{d-2}]- \mathcal{C}_d \bigg]^2.\label{eq:Gaussare61}
	\end{align}
	Here~\eqref{eq:Gaussare61} follows by plugging the expressions obtained in~\eqref{eq:Gaussare53},~\eqref{eq:Gaussare6},~and~\eqref{eq:Gaussare62} in~\eqref{eq:Gaussare4}. 
	
	Using~\eqref{eq:Gaussare61} with~\eqref{eq:Gaussare1}~and~\eqref{eq:AREinterest} completes the proof of \cref{prop:Gaussare}~(2).

	\medskip 	
	
	\noindent{\it Proof of~\cref{prop:Gaussare}}~(3): Note that in this case the optimal transport map is the same as the function $\Rmu(\cdot)$ defined in part (1). Once again using~\cref{rem:covmat} gives, 
	\begin{align*}
		\left\lVert\Serd^{-\frac{1}{2}}\E_{\mathrm{H}_0}\left[\bJ(\Rmu(\bX))\bh^{\top}\boldsymbol{\eta}(\bm X, \bt_0)\right]\right\rVert^2  	& =\left\lVert \E_{\mathrm{H}_0}\left[\Sigma^{-\frac{1}{2}} (\bX-\bt_0) \bh^{\top}\bm A\Sigma^{-\frac{1}{2}} (\bX-\bt_0)\right]\right\rVert^2, 
	\end{align*}
	which is exactly the same as~\eqref{eq:Gaussare1}. Hence, $\atr=1$. \qed

	\subsubsection{Proof of~\cref{prop:areind}} \hfill \medskip 
	
	\noindent {\it Proof of~\cref{prop:areind}}~(1):
	It is easy to see that $\atr=\infty$ if $\mbox{Var}[X_i]=\infty$, for some $1 \leq i \leq d$. Therefore, we will assume here that $\mbox{Var}[\bX]=\mbox{diag}(\sigma_1^2,\ldots ,\sigma_d^2)$, where $0<\sigma_i^2<\infty$, for $i\in [d]$. 
	Write $\bt_0:=(\theta_{0,1},\ldots ,\theta_{0,d})$. Then for $f (\cdot|\bt_0) \in \fnd$, 
	\begin{align}\label{eq:grad_independent}
		\bh^{\top}\boldsymbol{\eta}(\bm X, \bt_0)=\frac{\bh^{\top}\nabla_{\bt} f(\mx|\bt)|_{\bt_0} }{f(\mx|\bt_0)}= - \sum_{i=1}^d h_i \frac{\frac{\mathrm d}{\mathrm d x_i}f_i(x_i-\theta_{0,i})}{f_i(x_i-\theta_{0,i})},
	\end{align}
	since $f(\mx|\bt_0)=\prod_{i=1}^d f_i( x_i - \theta_{0, i})$.

	To begin with we consider the non-centrality parameter of the Hotelling $T^2$ statistic $T_
	{m, n}$ under the contiguous alternative. For this, recalling \eqref{eq:AREt} and using \eqref{eq:grad_independent}, note that,
	\begin{align}\label{eq:areindT2}
		\left\lVert\Sigma^{-\frac{1}{2}} \E_{\mathrm{H}_0}\left[\bX\bh^{\top}\boldsymbol{\eta}(\bm X, \bt_0)\right]\right \rVert^2 & =\sum_{i=1}^d \frac{h_i^2}{\sigma_i^2}\left(\int x_i\frac{\mathrm d}{\mathrm d x_i}f_i(x_i-\theta_{0,i})\,\mathrm dx_i\right)^2 \nonumber \\ 
		& =\sum_{i=1}^d \frac{h_i^2}{\sigma_i^2}.
	\end{align}
	Let $F_i(\cdot-\theta_{0,i})$ be the cumulative distribution function associated with $f_i(\cdot-\theta_{0,i})$ and $\Phi(\cdot)$ be the cumulative distribution function of a Gaussian random variable. Now, define
	$$\Rmu(\bX):=(\Phi^{-1}\circ F_1(X_1-\theta_{0,1}),\ldots ,\Phi^{-1}\circ F_d(X_d-\theta_{0,d})).$$ 
	To see that this is indeed the optimal transport map in this case, note that $\Rmu(\bX)\sim\mathcal{N}(\bzr,\bm I_d)$ and $\Rmu(\bX)$ as defined is the gradient of the following function:
	$$\sum_{i=1}^d \int_{-\infty}^{X_i} \Phi^{-1}\circ F_i(t_i-\theta_i)\,\mathrm dt_i,$$
	which due to the monotonicity of $\Phi^{-1}\circ F_i(\cdot-\theta_i)$ is a convex function. Therefore, by applying~\cref{prop:Mccan}, $\Rmu(\bX)$ is the required optimal transport map in this case. This implies that $$\bJ(\Rmu(\bX))=(F_1(X_1-\theta_{0,1}),\ldots ,F_d(X_d-\theta_{0,d})),$$
	as $\bJ(\mx)=(\Phi(x_1),\ldots ,\Phi(x_d))$ where $\mx=(x_1,\ldots ,x_d)$.
	
	Next, we consider the non-centrality parameter of the statistic $\rhsc$ under the contiguous alternative. Here, Remark~\ref{rem:covmat}, and~\eqref{eq:grad_independent} gives, 
	\begin{align}\label{eq:areindrank}
		\left\lVert\Serd^{-\frac{1}{2}}\E_{\mathrm{H}_0}\left[\bJ(\Rmu(\bX))\bh^{\top}\boldsymbol{\eta}(\bm X, \bt_0)\right]\right\rVert^2  	& = 12 \sum_{i=1}^d h_i^2 \left(\int F_i(x_i-\theta_{0, i})\frac{\mathrm d}{\mathrm d x_i}f_i(x_i-\theta_{0,i})\,\mathrm dx_i\right)^2 \nonumber \\ 
		& =12\sum_{i=1}^d h_i^2 \left(\int f_i^2(x_i-\theta_{0,i})\,\mathrm dx_i\right)^2, 
	\end{align}
	where the last step uses the integration by parts formula. 
	
	Now, combining~\eqref{eq:AREinterest},~\eqref{eq:areindT2}, and~\eqref{eq:areindrank} gives, 
	\begin{align}\label{eq:areindT2rank}
		\inf_{\fnd}\atr=\inf_{\fnd}\left(\sum_{i=1}^d \frac{h_i^2}{\sigma_i^2}\right)^{-1} \left\{12 \sum_{i=1}^d h_i^2 \left(\int f_i^2(x_i-\theta_{0,i})\,\mathrm dx_i\right)^2 \right\}.
	\end{align}
	Next, consider the following optimization problem: 
	\begin{align}\label{eq:ind_optimization}
		\inf_{f_i} \sigma_i^2\left(\int f_i^2(x_i-\theta_{0,i})\,\mathrm dx_i\right)^2, 
	\end{align}
	such that $\int f_i(x_i-\theta_{0,i})\mathrm dx_i =1$. Here $\int (x_i-\theta_{0, i})^2 f_i(x_i-\theta_{0,i}) \mathrm dx_i =\sigma_i^2$. 
	This is precisely the optimization problem that arises in the 1-dimensional case while minimizing the ARE of the Wilcoxon's test with respect to Student's $t$-test over location families. In particular,~\cite[Theorem 1]{Hodges1956} shows that the infimum in \eqref{eq:ind_optimization} is attained by the class of densities in \eqref{eq:areind_density}  and the minimum value is $9/125$. Plugging this in \eqref{eq:areindT2rank}, we get:
	$$\inf_{\fnd} \atr\geq \inf_{\fnd} \frac{108}{125}\left(\sum_{i=1}^d \frac{h_i^2}{\sigma_i^2}\right)^{-1}\left(\sum_{i=1}^d \frac{h_i^2}{\sigma_i^2}\right)=0.864.$$
	This proves the result in part (1) for the vase $\nu=\mathcal{N}(\bzr,\bm{I}_d)$ and $\bJ(\mx)=(\Phi(x_1),\ldots ,\Phi_(x_d))$. A similar sequence of arguments also lead to the conclusion when $\nu=\mathrm{Unif}[0,1]^d$ and $\bJ(\mx)=\mx$. We omit the details for brevity. \medskip 
	
	\noindent {\it Proof of~\cref{prop:areind}}~(2): Recall that $\Phi(\cdot)$ denotes the standard normal cumulative distribution function and $\phi(\cdot)$ denotes the standard normal density. Similar to part (1), in this case it can be checked that 
	$$\Rmu(\bX)=(\Phi^{-1}(F_1(X_1-\theta_{0,1})),\ldots ,\Phi^{-1}(F_d(X_d-\theta_{0,d})))$$ is the optimal transport map. This implies, 
	\begin{align}\label{eq:areind_score}
		&\;\;\;\;\left\lVert\Serd^{-\frac{1}{2}}\E_{\mathrm{H}_0}\left[\bJ(\Rmu(\bX))\bh^{\top}\boldsymbol{\eta}(\bm X, \bt_0)\right] \right\rVert^2 \nonumber \\ 
		&= \sum_{i=1}^d h_i^2 \left(\int \Phi^{-1}(F_i(x_i-\theta_{0, i})) \frac{\mathrm d}{\mathrm d x_i}f_i(x_i-\theta_{0,i})\,\mathrm dx_i\right)^2 \nonumber \\ 
		&=\sum_{i=1}^d h_i^2 \left(\bigg(\phi\big(\Phi^{-1}(F_i(x_i-\theta_{0,i}))\big)\bigg)^{-1} f_i^2(x_i-\theta_{0,i})\,\mathrm dx_i\right)^2,
	\end{align}
	where the last step uses integration by parts. Now, as in part~(1) combining~\eqref{eq:AREinterest},~\eqref{eq:areindT2}, and~\eqref{eq:areind_score} gives, 
	\begin{align} \label{eq:areind5}
		& \inf_{\fnd}\atr \nonumber \\ 
		& =\inf_{\fnd}\left(\sum_{i=1}^d \frac{h_i^2}{\sigma_i^2}\right)^{-1} \left\{ \sum_{i=1}^d h_i^2 \left(\int(\phi(\Phi^{-1}(F_i(x_i-\theta_{0,i}))))^{-1} f_i^2(x_i-\theta_{0,i})\,\mathrm dx_i\right)^2 \right\} \nonumber \\ 
		& =1, 
	\end{align}
	Here~\eqref{eq:areind5} follows by considering the following optimization problem: 
	\begin{align}\label{eq:indgauss_optimization}
		\inf_{f_i} \sigma_i^2\left((\phi(\Phi^{-1}(F_i(x_i-\theta_{0,i}))))^{-1}\int f_i^2(x_i-\theta_{0,i})\,\mathrm dx_i\right)^2, 
	\end{align}
	under the same constraints as in part (1). 
	This is precisely the optimization problem that arises in the 1-dimensional case~\cite[Theorem 2.1]{Gastwirth1968}, where the minimum value is $1$ and the minimizing density is that of $\mathcal{N}(0,\sigma^2)$ for any $\sigma>0$. This completes the proof.
	\qed

	\subsubsection{Proof of~\cref{prop:areell}} \label{sec:pfareell}
	
	We begin by formally defining the class $\fel$. Recall that $\bX$ is said to have an elliptically symmetric distribution if there exists $\bt\in\R^d$, a positive definite $d\times d$ matrix $\Sigma$, and a radial density function $\underline{f}(\cdot):\R^+\to\R^+$ such that the density $f_1$ of $\bm X$ satisfies:
	\begin{equation}\label{eq:elldefn}
		f_1(\mx)\propto (\mathrm{det}(\Sigma))^{-\frac{1}{2}}\underline{f}\left((\mx-\bt)^{\top}\Sigma^{-1}(\mx-\bt)\right).
	\end{equation}
	We denote by $\fel$ the class of $d$-dimensional elliptically symmetric distributions satisfying the following standard regularity conditions on the function $\underline{f}$ (see, for example,~\cite{hallin2002optimal}):
	\begin{itemize}	
		\item  $\int_{\R^+} r^{d+1}\underline{f}(r)\,\mathrm dr<\infty,$
		
		\item $\sqrt{\underline{f}}(\cdot)$ admits a weak derivative, which is denoted by $\left(\sqrt{\underline{f}}\right)'(\cdot)$. This means that
		$$\int_{\R^+} \sqrt{\underline{f}(r)}\psi'(r)\,\mathrm dr=-\int_{\R^+} \left(\sqrt{\underline{f}}\right)'(r)\psi(r)\,\mathrm dr,$$
		for all $\psi:\R^+\to\R^+$ which are compactly supported and infinitely differentiable. \item $\int_{\R^+} r^{d-1}\left[\left(\sqrt{\underline{f}}\right)'(r)\right]^2\,\mathrm dr<\infty$. 
	\end{itemize} 
	\medskip

	\noindent{\it Proof of~\cref{prop:areell}}~(1): Define $\overline{\bX}:=\Sigma^{-\frac{1}{2}}(\bX-\bt_0)$. It is easy to check that, in this case,
	$$\tilde{\Sigma}=\E(\bX-\bt_0)(\bX-\bt_0)^{\top}=\left(\frac{1}{d}\E\lVert \overline{\bX}\rVert^2\right)\Sigma.$$
	Therefore, using the same computation as in~\eqref{eq:Gaussare1}, we get that:
	\begin{equation}\label{eq:areell50}
		\atr=\frac{\E\lVert\overline{\bX}\rVert^2}{d}\cdot\frac{\Big\lVert\Serd^{-\frac{1}{2}}\E_{\mathrm{H}_0}\left[\bJ(\Rmu(\bX))\bh^{\top}\boldsymbol{\eta}(\bm X, \bt_0)\right]\Big\rVert^2}{\bh^{\top}\Sigma^{-1}\bh}.
	\end{equation}
	First note that~\eqref{eq:Gaussare1} holds for any elliptically symmetric distribution. Therefore, to prove the result it suffices to compute the non-centrality parameter of the limiting distribution of $\rhsc$ under contiguous alternatives and optimize it over $\fel$.  
	
	Towards this direction, let $\overline{H}(\cdot)$ be the distribution function of the random variable $\lVert \Sigma^{-\frac{1}{2}}(\bX-\bt_0)\rVert$ and $\overline{h}(\cdot)$ be the corresponding density function. By using~\cref{lem:ellipot}, the required optimal transport map in this case is given by:
	$$\Rmu(\bX):=\frac{\Sigma^{-\frac{1}{2}}(\bX-\bt_0)}{\lVert \Sigma^{-\frac{1}{2}}(\bX-\bt_0)\rVert}\overline{H}\left(\lVert \Sigma^{-\frac{1}{2}}(\bX-\bt_0)\rVert\right).$$
	As before, we write $\bm A=\Sigma^{-\frac{1}{2}}$ and $\Rmu(\bX)=(r_1(\bX),\ldots ,r_d(\bX))$. Note that under this notation,~\eqref{eq:Gaussare4},~\eqref{eq:Gaussare51}~and~\eqref{eq:Gaussare63} continue to hold with $H_d(\cdot)$ and $h_d(\cdot)$ replaced by $\overline{H}(\cdot)$ and $\overline{h}(\cdot)$, respectively. Therefore, by using the integration by parts formula as in~\eqref{eq:Gaussare64} gives, 		\begin{align}
		&\;\;\;\left\lVert\Serd^{-\frac{1}{2}}\E_{\mathrm{H}_0}\left[\bJ(\Rmu(\bX))\bh^{\top}\boldsymbol{\eta}(\bm X, \bt_0)\right]\right\rVert^2\nonumber \\
		& =3d\sum_{j=1}^d\left(\sum_{i=1}^n h_i \left\{a_{ij}\left(1- \frac{1}{d} \right)\E\left[\frac{\overline{H}(\lVert\overline{\bX}\rVert)}{\lVert\overline{\bX}\rVert}\right]+\frac{a_{ij}}{d}\E[\overline{h}(\lVert \overline{\bX}\rVert)]\right\}\right)^2\label{eq:computation}\\ 
		&=\frac{3}{d}\cdot \bh^{\top}\Sigma^{-1}\bh \left(\E \overline{h}(\lVert \overline{\bX}\rVert)+(d-1)\E\left[\frac{\overline{H}(\lVert  \overline{\bX}\rVert)}{\lVert \overline{\bX}\rVert}\right]\right)^2, \nonumber 
	\end{align}
	where \eqref{eq:computation} follows by plugging~\eqref{eq:Gaussare4} in~\eqref{eq:Gaussare64}.
	
	Using the above display coupled with~\eqref{eq:areell50} gives, 
	\begin{align}\label{eq:opt_ell}
		\inf_{\fel} \atr=\frac{3}{d^2}\inf_{\fel} \left\{ \E\lVert \overline{\bX}\rVert^2\cdot \left(\E \overline{h}(\lVert \overline{\bX}\rVert)+(d-1)\E\left[\frac{\overline{H}(\lVert \overline{\bX}\rVert)}{\lVert \overline{\bX}\rVert}\right]\right)^2 \right\} . 
	\end{align}
	To solve this optimization problem, we will now proceed in the same way as in the proof of 
	~\cite[Proposition 7]{Hallin2002}. To begin with note that the optimization problem only depends on the distribution of $\overline{\bX}$. Moreover, by replacing the density $h_d(\cdot)$ using the transformation $\overline{h}(r)\mapsto \overline{h}(r/\sigma)/\sigma$, for any $\sigma>0$, it is easy to check that the RHS of \eqref{eq:opt_ell} above does not change. Therefore, we can assume without loss of generality that $\int_0^{\infty} r^2 \overline{h}(r)\,\mathrm dr=1$. Write $v\equiv v(r):=\overline{H}(r)$ and $\dot{v}\equiv \dot{v}(r)=\overline{h}(r)$ (the derivative of $\overline{H}(\cdot)$). Note that for solving \eqref{eq:opt_ell}, it is enough to optimize the following integral form:
	\begin{equation}\label{eq:areell51}
		\inf_{v}\int \left(\dot{v}^2+\frac{d-1}{r} v\dot{v}\right)\, \mathrm dr,\; \mathrm{subject}\ \mathrm{to}\ \int \dot{v}\,\mathrm dr=1 \text{ and } \int r^2\dot{v}\, \mathrm dr=1.
	\end{equation} 
	Let $\lambda_1$ and $\lambda_2$ be the Lagrange multipliers associated with the two constraints in~\eqref{eq:areell51}. Define,
	$$F(r,v,\dot{v}):=\left(\dot{v}+\frac{(p-1)v}{r}+\lambda_1+\lambda_2r^2\right)\dot{v}.$$
	By the Lagrange formula, the optimal solution is an element $v$ that satisfies the constraints:
	$$\frac{\partial F}{\partial v}-\frac{\partial }{\partial r}\frac{\partial F}{\partial \dot{v}}=0,\quad \int_0^{\infty} \dot{v}\, \mathrm dr=1,\quad \text{and} \quad \int_0^{\infty} r^2\dot{v}\, \mathrm dr=1.$$
	It is easy to check that $\frac{\partial F}{\partial v}=\frac{(p-1)\dot{v}}{r}$ and
	\begin{align*}
		\frac{\partial }{\partial r}\frac{\partial F}{\partial \dot{v}}=2\ddot{v}+\frac{(p-1)\dot{v}}{r}-\frac{(p-1)v}{r^2}+2\lambda_2r.
	\end{align*}
	Therefore the Euler-Lagrange equation becomes:
	$$r^2 \ddot{v}-\frac{(p-1)v}{2}=-\lambda_2 r^3.$$
	This is the standard non-homogeneous Cauchy-Euler differential equation. In fact, precisely the same equation appears in~\cite[Equation 21]{hallin2002optimal}. Consequently, using the arguments in~\cite[Page 29-30]{hallin2002optimal},  the lower bound in~\cref{prop:areell}~(1) follows, with the minimizing radial density function given as:
	\begin{align}\label{eq:ellip1}
		\underline{f}(r)&=\frac{1}{\sigma}\Bigg(\frac{9\sqrt{3}}{5\sqrt{5}}\cdot \frac{(\sqrt{2d-1}+1)^{5/2}}{(\sqrt{2d-1}-5)(\sqrt{2d-1}+5)^{3/2}}\cdot \left(\frac{r}{\sigma}\right)^2\nonumber \\ &~~~~~~~~~-\frac{3(\sqrt{2d-1}+1)}{\sqrt{2d-1}-5}\cdot \left(\frac{3(\sqrt{2d-1}+1)}{5(\sqrt{2d-1}+5)}\right)^{(\sqrt{2d-1}+1)/4}\cdot \left(\frac{r}{\sigma}\right)^{(\sqrt{2d-1}-1)/2}\Bigg)\nonumber \\ & ~~~~~~~~~ \ind\left[0<r<\sigma\left(\frac{5(\sqrt{2d-1}+5)}{3(\sqrt{2d-1}+1)}\right)^{1/2}\right]
	\end{align}
	for $d\neq 13$, and
	\begin{equation}\label{eq:ellip2}\underline{f}(r)=\frac{243}{125\sigma^3}\left(\ln\frac{5}{3}-\ln\frac{r}{\sigma}\right)\cdot r^2\cdot \ind\left(0<r<\frac{5\sigma}{3}\right)\end{equation}
	for $d=13$, and some $\sigma>0$.
	\medskip

	\noindent{\it Proof of~\cref{prop:areell}}~(2): By using~\cref{lem:ellipot}, it follows that:
	$$\Rmu(\bX)=\frac{\Sigma^{-\frac{1}{2}} (\bX-\bt_0)}{\lVert \Sigma^{-\frac{1}{2}} (\bX-\bt_0)\rVert}\cdot H_d^{-1}\circ\overline{H}\left(\lVert \Sigma^{-\frac{1}{2}} (\bX-\bt_0)\rVert\right)$$
	is the required optimal transport map. Recall that $H_d(\cdot)$ is the distribution function of a $\sqrt{\chi^2_d}$ distribution and $\overline{H}(\cdot)$ is the distribution function of $\lVert \Sigma^{-\frac{1}{2}}(\bX-\bt_0) \rVert$. Once again, we write $\Rmu(\bX)=(r_1(\bX),\ldots ,r_2(\bX))$ and set $\overline{\bX}=\Sigma^{-\frac{1}{2}}(\bX-\bt_0)$. Note that~\eqref{eq:Gaussare4} holds with $H_d(\cdot)$ replaced with $H_d^{-1}\circ \overline{H}(\cdot)$ and $h_d(\cdot)$ replaced with,
	$$\frac{d}{dr} H_d^{-1}(\overline{H}(r))=\frac{\overline{h}(r)}{h_d(H_d^{-1}(\overline{H}(r)))}.$$
	Using the above observation in~\eqref{eq:Gaussare4}~and~\eqref{eq:Gaussare64}, we get:
	\begin{align*}
		\left\lVert\Serd^{-\frac{1}{2}}\E_{\mathrm{H}_0}\left[\bJ(\Rmu(\bX))\bh^{\top}\boldsymbol{\eta}(\bm X, \bt_0)\right]\right\rVert^2\nonumber  =\frac{\bh^{\top}\Sigma^{-1}\bh}{d^2}  \left(\E \left[ \overline{h}(\lVert \overline{\bX}\rVert) \right] +(d-1)\E\left[\frac{\overline{H}(\lVert  \overline{\bX}\rVert)}{\lVert \overline{\bX}\rVert}\right]\right)^2. 
	\end{align*}
	Plugging the above observation in~\eqref{eq:areell50} gives, 
	\begin{align}\label{eq:areell52}
		\inf_{\fel} & \atr \nonumber \\ 
		& = \inf_{\fel} \frac{\E\lVert \overline{\bX}\rVert^2}{d^3} \left\{ \E \left[ \frac{\overline{h}(\lVert \overline{\bX}\rVert)}{h_d(H_d^{-1}(\overline{H}(\lVert \overline{\bX}\rVert)))} \right] +(d-1)\E\left[\frac{H_d^{-1}\circ\overline{H}(\lVert \overline{\bX}\rVert)}{\lVert \overline{\bX}\rVert}\right]^2 \right\}.
	\end{align}
	
	Now, by~\cite[Theorem 1]{paindaveine2004} (also see~\cite[Lemma 1]{Hallin2008}), the following holds:
	\begin{align}\label{eq:areell53}
		& \left(\E \left[ \frac{\overline{h}(\lVert \overline{\bX}\rVert)}{h_d(H_d^{-1}(\overline{H}(\lVert \overline{\bX}\rVert)))} \right] +(d-1)\E\left[\frac{H_d^{-1}\circ\overline{H}(\lVert \overline{\bX}\rVert)}{\lVert \overline{\bX}\rVert}\right]\right)^2 \nonumber \\ 
		& ~~~~~~~~~~~~~~~~~~~~~~~ \geq d^4\left(\E\left[\lVert \overline{\bX}\rVert H_d^{-1}(\overline{H}(\lVert \overline{\bX}\rVert))\right]\right)^{-2}.
	\end{align}
	By the Cauchy Schwartz inequality, $\left(\E\left[\lVert \overline{\bX}\rVert H_d^{-1}(\overline{H}(\lVert \overline{\bX}\rVert))\right]\right)^{2}\leq d \E\lVert\overline{\bX}\rVert^2$. Using this observation in~\eqref{eq:areell53} yields:
	$$\left(\E \frac{\overline{h}(\lVert \overline{\bX}\rVert)}{h_d(H_d^{-1}(\overline{H}(\lVert \overline{\bX}\rVert)))}+(d-1)\E\left[\frac{H_d^{-1}\circ\overline{H}(\lVert \overline{\bX}\rVert)}{\lVert \overline{\bX}\rVert}\right]\right)^2\geq \frac{d^4}{d\E\lVert\overline{\bX}\rVert^2}=\frac{d^3}{\E\lVert\overline{\bX}\rVert^2}.$$
	Plugging this observation in~\eqref{eq:areell52} completes the proof of~\cref{prop:areell}~(2). \qed

	\subsection{Proofs from~\cref{sec:rankmmd}}\label{sec:pfrankmmd}
	\begin{proof}[Proof of~\cref{theo:consis}]
		First we define an oracle version of~\eqref{eq:kerankscmmd} as follows: 
		\begin{align} 
			\;\;\trsr:=\frac{mn}{m+n}\left[ w_{m, n, \ell}^{(1),\mathrm{or}}  + w_{m, n, \ell}^{(2),\mathrm{or}} - b_{m, n, \ell}^{\mathrm{or}}  \right], 
			\label{eq:kerankorH01} 
		\end{align}  
		where
		\begin{align*}
			w_{m, n, \ell}^{(1),\mathrm{or}} & := \frac{1}{m(m-1)}\sum_{1\leq i\neq j \leq m} \mathsf{K}(\bJ(\Rb(\bX_i)),\bJ(\Rb(\bX_j))), \nonumber \\ 
		w_{m, n, \ell}^{(2),\mathrm{or}}  & := \frac{1}{n(n-1)}\sum_{1\leq i\neq j \leq n} \bKe(\bJ(\Rb(\bY_i)),\bJ(\Rb(\bY_j))), \nonumber \\ 
			b_{m, n, \ell}^{\mathrm{or}} & := \frac{2}{mn}\sum_{1 \leq i \leq m}\sum_{1 \leq j \leq n} \bKe(\bJ(\Rb(\bX_i)),\bJ(\Rb(\bY_j))) , 
		\end{align*}
		%
		for $\ell=0,1$. Note that, $\gamma_{m,n,0}^{\nu,\bJ,\mathrm{or}}$ coincides with~\eqref{eq:kerankscmmdor}. We now claim the following, which we shall prove later. 
		\begin{equation}\label{eq:consis1}
			\frac{1}{N}(\grsc-\trsr)\overset{P}{\longrightarrow}0,\qquad \mbox{under}\ \mathrm{H}_{\ell}.
		\end{equation}

		First we complete the proof by assuming claim~\eqref{eq:consis1}. Note that the strong law of large numbers for $U$-statistics implies, 
		\begin{align}
			w_{m, n, \ell}^{(1),\mathrm{or}} & \overset{a.s.}{\longrightarrow}  \E_{\mathrm{H}_{\ell}} \bKe(\bJ(\Rb(\bX_1))  ,\bJ(\Rb(\bX_2))) := w_\ell^{(1)}(\mu_1) \nonumber \\ 
			w_{m, n, \ell}^{(2),\mathrm{or}}  & \overset{a.s.}{\longrightarrow} \E_{\mathrm{H}_{\ell}} \bKe(\bJ(\Rb(\bY_1)),\bJ(\Rb(\bY_2)))  :=  w_\ell^{(2)}(\mu_2), \nonumber \\ 
			b_{m, n, \ell}^{\mathrm{or}} & \overset{a.s.}{\longrightarrow} 2\E_{\mathrm{H}_{\ell}} \bKe(\bJ(\Rb(\bX_1)),\bJ(\Rb(\bY_1))) := b_\ell(\mu_1, \mu_2), \nonumber 
		\end{align} 
		under $\mathrm{H}_{\ell}$. 
		Hence, 
		\begin{align*}
			\frac{1}{N} \trsr \overset{a.s.}{\longrightarrow} & \lambda(1-\lambda) ( w_\ell^{(1)}(\mu_1) + w_\ell^{(2)}(\mu_2) - b_\ell(\mu_1, \mu_2) ) := \gamma_{\ell}^2(\mu_1, \mu_2),	
		\end{align*} 
		under $\mathrm{H}_{\ell}$.  
		Consequently by~\eqref{eq:consis1},
		\begin{equation}\label{eq:store2}
			\frac{1}{N}\grsc\overset{P}{\longrightarrow} \gamma_{\ell}^2(\mu_1, \mu_2),\quad \mbox{under}\ \mathrm{H}_{\ell}.
		\end{equation}
		As $\bJ(\cdot)$ is injective and $\Rl(\cdot)$ is invertible in the sense of~\cref{prop:Mccan}, by standard considerations of kernel maximum mean discrepancy (see~\cite[Theorem 5 and Lemma 6]{gretton2008}), we get that $\gamma_{\ell}^2(\mu_1, \mu_2)>0$ whenever $\mu_1\neq \mu_2$  and $\gamma_{\ell}^2(\mu_1, \mu_2)=0$ if $\mu_1=\mu_2$. Therefore, it follows from~\eqref{eq:testrankker}~and~\eqref{eq:uniflevelker} that 
		\begin{equation}\label{eq:store1}
			c_{m,n}=o(N),
		\end{equation}
		in the usual asymptotic regime~\eqref{eq:usual} . Consequently,
		$$\E_{\mathrm{H}_1}\left[\ptsc\right]=\P_{\mathrm{H}_1}\left[\frac{1}{N} \grsc\geq \frac{1}{N} c_{m,n}\right]\to 1,$$
		where the last limit follows by combining~\eqref{eq:store2}~and~\eqref{eq:store1}. This completes the proof.
		
		Now we move on to the proof of~\eqref{eq:consis1}. Observe that:
		\begin{align}\label{eq:consis2}
			&\;\;\;\frac{1}{N} |\grsc-\trsr|\lesssim T_1 + T_2+T_3 , 
		\end{align} 
		where 
		\begin{align} 
			T_1 & := \frac{1}{m^2}\sum_{1 \leq i\neq j \leq m} \big|\bKe(\bJ(\hbR_{m,n}(\bX_i)),\bJ(\hbR_{m,n}(\bX_j)))-\bKe(\bJ(\Rb(\bX_i)),\bJ(\Rb(\bX_j)))\big| , \nonumber \\ T_2 & := \frac{1}{mn}\sum_{i=1}^m\sum_{j=1}^n \big|\bKe(\bJ(\hbR_{m,n}(\bX_i)),\bJ(\hbR_{m,n}(\bY_j)))-\bKe(\bJ(\Rb(\bX_i)),\bJ(\Rb(\bY_j)))\big| , \nonumber \\ 
			T_3 &:= \frac{1}{n^2}\sum_{1 \leq i\neq j \leq n}\big|\bKe(\bJ(\hbR_{m,n}(\bY_i)),\bJ(\hbR_{m,n}(\bY_j)))-\bKe(\bJ(\Rb(\bY_i)),\bJ(\Rb(\bY_j)))\big|. \nonumber 
		\end{align}
		Observe that $T_1$ converges to $0$ in probability under $\mathrm{H}_{\ell}$ by using~\cref{theo:rankmapcon} with $p=2$, $q=1$ and $\cF(\cdot,\cdot)=\bKe(\cdot,\cdot)$. The other terms in~\eqref{eq:consis2} can be handled similarly, thereby establishing the claim in~\eqref{eq:consis1}. 
	\end{proof}
	
	\begin{proof}[Proof of~\cref{theo:nullker}]
		The main step in this proof is to establish~\eqref{eq:basestep1}. This requires us to prove a H\'{a}jek projection result under the null, which is of independent interest, and hence stated as a theorem below. Its proof is deferred to the end of this section.
		\begin{theorem}[Asymptotic multivariate H\'ajek representation]\label{theo:hajek} 
			Consider the same set of assumptions as in~\cref{theo:nullker} and recall \eqref{eq:kerankscmmdor}. Then the following conclusion holds:
			\begin{align}\label{eq:gamma_difference}
				\E (\grsc-\grsr)^2=o(1), 
			\end{align}
			which in turn implies $|\grsc-\grsr|=o_{P}(1)$.
		\end{theorem}
		The proof of~\cref{theo:hajek} is provided at the end of this section. Based on~\cref{theo:hajek} however, the proof of~\cref{theo:nullker} is immediate. Note that by~\cite[Theorem 12]{Gretton2012}, we have:
		$$\grsr\overset{w}{\longrightarrow} \sum_{i=1}^{\infty}\varpi_i(G_i^2-1)$$
		where $\varpi_i$'s and $G_i$'s are taken from the statement~\cref{theo:nullker}. Combining the above observation with~\cref{theo:hajek} and Slutsky's Theorem, completes the proof.
	\end{proof}
	
	\begin{proof}[Proof of~\cref{theo:Piteffrank}]
		We will first prove the result for the model in~\eqref{eq:twosamsmooth}. Note that $\mathrm{H}_{1}$ and $\mathrm{H}_0$ are mutually contiguous (see~\cite[Corollary 12.3.1]{LR05}). By using~\cref{theo:hajek} we  observe that, given any $\epsilon>0$, we have:
		\begin{align}\label{eq:contig1}
			\zph(|\grsc-\grsr|\geq \epsilon)\to 0 \; \implies \; \pho(|\grsc-\grsr|\geq \epsilon)\to 0
		\end{align}
		by contiguity. This allows us to reduce the analysis of $\grsc$ to that of $\grsr$ under both $\pho$ and $\zph$. Once we have reduced the problem to $\grsr$, the argument is similar to other similar results obtained in~\cite{Chikkagoudar2014,kim2018robust,Gregory1977}. 
		
		\noindent Towards this direction, define $\bH:=\bJ\circ \Rmu$ and set,
		\begin{align*} 
			\tilde{\mathsf{K}}(\bH(\mx),\bH(\my)) &  :=\bKe(\bH(\mx),\bH(\my))-\ezph[\bH(\mx),\bH(\bY)] \nonumber \\ 
			& -\ezph[\bH(\bX),\bH(\my)]+\ezph[\bH(\bX),\bH(\bY)] , 
		\end{align*}
		where $\bX,\bY$ are i.i.d. $\mu_1=\mu_2$. Note that there is a slight abuse of notation here with $\tilde{\mathsf{K}}$ as defined in the main paper. They are actually equivalent up to a change of variable. Nevertheless we work with the above representation in this proof.
		
		Define $\tilde{\mathcal{L}}^2:=L^2(\R^d,\mu_1)$ and $\mathcal{L}^2:=L^2(\R^d\times\R^d,\mu_1\otimes \mu_1)$. According to~\cite[Theorem VI.23]{Reed1980}, there exists eigenvalues $\varpi_i$ and eigenfunctions $\Psi_i(\cdot)$'s in~$\tilde{\mathcal{L}}^2$ for $\tilde{\mathsf{K}}(\cdot,\cdot)$, such that the following conclusions hold:
		\begin{align}\label{eq:eigen1}
			\tilde{\mathsf{K}}(\bH(\mx),\bH(\my))=\sum_{i=1}^{\infty}\varpi_i\Psi_i(\bH(\mx))\Psi_i(\bH(\my)), 	\end{align} 
		where the convergence is in $\mathcal{L}^2$, 
		\begin{align}\label{eq:KH}
			\int \tilde{\mathsf{K}}(\bH(\mx),\bH(\my)) \Psi(\bH(\my))\,\mathrm d\mu_1 &= \varpi_i\Psi_i(\bH(\mx)), \nonumber \\ 
			\int \Psi_i(\bH(\mx))\Psi_j(\bH(\mx))\,\mathrm d\mu_1 & =\mathbf{1}(i=j), 
		\end{align} 
		for $i, j \geq 1$, 
		and 
		\begin{align}\label{eq:eigen2}
			\varpi_i\ezph[\Psi_i(\bH(\bX))]=0\;\; \mathrm{and}\;\; \sum_{i=1}^{\infty}\varpi_i^2<\infty.
		\end{align} 
		
		It is sufficient to prove that:
		\begin{equation}\label{eq:claimun}
			\grsr\overset{w}{\longrightarrow}\sum_{i=1}^{\infty}\varpi_i\left[\left(G_i+\sqrt{\lambda(1-\lambda)}\mathbb{E}\left[\Psi_i(\bH(\bX))\bh^{\top}\boldsymbol{\eta}(\bX,\bt_0)\right]\right)^2-1\right].
		\end{equation}
		It is easy to check that $\grsr$ can be rewritten as follows: 
		$$\grsr = \frac{mn}{m+n}\left[ \tilde{w}_{m, n}^{(1),\mathrm{or}, u}  + \tilde{w}_{m, n}^{(2),\mathrm{or}, u} - \tilde{b}_{m, n}^{\mathrm{or}, u}  \right] .$$ 
		where 
		\begin{align}
			\tilde{w}_{m, n}^{(1),\mathrm{or}, u} & := \frac{1}{m(m-1)}\sum_{1\leq i \ne j \leq m} \tilde{\mathsf{K}}(\bJ(\Rmu(\bX_i)),\bJ(\Rmu(\bX_j))), \nonumber \\ 
			\tilde{w}_{m, n}^{(2),\mathrm{or}, u}  & := \frac{1}{n(n-1)}\sum_{1\leq i \ne j \leq n} \tilde{\bKe}(\bJ(\Rmu(\bY_i)),\bJ(\Rmu(\bY_j))), \nonumber \\ 
			\tilde{b}_{m, n}^{\mathrm{or}, u} & := \frac{2}{mn}\sum_{1 \leq i \leq m}\sum_{1 \leq j \leq n} \tilde{\bKe}(\bJ(\Rmu(\bX_i)),\bJ(\Rmu(\bY_j))) . \nonumber 
		\end{align}

		Next, the main idea here is to approximate $\tilde{\mathsf{K}}(\cdot,\cdot)$ which has an infinite expansion (in $\mathcal{L}^2$), with a truncated expansion. Accordingly, for any $L\geq 1$, we define, 
		\begin{align}\label{eq:kernelmnL}
			\grln = \frac{mn}{m+n}\left[ \tilde{w}_{m, n, L}^{(1),\mathrm{or}, u}  + \tilde{w}_{m, n, L}^{(2),\mathrm{or}, u} - \tilde{b}_{m, n, L}^{\mathrm{or}, u}  \right] . 
		\end{align} 
		where 
		\begin{align*}
			\tilde{w}_{m, n, L}^{(1),\mathrm{or}, u} &:=\frac{1}{m(m-1)}\sum_{1\leq i\neq j \leq m} \sum_{\ell=1}^L  \varpi_{\ell} \Psi_\ell(\bH(\bX_i))\Psi_\ell(\bH(\bX_j)) \nonumber \\ 
			\tilde{w}_{m, n, L}^{(2),\mathrm{or}, u} & := \frac{1}{n(n-1)}\sum_{1 \leq i\neq j \leq n} \sum_{\ell=1}^L  \varpi_{\ell} \Psi_\ell(\bH(\bY_i))\Psi_\ell(\bH(\bY_j)) \nonumber \\ 
			\tilde{b}_{m, n, L}^{\mathrm{or}, u} & := \frac{2}{mn}\sum_{1 \leq i \leq m} \sum_{1 \leq j \leq n} \sum_{\ell=1}^L  \varpi_{\ell} \Psi_\ell(\bH(\bX_i))\Psi_\ell(\bH(\bY_j)) . 
		\end{align*}
		Also, define 
		\begin{align*} 
			\Theta_L&:=\sum_{\ell=1}^L  \varpi_{\ell} \left[\left(G_i+\sqrt{\lambda(1-\lambda)}\E_{{\mu_1}}\Psi_\ell(\bH(\bX))\bh^{\top}\boldsymbol{\eta}(\bX,\bt_0)\right)^2-1\right].
		\end{align*}
		where $(G_1,G_2,\ldots ,)$ is an infinite sequence of i.i.d. standard Gaussian random variables. We claim that the following three conclusions hold:
		\begin{enumerate}
			\item Given any $\epsilon>0$ and any sequence $L_N\to\infty$, $\pho(|\grnn-\grun|\geq \epsilon)\to 0$.
			\item For any $\epsilon>0$, we have  $\lim_{L,\tilde{L}\to\infty}\P(|\Theta_L-\Theta_{\tilde{L}}|\geq \epsilon)=0$.
			\item For any fixed $L\geq 1$, $\grln\overset{w}{\longrightarrow}\Theta_L$ under $\pho$.
		\end{enumerate}
		Combining the above claims with~\cite[Lemma 2.5]{Sen2010}
		will complete the proof of~\eqref{eq:claimun} on using~\eqref{eq:contig1}.
		
		First we prove step 1 above. The argument here is similar to that in~\cite[Page 39-40]{Gretton2012}, where the fact that $\lim_{L \rightarrow \infty}\sum_{\ell=L+1}^{\infty} \varpi_\ell^2 = 0$ is used along with~\cite[Section 5.5.2]{Serfling1980}~and~\cite[Theorem 9.8.2]{Dudley2002}. In particular, the authors show in~\cite[Page 39-40]{Gretton2012} that $\zph(|\grnn-\grun|\geq \epsilon)\to 0$. By using contiguity, the same conclusion holds under $\pho$. We omit further details for brevity.
		
		We then move onto step 3. For $L<\tilde{L}$, a simple second moment computation shows:
		$$\E[\Theta_L-\Theta_{\tilde{L}}]^2\lesssim \sum_{\ell=L}^{\tilde{L}}  \varpi_{\ell} ^2\E[ G_{\ell}^2-1 ]^2+\sum_{\ell=L}^{\tilde{L}}  \varpi_{\ell} ^2c_{\ell}^2+\left(\sum_{\ell=L}^{\tilde{L}}  \varpi_{\ell} c_{\ell}^2\right)^2 , $$
		where $$c_{\ell}:=\sqrt{\lambda(1-\lambda)}\E_{\mathrm{H}_0}\Psi_\ell(\bH(\bX))\bh^{\top}\boldsymbol{\eta}(\bX,\bt_0).$$ 
		Note that $\sum_{\ell=1}^{\infty}  \varpi_{\ell} ^2<\infty$ by \eqref{eq:eigen2}. Moreover, by the Cauchy-Schwarz inequality 
		\begin{align}
			c_{\ell}^2 & \leq \lambda(1-\lambda) \E_{\mathrm{H}_0} \left[ \Psi_\ell(\bH(\bX))^2 \right]  \E_{\mathrm{H}_0} \left[ \left( \bh^{\top} \boldsymbol{\eta}(\bX,\bt_0) \right)^2 \right]  \nonumber \\ 
			& = h^2 \lambda(1-\lambda)  \E_{\mathrm{H}_0} \left[ \left( \bh^{\top} \boldsymbol{\eta}(\bX,\bt_0) \right)^2 \right] < \infty , 
		\end{align}
		using \eqref{eq:KH} and the assumption that $ \E_{\bm \theta_0} \left[ \| \boldsymbol{\eta}(\bX,\bt_0) \|^2 \right] < \infty$. Hence, $\limsup_{L\to\infty}\sum_{\ell=1}^L \varpi_{\ell}^2  c_{\ell}^2 < \infty$.  Now, we show that $\limsup_{L\to\infty} \sum_{\ell=1}^L \varpi_{\ell}  c_{\ell}^2<\infty$. 
		This follows by observing that:
		\begin{align*}
			&\;\;\;\;\;\limsup\limits_{L\to\infty}\sum_{l=1}^L \varpi_l c_l^2\\ &=\lambda(1-\lambda)\limsup\limits_{L\to\infty} \int \sum_{l=1}^L \varpi_l \Psi_l(\bH(\mx))\Psi_l(\bH(\my))\bh^{\top}\boldsymbol{\eta}(\mx,\bt_0)\boldsymbol{\eta}(\my,\bt_0)^{\top}\bh \, \mathrm d\mu_1(\mx)\, \mathrm d\mu_1(\my)\\ &\leq \lambda(1-\lambda) \sqrt{\limsup\limits_{L\to\infty}\E \left[\left(\sum_{l=1}^L \varpi_l\Psi_l(\bH(\bX))\Psi_l(\bH(\bY))\right)^2 \right] }\E\left(\bh^{\top}\boldsymbol{\eta}(\bX,\bt_0)\right)^2<\infty.
		\end{align*}
		Here we have used the Cauchy-Schwartz inequality in the final line, with $\bX,\bY$ i.i.d. $\mu_1$. The finitenesss claim above follows from~\eqref{eq:eigen1} and the assumption $\E_{\bt_0}\lVert \boldsymbol{\eta}(\bX,\bt_0)\rVert^2<\infty$.
		
		We now move on to step 2. The proof for this part is based on Le Cam's Third Lemma (see~\cite[Theorem 6.6]{Van1998}). Recalling the definition of $\tilde{w}_{m, n, L}^{(1),\mathrm{or}, u}$ from \eqref{eq:kernelmnL} note that

		Note that the first term of $\grln$ can be written as,
		\begin{align*} 
			\frac{mn}{m+n} \tilde{w}_{m, n, L}^{(1),\mathrm{or}, u} & = \frac{mn}{Nm(m-1)}\sum_{\ell=1}^L\sum_{i\neq j}  \varpi_{\ell} \Psi_\ell(\bH(\bX_i))\Psi_\ell(\bH(\bX_j))\\ 
			&=\frac{mn}{N(m-1)}\sum_{\ell=1}^L \left[\left(\frac{\sqrt{ \varpi_{\ell} }\sum_{i=1}^m \Psi_\ell(\bH(\bX_i))}{\sqrt{m}}\right)^2-\frac{ \varpi_{\ell} \sum_{i=1}^m \Psi_\ell ^2(\bH(\bX_i))}{m}\right].
		\end{align*}
		A similar expression can be written for $\frac{mn}{m+n} \tilde{w}_{m, n, L}^{(1),\mathrm{or}, u}$ and $\frac{mn}{m+n} \tilde{b}_{m, n, L}^{\mathrm{or}, u}$ as well. Also, note that the likelihood ratio for testing $\mathrm{H}_0$ versus $\mathrm{H}_1$ is given as follows:
		$$V_N:=\sum_{i=1}^N \log\left\{\frac{f(\bX_j|\bt_0+N^{-1/2}\bh)}{f(\bX_j|\bt_0)}\right\}.$$
		In order to apply Le Cam's third lemma, we therefore need to study the limiting joint distribution of $((\mathcal{Z}_N^L)^{\top}, (\mathcal{W}_N^L)^{\top},V_N)$ under $\zph$, where $\mathcal{Z}_N^L:=(Z_{N,1},\ldots ,Z_{N,L})^{\top}$, $\mathcal{W}_N^L:=(W_{N,1},\ldots ,W_{N,L})^{\top}$ with 
		$$Z_{N, \ell}:= \frac{1}{\sqrt{m}} \sum_{i=1}^m \Psi_\ell(\bH(\bX_i)) \quad \text{ and 
		} \quad W_{N, \ell} := \frac{1}{\sqrt{n}} \sum_{j=1}^n \Psi_\ell(\bH(\bY_j)).$$ 
		Note that by~\cite[Theorem 12.2.3]{LR05}, it suffices to analyze the limiting joint distribution of $(\mathcal{Z}_N^L,\mathcal{W}_N^L,\dot{V}_N)$ under $\zph$, where recall that
		\begin{align*}
			\dot{V}_N:=\dot{V}_N-\frac{(1-\lambda)}{2}\bh^{\top}I(\bt_0)\bh+o_{P}(1),\quad \dot{V}_N:=\frac{1}{\sqrt{N}}\sum_{j=1}^n \bh^{\top}\boldsymbol{\eta}(\bY_j,\bt_0).
		\end{align*}
		and $I(\cdot)$ is the Fisher Information matrix. 
		By the multivariate central limit theorem and~\eqref{eq:eigen2}, we have, under $\mathrm{H}_0$:
		\begin{align*}
			\begin{pmatrix} \mathcal{Z}_N^L\\ \mathcal{W}_N^L\\ V_N \end{pmatrix} \overset{w}{\to}\mathcal{N}\left(\begin{pmatrix} \mathbf{0}_{L\times 1} \\ \mathbf{0}_{L\times 1}\\ -((1-\lambda)/2)\bh^{\top}I(\bt_0)\bh\end{pmatrix},\begin{pmatrix} \mathrm{Id}_{L\times L} & \mathbf{0}_{L\times L} & \mathbf{0}_{L\times 1}\\ \mathbf{0}_{L\times L} & \mathrm{Id}_{L\times L} & \bm p \\ \mathbf{0}_{1\times L} & \bm p^\top & (1-\lambda)\bh^{\top}I(\bt_0)\bh\end{pmatrix}\right) , 
		\end{align*}
		where
		$$\bm p:=\sqrt{1-\lambda} \begin{pmatrix}
			\ezph\left[\Psi_1(\bH(\bX))\bh^{\top}\boldsymbol{\eta}(\bX,\bt_0)\right] \\ 
			\ezph\left[\Psi_2(\bH(\bX))\bh^{\top}\boldsymbol{\eta}(\bX,\bt_0)\right] \\
			\vdots \\ 
			\ezph\left[\Psi_L(\bH(\bX))\bh^{\top}\boldsymbol{\eta}(\bX,\bt_0)\right]  
		\end{pmatrix} . $$
		By appealing to Le Cam's Third Lemma, we then have under $\mathrm{H}_1$,
		\begin{align*}
			\begin{pmatrix} \mathcal{Z}_N^L\\ \mathcal{W}_N^L\end{pmatrix} \overset{w}{\to}\mathcal{N}\left(\begin{pmatrix} \mathbf{0}_{L\times 1} \\ \bm p\end{pmatrix},\begin{pmatrix} \mathrm{Id}_{L\times L} & \mathbf{0}_{L\times L} \\ \mathbf{0}_{L\times L} & \mathrm{Id}_{L\times L} \end{pmatrix}\right).
		\end{align*}
		Let $(Z_1,\ldots ,Z_L,W_1,\ldots ,W_L)$ be a sequence of i.i.d. standard Gaussian random variables. An application of the continuous mapping theorem then yields the following (under $\mathrm{H}_1$):
		\begin{align*}
			& \grln \\ 
			& \overset{w}{\to}\lambda\sum_{\ell=1}^L  \varpi_{\ell} (Z_{\ell}^2-1)+(1-\lambda)\sum_{\ell=1}^L \left\{ \varpi_{\ell} ((W_{\ell}+p_{\ell})^2-1)-2(\lambda(1-\lambda)) \varpi_{\ell} Z_{\ell}(W_{\ell}+p_{\ell}) \right\} \\ &\overset{w}{=}\sum_{\ell=1}^L  \varpi_{\ell} \left[\left(\sqrt{\lambda}(W_{\ell}+p_{\ell})-\sqrt{1-\lambda}Z_{\ell}\right)^2-1\right]\\ &\overset{w}{=}\sum_{\ell=1}^L  \varpi_{\ell} \left[\left(G_{\ell}+\sqrt{\lambda(1-\lambda)}\E_{\mathrm{H}_0}\Psi_\ell(\bH(\bX))\bh^{\top}\boldsymbol{\eta}(\bX,\bt_0)\right)^2-1\right].
		\end{align*}
		This completes the proof of \cref{theo:Piteffrank}.
	\end{proof}
	
	\begin{remark}
		By the above arguments, it is evident that the rate of convergence of $|\grsc-\grsr|$ is governed by the rate of convergence of $\frac{1}{N^2} \sum_{i\neq j} \E[(\hG_{ij}-\nhg_{ij})^2]$. Under stronger assumptions, it is possible to quantify this rate using~\cite[Theorem 2.2]{deb2021rates}. 
	\end{remark} 
	
	\begin{proof}[Proof of~\cref{theo:hajek}]
		
		We will first write the standard decomposition,
		\begin{equation}\label{eq:baseq1}
			\E[(\grsc-\grsr)^2]=\E [(\grsc)^2] -2 \E [\grsc \grsr ]+ \E [(\grsr)^2].
		\end{equation}
		Next we will simplify each term in~\eqref{eq:baseq1}. In order to do this, we will use the notion of the permutation distribution from~\cref{def:pdist}.
		
		We begin with the first term from the right hand side of~\eqref{eq:baseq1}. In the subsequent discussion, unless otherwise stated, in all summation signs, the indices will vary from $1$ to $N$. Let $\E_{\Z}$ denote as the conditional expectation given $\mathcal{Z}_N$ (that is, the permutation distribution in~\cref{def:pdist}). Also, set 
		$$\hG_{ij}:=\bKe(\bJ(\hbR_{m,n}(\bZ_i)),\bJ(\hbR_{m,n}(\bZ_j))),$$
		and $N_0:=mn/N$. Then we get 
		\begin{align}\label{eq:firsterm}
			\frac{1}{N_0^{2}}\E[(\grsc)^2]=\E\E_{\Z}\left[ \mathcal{W}_1 + \mathcal{W}_2 - \mathcal{B} \right]^2 , 
		\end{align} 
		where 
		\begin{align*}
			\mathcal{W}_1 & = \frac{2}{m(m-1)} \sum_{1 \leq i<j \leq N} \hG_{ij} \ind(L_i=L_j=1) , \nonumber \\ 
			\mathcal{W}_2 & = \frac{2}{n(n-1)} \sum_{1 \leq i<j \leq N} \hG_{ij} \ind(L_i=L_j=2) , \nonumber \\ 
			\mathcal{B} & = \frac{2}{mn}  \sum_{1 \leq i,j \leq N}  \hG_{ij} \ind(L_i=1,L_j=2) . 
		\end{align*} 
		
		Clearly, when we expand the squares in~\eqref{eq:firsterm}, each resulting term will involve a summation over a quadruple of indices, say $\{i,j,s,t\}$. Now $\{\hG_{ij}\}_{1 \leq i, j \leq N}$ are measurable with respect to the sigma field induced by $\Z$. Therefore, the conditional expectation $\E_{\Z}$ only operates on the indicator variables above to yield the corresponding probabilities. These probabilities are governed by the number of distinct indices in $\{i,j,s,t\}$. Towards this direction, given a collection of indices $S$, let $|S|$ denote the number of distinct indices in $S$. Suppose that $L_S:=\{L_i:i\in S\}$. Given two positive integers $a$ and $b$, $a\geq b$, let $p(a,b)=a!/(a-b)!$. Then a simple combinatorial argument shows that
		\begin{align}
			\alpha_{|S|}^{(1)}:=\P(L_S=1|\Z)=\frac{p(m,|S|)}{p(N,|S|)},\quad \alpha_{|S|}^{(2)}:=\P(L_S=2|\Z)=\frac{p(n,|S|)}{p(N,|S|)},\; \label{eq:genp1} 
		\end{align} 
		and 
		\begin{align} 
			\alpha_{|S_1|,|S_2|}:=\P(L_{S_1}=1, L_{S_2}=2|\Z)=\frac{p(m,|S_1|)p(n,|S_2|)}{p(N,|S_1|+|S_2|)} . \label{eq:genp2} 
		\end{align} 
		(Clearly, $\alpha_{|S|, 0} = \alpha_{|S|}^{(1)}$ and $\alpha_{0, |S|} = \alpha_{|S|}^{(2)}$.) Using~\eqref{eq:genp1}~and~\eqref{eq:genp2}, we will simplify the first term in the right hand side of~\eqref{eq:firsterm}. The crucial idea here is to split the summation according to the number of distinct indices.
		\begin{align}\label{eq:confirstermW1}
			\E\left[ \mathcal{W}_1^2 \right] & = \frac{1}{m^2(m-1)^2} \left( 2T_1 + 4 T_2 + T_3 \right) , 
		\end{align} 
		where 
		\begin{align*}	
			T_1 & :=\E\E_{\Z} \left[ \sum_{1 \leq i\neq j \leq N}\hG_{ij}^2 
			\ind(L_i=L_j=1) \right] =  \alpha_2^{(1)} \E \left[\sum_{1 \leq i\neq j \leq N}\hG_{ij}^2 \right] , \nonumber \\ 
			T_2& := \E\E_{\Z} \left[ \sum_{1 \leq i\neq j\neq s \leq N}\hG_{ij}\hG_{si}\ind(L_i=L_j=L_s=1) \right] =  \alpha_3^{(1)} \E \left[ \sum_{1 \leq i\neq j\neq s \leq N}\hG_{ij}\hG_{si}  \right] , \nonumber \\ 
			T_3 & := \E\E_{\Z} \left[\sum_{1 \leq i\neq j\neq s\neq t \leq N}\hG_{ij}\hG_{st}\ind(L_i=L_j=L_s=L_t=1) \right]  = \alpha_4^{(1)} \E \left[\sum_{1 \leq i\neq j\neq s\neq t \leq N}\hG_{ij}\hG_{st} \right] . 
		\end{align*} 	
		Hence, using \eqref{eq:confirstermW1}, 
		\begin{align}\label{eq:confirsterm1}
			\E\left[ \mathcal{W}_1^2 \right] & = \frac{1}{m^2(m-1)^2} \E \left[ 2 \alpha_2^{(1)} \mathcal A_{\hG} + 4 \alpha_3^{(1)}  \mathcal B_{\hG} + \alpha_4^{(1)}  \mathcal C_{\hG} \right] .
		\end{align} 
		where $\mathcal A_{\hG}:=\sum_{1 \leq i\neq j \leq N}\hG_{ij}^2$,  $\mathcal B_{\hG} := \sum_{1 \leq i\neq j\neq s \leq N}\hG_{ij}\hG_{si} $, and $\mathcal C_{\hG} := \sum_{1 \leq i\neq j\neq s\neq t \leq N}\hG_{ij}\hG_{st}$.  
		Similarly, 
		\begin{align}\label{eq:firsterm2}
			\E\left[ \mathcal{W}_2^2 \right] & = \frac{1}{n^2(n-1)^2} \E \left[ 2 \alpha_2^{(2)} \mathcal A_{\hG} + 4 \alpha_3^{(2)}  \mathcal B_{\hG} + \alpha_4^{(2)}  \mathcal C_{\hG} \right] .
		\end{align}

		We can carry out similar computations for the other terms from the right hand side of~\eqref{eq:baseq1}. The idea for all the terms is the same as above and we present the final expressions for all the terms arising out of expanding the squares in~\eqref{eq:baseq1} below:
		\begin{align}\label{eq:firsterm3}
			\E\left[ \mathcal{B}^2 \right] &=\frac{4}{m^2n^2}\E\left[\alpha_{1,1} \mathcal A_{\hG} + ( \alpha_{1,2} +  \alpha_{2,1} ) \mathcal B_{\hG}  + \alpha_{2,2} \mathcal C_{\hG} \right] , 
		\end{align} 
		\begin{align}\label{eq:firsterm4} 
			\E\left[ \mathcal W_1 \mathcal B \right]  & =\frac{2}{m^2 (m-1) n}\E\left[ 2 \alpha_{2,1} \mathcal B_{\hG}  +\alpha_{3,1}  \mathcal C_{\hG} \right] , 
		\end{align} 
		\begin{align}\label{eq:firsterm5}
			\E\left[ \mathcal W_2 \mathcal B \right] & =\frac{2}{mn^2 (n-1)}\E\left[ 2 \alpha_{1,2} \mathcal B_{\hG} + \alpha_{1,3} \mathcal C_{\hG} \right] , 
		\end{align} 
		and 
		\begin{align}\label{eq:firsterm6}
			\E\left[ \mathcal W_1 \mathcal W_2 \right] &=\frac{1}{m(m-1)n(n-1)}\E\left[ \alpha_{2,2} \mathcal C_{\hG} \right] . 
		\end{align} 
		Plugging in the expressions from~\eqref{eq:confirsterm1},~\eqref{eq:firsterm2},~\eqref{eq:firsterm3},~\eqref{eq:firsterm4},~\eqref{eq:firsterm5},~\eqref{eq:firsterm6} into~\eqref{eq:firsterm}, we get:
		\begin{align}\label{eq:gammaabc}
			\frac{1}{N_0} \E[(\grsc)^2] & =\E\Bigg[ a_{m, n} \mathcal A_{\hG} + b_{m, n} \mathcal B_{\hG}  +c_{m, n}  \mathcal C_{\hG}  \Bigg] , 
		\end{align}
		where  
		\begin{align*}
			a_{m, n} & := \frac{2\alpha_{2}^{(1)}}{m^2(m-1)^2}+\frac{2\alpha_{2}^{(2)}}{n^2(n-1)^2}+\frac{4\alpha_{1,1}}{m^2n^2} , \nonumber \\ 
			b_{m, n} & := \frac{4\alpha_{3}^{(1)}}{m^2(m-1)^2}+\frac{4\alpha_{3}^{(2)}}{n^2(n-1)^2}+\frac{4\alpha_{1,2}}{m^2n^2}+\frac{4\alpha_{2,1}}{m^2n^2}-\frac{8\alpha_{2,1}}{m^2 (m-1) n} -\frac{8\alpha_{1,2}}{mn^2 (n-1) } , \nonumber \\ 
			c_{m, n} & :=  \frac{\alpha_{4}^{(1)}}{m^2(m-1)^2}+\frac{\alpha_{4}^{(2)}}{n^2 (n-1)^2}+\frac{4\alpha_{2,2}}{m^2n^2}-\frac{4\alpha_{3,1}}{m^2(m-1)n}-\frac{4\alpha_{1,3}}{mn^2(n-1) } \nonumber \\ 
			& \hspace{2.0in} + \frac{2\alpha_{2,2}}{m(m-1)n(n-1)} . 
		\end{align*} 
		%
		%
		
		Next, define $\nhg_{ij}:=\bKe(\bJ(\Rmu(\mz_i)),\bJ(\Rmu(\mz_j)))$. It is easy to see that $\E[(\grsr)^2]$ can be dealt with in the exact same fashion as $\E[(\grsc)^2]$ with $\hG_{ij}$ is replaced by $\nhg_{ij}$. To avoid clutter and for future reference, we only present the final expression for $\E[(\grsr)^2]$ which will be useful for us. 
		\begin{align}\label{eq:orabc}
			\frac{1}{N_0}  \E[(\grsr)^2] & =\E\Bigg[ a_{m, n} \mathcal A_{\nhg} + b_{m, n} \mathcal B_{\nhg}  +c_{m, n}  \mathcal C_{\nhg}  \Bigg] , 
		\end{align} 
		where $\mathcal A_{\nhg}:=\sum_{1 \leq i\neq j \leq N}\nhg_{ij}^2$,  $\mathcal B_{\nhg} := \sum_{1 \leq i\neq j\neq s \leq N}\nhg_{ij}\nhg_{si} $, and $\mathcal C_{\nhg} := \sum_{1 \leq i\neq j\neq s\neq t \leq N}\hG_{ij}\hG_{st}$.  
		
		%
		%
		
		%
		%
		A similar argument as above also yields:
		\begin{align}\label{eq:orabc2}
			\frac{1}{N_0^2} \E[ \grsc \grsr ] =\E\Bigg[ a_{m, n} \mathcal A_{\hG, \nhg}   + b_{m, n}  \mathcal B_{\hG, \nhg}   + c_{m, n} \mathcal C_{\hG, \nhg}     \Bigg] , 
		\end{align} 
		where $\mathcal A_{\hG, \nhg} := \sum_{1 \leq i\neq j \leq N}\hG_{ij}\nhg_{ij}$,  $\mathcal B_{\hG, \nhg} := \sum_{1 \leq i\neq j \neq t \leq N}\hG_{ij}\nhg_{it}$, 
		and $\mathcal C_{\hG, \nhg} := \sum_{1 \leq i\neq j \neq s \neq t \leq N}\hG_{ij}\nhg_{st}$. 
		By plugging in the expressions from~\eqref{eq:gammaabc},~\eqref{eq:orabc}~and~\eqref{eq:orabc2} in to~\eqref{eq:baseq1}, we get:
		\begin{align}\label{eq:hajek1}
			\E\left[\grsc-\grsr\right]^2 & = S_1 + S_2 + S_3 , 
		\end{align} 
		where 
		\begin{align*}
			S_1 & := N_0^{2} a_{m, n} \sum_{1 \leq i\neq j \leq N} \E [ ( \hG_{ij}-\nhg_{ij})^2 ] , \nonumber \\ 
			S_2 & := 4 N_0^2 b_{m, n} \sum_{1 \leq i\neq j\neq s \leq N} \E [ \nhg_{ij}\nhg_{it} + \hG_{ij}\hG_{it} - 2 \hG_{ij}\nhg_{it} ] , \nonumber \\ 
			S_3 & := N_0^2 c_{m, n} \sum_{1 \leq i\neq j\neq s\neq t \leq N} \E [ \hG_{ij}\hG_{st}+\nhg_{ij}\nhg_{st}-2\hG_{ij}\nhg_{st} ] .
		\end{align*}

		Next we show that each of the terms in~\eqref{eq:hajek1} above converges to $0$ . In this regard, assumptions~\eqref{eq:kerassn1}~and~\eqref{eq:kerassn2} will play a crucial role. Let us start with $S_1$. 
		Recalling the definition of $a_{m, n}$ from \eqref{eq:gammaabc} and by   \eqref{eq:genp1} and \eqref{eq:genp2} we get: 
		\begin{align*}
			N_0^2 a_{m, n} & = \frac{2\alpha_{2}^{(1)} N_0^2}{m^2(m-1)^2}+\frac{2\alpha_{2}^{(2)} N_0^2}{n^2(n-1)^2}+\frac{4\alpha_{1,1} N_0^2}{m^2n^2} \nonumber \\ 
			& =  \frac{N_0^2}{N(N-1)} \left( \frac{2 }{m(m-1)}+\frac{2}{n(n-1)}+\frac{4}{m n} \right) = O\left(\frac{1}{N^2}\right) . 
		\end{align*}
		Hence, 
		\begin{align}\label{eq:s1}
			S_1 & \lesssim \frac{1}{N(N-1)}  \sum_{1 \leq i\neq j \leq N} \E[(\hG_{ij}-\nhg_{ij})^2]. 
		\end{align}
		To bound this term, we will need a Cauchy-Schwartz type
		inequality. Towards this direction, note that by using standard properties of reproducing kernels, see e.g.,~\cite[Theorem 1]{gretton2008}, there exists a Hilbert  space $\mathcal{H}$ of functions with an inner product $\langle \cdot,\cdot\rangle_{\mathcal{H}}$ such that $K(x,y)=\langle \bKe(x,\cdot),\bKe(y,\cdot)\rangle_{\mathcal{H}}$. This implies that
		\begin{align*}
			\hG_{ij}^2&=\bKe(\bJ(\hbR_{m,n}(\bZ_i)),\bJ(\hbR_{m,n}(\bZ_j)))^2 \\ &=\langle \bKe(\bJ(\hbR_{m,n}(\bZ_i)),\cdot), \bKe(\bJ(\hbR_{m,n}(\bZ_j)),\cdot) \rangle_{\mathcal{H}}^2\\ &\leq \langle \bKe(\bJ(\hbR_{m,n}(\bZ_i)),\cdot),\bKe(\bJ(\hbR_{m,n}(\bZ_i)),\cdot)\rangle_{\mathcal{H}}\, \langle\bKe(\bJ(\hbR_{m,n}(\bZ_j)),\cdot),\bKe(\bJ(\hbR_{m,n}(\bZ_j)),\cdot)\rangle_{\mathcal{H}}\\ &=\hG_{ii}\hG_{jj}
		\end{align*}		
		By using the above, we get:
		$$\frac{1}{N(N-1)}\sum_{i\neq j} \hG_{ij}^2\lesssim \left(\frac{1}{N}\sum_{i=1}^N \hG_{ii}\right)^2\leq \frac{1}{N}\sum_{i=1}^N \hG_{ii}^2.$$ 
		Similarly, we can show that 
		$$\frac{1}{N(N-1)}\sum_{i\neq j} \nhg_{ij}^2\lesssim \left(\frac{1}{N}\sum_{i=1}^N \nhg_{ii}\right)^2\leq \frac{1}{N}\sum_{i=1}^N \nhg_{ii}^2.$$
		Using the above chain of inequalities with~\eqref{eq:seckerasn} and Vitali's Theorem, we get that the sequence of random variables
		$$\frac{1}{N(N-1)}\sum_{i\neq j} (\hG_{ij}-\nhg_{ij})^2$$
		is uniformly integrable. Therefore, by~\eqref{eq:s1}, to show $S_1$ converges to $0$, it suffices to show convergence of the display in probability to $0$. This follows from using~\cref{theo:rankmapcon} with $p=2,r=2,q=1$ with $\cF(\mx,\my)=\bKe(\mx,\my)$.
		
		Let us now show that $S_2$ converges to $0$. First note that 
		\begin{align*}
			b_{m,n} 
			&=\frac{4mn(n-1)+4mn(m-1)-4(m+n)(m-1)(n-1)}{mn(m-1)(n-1)N(N-1)(N-2)}+O\left(\frac{1}{N^5}\right)
		\end{align*}  
		which implies
		$$N_0^2b_{m,n}=O\left(\frac{1}{N^3}\right).$$
		Therefore,
		\begin{align*}
			|S_2|&\lesssim \frac{1}{N^3}\sum_{i\neq j\neq t}\E[|\hG_{ij}-\nhg_{ij}||\nhg_{it}|]+\frac{1}{N^3}\sum_{i\neq j\neq t}\E[|\hG_{it}-\nhg_{it}||\hG_{ij}|]\\ &\leq \sqrt{\frac{1}{N^2}\sum_{i\neq j}\E[(\hG_{ij}-\nhg_{ij})^2]}\left(\sqrt{\frac{1}{n^2}\sum_{i\neq t} \E\nhg_{it}^2}+\sqrt{\frac{1}{n^2}\sum_{i\neq t} \hG_{it}^2}\right)
		\end{align*}  
		It is then sufficient to show that 
		$$\frac{1}{N^2}\sum_{i\neq j} \E[(\hG_{ij}-\nhg_{ij})^2]\to 0,$$
		which we have already proved above.
		
		The final step is to prove that $S_3\to 0$. Towards this direction, note that 
		\begin{align*}
			c_{m,n} 
			&=\frac{-4(m^2-m+n^2-n)}{N(N-1)(N-2)(N-3)m(m-1)n(n-1)}+O\left(\frac{1}{N^6}\right).
		\end{align*}
		Therefore, $$N_0^2 c_{m,n}=O\left(\frac{1}{N^4}\right).$$
		Consequently,
		\begin{align*}
			|S_3|&\lesssim \frac{1}{N^4}\sum_{1\leq i\neq j\neq s\neq t\leq N}\E[|\hG_{ij}-\nhg_{ij}||\nhg_{st}|]+\frac{1}{N^4}\sum_{1\leq i\neq j\neq s\neq t\leq N}\E[|\hG_{st}-\nhg_{st}| |\hG_{ij}|]\\ &\leq \sqrt{\frac{1}{N^2}\sum_{i\neq j}\E[(\hG_{ij}-\nhg_{ij})^2]}\left(\sqrt{\frac{1}{n^2}\sum_{i\neq t} \E\nhg_{it}^2}+\sqrt{\frac{1}{n^2}\sum_{i\neq t} \hG_{it}^2}\right)
		\end{align*}
		which converges to $0$ as argued above.

		This completes the proof of~\cref{theo:hajek} with a further application of Markov's inequality. 	
	\end{proof} 
	
	\subsection{Proofs from~\cref{sec:compare}}\label{sec:pfindtest}
	\begin{proof}[Proof of~\cref{theo:nullrhnk}]
		
		Let $\bR_1(\cdot)$ and $\bR_2(\cdot)$ be the optimal transport maps from the distribution of $\bX_1$ and $\bY_1$ to $\nu_1$ and $\nu_2$, respectively. Define $\jro:=\frac{1}{n} \sum_{i=1}^n \bJ_1(\bR_1(\bX_i))$, $\jrt:= \frac{1}{n}\sum_{i=1}^n \bJ_2(\bR_2(\bY_i))$ and
		\begin{equation}\label{eq:pfindtest1}
			\ornk:=\left\lVert \left(\sn\otimes \st\right)^{-\frac{1}{2}}\mbox{\textrm{vec}}\left(\frac{1}{\sqrt{n}}\sum_{i=1}^n (\bJ_1(\bR_1(\bX_i))-\jro)(\bJ_2(\hbR_2(\bY_i))-\jrt)^{\top}\right)\right\rVert^2.
		\end{equation}
		Note that $\ornk$ is obtained by replacing the empirical rank maps $\hbR_1(\cdot)$ and $\hbR_2(\cdot)$ in \eqref{eq:rank_spearman}  with their population counterparts $\bR_1(\cdot)$ and $\bR_2(\cdot)$. The proof of~\cref{theo:nullrhnk} now proceeds in two steps:
		\begin{itemize}	
			
			\item In the first step we will show that, under $\mathrm{H}_0$, 
			\begin{align}\label{eq:pfrank}
				|\rhnk-\ornk|\overset{P}{\longrightarrow}0. 
			\end{align}
			
			\item Next, we will show that, under $\mathrm{H}_0$, 
			\begin{align}\label{eq:pfrank_distribution}
				\ornk\overset{w}{\longrightarrow}\chi^2_{d_1 d_2}.
			\end{align}
			
		\end{itemize}
		Combining~\eqref{eq:pfrank} and~\eqref{eq:pfrank_distribution} with Slutsky's theorem completes the proof of~\cref{theo:nullrhnk}.

		We begin with the proof of~\eqref{eq:pfrank_distribution}. For this, let $\bm{m}_1:=\E\bJ_1(\bR_1(\bX_1))$, $\bm{m}_2:=\E\bJ_2(\bR_2(\bY_1))$ and observe that,
		\begin{align} 
			\ornk&=\left\lVert \left(\sn\otimes \st\right)^{-\frac{1}{2}}\mbox{\textrm{vec}}\left(\frac{1}{\sqrt{n}}\sum_{i=1}^n (\bJ_1(\bR_1(\bX_i))-\bm{m}_1)(\bJ_2(\hbR_2(\bY_i))-\bm{m}_2)^{\top}\right)\right\rVert^2 \nonumber \\ 
			& \hspace{3.25in} +o_P(1). 
		\end{align} 
		Note that the first term in the RHS of the above display converges weakly to $\chi^2_{d_1 d_2}$ by combining the multivariate central limit theorem with the continuous mapping theorem. This completes the proof of~\eqref{eq:pfrank_distribution}.
		
		To prove \eqref{eq:pfrank}, note that it suffices to prove that:
		\begin{equation}\label{eq:step1req} 
			\E\left\lVert \mbox{\textrm{vec}}\left(\sum_{i=1}^n \hbG_{\bm X_i}\hbG_{\bm Y_i}^{\top}-\sum_{i=1}^n \bGa_{\bm X_i}\bGa_{\bm Y_i}^{\top}\right)\right\rVert^2=o(n),\end{equation}
		where, for $i \in [n]$, 
		$$\hbG_{\bm X_i}:=\bJ_1(\hbR_1(\bX_i))-\bar{\bJ_1}, \quad \hbG_{\bm Y_i}:=\bJ_2(\hbR_2(\bY_i))-\bar{\bJ_2},$$ 
		and
		$$\bGa_{\bm X_i}:=\bJ_1(\bR_1(\bX_i))-\jro, \quad \bGa_{\bm Y_i}:=\bJ_2(\bR_2(\bY_i))-\jrt.$$

		To prove~\eqref{eq:step1req}, let $S_n$ denote the set of all permutations of the set $\{1,2,\ldots ,n\}$. Also, suppose $\sigma$ is a random permutation sampled uniformly over $S_n$ and independently of $(\bX_1,\bY_1), \ldots , (\bX_n,\bY_n)$. It is easy to see that, under $\mathrm{H}_0$, we have:
		$$(\bX_1,\bY_1),\ldots ,(\bX_n,\bY_n)\overset{D}{=}(\bX_1,\bY_{\sigma(1)}),\ldots ,(\bX_n,\bY_{\sigma(n)}).$$
		Let $\mathcal{X}_n:=\{\bX_1,\ldots ,\bX_n\}$ and $\mathcal{Y}_n:=\{\bY_1,\ldots ,\bY_n\}$ be the \emph{unordered} sets of observations. Denote by $\Ezn$ the expectation conditional on $\mZ_n:=(\mathcal{X}_n,\mathcal{Y}_n)$. Based on this notation, the LHS of~\eqref{eq:step1req} can be written as:
		\begin{align}\label{eq:step1req1}
			\E\left\lVert \mbox{\textrm{vec}}\left(\sum_{i=1}^n \hbG_{\bm X_i}\hbG_{\bm Y_i}^{\top}-\sum_{i=1}^n \bGa_{\bm X_i}\bGa_{\bm Y_i}^{\top}\right)\right\rVert^2 = T_1 + T_2 - 2 T_3, 
		\end{align}
		where 
		$$T_1:=\E\mbox{Tr}\left(\Ezn\left[\sum_{1 \leq i,j \leq n}\hbG_{\bm X_i}\hbG_{\bm Y_i}^{\top}\hbG_{\bm Y_j}\hbG_{\bm X_j}^{\top}\right]\right),$$
		$$T_2:=\E\mbox{Tr}\left(\Ezn\left[\sum_{1 \leq i,j \leq n}\bGa_{\bm X_i}\bGa_{\bm Y_i}^{\top}\bGa_{\bm Y_j}\bGa_{\bm X_j}^{\top}\right] \right),$$
		$$T_3:=\E\mbox{Tr}\left(\Ezn\left[\sum_{1 \leq i, j \leq n}\hbG_{\bm X_i}\hbG_{\bm Y_i}^{\top}\bGa_{\bm Y_j}\bGa_{\bm X_j}^{\top}\right] \right) , 
		$$	
		with $\mbox{Tr}(\cdot)$ denoting the trace of a matrix. 
		
		We will focus on $T_3$. The analysis for the other two terms will follow along similar lines. Towards this direction, note that:
		\begin{align*}
			\Ezn & \Bigg[\sum_{1 \leq i,j \leq n}\hbG_{\bm X_i}\hbG_{\bm Y_i}^{\top} \bGa_{\bm Y_j}\bGa_{\bm X_j}^{\top}\Bigg] \nonumber \\ 
			& =\Ezn\left[\sum_{1 \leq i,j \leq n} \hbG_{\bm X_i}\hbG_{\bm Y_{\sigma(i)}}^{\top}\bGa_{\bm Y_{\sigma(j)} }\bGa_{\bm X_j}^{\top}\right]\\ 
			&=\frac{1}{n}\left( \sum_{i=1}^n \hbG_{\bm Y_i}^{\top}\bGa_{\bm Y_i}\right)\left(\sum_{i=1}^n \hbG_{\bm X_i}\bGa_{\bm X_i}^{\top}\right)+  \frac{1}{n(n-1)} \left(\sum_{1 \leq i\neq j \leq n} \hbG_{\bm Y_i}^{\top}\bGa_{\bm Y_j}\right)\left(\sum_{1 \leq i\neq j \leq n} \hbG_{\bm X_i}\bGa_{\bm X_j}^{\top}\right)\\ 
			&=\frac{1}{n-1}\left(\sum_{i=1}^n \hbG_{\bm Y_i}^{\top}\bGa_{\bm Y_i}\right)\left(\sum_{i=1}^n \hbG_{\bm X_i}\bGa_{\bm X_i}^{\top}\right),
		\end{align*}
		where the last line uses $\sum_{i=1}^n \hbG_{\bm X_i}=\bzr_{d_1}$ and $\sum_{i=1}^n \hbG_{\bm Y_i}=\bzr_{d_2}$. Using the above display, under $\mathrm{H_0}$ we get:
		\begin{align*}
			\frac{T_3}{n} & = \frac{1}{n}  \E\mbox{Tr} \left( \Ezn\Bigg[\sum_{1\leq i,j \leq n}\hbG_{\bm X_i}\hbG_{\bm Y_i}^{\top}  \bGa_{\bm Y_j}\bGa_{\bm X_j}^{\top}  \Bigg] \right) \\ 
			& =\E\left[\frac{1}{n}\sum_{i=1}^n \hbG_{\bm Y_i}^{\top}\bGa_{\bm Y_i}\right] \E\left[\frac{1}{n-1}\sum_{i=1}^n \mbox{Tr}\left(\hbG_{\bm X_i}\bGa_{\bm X_i}^{\top}\right)\right] \\ 
			&=\E\left[\frac{1}{n}\sum_{i=1}^n \hbG_{\bm Y_i}^{\top}\bGa_{\bm Y_i}\right] \E\left[\frac{1}{n-1}\sum_{i=1}^n \hbG_{\bm X_i}^{\top}\bGa_{\bm X_i}\right] \\ 
			& \longrightarrow \E\lVert \bJ_1(\bR_1(\bX_1))-\bm{m}_1\rVert^2\E\lVert \bJ_2(\bR_2(\bY_1))-\bm{m}_2\rVert^2, 
		\end{align*}
		as $n\to\infty$. Here, the last line follows by applying~\cref{theo:rankmapcon} in the same way as in the proof of~\cref{theo:nullrankhotelling} (both theorems are). We skip the details for brevity.
		
		In the same way, it can be shown that $T_1/n$ and $T_2/n$ also converge to the same limit as in the display above. This implies, $T_1+T_2-2T_3= o(n)$, that is, the LHS of~\eqref{eq:step1req1}  is $o(n)$, which completes the proof of~\eqref{eq:step1req1} and, hence,~\eqref{eq:step1req}. \medskip
	\end{proof}
	
	\begin{proof}[Proof of~\cref{prop:indepeff}] Consider the testing problem~\eqref{eq:locindep} under the Konijn alternatives as in~\cref{def:konijn} (both). Also, let the Lebesgue densities associated with $\mu_1$ and $\mu_2$ be $f_1(\cdot)$ and $f_2(\cdot)$, respectively. In this setting, by~\cite[Lemma 3.2.1]{gieser1993}, the sequence of joint distributions of $(\bX_1,\bY_1), \ldots ,(\bX_n,\bY_n)$ under $\mathrm{H}_1$ and $\mathrm{H}_0$ are contiguous to each other. In fact, by defining the joint density of $(\bX_1,\bY_1)$ under $\mathrm{H}_1$ as $f_{n,\delta}(\cdot,\cdot)$, it follows from the same lemma that,
		\begin{align}\label{eq:locindep1}
			L_{n, \delta} := \sum_{i=1}^n \log\frac{f_{n,\delta}(\bX_i,\bY_i)}{f_{n,0}(\bX_i,\bY_i)}=\frac{\delta}{\sqrt{n}}\sum_{i=1}^{n} \ell(\bX_i,\bY_i)-\frac{\delta^2}{2}\mbox{Var}[\ell(\bX_1,\bY_1)]+o_P(1),
		\end{align}
		where
		$$\ell(\mx,\my):=d_1+d_2-\frac{\mx^{\top}\nabla f_1(\mx)}{f_1(\mx)}-\frac{\my^{\top}\nabla f_2(\my)}{f_2(\my)}+\frac{\mx^{\top}\bm{M}\nabla  f_2(\my)}{f_2(\my)}+\frac{\my^{\top}\bm{M}\nabla  f_1(\mx)}{f_1(\mx)}.$$
		Let $\bR_1(\cdot)$ and $\bR_2(\cdot)$ denote the optimal transport maps (in the sense of~\cref{prop:Mccan}) from $\mu_1$ and $\mu_2$ (the marginal distributions of $\bX'_1$ and $\bY'_1$ to the reference distributions $\nu_1$ and $\nu_2$). Then, by a standard application of the multivariate central limit theorem, we have: under $\mathrm{H}_0$, 
		\begin{align}\label{eq:locindep2}
			\begin{pmatrix} \mbox{\textrm{vec}}\left(\frac{1}{\sqrt{n}}\sum_{i=1}^n (\bJ_1(\bR_1(\bX_i))-\jro)(\bJ_2(\hbR_2(\bY_i))-\jrt)^{\top}\right) \\ L_{n, \delta} \end{pmatrix}  &\overset{w}{\longrightarrow}\mathcal{N}\left(\bm \kappa, \Gamma \right),
		\end{align}
		where 
		$$\bm \kappa := \begin{pmatrix} \bzr_{d_1d_2} \\ \frac{\delta^2}{2}\mbox{Var}[\ell(\bX_1,\bY_1)] \end{pmatrix} \quad \text{and} \quad \Gamma := \begin{pmatrix} \Serd^{(1)}\otimes \Serd^{(2)} & \delta\bm{\gamma}_{d_1d_2} \\ \delta\bm{\gamma}_{d_1d_2}^{\top} & \delta^2\mbox{Var}[\ell(\bX_1,\bY_1)] \end{pmatrix}, $$ 
		\begin{align}\label{eq:g_d1d2}
			\bm{\gamma}_{d_1d_2}:=g(\bX'_1, \bY'_1) \cdot \mbox{\textrm{vec}}\left(\E\left(\bJ_1(\bR_1(\bX'_1))-\bm{m}_1\right)\left(\bJ_2(\bR_2(\bY'_1))-\bm{m}_2\right)^{\top}  \right), 
		\end{align}
		with $$g(\bX'_1, \bY'_1):= \frac{(\bX'_1)^{\top}\bm{M}\nabla  f_2(\bY'_1)}{f_2(\bY'_1)}+\frac{(\bY'_1)^{\top}\bm{M}\nabla  f_1(\bX'_1)}{f_1(\bX'_1)} ,$$
		$\bm{m}_1:=\E[\bJ_1(\bR_1(\bX'_1))]$, and $\bm{m}_2:=\E[\bJ_2(\bR_2(\bY'_1))]$. Now, recall the definition of $\ornk$ from~\eqref{eq:pfindtest1}. Then, by using~\eqref{eq:locindep2} with Le Cam's third lemma~\cite{LR05}, under $\mathrm{H}_1$ as in~\eqref{eq:locindep}, the following holds: 
		\begin{equation}\label{eq:locindep3}
			\ornk\overset{w}{\longrightarrow}\chi^2_{d_1d_2}\left(\delta^2 \bm{\gamma}_{d_1d_2}^{\top}\left(\Serd^{(1)}\otimes \Serd^{(2)}\right)^{-1}\bm{\gamma}_{d_1d_2}\right). 
		\end{equation}
		Note that RHS of~\eqref{eq:locindep3} denotes the $\chi^2$ distribution with $d_1d_2$ degrees of freedom and non-centrality parameter $$\delta^2 \bm{\gamma}_{d_1d_2}^{\top}\left(\Serd^{(1)}\otimes \Serd^{(2)}\right)^{-1}\bm{\gamma}_{d_1d_2}.$$ 
		This implies, using~\eqref{eq:pfrank},~\eqref{eq:locindep3}~and contiguity, that 
		\begin{align}\label{eq:rh_noncentral}
			\rhnk\overset{w}{\longrightarrow}\chi^2_{d_1d_2}\left(\delta^2 \bm{\gamma}_{d_1d_2}^{\top}\left(\Serd^{(1)}\otimes \Serd^{(2)}\right)^{-1}\bm{\gamma}_{d_1d_2}\right), 
		\end{align}
		under $\mathrm{H}_1$.
		
		Next, recall the Wilks' test defined in~\eqref{eq:wilkstest}  using the variable $R_n$. From~\cite{Taskinen2004}, we have,
		\begin{align}\label{eq:Rn_noncentral}
			R_n\overset{w}{\longrightarrow}\chi^2_{d_1d_2}\left(4\delta^2\lVert\mbox{\textrm{vec}}(\bm M)\rVert^2\right), 
		\end{align}
		under $\mathrm{H}_1$. Therefore, using~\eqref{eq:rh_noncentral} and~\eqref{eq:Rn_noncentral}, \begin{equation}\label{eq:locindep4}\ati=\frac{\bm{\gamma}_{d_1d_2}^{\top}\left(\Serd^{(1)}\otimes \Serd^{(2)}\right)^{-1}\bm{\gamma}_{d_1d_2}}{4\lVert\mbox{\textrm{vec}}(\bm M)\rVert^2}.
		\end{equation}
		
		Note that the expression of $\ati$ in~\eqref{eq:locindep4} holds in general without restricting to $\mu_1,\mu_2\in\fnd$ or $\fel$. We now provide lower bounds to $\ati$ obtained above. Recall that the theorem specifies both our ERDs to be standard Gaussian. Therefore, $\Serd^{(1)}=\bm I_{d_1}$, $\Serd^{(2)}=\bm I_{d_2}$ which implies $\Serd^{(1)}\otimes \Serd^{(2)} = \bm I_{d_1d_2}$, $\bm{m}_1=\bzr_{d_1}$, and $\bm{m}_2=\bzr_{d_2}$. By symmetry, it suffices to consider the following three cases: (1) $\mu_1,\mu_2\in\fnd$, (2) $\mu_1,\mu_2\in\fel$, and (3) $\mu_1\in\fnd$, $\mu_2\in\fel$. \medskip
		
		\noindent \textit{Case} (1): Denote by marginal distribution functions of $\bX'_1$ and $\bY'_1$ by $F(\cdot)$ and $G(\cdot)$, respectively. As $\mu_1$ has independent components, $F(\cdot)$ has the following form:
		$$F(x_1,\ldots ,x_{d_1})=\prod_{i=1}^{d_1} F_i(x_i),$$
		where $F_1(\cdot), F_1(\cdot), \ldots, F_d(\cdot)$ are univariate cumulative distribution functions. Let $f_i(\cdot)$ denote the probability density function associated with $F_i(\cdot)$, for $i \in [d]$. Similarly, let $G_1(\cdot), G_2(\cdot), \ldots, G_d(\cdot)$ and $g_1(\cdot), g_2(\cdot), \ldots, g_d(\cdot)$ be the distribution and density functions associated with the components of  $\bY'_1$, respectively. (Note that  $\bY'_1$ also has independent components by assumption.) Write $\bX'_1=(X'_{1,1},\ldots ,X'_{1,d_1})$ and $\bY'_1=(Y'_{1,1},\ldots ,Y'_{1,d_1})$. Using this notation and the same argument as in the proof of~\cref{prop:areind}, shows that 
		\begin{align}\label{eq:locindep5} 
			\bR_1(\bX'_1)&=(\Phi^{-1}(F_1(X'_{1,1})),\ldots ,\Phi^{-1}(F_{d_1}(X'_{1,d_1}))), \nonumber \\ 
			\bR_2(\bY'_1)&=(\Phi^{-1}(G_1(Y'_{1,1})),\ldots ,\Phi^{-1}(G_{d_2}(Y'_{1,d_2}))), 
		\end{align}
		are the required optimal transport maps, where $\Phi(\cdot)$ is the standard Gaussian cumulative distribution function. Next, recalling \eqref{eq:g_d1d2} and noting that in this case $\bm{m}_1=\bzr_{d_1}$, $\bm{m}_2=\bzr_{d_2}$ gives, 
		\begin{equation}\label{eq:locindep6}\bm{\gamma}_{d_1d_2}=\mbox{\textrm{vec}}\left(\E\left(\bR_1(\bX'_1)\bR_2(\bY'_1)\right)^{\top}\left(\frac{(\bX'_1)^{\top}\bm{M}\nabla  f_2(\bY'_1)}{f_2(\bY'_1)}+\frac{(\bY'_1)^{\top}\bm{M}\nabla  f_1(\bX'_1)}{f_1(\bX'_1)}\right)\right).\end{equation}
		Now, for $k \in [d_1]$, note that by the integration by parts formula, 
		\begin{equation}\label{eq:locindep7}\E\left[\frac{\nabla f_1(\bX'_1)}{f_1(\bX'_1)}\Phi^{-1}(F_k(X'_{1,k}))\right]=\bm{e}_k\int (\Phi^{-1})'(F_k(x))f_k^2(x)\,\mathrm dx. \end{equation}
		Define,
		\begin{equation}\label{eq:locindep8}
			C_{1,k}:=\E\left(\Phi^{-1}(F_{k}(X'_{1,k}))(X'_{1,k})\right), \quad D_{1,k}:=\bm{e}_k\int (\Phi^{-1})'(F_k(x))f_k^2(x)\,\mathrm dx, 
		\end{equation}
		and, similarly, $C_{2,\ell}$, $D_{2,\ell}$, for $\ell \in [d_2]$ with the $k$-th entry of $\bX'_1$ replaced by the $\ell$-th entry of $\bY_1'$.  Note that under~\cref{assumption4}, for $k \in [d_1]$ and $\ell \in [d_2]$, 
		\begin{align}\label{eq:FG}
			\E\left(\Phi^{-1}(F_{k}(X'_{1,k}))(\bX'_1)^{\top}\right)=C_{1,k} \bm{e}_k^{\top} \quad \text{and} \quad \E\left(\Phi^{-1}(G_{\ell}(Y'_{1,\ell}))(\bY'_1)^{\top}\right)=D_{1,\ell}\bm{e}_\ell^{\top}.
		\end{align}  
		Denoting  the matrix $\bm M:=((m_{k\ell}))_{k \in [d_1], \ell \in [d_2]}$ and combining~\eqref{eq:locindep5}, \eqref{eq:locindep6}, \eqref{eq:locindep7}, \eqref{eq:locindep8}, and \eqref{eq:FG} gives, 
		\begin{align}\label{eq:locindep9}
			\lVert \bm{\gamma}_{d_1d_2}\rVert^2&=\sum_{k=1}^{d_1} \sum_{\ell=1}^{d_2} m_{k\ell}^2\Big(C_{1,k}D_{2,\ell}+D_{1,k}C_{2,\ell}\Big)^2\nonumber\\ 
			& \geq 4 \sum_{k=1}^{d_1} \sum_{\ell=1}^{d_2} m_{k\ell}^2\left(C_{1,k}D_{1,k}\right)\left(C_{2,\ell}D_{2,\ell}\right), 
		\end{align}
		where the last step uses using the elementary inequality $(a+b)^2\geq 4ab$, for $a, b \in \R$.
		
		Observe that the lower bound in~\eqref{eq:locindep9} has effectively \emph{decoupled} as the product of two quantities, the first of which only depends on the distribution of the $k$-th component of $\bX'_1$ while the second one only depends on the distribution of the $\ell$-th component of $\bY'_1$. Now, following the proof of~\cite[Theorem 2.1]{Gastwirth1968} shows, $C_{1,k}D_{1,k}\geq 1$, for any distribution $F_k$ (the cumulative distribution function of $X'_{1,k}$), and similarly, $C_{2,\ell}D_{2,\ell}\geq 1$ for any distribution $G_{\ell}$ (the cumulative distribution function of $Y'_{1,\ell}$). Moreover,  equality holds if and only if both $X'_{1,k}$ and $Y'_{1,\ell}$ have standard normal distribution. This observation together with~\eqref{eq:locindep9} in~\eqref{eq:locindep4}, shows that
		$$\ati=\frac{\lVert \bm{\gamma}_{d_1d_2}\rVert^2}{4\lVert \mbox{\textrm{vec}}(\bm M)\rVert^2}\geq \frac{4 \sum_{k=1}^{d_1} \sum_{\ell=1}^{d_2} m_{k\ell}^2 }{4\lVert \mbox{\textrm{vec}}(\bm M)\rVert^2}=1.$$
		This completes the proof for case (1). \medskip
		
		\noindent \textit{Case} 2: Recall that $f_1$ and $f_2$ denote the probability density functions of $\bX'_1$ and $\bY_1'$, respectively. Since, $\mu_1,\mu_2\in\fel$ in this case and~\cref{assumption4}  is satisfied, we can assume without loss of generality that $f_1$ is proportional to
		$\underline{f}_1(\lVert \mx\rVert^2)$ and $f_2$ is proportional to $\underline{f}_2(\lVert \mx\rVert^2)$, for some radial density functions $\underline{f}_1$ and $\underline{f}_2$. Also, recall that $H_{d_1}(\cdot)$ and $H_{d_2}(\cdot)$ are the cumulative distribution functions of $\sqrt{\chi^2_{d_1}}$ and $\sqrt{\chi^2_{d_2}}$ distributions, respectively (as defined in the proof of~\cref{prop:Gaussare}).

		Now, using the same argument as in~\cref{lem:ellipot} we get,  
		$$\bR_1(\bX'_1)=\frac{\bX'_1}{\lVert \bX'_1\rVert} H_{d_1}^{-1}(\tP_1(\lVert \bX'_1\rVert)) \quad \text{and} \quad \bR_2(\bY'_1)=\frac{\bY'_1}{\lVert \bY'_1\rVert} H_{d_2}^{-1}(\tP_1(\lVert \bY'_1\rVert)),$$ 
		where $\tP_1(\cdot)$ and $\tP_2(\cdot)$ are the distribution functions of $\lVert \bX'_1\rVert$ and $\lVert \bY'_1\rVert$, respectively. For $k \in [d_1]$,  define 
		$$\bm C_{1,k}:=\E[(\bR_1(\bX'_1))_k(\bX'_1)]=\underbrace{\E\left[\lVert \bX'_1\rVert H_{d_1}^{-1}(\tP_1(\lVert \bX'_1\rVert))\right]}_{C_1}\frac{\bm{e}_k}{d_1}$$ 
		and 	
		$$\bm D_{1,k}:=\E\left[(\bR_1(\bX'_1))_k\frac{\nabla f_1(\bX'_1)}{f_1(\bX'_1)}\right]=\underbrace{\E\left[\frac{\underline{f}_1'}{\underline{f}_1}(\lVert \bX'_1\rVert) H_{d_1}^{-1}(\tP_1(\lVert \bX'_1\rVert))\right]}_{D_1}\frac{\bm{e}_k}{d_2}.$$
		Similarly, for $\ell \in [d_2]$, define $\bm C_{2,\ell}$, $\bm D_{2,\ell}$, $C_2$ and $D_2$ with the $k$-the element of $\bX'_1$ replaced by the $\ell$-th element of $\bY'_1$. Now, using the same steps as in~\eqref{eq:locindep9}, we get:
		\begin{align}\label{eq:locindep10}
			\lVert \bm{\gamma}_{d_1d_2}\rVert^2&= \sum_{k,\ell} \left(\bm C_{1,k}^{\top} M \bm D_{2,\ell}+\bm D_{1,k}M\bm C_{2,\ell}\right)^2\nonumber \\ &\geq \frac{4}{d_1^2d_2^2} (C_1D_1) (C_2D_2) \sum_{k=1}^{d_1} \sum_{\ell=1}^{d_2}  m_{k \ell}^2.
		\end{align}
		Note that the once again the lower bound in~\eqref{eq:locindep10} has \emph{decoupled} into two separate problems, one involving the distribution of $\bX'_1$ and the other involving the distribution of $\bY'_1$. Now, by~\cite[Theorem 1]{paindaveine2004}, $C_1D_1\geq d_1^2$ and $C_2D_2\geq d_2^2$,  where equality holds if and only if both $\bX'_1$ and $\bY'_1$ have standard normal distributions of appropriate dimensions. Using this observation and~\eqref{eq:locindep10} in~\eqref{eq:locindep4} gives, $$\ati=\frac{\lVert \bm{\gamma}_{d_1d_2}\rVert^2}{4\lVert \mbox{\textrm{vec}}(\bm M)\rVert^2}\geq \frac{4d_1^2d_2^2\sum_{k=1}^{d_1} \sum_{\ell=1}^{d_2}  m_{k \ell}^2}{4d_1^2d_2^2\lVert \mbox{\textrm{vec}}(\bm M)\rVert^2}=1,$$
		which completes the proof for case (2). \medskip
		
		\noindent \textit{Case} (3): When $\mu_1\in\fnd$ and $\mu_2\in\fel$, the proof proceeds exactly similar to the above two cases. In particular, we can get a similar lower bound to those obtained in~\eqref{eq:locindep9}~and~\eqref{eq:locindep10}, which will again decouple into two separate problems, one involving $\mu_1$ and the other involving $\mu_2$. We can then separately optimize over $\mu_1\in\fnd$ as we did in case (1) and $\mu_2\in\fel$ as we did in case (2), to complete the proof. The details are omitted.
	\end{proof}
	
	\begin{proof}[Proof of~\cref{prop:rdcovcon}]
		
		Note that, an application of the triangle inequality followed by the Cauchy-Schwarz inequality gives, 
		\begin{align}\label{eq:deltaor}
			\frac{1}{n^2}\bigg|\sum_{i,j} \left( \Don_{i,j}\Dwo_{i,j}- \Donor_{i,j}\Dwor_{i,j} \right) \bigg|\leq \sqrt{Q_1 T_2} + \sqrt{Q_2 T_1}
		\end{align}
		where $$Q_1:=\frac{1}{n^2}\sum_{i,j}(\Don_{i,j}-\Donor_{i,j})^2, \quad Q_2:=\frac{1}{n^2}\sum_{i,j}(\Dwo_{i,j}-\Dwor_{i,j})^2,$$ 
		and $T_1:=\frac{1}{n^2}\sum_{i,j} (\Donor_{i,j})^2$, $T_2 := \frac{1}{n^2}\sum_{i,j} (\Dwo_{i,j})^2$. Note that $T_1=O_P(1)$ and  $T_2=O_P(1)$ by~\cref{assumption3}. Moreover, by using~\cref{theo:rankmapcon} with $p=r=2$, $q=1$, $\cF(\mx_1,\mx_2)=\lVert \mx_1-\mx_2\rVert$, we get $Q_1\overset{P}{\longrightarrow}0$ and $Q_2\overset{P}{\longrightarrow}0$. This implies, the LHS of \eqref{eq:deltaor} converges to $0$ in probability. Consequently, the weak law of large numbers for V-statistics gives, 
		$$\frac{1}{n^2}\sum_{i,j} \Don_{i,j}\Dwo_{i,j}\overset{P}{\longrightarrow} \E\Big[ \Donor_{1,2}\Dwor_{1,2}\Big].$$
		Using similar computations we can find the weak limits of the other two terms in the definition of $\Rdcov_n^2$ in \eqref{eq:rdcovariance} to establish~\eqref{eq:rdcon} (both equations are).
		
		By using~\cite[Theorem 3(i)]{Gabor2007}, the right hand side of~\eqref{eq:rdcon} converges to $0$ in probability if and only if $\bJ_1(\bR_1(\bX_1))$ and $\bJ_2(\bR_2(\bY_1))$ are independent. Further, as $\bJ_1(\cdot)$, $\bJ_2(\cdot)$, $\bR_1(\cdot)$, and $\bR_2(\cdot)$ are injective (in the sense of~\cref{prop:Mccan}), $\bJ_1(\bR_1(\bX_1))$ and $\bJ_2(\bR_2(\bY_1))$ are independent if and only if $\bX_1$ and $\bY_1$ are independent. 
	\end{proof}
	
	\subsection{Proofs from~\cref{sec:auxdet}}\label{sec:auxpf}
	\begin{proof}[Proof of~\cref{prop:conlocontam}]
		Recall that $\bW$ has density $g(\cdot)$. Observe that $$\E\Rl(\bY)=(1-\delta)\E\Rl(\bX)+\delta \E\Rl(\bW)$$ and consequently, $\E\Rl(\bX)\neq \E\Rl(\bY)$ if and only if $\E\Rl(\bX)\neq \E\Rl(\bW)$. This observation combined with~\cref{theo:rhconsis} proves the first part of the proposition. The second part then follows from~\cref{prop:conloc}.
	\end{proof}

	\begin{proof}[Proof of~\cref{prop:arecontam}] 
		Recall the setup of~\eqref{eq:twosamcont} and its corresponding assumptions from~\cref{sec:contamodel}. In this case the likelihood ratio $V_N$ is defined by:
		$$V_N:=\sum_{j=1}^N \log\left[ \frac{\left(1-\frac{h}{\sqrt{N}}\right)f_1(\bY_j)+\frac{h}{\sqrt{N}}g(\bY_j)}{f_1(\bY_j)}\right].$$
		Once again, by using local asymptotic normality, $V_N$ can be written as:
		
		$$V_N=\dot{V}_N-\frac{1-\lambda}{2}h^2\underbrace{\int \left(\frac{g(\mx)}{f_1(\mx)}-1\right)^2 f_1(\mx)\,\mathrm d\mx}_{A}+o_{P}(1), $$
		where 
		$$\dot{V}_N=\frac{h}{\sqrt{N}}\sum_{j=1}^n \left(\frac{g(\bY_j)}{f_1(\bY_j)}-1\right).$$ Now, recalling~\eqref{eq:rmu_T12} and the multivariate central limit theorem, the following result holds:
		\begin{equation}\label{eq:3dmctt}
			\begin{pmatrix} \sqrt{\frac{mn}{N}} \bm T_1 \\ \sqrt{\frac{mn}{N}} \bm T_2 \\ V_N \end{pmatrix} \overset{w}{\to}\mathcal{N}\left(\begin{pmatrix} \bzr \\ \bzr\\ -\frac{c_2}{2} \end{pmatrix},\begin{pmatrix} (1-\lambda)\Serd & \bzr_{d \times d} & \bzr \\ \bzr_{d \times d} & \lambda\Serd & \bsg_2  \\ \bzr^{\top} & \bsg_2^{\top} &  c_2\end{pmatrix}\right)
		\end{equation}
		under $\mathrm{H}_0$, where $c_2:=(1-\lambda)h^2A$ and 
		$$\bsg_2:=h\sqrt{\lambda(1-\lambda)}\E_{\mathrm{H}_0}\left[\bJ(\Rmu(\bY))\left(\frac{g(\bY)}{f_1(\bY)}-1\right)\right].$$ The conclusion in part (2) then follows from Le Cam's third lemma~\cite[Corollary 12.3.2]{LR05} and the continuous mapping theorem.		
	\end{proof}
	
	\begin{proof}[Proof of~\cref{prop:Piteffloc}]
		The argument for model~\eqref{eq:twosamcont} is similar to that used in the proof of~\cref{theo:Piteffrank} . Recall the definitions of $\mathcal{Z}_N^L$ and $\mathcal{W}_N^L$ from the proof of~\cref{theo:Piteffrank}. Set $V_N$ as the likelihood ratio and note that under the regularity assumptions (2a), (2b) and (2c), we have: 
		\begin{equation}\label{eq:Pcontam1}
			V_N=\dot{V}_N-\frac{1-\lambda}{2}h^2\underbrace{\int \left(\frac{f_C(\mx)}{f_{\bX}(\mx)}-1\right)^2 f_{\bX}(\mx)\, \mathrm d\mx}_{A}+o_{P}(1), 	\end{equation} 
		where $$\dot{V}_N=\frac{h}{\sqrt{N}}\sum_{j=1}^n \left(\frac{f_C(\bY_j)}{f_{\bX}(\bY_j)}-1\right).$$ Set $g(\cdot):=f_C(\cdot)-f_{\bX}(\cdot)$. Note that $\int g(\mx)\, \mathrm d\mx=0$.
		Note that $\int g(\mx)\,d\mx=0$. We only need to show the following:
		\begin{align}\label{eq:Pcontam2}
			\begin{pmatrix} \mathcal{Z}_N^L\\ \mathcal{W}_N^L\\ V_N \end{pmatrix} \overset{w}{\to}\mathcal{N}\left(\begin{pmatrix} \mathbf{0}_{L\times 1} \\ \mathbf{0}_{L\times 1}\\ -((1-\lambda)/2)h^2A\end{pmatrix},\begin{pmatrix} \mathrm{Id}_{L\times L} & \mathbf{0}_{L\times L} & \mathbf{0}_{L\times 1}\\ \mathbf{0}_{L\times L} & \mathrm{Id}_{L\times L} & \bm p \\ \mathbf{0}_{1\times L} & \bm p^\top & (1-\lambda)h^2 A\end{pmatrix}\right)
		\end{align}
		where
		$$\bm p:=h \sqrt{1-\lambda} \begin{pmatrix} 
			\ezph\left[\Psi_1(\bH(\bX))\frac{g(\bX)}{f_{\bX}(\bX)}\right] \\ 
			\ezph\left[\Psi_2(\bH(\bX))\frac{g(\bX)}{f_{\bX}(\bX)}\right] \\ 
			\vdots \\ 
			\ezph\left[\Psi_L(\bH(\bX))\frac{g(\bX)}{f_{\bX}(\bX)}\right] 
		\end{pmatrix} . $$
		The proof could then be completed using Le Cam's third lemma and the continuous mapping theorem in the same way as in~\cref{theo:Piteffrank}. In order to establish~\eqref{eq:Pcontam2}, after using the multivariate central limit theorem and~\eqref{eq:Pcontam1}, we only need to show:
		\begin{equation}\label{eq:Pcontam3}
			\lim_{n \rightarrow \infty} \ezph [\mathcal{Z}_N^L\dot{V}_N] = \mathbf{0}_{L\times L}
		\end{equation} 
		and 
		\begin{equation}\label{eq:Pcontam4}
			\lim_{n \rightarrow \infty} \ezph [\mathcal{W}_N^L\dot{V}_N] = \bm p . 
		\end{equation}
		Note that~\eqref{eq:Pcontam3} is immediate as $(\bX_1,\ldots ,\bX_m)$ and $(\bY_1,\ldots ,\bY_n)$ are independent. Next, observe that,
		\begin{align*}
			\ezph\left[ \mathcal{W}_N^L\dot{V}_N \right] &=\frac{h}{\sqrt{nN}}
			\begin{pmatrix}
				\ezph\left[\left(\sum_{j=1}^n \Psi_1(\bH(\bY_j))\right)\left(\sum_{j=1}^n \frac{g(\bY_j)}{f_{\bX}(\bY_j)}\right)\right] \\ 
				\ezph\left[\left(\sum_{j=1}^n \Psi_2(\bH(\bY_j))\right)\left(\sum_{j=1}^n \frac{g(\bY_j)}{f_{\bX}(\bY_j)}\right)\right] \\ 
				\vdots \\ 
				\ezph\left[\left(\sum_{j=1}^n \Psi_L(\bH(\bY_j))\right)\left(\sum_{j=1}^n \frac{g(\bY_j)}{f_{\bX}(\bY_j)}\right)\right] 
			\end{pmatrix} \\ 
			&=\frac{ h \sqrt{n}}{\sqrt{N}} \begin{pmatrix} 
				\frac{1}{n}\sum_{j=1}^n \ezph\left[\Psi_1(\bH(\bY_j))\frac{g(\bY_j)}{f_{\bX}(\bY_j)}\right] \\ 
				\frac{1}{n}\sum_{j=1}^n \ezph\left[\Psi_2(\bH(\bY_j))\frac{g(\bY_j)}{f_{\bX}(\bY_j)}\right] \\ 
				\vdots \\ 
				\frac{1}{n}\sum_{j=1}^n \ezph\left[\Psi_L(\bH(\bY_j))\frac{g(\bY_j)}{f_{\bX}(\bY_j)}\right] 
			\end{pmatrix}  \rightarrow \bm p , 
		\end{align*} 
		as $n \rightarrow \infty$. This completes the proof of~\eqref{eq:Pcontam4}. 
	\end{proof}
	
	\begin{proof}[Proof of~\cref{prop:areica}]
		Let $\E[(\bm{W}-\E\bm{W})(\bm{W}-\E\bm{W})^{\top}]=\Sigma$. Note that $\Sigma$ is a diagonal matrix, since $\bm W$ has independent components. We write $\Sigma=\mbox{diag}(\sigma_1^2,\ldots ,\sigma_d^2)$. Recall the definition of $\tilde{\Sigma}$ from~\eqref{eq:tsig}. We can assume without loss of generality that $\bt_0=\bzr$. Note that
		\begin{equation}\label{eq:areica1}
			\tilde{\Sigma}=\bm{A}\E[(\bm{W}-\E\bm{W})(\bm{W}-\E\bm{W})^{\top}]\bm{A}^{\top}=\bm{A}\Sigma\bm{A}^{\top}.
		\end{equation}
		Define $R_i:=\sum_{j=1}^d h_j a_{j,i}$. Observe that
		$$f(\mx|\bt):=f_1(\mx-\bt)=\prod_{i=1}^d \tilde{f}_{i}\left(\sum_{j=1}^d a_{j,i}(x_j-\theta_j)\right),\quad \mbox{for}\ \mbox{all}\ \mx=(x_1,\ldots ,x_d)\in\R^d,$$
		By using the same computation as in~\eqref{eq:Gaussare1} coupled with~\eqref{eq:areica1}, we get that:
		\begin{equation}\label{eq:areica2}
			\atr=\frac{\Big\lVert\Serd^{-\frac{1}{2}}\E_{\mathrm{H}_0}\left[\bJ(\Rmu(\bX))\mathbf{1}^{\top}\boldsymbol{\eta}(\bm X, \bzr)\right]\Big\rVert^2}{\sum_{i=1}^d R_i^2/\sigma_i^2}.
		\end{equation}
		With $\bJ(\mx)=\mx$, $\nu$ as the standard Gaussian distribution, it is easy to see that $\Serd=\bm I_d$. Next observe that
		\begin{align}\label{eq:areica3}
			\bh^{\top}\boldsymbol{\eta}(\bm X, \bzr)=\frac{\bh^{\top}\nabla_{\bt} f(\bX|\bt)|_{\bt=\bzr}}{f(\bX|\bt=\bzr)}=\sum_{i=1}^d R_i\frac{\tilde{f}_i'(\sum_{j=1}^d a_{ji}x_j)}{\tilde{f}_i(\sum_{j=1}^d a_{ji}x_j)}.
		\end{align}
		We now compute $\Rmu(\bX)$. Let $\tilde{F}_i(\cdot)$ be the distribution function of $W_i$, for $1 \leq i \leq d$. As $(W_1,\ldots ,W_d)$ has independent components, $\bm{A}$ is orthogonal, we can use~\cite[Lemma A.8]{ghosal2019multivariate} to get:
		\begin{align}\label{eq:areica4}
			\Rmu(\bX)=\bm{A}\ \bm{v}(\bX), 
		\end{align} 
		where  $\bm{v}(\bX):=(v_1(\bX),\ldots ,v_d(\bX))$, with $v_j(\bX):=\Phi^{-1}\circ \tilde{F}_j\left(\sum_{k=1}^d a_{kj}X_j\right)$ and $\Phi(\cdot)$ the standard normal cumulative distribution function.
		
		Next, by using~\eqref{eq:areica3}~and~\eqref{eq:areica4}, we have:
		\begin{align*}\label{eq:areica5}
			&\;\;\;\;\;\;\Big\lVert\Serd^{-\frac{1}{2}}\E_{\mathrm{H}_0}\left[\bJ(\Rmu(\bX))\bh^{\top}\boldsymbol{\eta}(\bm X, \bzr)\right]\Big\rVert^2\nonumber \\&=\sum_{i=1}^d \left(\E\left[\left\{\sum_{j=1}^d R_j\frac{\tilde{f}_j'(W_j)}{\tilde{f}_j(W_j)}\right\}\cdot \left\{\sum_{j=1}^d a_{ij}\Phi^{-1}\circ \tilde{F}_j(W_j)\right\}\right]\right)^2\nonumber \\ &=\sum_{i=1}^d \left(\sum_{j=1}^d a_{ij}R_j\Phi^{-1}\circ \tilde{F}_j(W_j)\frac{\tilde{f}_j'(W_j)}{\tilde{f}_j(W_j)}\right)^2\nonumber \\ &=\sum_{j=1}^d R_j^2 \left(\E\left[\Phi^{-1}\circ \tilde{F}_j(W_j)\frac{\tilde{f}_j'(W_j)}{\tilde{f}_j(W_j)}\right]\right)^2.
		\end{align*}
		By using the same argument as in~\eqref{eq:areind5} and right after~\eqref{eq:indgauss_optimization}, we further have:
		$$\sum_{j=1}^d R_j^2 \left(\E\left[\Phi^{-1}\circ \tilde{F}_j(W_j)\frac{\tilde{f}_j'(W_j)}{\tilde{f}_j(W_j)}\right]\right)^2\geq \sum_{j=1}^d \frac{R_j^2}{\sigma_j^2}.$$
		Plugging the above observation into~\eqref{eq:areica2} completes the proof.
	\end{proof}
	
	\subsection{Lower Bounds}\label{sec:lowerbound}
	This section is devoted to proving lower bounds for the testing problems \eqref{eq:twosamsmooth} and \eqref{eq:twosamcont} described in~\cref{sec:locpow} (all). In other words, we show that for both these problems, the power of any level $\alpha$ test function is upper bounded by $\alpha+\varepsilon$, for any given $\varepsilon>0$ and for all large enough $m, n$. This is formalized in the following proposition for the hypothesis~\eqref{eq:twosamsmooth}. The proof for~\eqref{eq:twosamcont} is similar.

	\begin{prop}[Lower bound in testing]\label{prop:lowerbound}
		Fix any $\varepsilon>0$ and let $\mathcal{T}_{\alpha}^{m,n}$ be the set of all level $\alpha$ test functions based on $\mathcal{Z}_N$. 
		Then, provided $\alpha+\varepsilon<1$, there exists $h_{\varepsilon}>0$ such that, for all $m, n$ large enough, 
		$$\inf_{T_{\alpha}\in\mathcal{T}_{\alpha}^{m,n}} \sup_{|h|\geq h_{\varepsilon}}\P_{\mathrm{H}_1}(T_{\alpha}=0)\geq 1-\alpha-\varepsilon,$$
		where $\mathrm{H}_1$ is specified as in~\eqref{eq:twosamsmooth}, with $\bh= h_{\varepsilon} \bo$. 
	\end{prop}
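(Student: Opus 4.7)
The plan is to combine the local asymptotic normality (LAN) expansion already recorded in \eqref{eq:likscore} with the Neyman--Pearson lemma and Le Cam's third lemma to obtain a sharp asymptotic upper bound on the power of any level $\alpha$ test, and then to calibrate $h_\varepsilon$ so that this upper bound is at most $\alpha+\varepsilon$.

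First, recall from the derivation leading to \eqref{eq:likscore} that the log-likelihood ratio $V_N$ of $\mathrm{H}_1$ against $\mathrm{H}_0$ under the model \eqref{eq:twosamsmooth} satisfies
\begin{equation*}
V_N \;=\; \dot V_N \;-\; \tfrac{1}{2} h^2 \sigma_0^2 + o_{\mathrm{H}_0}(1), \qquad \sigma_0^2 := (1-\lambda)\,\bm 1^{\top} I(\bt_0)\bm 1 > 0,
\end{equation*}
and $\dot V_N \overset{w}{\longrightarrow} \mathcal{N}(0, h^2 \sigma_0^2)$ under $\mathrm{H}_0$. By Le Cam's first lemma, the sequence of joint laws under $\mathrm{H}_1$ is then contiguous to that under $\mathrm{H}_0$, and under $\mathrm{H}_1$, by Le Cam's third lemma, $V_N \overset{w}{\longrightarrow} \mathcal{N}(h^2 \sigma_0^2/2, h^2 \sigma_0^2)$.

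Next, for any level $\alpha$ test $T_\alpha \in \mathcal{T}_\alpha^{m,n}$, the Neyman--Pearson lemma gives $\E_{\mathrm{H}_1}[T_\alpha] \leq \E_{\mathrm{H}_1}[T_N^{\mathrm{NP}}]$, where $T_N^{\mathrm{NP}} := \mathbb{I}\{V_N \geq c_N\}$ is calibrated so that $\E_{\mathrm{H}_0}[T_N^{\mathrm{NP}}] = \alpha$. Using the weak limit of $V_N$ under $\mathrm{H}_0$, $c_N \to z_{1-\alpha} \sigma_0 - h^2 \sigma_0^2/2$ with $z_{1-\alpha} := \Phi^{-1}(1-\alpha)$; plugging this into the weak limit of $V_N$ under $\mathrm{H}_1$ yields
\begin{equation*}
\lim_{m,n \to \infty} \E_{\mathrm{H}_1}[T_N^{\mathrm{NP}}] \;=\; 1 - \Phi(z_{1-\alpha} - h\sigma_0).
\end{equation*}
In particular,
\begin{equation*}
\sup_{T_\alpha \in \mathcal{T}_\alpha^{m,n}} \E_{\mathrm{H}_1}[T_\alpha] \;\leq\; 1 - \Phi(z_{1-\alpha} - h \sigma_0) + o(1)
\end{equation*}
uniformly in $T_\alpha$, the uniformity being automatic from Neyman--Pearson dominance at each finite $(m,n)$.

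Finally, choose $h_\varepsilon > 0$ small enough that $1 - \Phi(z_{1-\alpha} - h_\varepsilon \sigma_0) \leq \alpha + \varepsilon$; this is possible by continuity of $\Phi$ at $z_{1-\alpha}$ and the condition $\alpha+\varepsilon < 1$. (Since the power function $h \mapsto 1 - \Phi(z_{1-\alpha} - h\sigma_0)$ is strictly increasing and tends to $1$ as $h \to \infty$, the bound is informative only for $|h|$ bounded; under the natural reading of the proposition in which the restriction on $h$ localizes around $h_\varepsilon$, the inequality $1 - \E_{\mathrm{H}_1}[T_\alpha] \geq 1-\alpha-\varepsilon$ then follows by passing to the complementary event.) The only mild obstacle is bookkeeping the uniformity of the Neyman--Pearson comparison across $T_\alpha$ and the passage from asymptotic to finite-sample via a standard ``for all $m,n$ sufficiently large'' argument; both are routine once the LAN expansion is in place.
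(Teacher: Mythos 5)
Your proof is correct in substance, and it takes a genuinely different route from the paper's. The paper bounds the total variation distance $\mathrm{TV}(\P^{(N)},\mathbb{Q}^{(N)})$ via the Hellinger distance, using the direct calculation
$1 - \tfrac{1}{2}\mathrm{HD}^2(\P^{(N)},\mathbb{Q}^{(N)}) = \E_{\bt_0}\bigl[\prod_{j} \sqrt{f(\bY_j|\bt_0+h\bo/\sqrt{N})/f(\bY_j|\bt_0)}\bigr] \to \exp(-h^2(1-\lambda)\bo^\top I(\bt_0)\bo/8)$,
and then invokes the elementary inequality $\E_{\mathrm{H}_1}[T_\alpha] \le \alpha + \mathrm{TV}(\P^{(N)},\mathbb{Q}^{(N)})$; choosing $h_\varepsilon$ small so that $\mathrm{TV}\le\varepsilon$ finishes the proof. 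You instead push the LAN expansion \eqref{eq:likscore} through the Neyman--Pearson lemma and Le Cam's third lemma to obtain the \emph{exact} asymptotic power $1-\Phi(z_{1-\alpha}-h\sigma_0)$ of the optimal test, and then calibrate $h_\varepsilon$ by continuity of $\Phi$ at $z_{1-\alpha}$. Your approach is sharper --- it pins down the asymptotic power envelope rather than merely bounding it by $\alpha+\mathrm{HD}$ --- at the cost of invoking Le Cam's machinery; the paper's route is shorter and needs only the Hellinger affinity computation (which they have anyway) and the two-point TV inequality. Both are standard and correct.

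Two small points. First, your formula for the critical value has a typo: since $V_N \to \mathcal{N}(-h^2\sigma_0^2/2,\, h^2\sigma_0^2)$ under $\mathrm{H}_0$, the standard deviation is $|h|\sigma_0$, so $c_N \to -h^2\sigma_0^2/2 + z_{1-\alpha}\,|h|\,\sigma_0$ (you dropped the factor of $h$); your final power expression $1-\Phi(z_{1-\alpha}-h\sigma_0)$ is nevertheless the one consistent with the corrected $c_N$. Second, your parenthetical worry about the direction of the constraint on $h$ is well taken: since the alternatives become \emph{easier} to detect as $|h|$ grows, the construction (both yours and the paper's) in fact produces a \emph{small} $h_\varepsilon$, and the conclusion is really established at the single point $h=h_\varepsilon$ (as the final clause of the proposition states); the displayed $\sup_{|h|\ge h_\varepsilon}$ is then trivially bounded below by the value at $h=h_\varepsilon$. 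The paper's own proof contains the same slip ("when $h\ge h_\varepsilon$, then $\mathrm{HD}\le\varepsilon$"), so you were right to flag it.
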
 
	
	\begin{remark}[Rate-optimality of $\rhsc$]\label{rem:rateop}
		Recall that in~\cref{theo:locpowermain1}  we show, for the testing problems in~\eqref{eq:twosamsmooth}~and~\eqref{eq:twosamcont}, $\rhsc$ has a non-trivial power $\in (\alpha,1)$ under $\mathrm{H}_1$. This combined with~\cref{prop:lowerbound} above, shows the rate-optimality of the test based on $\rhsc$.
	\end{remark}
	
	\begin{proof}[Proof of~\cref{prop:lowerbound}]
		The proof is a standard application of the connection between minimax lower bounds and total variation distance between probability measures (see, for example,~\cite[Chapter 6]{Groeneboom2014}). Towards this, set $\P^{(N)}:=\mu_1^{(N)}$ and $\mathbb{Q}^{(N)}:=\mu_1^{(m)}\otimes \mu_2^{(n)}$, where $\mu_2$ is as specified under $\mathrm{H}_1$ in~\eqref{eq:twosamloc} . Let $\tv$ denote the total variation distance between $\P^{(N)}$ and $\mathbb{Q}^{(N)}$, and $\hd$ denote the Hellinger distance between $\P^{(N)}$ and $\mathbb{Q}^{(N)}$. It suffices to show that given any $\varepsilon>0$, $\tv\leq\varepsilon$, for all large enough $m,n$. In fact, since~\cite[Equation 2.20]{Tsybakov2009}, $\tv\leq \hd$, it suffices to show $\hd \leq \varepsilon$, for all large enough $m,n$. To this end, following~\cite[Page 83]{Tsybakov2009}, we have
		\begin{equation}\label{eq:lb1}
			1-\frac{\hds}{2}= \E_{\bt_0}\left[\prod_{i=1}^n\frac{\sqrt{f(\bX_i|\bt_0+h\bo/\sqrt{N})}}{\sqrt{f(\bX_i|\bt_0)}}\right].
		\end{equation}
		Now, using~\eqref{eq:likscore} gives, 
		$$\prod_{i=1}^n\frac{\sqrt{f(\bX_i|\bt_0+h\bo/\sqrt{N})}}{\sqrt{f(\bX_i|\bt_0)}}\overset{w}{\longrightarrow} \exp\left(-\frac{h^2(1-\lambda)}{4}\cdot \bo^{\top} \bm{I}(\bt_0)\bo +\frac{h\sqrt{(1-\lambda) \bo^{\top}\bm{I}(\bt_0)\bo}}{2}G\right),$$
		where $G\sim\mathcal{N}(0,1)$. Observe that
		$$\E_{\bt_0}\left[\left(\prod_{i=1}^n\frac{\sqrt{f(\bX_i|\bt_0+h\bo/\sqrt{N})}}{\sqrt{f(\bX_i|\bt_0)}}\right)^2\right]=1.$$
		Therefore, using uniform integrability and~\eqref{eq:lb1} gives, 
		$$1-\frac{\hds}{2}\to \exp\left(-\frac{h^2(1-\lambda)\bo^{\top}\bm{I}(\bt_0)\bo}{8}\right).$$
		Hence, one can choose $h_{\varepsilon}>0$ such that when $h \geq h_{\varepsilon}$, then $\hd \leq \varepsilon$, for all large enough $m,n$. 
	\end{proof}
	
	\section{Simulations}\label{sec:sim}
	In this section, we will illustrate our theoretical findings through numerical experiments. 
	The section is organized as follows: In~\cref{sec:numill} we use numerical experiments to demonstrate the multivariate Hodges-Lehmann phenomenon and the multivariate Chernoff-Savage phenomenon which we discussed after~\cref{prop:areind}. In~\cref{sec:finper} we compare the finite sample power of $\prsc$ (see~\eqref{eq:testtrank}) to Hotelling $T^2$. In~\cref{sec:beyondloc}, we show that $\prsc$ has high power even beyond location-shift alternatives unlike the usual Hotelling $T^2$. We next move on to the rank MMD test $\ptsc$ (see~\eqref{eq:testrankker}). In~\cref{sec:emmdrank}, we compare the finite sample power of $\ptsc$ with the usual energy and MMD test in moderate dimensions. Similar comparisons in the high-dimensional setting are presented in~\cref{sec:highdpower}.
	
	\subsection{Numerical illustration of Hodges-Lehmann and Chernoff-Savage type results}\label{sec:numill}
	\begin{figure}[h]
		\begin{center}
			\includegraphics[height=7.5cm,width=7cm]{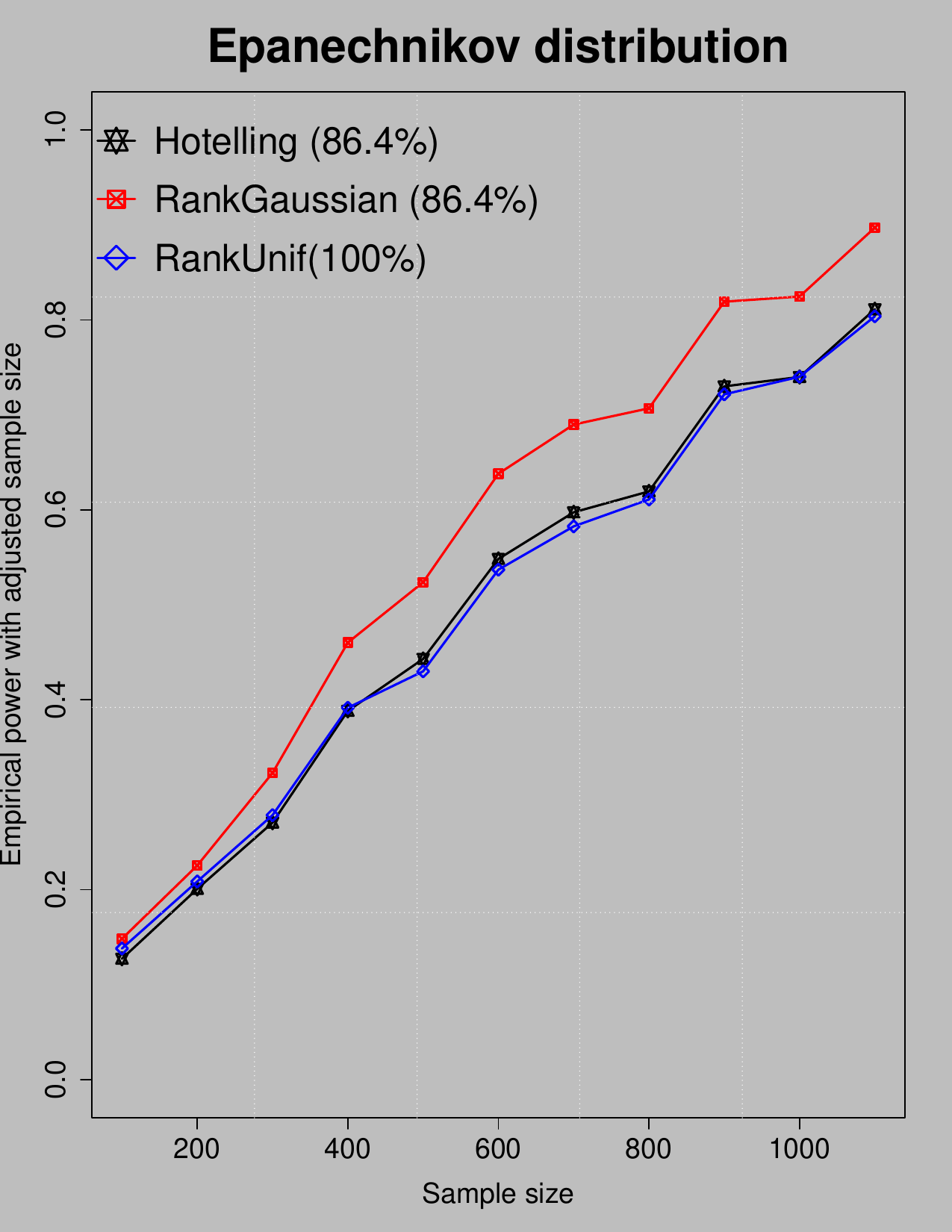}	
			\includegraphics[height=7.5cm,width=7cm]{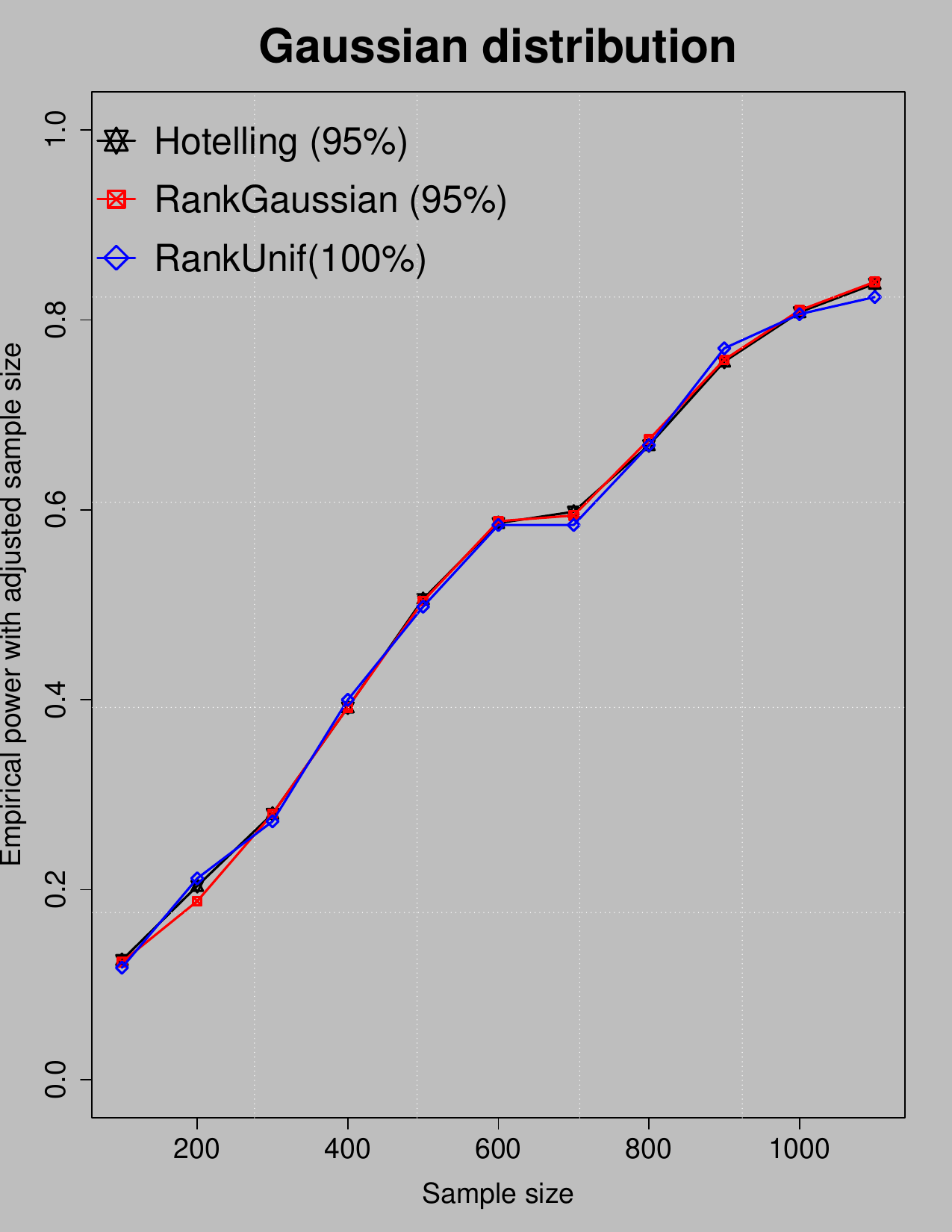}
		\end{center}	
		\caption{In the left panel, we sample $\bX_1$ and $\bY_1$ according to setting (H1). We plot the power curve for $\rhsc$ with $\nu=\nu_U$ with sample sizes $m=n$ varying in $[100,1100]$ (in blue), the power curve for Hotelling $T^2$ with the number of samples $m,n$ replaced by $\approx 0.864m$, $0.864n$ (in black), and the power curve for $\rhsc$ with $\nu=\nu_G$ with the same number of samples as used for Hotelling $T^2$ (in red). In the right panel, we do the same but with $\bX_1$, $\bY_1$ sampled according to setting (H2).}
		\label{fig:settingA5}
	\end{figure}
	In this section, we use numerical experiments to demonstrate the Hodges-Lehmann and  Chernoff-Savage type behavior theoretically observed in~\cref{prop:areind}. Towards this consider the following simulation settings:
	\begin{enumerate}
		\item[(H1)] $\bX_1$ follows a bivariate Epanechnikov distribution (see~\cref{prop:areind}) with independent components and location parameter $\bzr$, and $\bY_1$ has the same distribution with location parameter $0.1 \cdot \mathbf{1}_2$. 
		\item[(H2)] $\bX_1$ follows a bivariate standard normal distribution with location parameter $\bzr$, and $\bY_1$ has the same distribution with location parameter $0.1 \cdot \mathbf{1}_2$.
	\end{enumerate}
	For each of these settings, we compare the power curves of the Hotelling $T^2$ test versus tests based on $\rhsc$, where we use $\nu\equiv\nu_U\equiv\mbox{Unif}[0,1]^d$ and $\bJ(\mx)=\mx$, or $\nu\equiv\nu_G\equiv\mathcal{N}_d(\bzr,\bm I_d)$ and $\bJ(\mx)=\mx$. These will be referred to as the {\tt RankUniform} and {\tt RankGaussian} versions of the test based on $\rhsc$, respectively. In the sequel, we will not repeat the choice of the score function as it is set to the identity map in both cases. Each of the tests are carried out at level $0.05$ and the power curves are plotted as the sample size $m=n$ varies in the interval $[100,1100]$. To obtain the power curves, we used $500$ independent replications.
	
	To understand how~\cref{fig:settingA5} should be interpreted in the aforementioned settings, it is instructive to recall the informal/intuitive understanding of ARE which we presented in the Introduction, namely:
	\begin{changemargin}{0.5cm}{0.5cm} 
		\textit{The ARE of $T_1$ relative to $T_2$ is the ratio of the number of samples needed to attain the same power when using the test $T_2$ compared to the same for test $T_1$.}
	\end{changemargin} 
	\noindent In other words, if the ARE of $T_1$ with respect to $T_2$ is $0.9$, intuitively it means that the power of $T_1$ with $n$ samples and that of $T_2$  with $\approx 0.9n$ should be similar, at least for large $n$. 
	
	We now look at~\cref{fig:settingA5} in light of the above discussion. In the left panel of~\cref{fig:settingA5}, we focus on setting (H1). Note that, by~\cref{prop:areind}, the ARE of $\rhsc$ with $\nu=\nu_U$ against Hotelling $T^2$ is $0.864$ under setting (H1). Therefore we plot the power curve for $\rhsc$ with $\nu=\nu_U$ with $m=n\in [100,1100]$ samples and the power curve for Hotelling $T^2$ with the number of samples $m,n$ replaced by $\approx 0.864m$, $0.864n$. As per the aforementioned intuitive understanding of ARE, these two power curves should be fairly close, specially for large $n$. This is exactly what we observe in the left panel of~\cref{fig:settingA5}. The black and blue lines (for Hotelling $T^2$ and  $T_{m,n}^{\nu_U,\bJ}$ respectively) are very close for the entire spectrum of sample sizes considered in the left panel of~\cref{fig:settingA5}. In the same plot, we also show the power curve of $\rhsc$ with $\nu=\nu_G$ where the sample sizes $m,n$ in this case were chosen in the same way as that for Hotelling $T^2$ test. By~\cref{prop:areind}, the ARE against Hotelling $T^2$ in this case is \emph{larger} than $1$. Therefore one would expect $\rhsc$ with $\nu=\nu_G$ to have better power in this setting which is exactly what we observe in the left panel of~\cref{fig:settingA5}. The power curve of $\rhsc$ with $\nu=\nu_G$ is significantly higher than that of Hotelling $T^2$ or $\rhsc$ with $\nu=\nu_U$. 
	
	A similar observation is made in the right panel of~\cref{fig:settingA5} under setting (H2) - the standard Gaussian location model. In this case, by~\cref{prop:Gaussare} , we have $\mbox{ARE}(T^{\nu_G,\bJ},T)=1$ and $\mbox{ARE}(T^{\nu_U,\bJ},T)=0.95$. We therefore allot $m,n\in [100,1100]$ samples for computing the power curve for $\rhsc$ with $\nu=\nu_U$ and sample sizes of $\approx 0.95m,0.95n$ while computing power curves for both Hotelling $T^2$ and $\rhsc$ with $\nu=\nu_G$. As the theory predicts, all the power curves are virtually identical in the right panel of~\cref{fig:settingA5}. The resemblance can be seen through the entire spectrum of sample sizes considered.

	\subsection{Power comparisons}\label{sec:finper}
	In this subsection, we consider the following $4$ settings. In each of these settings, we choose the dimension $d=\{2,4\}$ and the sample sizes as $m=n=300$.
	\begin{itemize}
		\item[(A1)] $\bX_1\sim\mathcal{N}(\bzr_d,\bm I_d)$ and $Y_1\sim\mathcal{N}(\theta \cdot \mathbf{1}_d,\bm I_d)$ where $\theta\in\R$ varies in the interval $[0.01,0.20]$.
		\item[(A2)] $\bX_1$ has a \emph{logistic} distribution with location parameter $\bzr_d$ and scale parameter $\bm I_d$, and $\bY_1$ has a logistic distribution with the same scale parameter but with location parameter $\theta \cdot \mathbf{1}_d$. Once again, we choose $\theta\in\R$ with $\theta\in [0.01,0.20]$.
		\item[(A3)] $\bX_1$ has a \emph{Laplace} distribution with location parameter $\bzr_d$ and scale parameter $0.5\times \bm I_d+0.5\times \mathbf{1}_d\mathbf{1}_d^{\top}$, and $\bY_1$ has a Laplace distribution with the same scale parameter but with location parameter $\theta \cdot \mathbf{1}$, $\theta\in [0.01,0.5]$.
		\item[(A4)] $\bX_1$, $\bY_1$ both belong to a log-normal family of distributions. In particular, $\log{\bX_1}\sim\mathcal{N}(\bzr_d,\bm I_d)$ and $\log{\bY_1}\sim\mathcal{N}(\theta \cdot \mathbf{1}_d,\bm I_d)$ where $\theta\in\R$ varies in the interval $[-0.25,-0.01]$.
	\end{itemize}
	As before, for each of the settings (A1)-(A4), we compare the power curves of the Hotelling $T^2$ test versus tests based on $\rhsc$, where we use $\nu\equiv\nu_U\equiv\mbox{Unif}[0,1]^d$ and $\bJ(\mx)=\mx$, or $\nu\equiv\nu_G\equiv\mathcal{N}_d(\bzr,\bm I_d)$ and $\bJ(\mx)=\mx$. Each of the tests are carried out at level $0.05$ and the power curves are plotted as the parameter $\theta$ varies over the aforementioned ranges in settings (A1)-(A4). To obtain the power curves, we used $500$ independent replications. The plots can be found in Figures~\ref{fig:settingA1}-\ref{fig:settingA4} respectively. We now discuss our principle findings from these simulations.
	
	Let us begin with the Gaussian setting (A1) (see~\cref{fig:settingA1}). In this case, for $d=2$ (left panel of~\cref{fig:settingA1}), the performance of Hotelling $T^2$ and $\rhsc$ with $\nu=\nu_G$, $\bJ(\mx)=\mx$ are almost identical, whereas the performance of $\rhsc$ with $\nu=\nu_U$, $\bJ(\mx)=\mx$ is slightly worse. This is in alignment with~\cref{prop:Gaussare} where we show that the ARE of $\rhsc$ based on $\nu=\nu_G$ with respect to Hotelling $T^2$ is $1$; and the same for $\rhsc$ based on $\nu=\nu_U$ is $0.95$. Therefore, we would expect the performances of $\rhsc$ with $\nu=\nu_G$ and Hotelling $T^2$ to be close, and that of $\nu=\nu_U$ to be slightly worse, asymptotically. It is interesting to see a similar behavior manifest itself for a moderate sample size of $m=n=300$. For $d=4$ (right panel of~\cref{fig:settingA1}), the performance of Hotelling $T^2$ is slightly better than the performance of $\rhsc$ with $\nu=\nu_G$, whose performance is in turn slightly better than $\rhsc$ with $\nu=\nu_U$. As expected, the agreement with the aforementioned AREs is slightly weaker in the $d=4$ case than in the $d=2$ case. However, 
	the power curves are still reasonably close so as to justify the theoretical AREs of $1$ and $0.95$ as mentioned above.
	
	Note that, when working with $\rhsc$, $\nu=\nu_G$, the standard Gaussian example in the preceding paragraph is in a way, the ``worst case" distribution as per~\cref{prop:Gaussare}, and Theorems~\ref{prop:areind},~\ref{prop:areell},~and~\ref{prop:areica} (all). The aforementioned results show that $\rhsc$ with $\nu=\nu_G$ has ARE $1$ against Hotelling $T^2$ and an ARE \emph{larger} than $1$ for any distribution belonging to the families of distributions covered in Theorems~\ref{prop:areind}-\ref{prop:areica}. We illustrate this using settings (A2) and (A3), featuring a standard Laplace distribution (independent components, see~\cref{prop:areind}) and a correlated logistic distribution (elliptically symmetric, see~\cref{prop:areell}). In both cases, the power curves in Figures~\ref{fig:settingA2}~and~\ref{fig:settingA3} show that $\rhsc$ with $\nu=\nu_G$ has higher power than Hotelling $T^2$. This difference in the power curves is quite pronounced in~\cref{fig:settingA2} (standard Laplace with $d=2,4$)  and the left hand panel of~\cref{fig:settingA3} (correlated logistic with $d=2$), while this difference is only marginal in the right hand panel of~\cref{fig:settingA3} (correlated logistic with $d=4$). Further $\rhsc$ with $\nu=\nu_U$ has the highest power in~\cref{fig:settingA2} (standard Laplace with $d=2,4$). This too, is justified by our  theoretical results, in particular~\cref{theo:locpowermain1}, using which it is easy to check that the following approximations hold (up to errors due to numerical integration):
	$$\mbox{ARE}(T^{\nu_U,\bJ},T)=\frac{3}{2}>\frac{32}{25}\approx \mbox{ARE}(T^{\nu_G,\bJ},T).$$
	For the correlated logistic setting (A3), see~\cref{fig:settingA3}, the power curve of $\rhsc$ with $\nu=\nu_U$ is very similar to $\rhsc$ with $\nu=\nu_G$ and none of these two power curves seem to be uniformly better than the other. Note that, particularly for the $d=2$ case, $\rhsc$ with $\nu=\nu_U$ performs significantly better than Hotelling $T^2$. For $d=4$, it has similar performance compared to Hotelling $T^2$; once again none of the power curves uniformly dominate the other.
	
	In the next simulation setting, we explore the performance of the $3$ candidate tests in a heavy-tailed log-normal setting (A4). The log-normal distribution is heavy tailed in the sense that it has all moments finite but has an infinite exponential moment. Traditionally rank-based procedures perform better in such cases than their non rank-based counterparts. Both our proposed tests $\rhsc$ with $\nu=\nu_G$ and $\nu=\nu_U$ manifest this behavior by significantly outperforming the Hotelling $T^2$ test both for $d=2$ and $4$ (see~\cref{fig:settingA4}). The difference in the power curves between the rank-based procedures and the Hotelling $T^2$ test is the largest in this setting compared to the other settings ((A1)-(A3)) considered in this section. A similar observation was also made in~\cite{Deb19} where the authors show that other (optimal transport based) multivariate rank tests for the two-sample problem outperform their non rank-based counterparts in different heavy-tailed settings. We believe that theoretically investigating the robustness properties of these multivariate rank tests would be an interesting future research direction.
	\begin{figure}[h]
		\begin{center}
			\includegraphics[height=7.5cm,width=7cm]{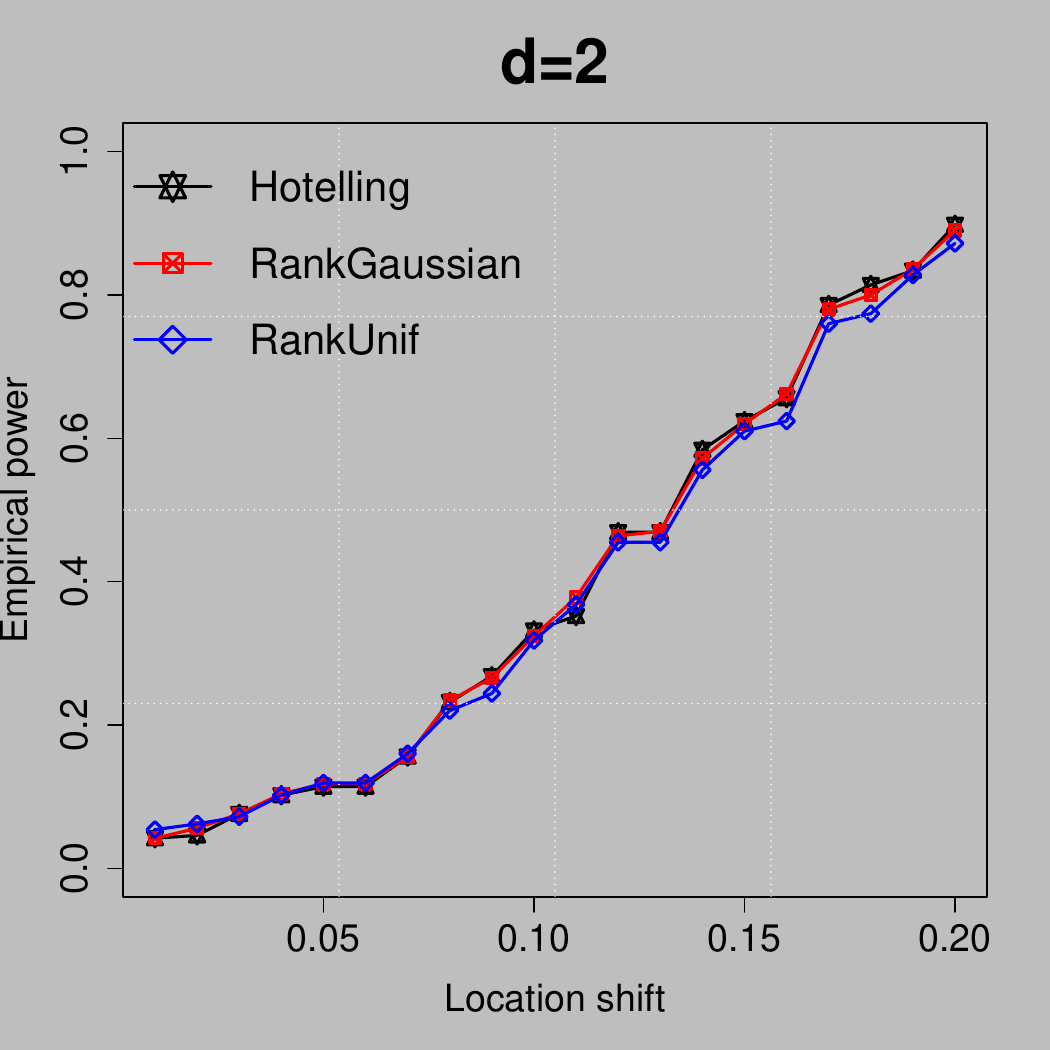}  
			\includegraphics[height=7.5cm,width=7cm]{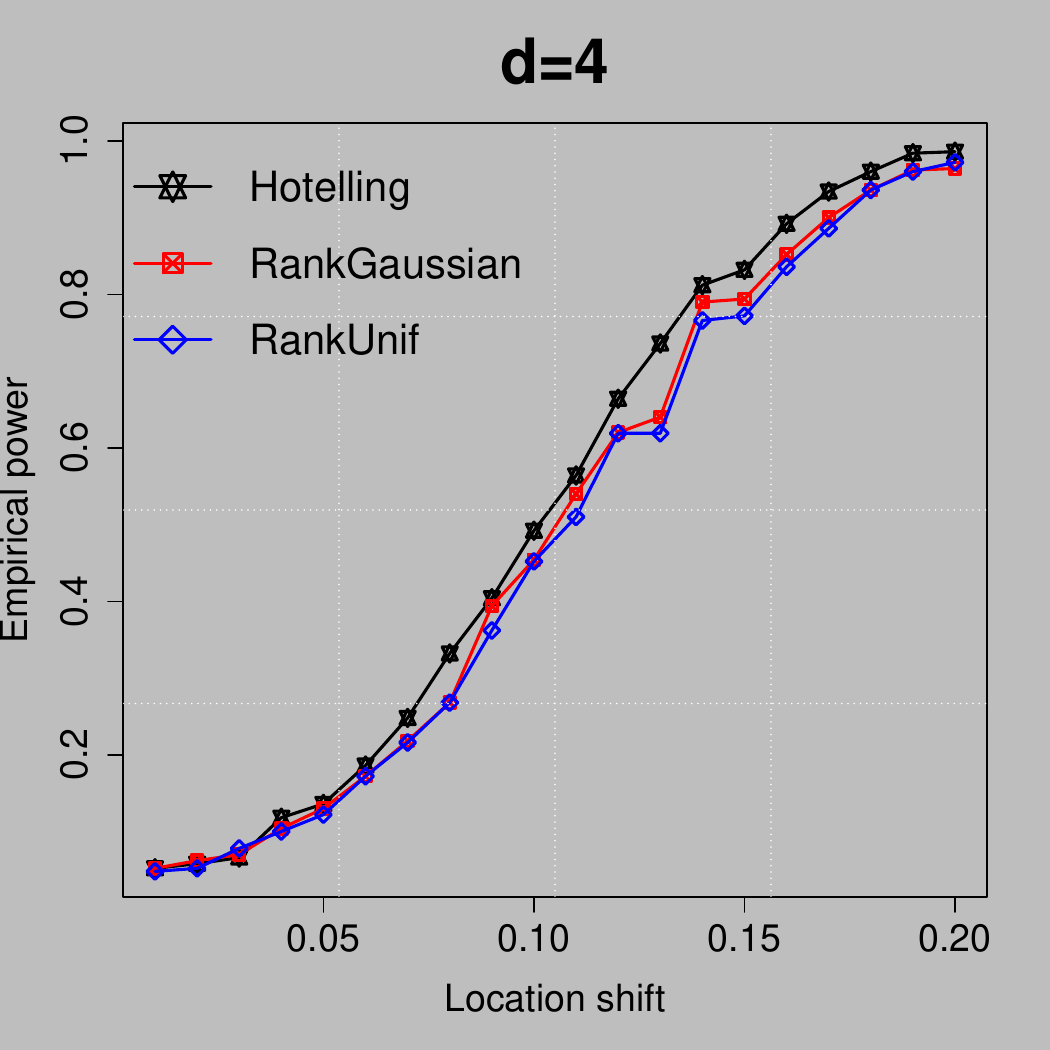} 
		\end{center}	
		\caption{In the left panel, we sample $\bX_1$, $\bY_1$ according to setting (A1) with $d=2$. The power curves are plotted as $\theta\in [0.01,0.2]$. The red line represents the power curve for $\rhsc$ with $\nu=\nu_G$, the blue line represents the same for $\rhsc$ with $\nu=\nu_U$, and the black line represents the same for Hotelling $T^2$. In the right panel, we plot the same with $d=4$. }
		\label{fig:settingA1}
	\end{figure}
	\begin{figure}[h]
		\begin{center}
			\includegraphics[height=7.5cm,width=7cm]{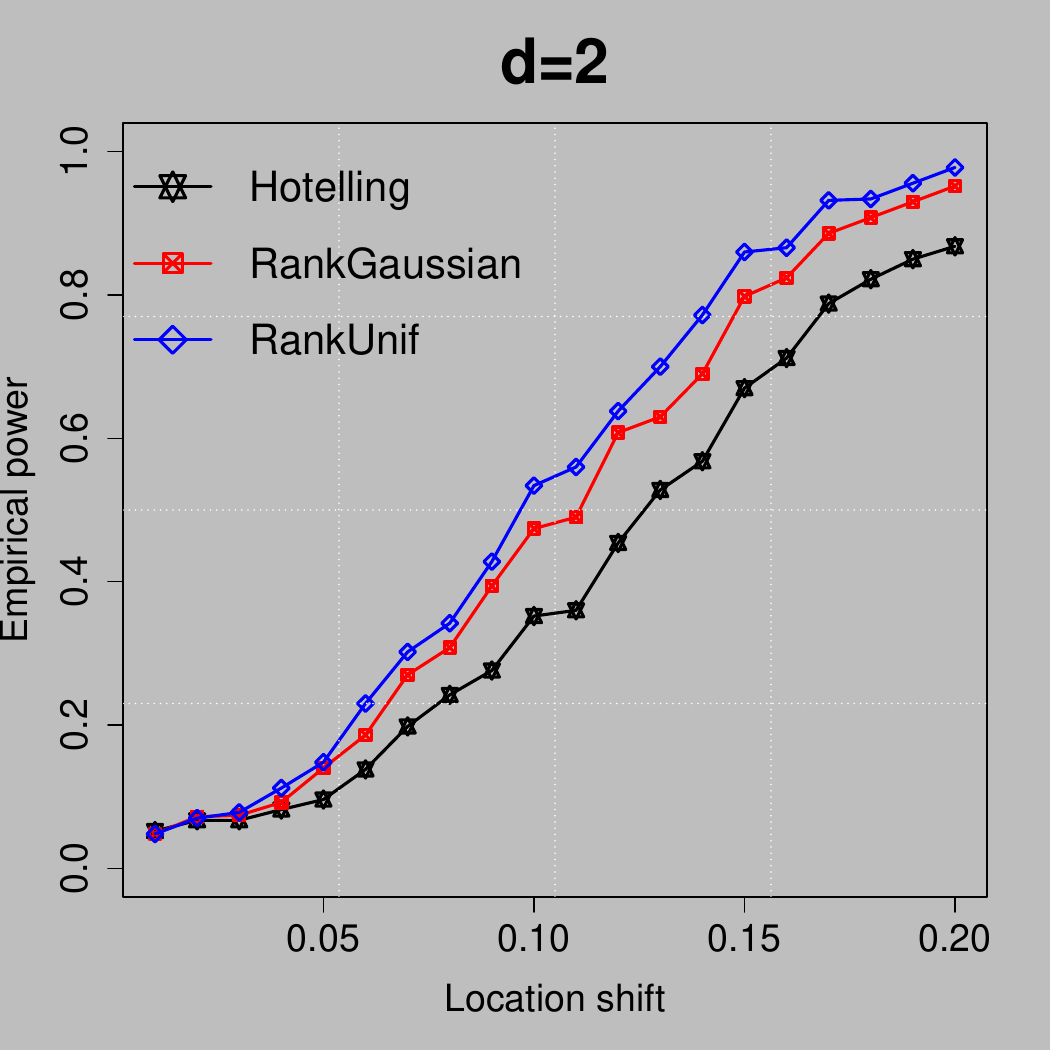}	
			\includegraphics[height=7.5cm,width=7cm]{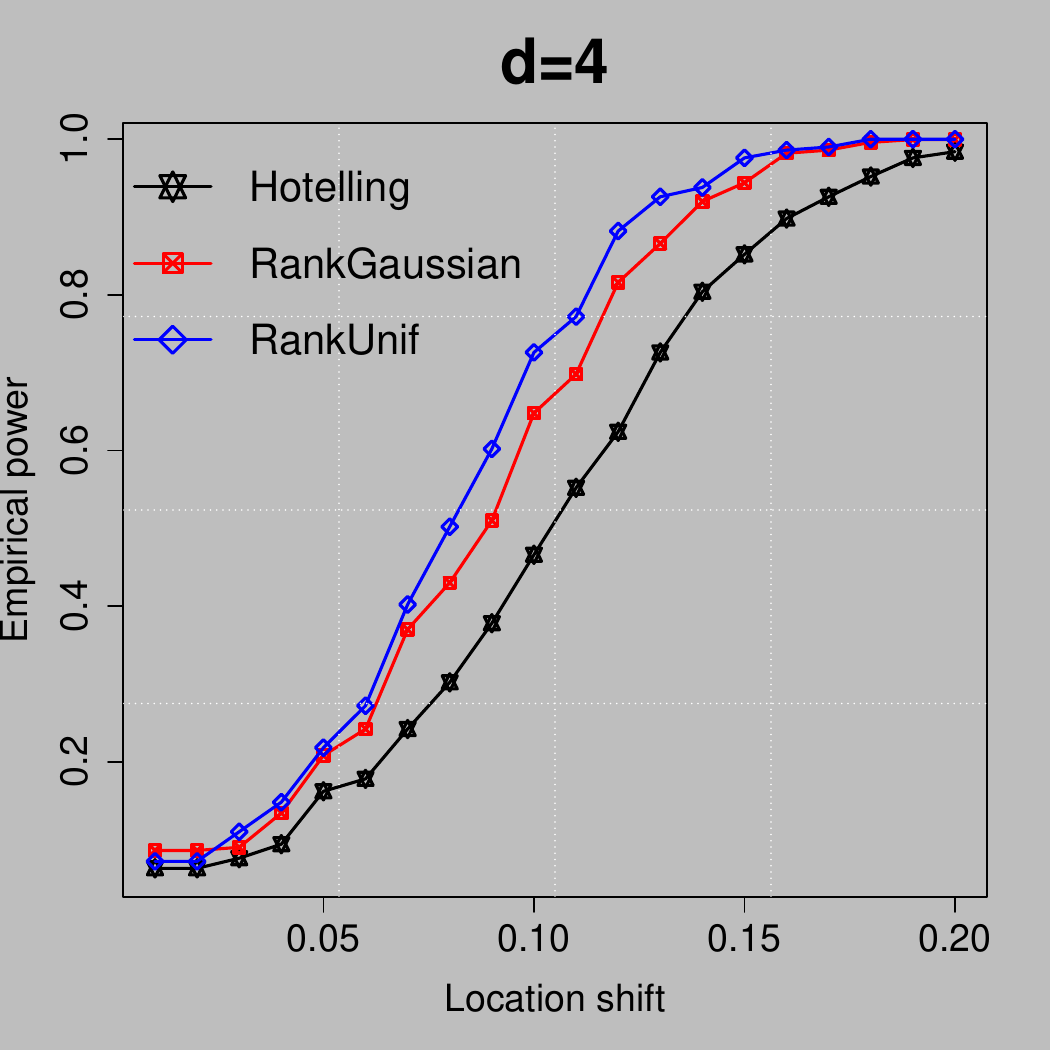}
		\end{center}	
		\caption{Power curves under setting (A2) with $d=2$ (left panel) and $d=4$ (right panel). The rest of the description of this figure is similar to that of~\cref{fig:settingA1}.}
		\label{fig:settingA2}
	\end{figure}
	\begin{figure}[h]
		\begin{center}
			\includegraphics[height=7.5cm,width=7cm]{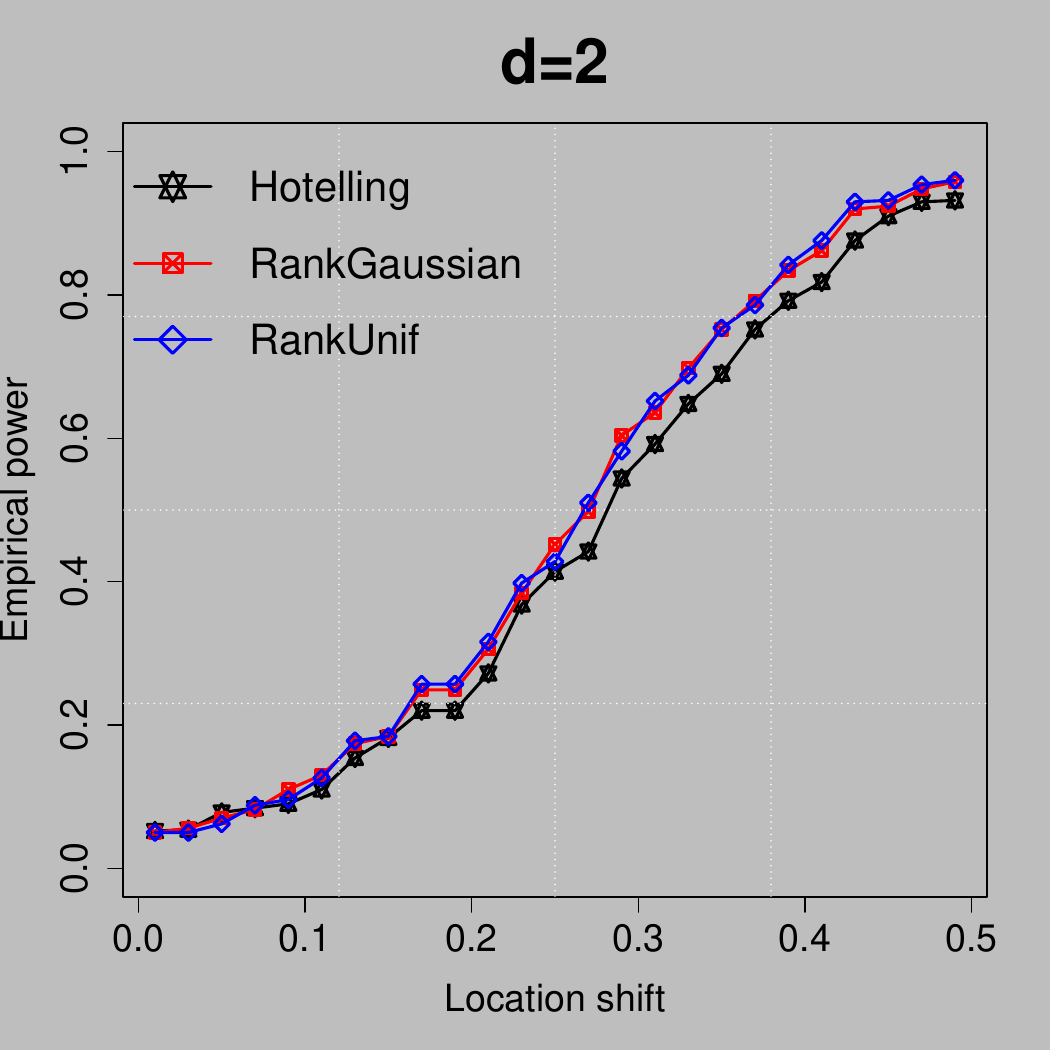}	
			\includegraphics[height=7.5cm,width=7cm]{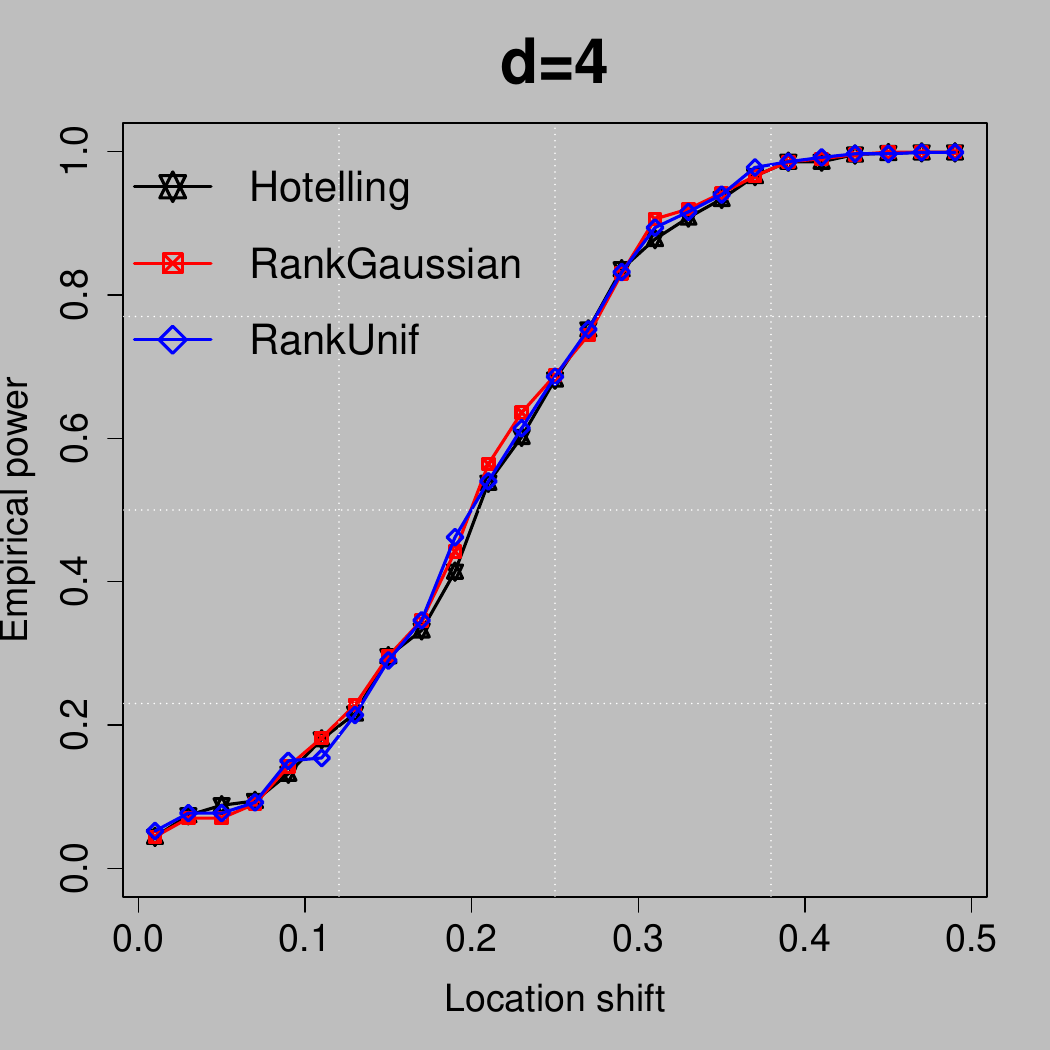}
		\end{center}	
		\caption{Power curves under setting (A3) with $d=2$ (left panel) and $d=4$ (right panel). The rest of the description of this figure is similar to that of~\cref{fig:settingA1}.}
		\label{fig:settingA3}
	\end{figure}
	
	\begin{figure}[h]
		\begin{center}
			\includegraphics[height=7.5cm,width=7cm]{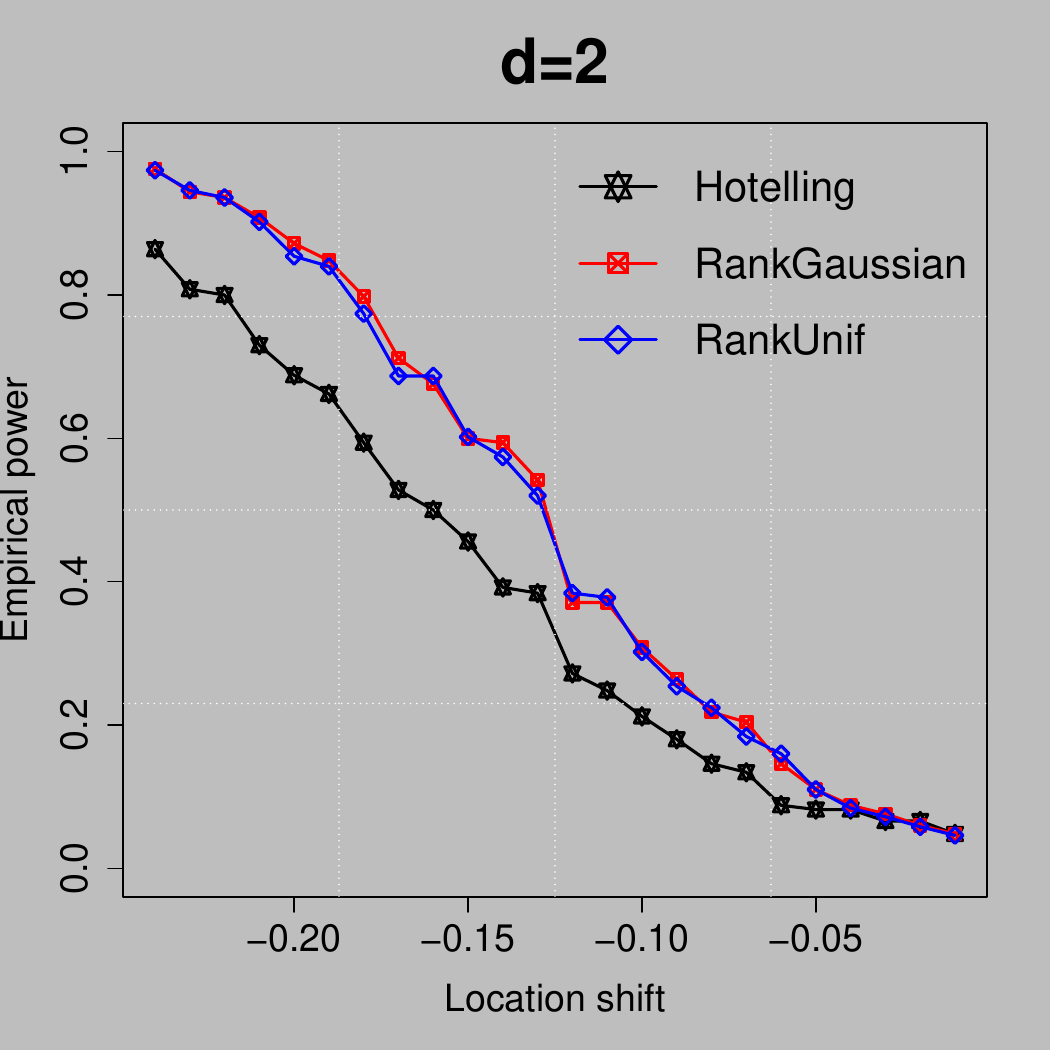}	
			\includegraphics[height=7.5cm,width=7cm]{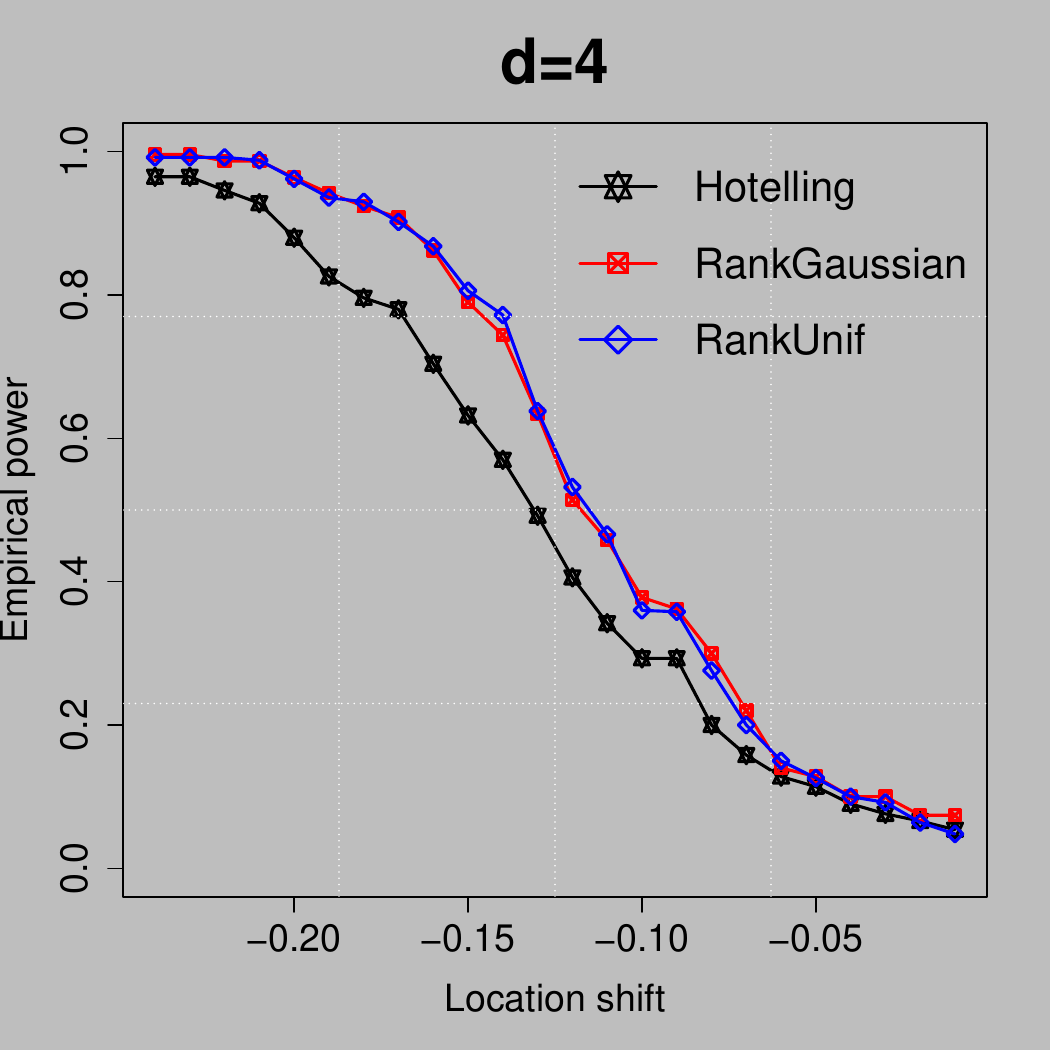}
		\end{center}	
		\caption{Power curves under setting (A4) with $d=2$ (left panel) and $d=4$ (right panel). The rest of the description of this figure is similar to that of~\cref{fig:settingA1}.}
		\label{fig:settingA4}
	\end{figure}
	
	\subsection{Beyond location/mean alternatives}\label{sec:beyondloc}
	One of the main reasons behind the popularity of Wilcoxon type tests (for $d=1$) over the $t$-test is the fact that Wilcoxon type methods can detect a more general class of alternatives, beyond simply mean shift. In this section, we use two examples to show that the rank Hotelling $T^2$ test based on $\rhsc$ can also detect beyond location alternatives which Hotelling $T^2$ is unable to do. For the two settings described below, we have chosen $d=3$ and $m=n=200$. The tests are carried out at level $0.05$ and the power curves are plotted based on $1000$ independent replicates.
	\begin{enumerate}
		\item[(A5)] $\bX=(X_1,X_2,X_3)$ and $\bY=(Y_1,Y_2,Y_3)$ where $X_1, X_2, X_3$ are i.i.d. $(\mathcal{N}(0,1))^2-1$ and $Y_1, Y_2, Y_3$ are i.i.d. 
		$(\mathcal{N}(\theta,1))^2-\E[(\mathcal{N}(\theta,1))^2]$, $\theta\in [0,0.2]$. By construction therefore, both $\bX$ and $\bY$ have mean $\bzr$. However, various other aspects of the two distributions are different, for instance, their variances.
		\item[(A6)] $\bX=(X_1,X_2,X_3)$ and $\bY=(Y_1,Y_2,Y_3)$ where $X_1, X_2, X_3$ are i.i.d. $(\mathcal{N}(0,1))^4-3$ and $Y_1,Y_2,Y_3$ are i.i.d. $(\mathcal{N}(\theta,1))^4-\E[(\mathcal{N}(\theta,1))^4]$, $\theta\in [0,0.2]$. Once again, both $\bX$ and $\bY$ have mean $\bzr$, but different variances.
	\end{enumerate}
	\begin{figure}[h]
		\begin{center}
			\includegraphics[height=7.5cm,width=7cm]{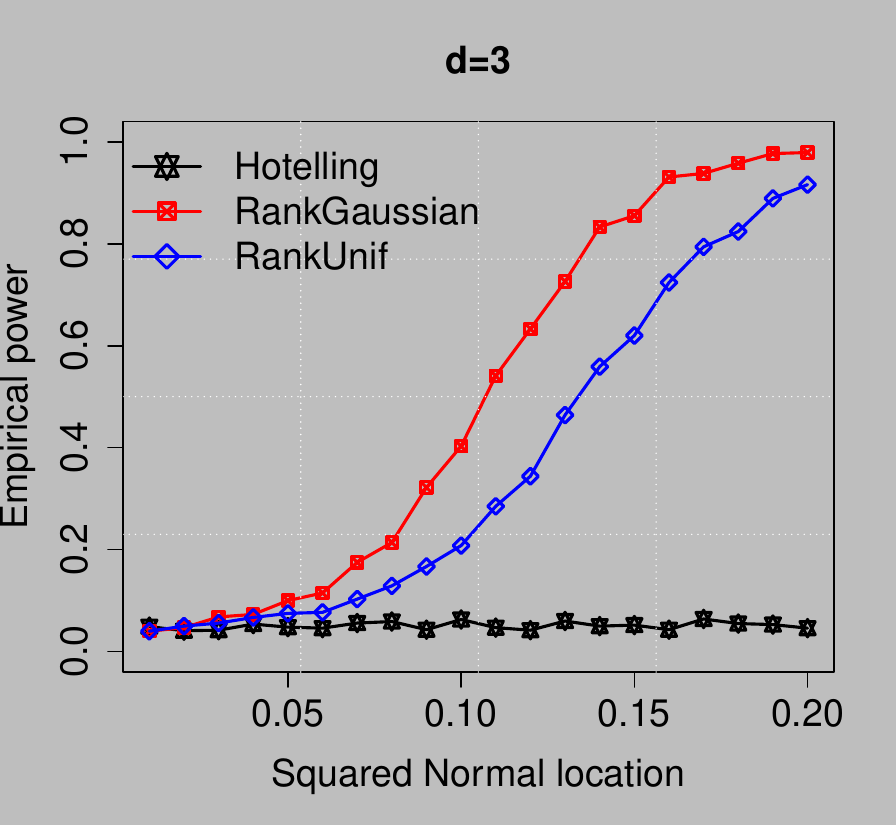}	
			\includegraphics[height=7.5cm,width=7cm]{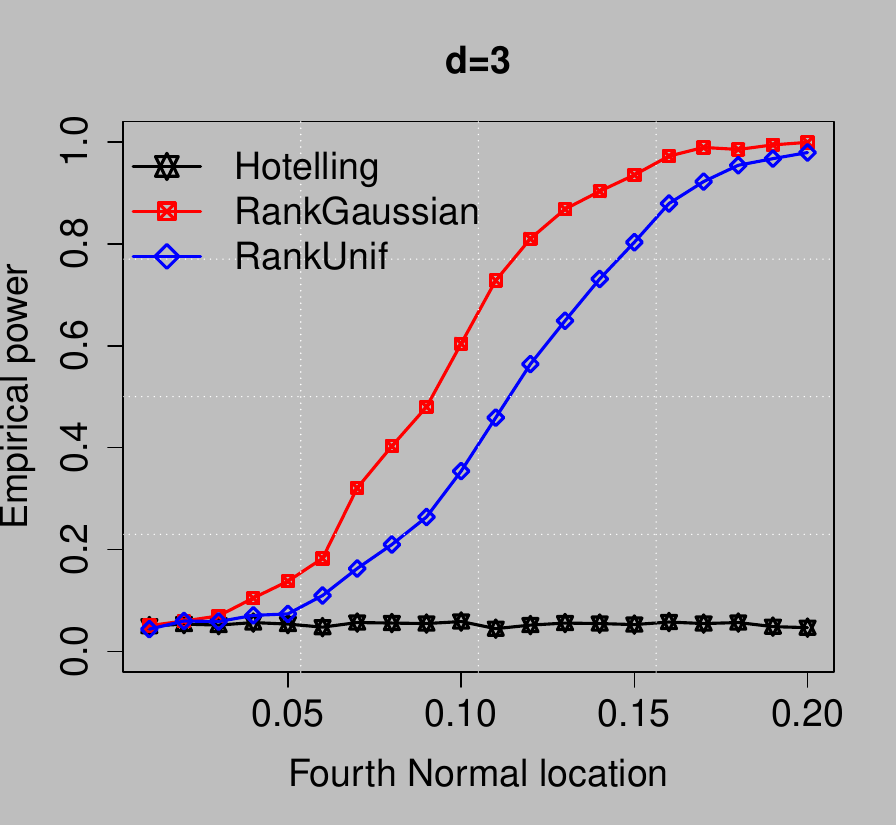}
		\end{center}	
		\caption{In the left panel, we sample $\bX$, $\bY$ according to setting (A5). The power curves are plotted as $\theta\in [0,0.2]$. The red line represents the power curve for $\rhsc$ with $\nu=\nu_G$, the blue line represents the same for $\rhsc$ with $\nu=\nu_U$, and the black line represents the same for Hotelling $T^2$. In the right panel, we plot the same for setting (A6).}
		\label{fig:settingA6}
	\end{figure}
	
	The settings chosen above are rather natural and formed by taking simple non-linear transformations of independent standard normals. We now compare the performances of Hotelling $T^2$ and $\rhsc$ with $\nu=\nu_G$ (the Gaussian reference distribution) and $\nu=\nu_U$ (the $\mathrm{Unif}[0,1]^d$ reference distribution), where $\bJ(\mx)=\mx$, in~\cref{fig:settingA6}. 
	
	As expected, the Hotelling $T^2$ is powerless in both settings as it is unable to detect differences beyond changes in mean. Also, in both cases, the $\mathrm{Unif}[0,1]^d$ reference distribution ($\nu_U$) is significantly outperformed by the Gaussian reference distribution ($\nu_G$). This is very much in line with the observations (see Figures~\ref{fig:settingA5}--\ref{fig:settingA4}) and the results proved in the rest of the paper. The plots also make it clear that both our proposals, namely, $\rhsc$ with $\nu=\nu_G$ and $\nu=\nu_U$ are able to detect differences beyond the mean shift. Both the above tests have power curves that approach $1$ as the $\theta$ parameter moves closer and closer to $0.2$. In fact, we tried several other non-linear transforms (such as an exponential function) of standard normals (appropriately centered) compared to the settings (A5) and (A6), and we observed similar behavior as in~\cref{fig:settingA6}.
	
	Overall~\cref{fig:settingA6} makes it clear that $\rhsc$ is indeed able to detect differences beyond the mean shift, unlike Hotelling $T^2$, in the multivariate setting. We believe this also makes the conclusion of~\cref{theo:rhconsis}  worthy of detailed investigation. While we have shown in~\cref{prop:conloc}  that the consistency condition in~\cref{theo:rhconsis} encompasses location alternatives, it is now apparent that the criteria covers more general alternatives.
	
	\subsection{Comparing Energy, MMD with their multivariate rank versions}\label{sec:emmdrank}
	
	In the previous simulation sections, we have focused on $\rhsc$, its efficiency and advantages over usual Hotelling $T^2$ in the multivariate setting. We will now move on to the other category of tests we described in~\cref{sec:rankmmd} , that is, the tests based on $\grsc$. Recall that these tests can be viewed as multivariate rank versions of the celebrated energy test (see~\cite{Szekely2013}) and the kernel MMD (see~\cite{Gretton2012}). In the sequel, we will draw two sets of comparisons described below:
	\begin{enumerate}
		\item \emph{Energy vs rank energy}: Consider the energy test for the two-sample testing problem. We compare it with the rank energy test as described in~\cref{rem:rankker}, where $\bJ(\mx)=\mx$ and $\nu$ is taken to be either $\nu_U=\mathrm{Unif}[0,1]^d$ or $\nu_G=\mathcal{N}(\bzr,{\bm I}_d)$.
		\item \emph{MMD vs rank MMD}: Consider the MMD test for the two-sample testing problem with the Gaussian kernel, that is, $\bKe(\mx,\my)=\exp(-\lVert \mx-\my\rVert^2/h)$, where the $h$ (bandwidth) parameter is chosen using the `sigest' mechanism (see~\cite{caputo2002appearance,Karatzoglou2004}). This mechanism can be viewed as an extension of the `median heurstic' and is based on finding an appropriate bandwidth based on the quantiles of $\lVert \mx-\my\rVert^2$ as $\mx$ and $\my$ vary over the observed data points. This is also convenient to implement via the \texttt{kernlab} package (see~\cite{kernlab2002}) in \texttt{R}. We compare it with the rank MMD test as described in~\eqref{eq:kerankscmmd}, where $\bJ(\mx)=\mx$ and $\nu$ is taken to be either $\nu_U=\mathrm{Unif}[0,1]^d$ or $\nu_G=\mathcal{N}(\bzr,{\bm I}_d)$.
	\end{enumerate}
	As in the previous section, we will work with $d=3$, $m=n=200$ and carry out the tests at level $0.05$ and obtain power curves with $1000$ independent replicates. The following settings have been used.
	\begin{enumerate}
		\item[(A7)] $\bX\sim \mathcal{N}(\bzr,\boldsymbol{\Sigma})$ and $\bY\sim\mathcal{N}(\bzr,\boldsymbol{\Sigma}_{\theta})$ where $\boldsymbol{\Sigma}(i,j)=0.3^{|i-j|}$ and $\boldsymbol{\Sigma}_{\theta}=\theta^{|i-j|}$ where the `scale' $\theta$ varies in the interval $(0.3,0.7)$.
		\item[(A8)] Construct $\tilde{\bX}_1\sim\mathcal{N}(\bzr,\boldsymbol{\Sigma})$ and $\tilde{\bY}_1\sim\mathcal{N}(\bzr,\boldsymbol{\Sigma}_{\theta})$ as in (A7). Then set $\bX=\exp(\tilde{\bX}_1)$ and $\bY=\exp(\tilde{\bY}_1)$.
		\item[(A9)] $\bX\sim \mathcal{N}(\bzr,\boldsymbol{\Sigma})$ and $\bY\sim\mathcal{N}(\bzr,\boldsymbol{\Sigma}_{\theta})$ where $\boldsymbol{\Sigma}(i,j)=0.3$ if $i\neq j$, $\boldsymbol{\Sigma}(i,i)=1$, and set  $\boldsymbol{\Sigma}_{\theta}(i,j)=\theta$ if $i\neq j$, $\boldsymbol{\Sigma}_{\theta}(i,i)=1$, where the `scale' $\theta$ varies in the interval $(0.3,0.7)$.
		\item[(A10)] Construct $\tilde{\bX}_1\sim\mathcal{N}(\bzr,\boldsymbol{\Sigma})$ and $\tilde{\bY}_1\sim\mathcal{N}(\bzr,\boldsymbol{\Sigma}_{\theta})$ as in (A9). Then set $\bX=\exp(\tilde{\bX}_1)$ and $\bY=\exp(\tilde{\bY}_1)$.
	\end{enumerate}
	Note that both the settings (A7) and (A9) feature the centered Gaussian distribution with commonly studied covariance matrices. For instance, (A7) is the autocorrelation matrix up to order $2$ of an autoregressive model of order $1$. (A9) is the popular equicorrelation matrix which has been studied extensively in the hypothesis testing literature (see~\cite{Steiger1980testing,gupta1982tests}). The settings (A8) and (A10) are formed by taking a lognormal transform of the settings (A7) and (A9) respectively. This is a popular way to make the data somewhat heavy-tailed. It was recently observed in~\cite{Deb19} that this modification can sometimes adversely impact the performance of energy statistic, probably because it is not very robust against outliers. 
	
	The power plots for the above simulation settings are given in~\cref{fig:settingA7}. The broad common trend in all the $4$ plots is that our proposed distribution-free multivariate rank versions of the energy test and the MMD clearly dominate the standard, existing versions (see~\cite{Szekely2013,Gretton2012}). Moreover, in all cases, the Gaussian reference distribution based implementation of $\grsc$ dominates the corresponding $\mathrm{Unif}[0,1]^d$ reference distribution based implementation. This echoes the general message of our paper that using the Gaussian reference distribution can have beneficial power properties in a variety of models. Note that this also matches a similar observation made in~\cref{fig:settingA5,fig:settingA1,fig:settingA4} when comparing $\rhsc$ with reference distributions $\nu_G$ and $\nu_U$, with the standard Hotelling $T^2$ test. 
	
	On a finer level, let us compare the plots in the left hand column (with normal distribution) of~\cref{fig:settingA7} to the right hand column (with lognormal distribution). Observe that the performance of the energy test tapers off in the lognormal distribution power plots when compared to the corresponding normal distribution power plots, particularly for larger values of the scale parameter. This is perhaps not surprising given that it works directly with inter-point distances and may be adversely affected by a few ``large'' observations, which typically arise when working with the lognormal distribution. It is surprising that the MMD too exhibits a similar phenomena, although it uses a \emph{bounded} Gaussian kernel. Having said that, the kernel MMD outperforms the energy test in all cases, which we believe is due to the carefully selected bandwidth. Interestingly, all the rank versions proposed in this paper have a consistent performance in the normal and corresponding lognormal power plots. Further, we see that for larger values of the scale $\theta$ in the lognormal plots, the power of $\grsc$ with the energy kernel (see~\eqref{eq:ranker}) and $\nu=\nu_G$ dominates the power of the MMD with Gaussian kernel despite the careful bandwidth selection process in the latter. This is testimony to the added robustness provided by our rank-based procedures when compared to the energy test or the MMD.
	
	\begin{figure}[h]
		\begin{center}		
			\subcaptionbox{Setting (A7)}{\includegraphics[height=7.5cm,width=7cm]{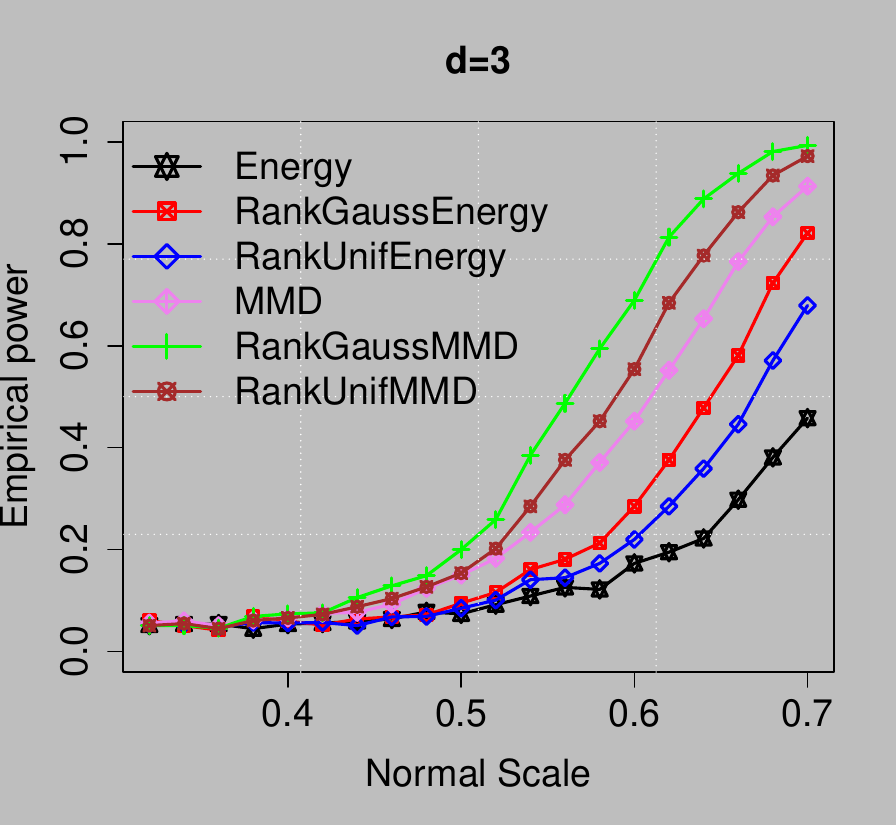}}
			\subcaptionbox{Setting (A8)}{\includegraphics[height=7.5cm,width=7cm]{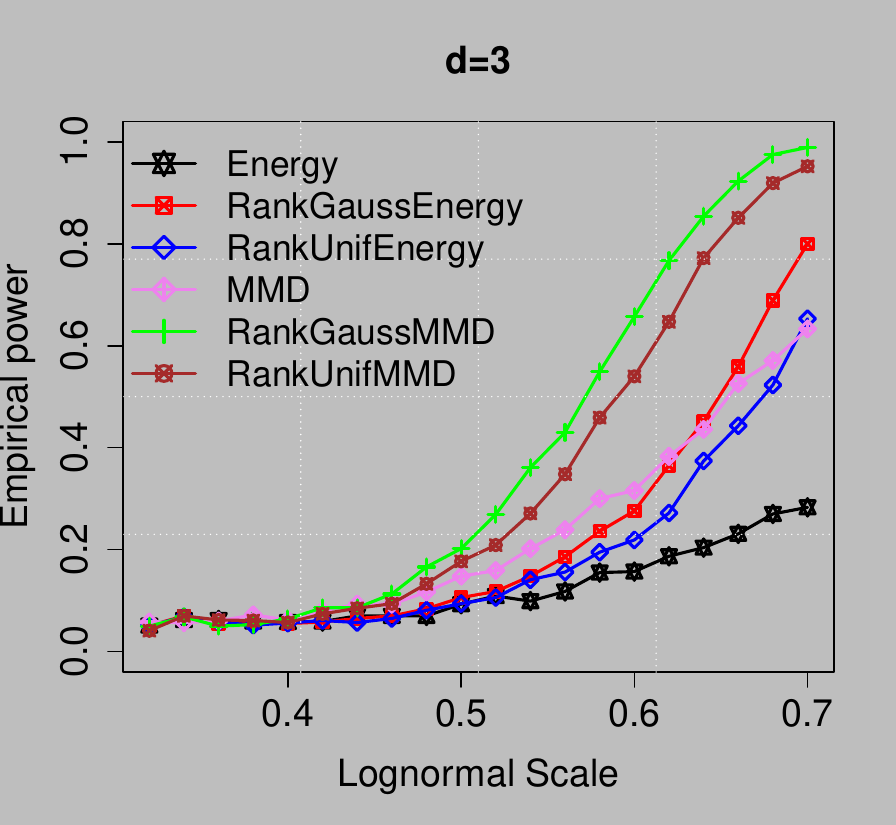}}
			\subcaptionbox{Setting (A9)}{\includegraphics[height=7.5cm,width=7cm]{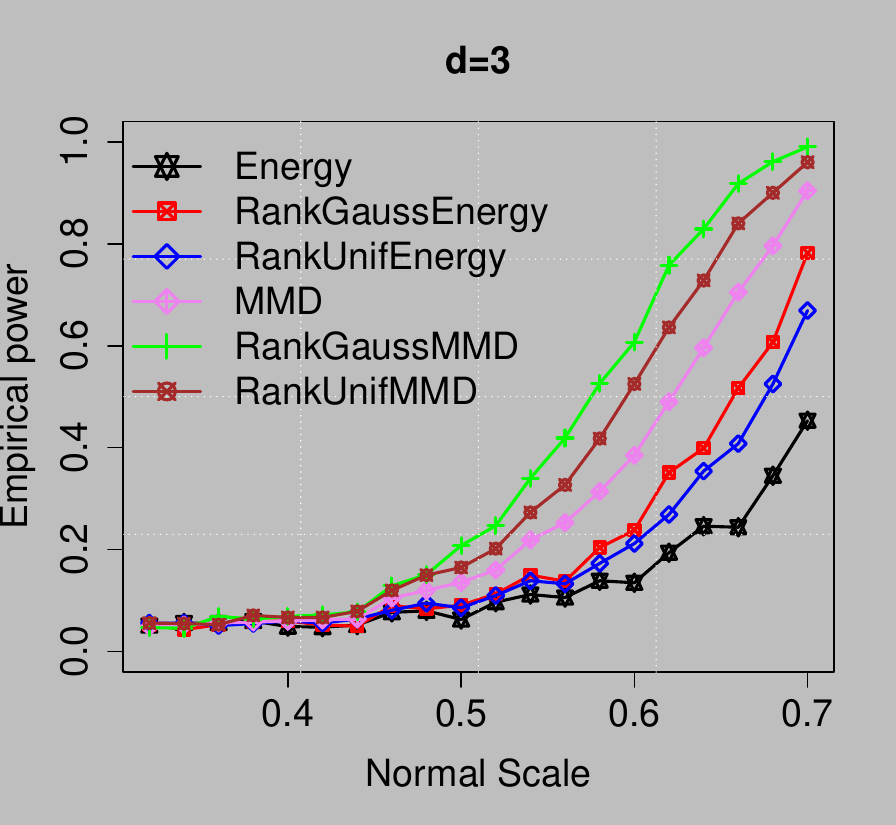}}	
			\subcaptionbox{Setting (A10)}{\includegraphics[height=7.5cm,width=7cm]{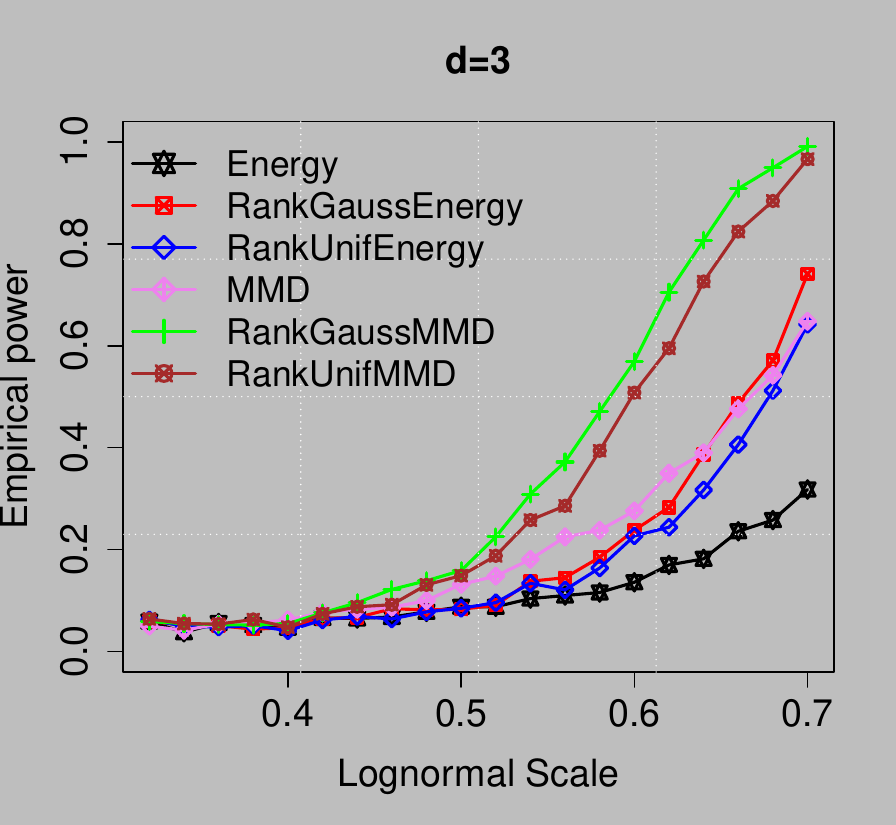}}
		\end{center}	
		\caption{The top left and right panels correspond to settings (A7) and (A8), whereas the bottom left and right panels correspond to settings (A9) and (A10) respectively. In all the figures, ``RankGaussEnergy'' and ``RankUnifEnergy'' are used to denote $\grsc$ with $\nu=\nu_G$ and $\nu=\nu_U$ respectively, with $\bJ(\mx)=\mx$ and the kernel from~\cref{rem:rankker}  in both cases. Similarly, ``RankGaussMMD'' and ``RankUnifMMD'' are used to denote $\grsc$ with $\nu=\nu_G$ and $\nu=\nu_U$ respectively, with $\bJ(\mx)=\mx$ and the Gaussian kernel with appropriate bandwidth (see the description in~\cref{sec:emmdrank}) in both cases.} 
	\label{fig:settingA7}
\end{figure}

\subsection{Behavior in higher dimension}\label{sec:highdpower}
In the previous section, we have focused on the \emph{low dimensional setting} and compared the tests based on $\grsc$ (as described in~\cref{sec:rankmmd}) with the celebrated energy test (see~\cite{Szekely2013}) and the kernel MMD (see~\cite{Gretton2012}). In contrast the focus of this section is on \emph{significantly higher dimensional} regime. We have used the same choices of $\nu$, $\bJ(\cdot)$, and bandwidths for kernel MMD as in the previous section.

Specifically, we consider $d=75$, $m=n=200$ and carry out the tests at level $0.05$ and obtain power curves with $1000$ independent replicates. The following natural settings have been considered: 
\begin{enumerate}
	\item[(A11)] \emph{Gaussian location shift}: Same as the setting (A1) above, with $d=75$. 
	\item[(A12)] \emph{Lognormal location shift}: Same as the setting (A4) above, with $d=75$.
	\item[(A13)] \emph{Gaussian correlation decay}: Same as setting (A7) above with $d=75$.
	\item[(A14)] \emph{Gaussian equicorrelated design}: Same as setting (A9) above with $d=75$
\end{enumerate}

The power plots for the above simulation settings are given in~\cref{fig:settingA8}. Generally speaking, the plots provide strong evidence about the competitive nature of the test based on $\grsc$ compared to those based on energy and kernel MMD, even in such high dimensional problems. Only in setting (A14), do we see some significant gains of using the usual kernel MMD as the (equi)correlation parameter under the alternative crosses $0.5$. In the settings (A11), (A12) and (A13), the rank-based methods perform as well and often better than the non rank-based competitors.

The broad common trend in all the $4$ plots is that the closer the alternative is to the null, the better is the performance of our rank-based procedures. This is supported by the fact that near the left hand bottom corner, the power curves for the rank-based procedures are in general above those of the energy and kernel MMD based tests. This is quite noticeable in the upper left hand panel of~\cref{fig:settingA8} which corresponds to the Gaussian location problem. We observe here that up to the point where the location parameter under the alternative is $\leq 0.6$, both the curves corresponding to the energy and kernel MMD are lower than those of our rank-based procedures. The curve corresponding to the energy test marginally crosses that of our tests after crossing $0.06$ on the $x$-axis. For the power curve of the kernel MMD test, this takes longer (around $0.08$ in the $x$-axis). For the upper right hand panel of~\cref{fig:settingA8}, corresponding to the lognormal case, the power curves are largerly indistinguishable. While the kernel MMD still has lower power near the bottom left, the difference with the other curves is quite small and hence could just be an effect of pure noise. In the bottom left panel of~\cref{fig:settingA8}, we observe that there is quite a difference between say the power curve of the energy distance based test compared to the rank energy test and the rank MMD test, both using the Gaussian reference distribution. In fact, when the correlation parameter (on the $x$-axis) is near its highest (around $0.7$), the power of energy test is around $0.4$ whereas those of the Gaussian reference based rank tests are around $0.7$. The power curve of the test based on MMD also lies below the rank tests up to around $0.6$ in the $x$-axis. The bottom right panel of~\cref{fig:settingA8} provides an example where the kernel MMD test works significantly better than all the other tests when the (equi)correlation parameter crosses $0.5$. In this case as well, for smaller values of the (equi)correlation parameter, the rank-based tests work better.

We also see that the benefits of using the Gaussian reference distribution continue to show up. This is quite evident in the bottom left panel of~\cref{fig:settingA8}, where both the rank-based tests (that is, rank energy and rank MMD) with Gaussian reference distribution perform noticeably better than the ones with the Uniform reference distribution. In the bottom right panel of~\cref{fig:settingA8}, once again the power curve of the rank-based energy test with the Gaussian reference distribution is noticeably above that of rank-based energy test with the uniform reference distribution. 

Overall, we find the simulation results in the high dimensional settings quite encouraging. In particular, it seems that even in the high dimensional regime, the Gaussian reference distribution continues to have some attractive power properties, the likes of which we have theoretically exhibited in the low dimensional setting. We leave the theoretical understanding of rank-based tests in the high dimensional regime for further research.

\begin{figure}[h]
	\begin{center}
		\subcaptionbox{Setting (A11)}{\includegraphics[height=7.5cm,width=7cm]{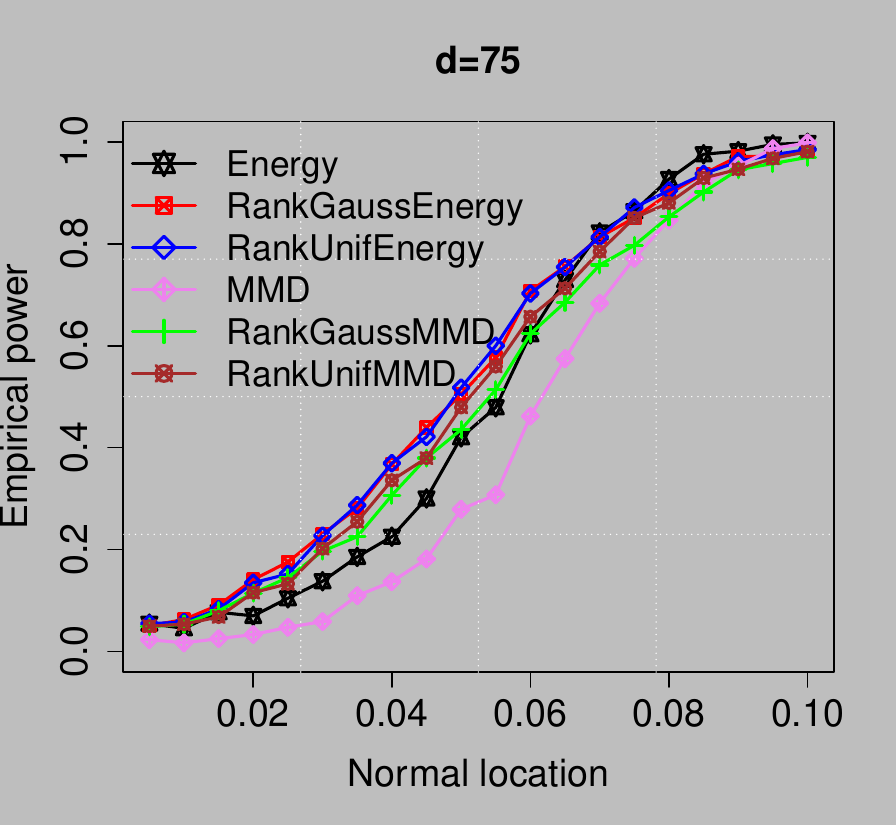}}	
		\subcaptionbox{Setting (A12)}{\includegraphics[height=7.5cm,width=7cm]{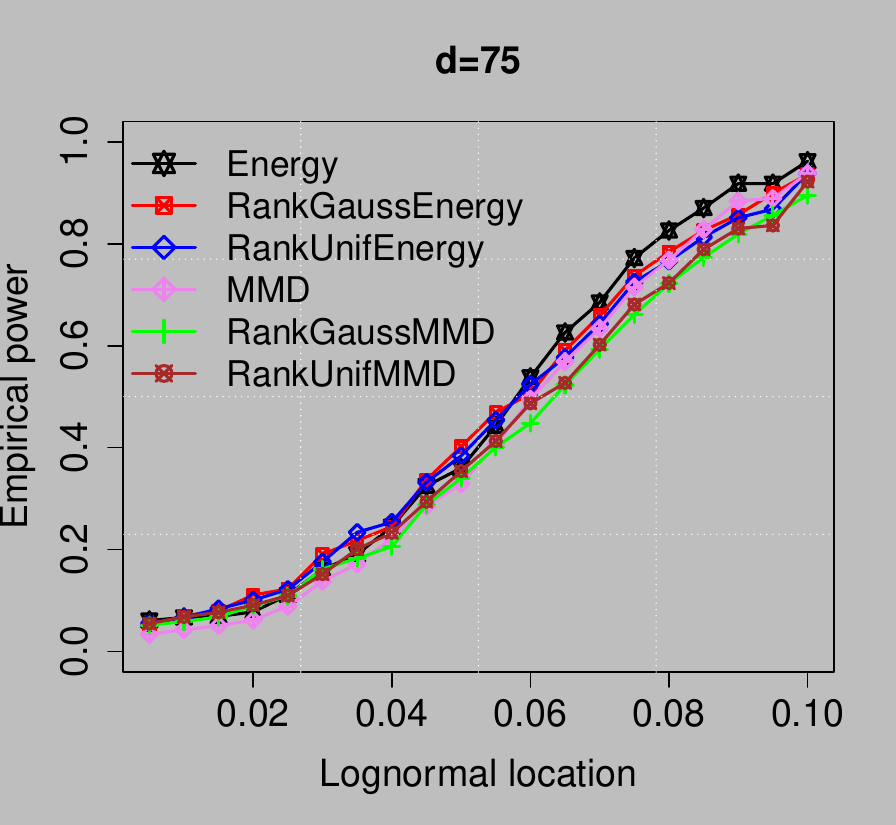}}
		\subcaptionbox{Setting (A13)}{\includegraphics[height=7.5cm,width=7cm]{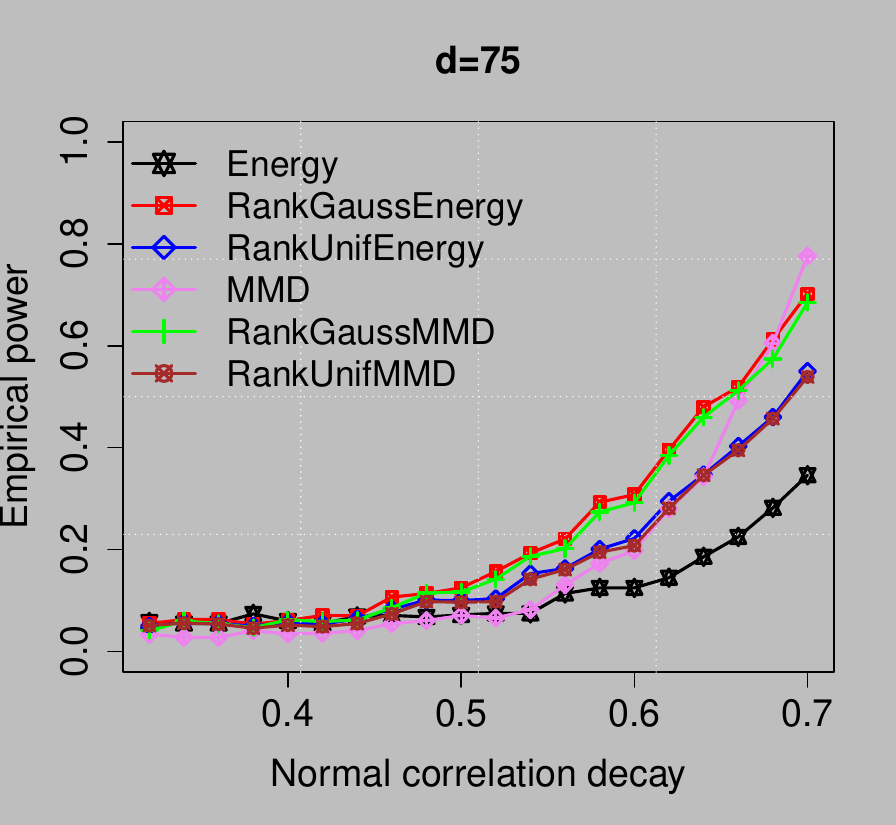}}
		\subcaptionbox{Setting (A14)}{\includegraphics[height=7.5cm,width=7cm]{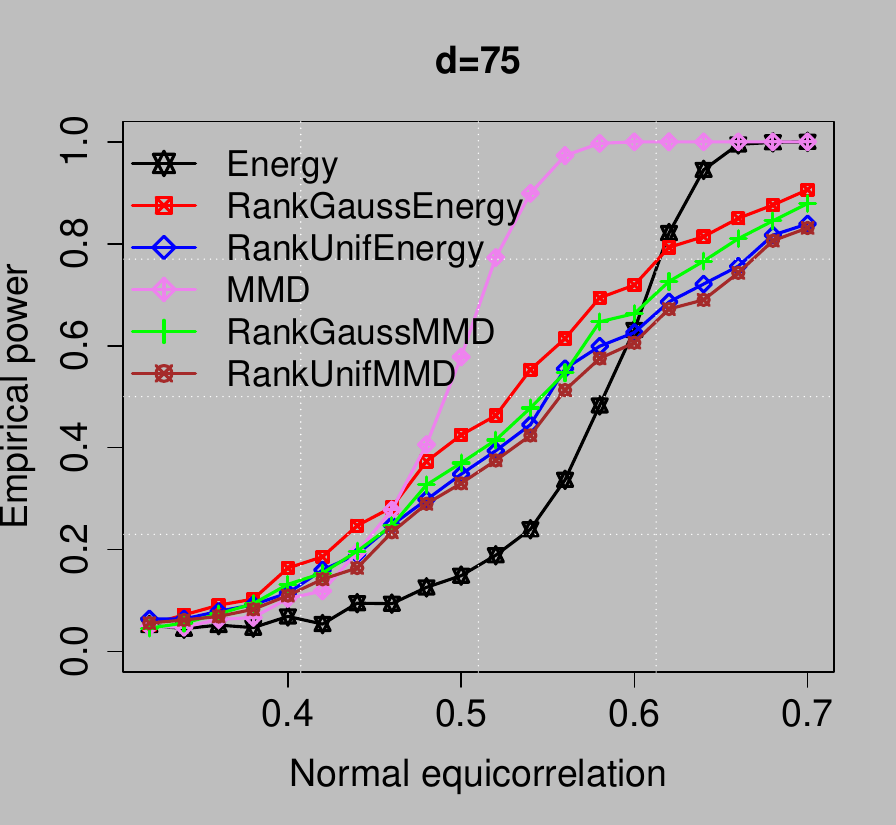}}
	\end{center}	
	\caption{The top left and right panels correspond to settings (A11) and (A12), whereas the bottom left and right panels correspond to settings (A13) and (A14) respectively. In all the figures, ``RankGaussEnergy'' and ``RankUnifEnergy'' are used to denote $\grsc$ with $\nu=\nu_G$ and $\nu=\nu_U$ respectively, with $\bJ(\mx)=\mx$ and the kernel from~\cref{rem:rankker}  in both cases. Similarly, ``RankGaussMMD'' and ``RankUnifMMD'' are used to denote $\grsc$ with $\nu=\nu_G$ and $\nu=\nu_U$ respectively, with $\bJ(\mx)=\mx$ and the Gaussian kernel with appropriate bandwidth (see the description in~\cref{sec:emmdrank}) in both cases.}
	\label{fig:settingA8}
\end{figure}

\end{document}